\documentclass[a4paper,english,11pt,reqno]{amsart}

\usepackage[utf8]{inputenc}
\usepackage[T1]{fontenc}
\usepackage{babel}
\usepackage[babel]{csquotes}

\usepackage{appendix}

\usepackage{amsmath}
\usepackage{amsfonts}
\usepackage{amsthm}
\usepackage{bbm}
\usepackage{amssymb}
\usepackage[top=3.4cm, bottom=3.6cm, left=3.6 cm, right=3.6 cm]{geometry}
\usepackage{mathrsfs}
\usepackage[pdftex, pdfstartview=FitW,colorlinks=true,linkcolor=blue,%
urlcolor=blue,citecolor=blue]{hyperref}
\usepackage{stmaryrd}
\usepackage{cleveref}
\usepackage{mathtools}
\usepackage{soul}

\usepackage{url}
\usepackage{xcolor}
\usepackage{enumitem}

\newtheorem{th.}{Theorem}[section]
\newtheorem{thm}[th.]{Theorem}

\newtheorem{lemma}[th.]{Lemma}

\newtheorem{remark}[th.]{Remark}

\newtheorem{proposition}[th.]{Proposition}
\newtheorem{corollary}[th.]{Corollary}

\theoremstyle{definition}
\newtheorem{definition}[th.]{Definition}

\numberwithin{equation}{section}

\theoremstyle{definition}

\numberwithin{equation}{section}

\newcommand{\N}{\mathbb{N}}
\newcommand{\Z}{\mathbb{Z}}

\newcommand{\R}{\mathbb{R}}

\newcommand{\dd}{\,\mathrm{d}}
\newcommand{\norm}[1]{\lVert#1\rVert}

\newcommand{\bomega}{\boldsymbol{\omega}}
\newcommand{\bs}{\boldsymbol{s}}
\newcommand{\br}{\boldsymbol{r}}

\newcommand{\cS}{\mathcal{S}}

\newcommand{\integer}[1]{\lfloor #1 \rfloor}
\newcommand{\uphi}{\underline{\phi}}
\newcommand{\abse}[1]{\left\lvert#1\right\rvert}
\newcommand{\ttb}{\mathtt{b}}
\newcommand{\ttr}{\mathtt{r}}
\newcommand{\bp}{\boldsymbol{p}}

\newcommand{\bt}{\boldsymbol{t}}

\newcommand{\bv}{\boldsymbol{v}}

\newcommand{\bttb}{\boldsymbol{\ttb}}
\newcommand{\bttr}{\boldsymbol{\ttr}}
\newcommand{\kg}{\mathfrak{g}}
\newcommand{\sV}{\mathscr{V}}
\newcommand{\dbracket}[1]{\llbracket #1 \rrbracket}
\newcommand{\eps}{\epsilon}
\newcommand{\cE}{\mathcal{E}}
\newcommand{\cF}{\mathcal{F}}
\newcommand{\cC}{\mathcal{C}}

\DeclareMathOperator{\diag}{diag}
\DeclareMathOperator{\SL}{SL}
\DeclareMathOperator{\inj}{inj}
\DeclareMathOperator{\supp}{supp}
\DeclareMathOperator{\SO}{SO}
\DeclareMathOperator{\Aff}{Aff}
\DeclareMathOperator{\Id}{Id}

\DeclareMathOperator{\GL}{GL}
\DeclareMathOperator{\dist}{dist}
\DeclareMathOperator{\Gr}{Gr}
\DeclareMathOperator{\MNC}{MNC}
\DeclareMathOperator{\Lie}{Lie}
\DeclareMathOperator{\Ad}{Ad}
\DeclareMathOperator{\leb}{Leb}

\title[Effective equidistribution]{On effective equidistribution for product-type self-affine measures}

\author{Han Zhang}
\address{School of Mathematical Sciences, Shenzhen University, 518060,  Shenzhen, China
}
\email{hzmath@szu.edu.cn}
\thanks{H.Z. is supported by  the National Natural Science Foundation of China (No. 12501250)}
\date{}

\begin{document}

\begin{abstract}
We establish an effective equidistribution theorem on homogeneous spaces for expanding translates of product-type self-affine measures seen on unipotent orbits along any sequence of diagonal elements drifting away from walls. In particular, we do not require the  diagonal elements to lie in a fixed proper and closed subcone.

The main new ingredients in the proof of our  equidistribution theorem include an  adaptation of the methods in \cite{BHZ25} to the setting of product-type self-affine measures, as well as  a weighted effective recurrence theorem for intertwined  associated random walks. The latter involves decomposing the  random walks according to suitable stopping times and applications of intertwined Benoist-Quint height functions.
\end{abstract}

\maketitle

\tableofcontents
\setcounter{tocdepth}{1}
\large

\section{Introduction}

Let $d\geq 2$ be an integer and $\Aff(\R^d)$ be the collection of all affine maps from $\R^d$ to $\R^d$ whose linear part lies in $\GL_d(\R)$. We consider a self-affine iterated function system (IFS) $\lambda$, which is a probability measure on $\Aff(\R^d)$ such that $\lambda$ is contractive with finite support. By \cite{BP92}, there exists a unique probability measure $\sigma$ on $\R^d$ satisfying $\lambda*\sigma=\sigma$. The measure $\sigma$ is referred to as the \textit{self-affine measure} associated to $\lambda$. If each $\phi\in \supp \lambda$ is a similitude, then $\lambda$ is called a self-similar IFS and $\sigma$ is the associated self-similar measure. In this paper, we consider a special class of self-affine measures defined as follows:

\begin{definition}\label{defn: product type}
   A self-affine measure $\sigma$ on $\R^d$ is said to be \textit{product-type} if $\sigma=\sigma_1\otimes \cdots \otimes \sigma_d$, where each $\sigma_i$ ($1\leq i\leq d$) is a self-similar measure on $\R$ whose associated IFS $\lambda_i$ is contractive, has finite support and affine maps in $\lambda_i$ have no common fixed point.
\end{definition}

\begin{remark}\label{rmk: not self-similar}
    In general a product-type self-affine measure $\sigma$ is not self-similar. For example, the product of the middle thirds Cantor measure and the middle fifths Cantor measure is not self-similar. $\sigma$ is self-similar if one can find an associated IFS such that all contraction ratios agree.
\end{remark}

\medskip

In this paper, we investigate expanding translates by arbitrary diagonal flows  of a product-type self-affine measure supported on a unipotent orbit on the homogeneous space  $\SL_{d+1}(\R)/\SL_{d+1}(\Z)$. Denote $G=\SL_{d+1}(\R)$, $\Gamma=\SL_{d+1}(\Z)$  and $X=G/\Gamma$. Given $\bt=(t_1,\cdots,t_d)\in \R^d_{>0}$, we define the multi-parameter diagonal flow
\begin{align}\label{eq: defn of multi parameter}
    a(\bt)=\diag(t_1^{\tfrac{1}{d+1}},\cdots,t_d^{\tfrac{1}{d+1}},\prod_{i=1}^d t_i^{-\tfrac{1}{d+1}}), 
\end{align}
and $A=\{a(\bt):\bt\in \R^d_{>0}\}$. When $\bt=(t,\cdots,t)$, we write $a(t)=a(\bt)$ for simplicity. Denote by $u:\R^d\to G$ the upper triangular matrix given by 
\[ u(\bs)=\begin{pmatrix}
     1 & & & s_1\\
     & \ddots & & \vdots\\
     & & 1 & s_d\\
     & & & 1
\end{pmatrix}, \]
and $U=\{u(\bs): \bs\in \R^d\}$. For $l\in \N$, we denote by $B^{\infty}_{\infty,l}(X)$ the collection of smooth functions from $X$ to $\R$ with bounded derivative up to order $l$, and let $\cS_{\infty,l}(\cdot)$ be the associated $C^l$-norm. Let $m_X$ be the Haar probability measure on $X$, that is, $m_X$ is the unique $G$-invariant probability measure on $X$.

\medskip

\begin{thm}\label{thm: weighted effective equid}
Let $\sigma$ be a product-type self-affine measure on $\R^d$ and $l=\lceil \frac{1}{2}\dim \SO(d+1)\rceil$. Then there exist $C,c>0$ depending only on $\sigma$ such that for every $x\in X$, $\bt\in \R^d_{>1}$ and $f\in B^\infty_{\infty, l}(X)$, we have
\begin{align}\label{eq: effective conv}
    \abse{\int_{\R^d} f\bigl(a(\bt) u(\bs) x\bigr) \dd \sigma(\bs) - m_{X}(f) } \leq C \cS_{\infty, l}(f) \inj(x)^{-1} \integer{\bt}^{-c},
\end{align}
where $\integer{\bt}=\min_{i} t_i$, and $\inj(x)$ is the injectivity radius of $x$.
\end{thm}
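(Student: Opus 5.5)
We prove the estimate \eqref{eq: effective conv} with a three-step scheme in the style of \cite{BHZ25}: recurrence, then a gain in dimension of the pushed measure, then equidistribution of a high-dimensional measure.

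\textbf{Step 1: a self-affine decomposition of $\sigma$.} Each factor satisfies $\sigma_i=\sum_j p_{ij}\,\phi_{ij}*\sigma_i$ with contraction ratios $r_{ij}<1$. Iterating gives $\sigma_i$ as a convex combination of rescaled translates of itself. We iterate each coordinate until a stopping time. Coordinate $i$ stops at the first word with $\prod r_{ij_k}\le t_i^{-1}$ (more precisely $t_i^{-(1-\eta)}$ for a small $\eta>0$). Then $\sigma=\sum_{w}p_w\,(\psi_w)_*\sigma$, where each $\psi_w$ is an affine map with diagonal linear part $\diag(\rho_{w,1},\dots,\rho_{w,d})$ and $\rho_{w,i}\asymp t_i^{-(1-\eta)}$. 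Conjugating through $a(\bt)$ gives
\[
a(\bt)u(\psi_w(\bs))x=a(\bt')\,u(\bs)\,g_w x ,
\]
where $a(\bt')$ has $t'_i=t_i\rho_{w,i}\asymp t_i^{\eta}$. The matrix $g_w$ equals $a(\bt)a(\bt')^{-1}u(\text{translation})$. Its diagonal part is $a(\rho_w^{-1})$, a product of random ratios. So the integral becomes an average over the points $y_w=a(\rho_w^{-1})u(\cdot)x$ of integrals of $f$ against $a(\bt')u(\bs)$ pushed by $\sigma$. The diagonal elements now have comparable, controlled sizes, governed by $\integer{\bt}^{\eta}$ in each coordinate.

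\textbf{Step 2: weighted effective recurrence.} The points $y_w$ are the endpoints of $d$ intertwined random walks on $X$, with coordinate $i$ stopped at its own stopping time. We need a large-deviation bound of the form $\sum_w p_w\,\mathbf 1\{\inj(y_w)<\epsilon\}\ll \epsilon^{\kappa}\inj(x)^{-1}+\integer{\bt}^{-c}$. To get it, we build a Benoist--Quint type height function $\alpha$ on $X$, a combination of $\|\wedge^k g v\|^{-\delta}$ over primitive $\Z$-flags, adapted to the weights. We then check a contraction inequality $\sum p_w\,\alpha(g_wy)\le c\,\alpha(y)+b$ for the operator corresponding to one step in a single coordinate. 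The random stopping times and the non-uniform growth of the different coordinates are handled by decomposing the walk into blocks. In each block all coordinates advance by a comparable multiplicative amount, so the averaged expansion of $\wedge^k$ in the $u$-direction (nonconcentration of $\sigma$ on hyperplanes, which uses the no-common-fixed-point hypothesis) gives contraction. \textbf{This step is the main obstacle.} Because the $t_i$ may have wildly different sizes, no fixed subcone is available. The key point is that the expansion of $a(\bt)u(\bs)$ on $\wedge^k\R^{d+1}$ is controlled by the smallest $t_i$, and that controls the uniform error $\integer{\bt}^{-c}$.

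\textbf{Step 3: dimension increment and conclusion.} For points $y_w$ with $\inj(y_w)\ge\epsilon$, we follow \cite{BHZ25}. A multiscale projection (Bourgain-type) argument for $\sigma$ under the partially expanded flow shows that the pushed measure $a(\bt'')u_*\sigma\cdot y_w$, after a further intermediate time, has dimension close to $\dim G$ at scales above $\integer{\bt}^{-c}$. Here $\sigma$ has positive dimension in every coordinate, and the products of the $\sigma_i$ are non-concentrated on affine subspaces transverse to the expanded directions. Once the measure has high dimension, we convolve with the remaining expansion along $a(\bt')$ and apply effective mixing (spectral gap of $\SL_{d+1}(\Z)$, with Sobolev order $l$). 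This gives closeness to $m_X(f)$ with error $\cS_{\infty,l}(f)\,\integer{\bt}^{-c}$. Summing over $w$, with the recurrence bound absorbing the cusp contributions via $\inj(x)^{-1}$, yields \eqref{eq: effective conv}.
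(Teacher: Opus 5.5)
Your architecture (stopping-time decomposition of $\sigma$, recurrence of the resulting base points, equidistribution of the remaining translate) is the right one. However, two steps fail as written.

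\textbf{Step 1 does not produce comparable sizes.} Stopping coordinate $i$ at ratio $\asymp t_i^{-(1-\eta)}$ leaves, as you compute, $t'_i\asymp t_i^{\eta}$. That is just $\bt$ raised to the power $\eta$, so it lies on the same ray of the cone as $\bt$. For instance, if $t_1=\integer{\bt}^{100}$ and $t_2=\integer{\bt}$, then $t'_1/t'_2\asymp\integer{\bt}^{99\eta}$. Step 3 would therefore still have to treat translates by a flow arbitrarily close to a wall, which is the original problem. Meanwhile, the \cite{BHZ25} tools you invoke (flags adapted to $a(t)$, the spectral-gap estimate for $\eta_t$) concern only the unweighted flow. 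The stopping thresholds must depend on $\integer{\bt}$ as well as on $t_i$, so that the remainder is \emph{exactly} unweighted. Following \cite{KM12}, the paper orders $t_1\ge\cdots\ge t_d$ and writes $a(\bt)=a(t_d^{1/2})\cdot a(\bt)a(t_d^{-1/2})$. It then chooses $\eps_i$ so that every $g\in\supp\mu_{1,d}^{\eps_1}*\cdots*\mu_{d,d}^{\eps_d}$ equals $a_g\,a(\bt)a(t_d^{-1/2})u(\ttb_g)$ with $a_g$ bounded. All the weightedness is thus pushed into the recurrence of $gx$, and the last step only needs the unweighted Theorem~\ref{thm: unweighted effective equid}. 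Even that does not follow from \cite{BHZ25} verbatim, because $\sigma$ is not self-similar. The paper runs the \cite{BHZ25} argument with the balanced walk $\overline{\mu}_{n_1}=\mu_{1,d}^{*n_1}*\cdots*\mu_{d,d}^{*n_d}$, $n_k=\lfloor n_1\ell_1/\ell_k\rfloor$, whose diagonal part is close to $a(e^{n_1\ell_1})$.

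\textbf{The recurrence mechanism in Step 2 cannot work.} There is no contraction inequality for one step in a single coordinate. An element of $\supp\mu_{i,d}$ multiplies $e_j$ ($j\le d$, $j\neq i$) by $\ttr^{1/(d+1)}<1$, so iterating it drives the identity coset into the cusp deterministically. Nor can every block advance all coordinates comparably, since the total advances must differ wildly. Some blocks must therefore have diagonal part on a wall of $A^+$. There no height function for $G$ contracts, because the wall fixes the vectors $e_{k+1},\dots,e_d$.

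The missing idea is the block decomposition of Lemma~\ref{lemma: specific decomposition of random walk}. The first block advances all coordinates by the common amount $\eps_d^d\prod_{s<d}\eps_s^{-1}$ (a power of $\integer{\bt}^{-1}$). For it, the $G$-height function gives genuine recurrence at the cost of $\inj(x)^{-C}$. Each later block advances coordinates $1,\dots,k$ by $\epsilon^{k+1}$ and the rest by $\epsilon^{k}$, where $\epsilon$ is a ratio of consecutive $\eps_i$'s. Its diagonal part is then $a_k(\epsilon^{-(k+1)})\in G_k\cong\SL_{k+1}(\R)$. For these blocks the paper uses the Benoist--Quint height function attached to $G_k$, which tolerates $G_k$-fixed vectors. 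It yields only relative recurrence $\inj(hy)\ge\rho^C\inj(y)^C$ (Proposition~\ref{prop: effective recurrence of suitable convolution}); the bounded translations in coordinates $k+1,\dots,d$ are harmless. Chaining the $d$ blocks gives Theorem~\ref{thm: weighted effective recurrence}, which is exactly the input your Step 2 needs.
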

\medskip

Let $A^+=\{a(\bt):\bt\in \R^d_{> 1}\}$.  $A^+$ is referred to as the expanding cone of $U$ (see \cite{Shi21}). Following \cite{KW08}, we say that a sequence $\{a(\bt_n)\}\subset A^+$ \textit{drifts away from walls} (of the expanding cone) if  $\integer{\bt_n}\to \infty$ as $n\to\infty$. When $\bt=(t,\cdots,t)$, the convergence in \eqref{eq: effective conv} translated by $a(t)$ as $t\to \infty$ is referred to as the \textit{unweighted} equidistribution. Theorem \ref{thm: weighted effective equid} asserts that for any sequence $\{a(\bt_n)\}$ in $A^+$, the expanding translates by $a(\bt_n)$ of $\sigma$ seen on a unipotent orbit get equidistributed in $X$ with rate depending only on the speed of $\{a(\bt_n)\}$ drifting away from walls. The error term $\inj(x)^{-1}$ indicates that it will take long time to equidistribute if the base point $x$ is deep in the cusp.

The important feature in Theorem \ref{thm: weighted effective equid} is that for \eqref{eq: effective conv} to hold, a sequence $\{\bt_n\}\subset \R^{d}_{>1}$ is allowed to escape to infinity at arbitrarily different rates in different coordinates. This is made possible by the following  non-divergence estimate (or rather its equivalent form of Theorem \ref{thm: weighted effective recurrence}):

\begin{thm}\label{thm: nondiv}
Let $\sigma$ be as in Theorem \ref{thm: weighted effective equid}. There exist constants $C,c>0$ depending only on $\sigma$ and $G$ such that the following holds. For every $\bt\in \R^d_{> 1}$, any $x\in X$ and $\rho\lll_{\sigma} 1$, we have
\[ \sigma\{ \bs: \inj(a(\bt)u(\bs)x)<\rho \}< C\rho^c(1+\integer{\bt}^{-c}\inj(x)^{-C}).  \]

\end{thm}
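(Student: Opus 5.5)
We prove Theorem \ref{thm: nondiv} by recasting it as a recurrence statement for a random walk driven by the IFS, and controlling that walk with a Benoist--Quint type height function.

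\textbf{Random walk reformulation.} Write $\lambda=\lambda_1\otimes\cdots\otimes\lambda_d$ coordinatewise, so that $\sigma=\lambda^{*n}*\sigma$ for every $n$. Each word $\phi=\phi_1\times\cdots\times\phi_d$ in the support of $\lambda^{*n}$ has coordinate maps $\phi_i(s)=r_i s+b_i$, where $r_i$ is the product of the contraction ratios. The conjugation identity
\[ a(\bt)u(\phi(\bs))=a(\bt)u(\bttb)\,a(\br)^{-1}\,a(\br)u(\bs)a(\br)^{-1}\cdots \]
(after the standard normalization) shows that $a(\bt)u(\phi(\bs))x$ equals $a(\bt')u(\bs)$ applied to the point $g_\phi x$. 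Here $g_\phi=a(\br)u(\bttb)$-type elements are built from the word, and $\bt'=\bt\cdot\br$ coordinatewise. Hence
\[ \int f(a(\bt)u(\bs)x)\dd\sigma(\bs)=\int\!\!\int f\bigl(a(\bt')u(\bs)\,g_\phi x\bigr)\dd\sigma(\bs)\dd\lambda^{*n}(\phi). \]
The translate of $\sigma$ thus appears as the law of a walk on $X$ whose steps are products $a(\br_\phi)u(\bttb_\phi)$. Because the ratios in different coordinates differ, a fixed number of steps consumes different amounts of each $t_i$.

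\textbf{Stopping times.} To handle this, I would replace the fixed step count by a stopping time. For each coordinate $i$, stop the $i$-th word once $\prod r_i$ falls below $t_i^{-1}$ (up to a bounded factor). Then the remaining diagonal part is $a(\bt')$ with $\bt'$ bounded, and the walk becomes a sum over stopped words. This is an intertwined walk: the coordinates advance asynchronously.

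\textbf{Height function.} Use the Benoist--Quint/Eskin--Margulis height
\[ \alpha(x)=\sum_j c_j\,\alpha_j(x)^{\delta}, \qquad \alpha_j(x)=\max\{\|v\|^{-1}: v \text{ a primitive } j\text{-vector of } x\}. \]
The aim is the contraction inequality
\[ \int\alpha(g_\phi x)\dd\mu(\phi)\le \kappa\,\alpha(x)+B, \qquad \kappa<1, \]
for the stopped step distribution $\mu$, once the stopping scale is large. The mechanism: for a vector $v\in\wedge^j\R^{d+1}$, the average of $\|a(\br)u(\bttb)v\|^{-\delta}$ over the IFS decays. This holds because the translations $\bttb$ have no common fixed point in each coordinate, so no proper subspace is preserved and the expanded component cannot vanish for most $\bttb$. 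Non-atomicity and Frostman-type regularity of each $\sigma_i$ give the averaged bound with a small exponent $\delta$.

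\textbf{Iteration and Markov.} Iterate the contraction through a geometric number of stopping layers, about $\log\integer{\bt}$ of them. This yields
\[ \int\alpha\bigl(a(\bt)u(\bs)x\bigr)\dd\sigma(\bs)\ll \integer{\bt}^{-c}\,\alpha(x)+B. \]
The final step of bounded size is absorbed, using that $\alpha$ changes by a bounded factor under bounded group elements. Since $\inj(y)^{-C'}\asymp\alpha(y)$ up to powers, Markov's inequality $\sigma\{\alpha>\rho^{-c'}\}\le\rho^{c'}\int\alpha$ gives the stated bound.

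\textbf{Main obstacle.} The hardest step is the contraction inequality for intertwined, asynchronous stopping. For the top and bottom exterior powers the $j$-vectors split into weight components with different coordinate ratios, and the bad set of $\bttb$ must be controlled uniformly over all directions. I would try the following. First, bound $\|a(\br)u(\bttb)v\|$ below by the largest expanded coordinate. Then apply the one-dimensional non-concentration of $\sigma_i$ for the relevant $i$, choosing for each $v$ a coordinate with large weight. The max/convex-combination trick of Eskin--Margulis, with suitable constants $c_j$, would then handle the lower-order terms.
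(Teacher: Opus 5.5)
\textbf{There is a genuine gap in the iteration step.} Your random-walk reformulation is fine: it is essentially the paper's reduction of Theorem \ref{thm: nondiv} to Theorem \ref{thm: weighted effective recurrence}. The trouble comes afterwards. After about $\log\integer{\bt}$ contracting layers, the diagonal part left over is roughly $a(\bt/\integer{\bt})$. Its smallest coordinate is bounded, but the others are arbitrary (take $d=2$, $\bt=(T,2)$, $T\to\infty$). So this is not a ``final step of bounded size''. Absorbing it via $\alpha(gy)\le\norm{g}^{D}\alpha(y)$ costs a power of $\max_i t_i$, which the statement forbids. In fact, controlling this leftover is the theorem itself in the regime $\integer{\bt}\asymp 1$, so the plan is circular exactly where the difficulty lies.

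Your suggested remedy, a contraction inequality for asynchronous stopped steps with the full-group Eskin--Margulis height, does not work either. The vectorwise decay is not the issue: $U$ is the expanding horospherical subgroup of every $a(\bt)$ with $\bt\in\R^d_{>1}$, with rate at least $\integer{\bt}^{1/(d+1)}$, so that decay holds uniformly. The problem is passing from vectors to the height function, whether by the Eskin--Margulis weighting or by Proposition \ref{prop: contraction of BQ}. That step needs a uniform bound on $\norm{g}$ over the support, and the additive constant and admissible weights depend on it. For asynchronous steps, $\norm{g}$ grows with $\max_i t_i$.

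If you instead cut the walk into bounded pieces, each piece multiplies $e_j$, for a slow index $j$, by a factor tending to $1$ (recall $u(\bs)e_j=e_j$ for $j\le d$). A point with a short vector along such an $e_j$ then keeps essentially the same height, so no uniform $\kappa<1$ exists. This is the obstruction the introduction attributes to the comparable-growth hypothesis in \cite{AG26}.

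\textbf{The missing idea} is to give up contraction on the unbalanced part and accept non-divergence with polynomial loss, organized through subgroups. Order $\eps_1\le\cdots\le\eps_d$; Lemma \ref{lemma: specific decomposition of random walk} then splits the stopped walk into $d$ layers.

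In the first layer, every coordinate stops at the common scale $\eps_d^d\prod_{s<d}\eps_s^{-1}=\integer{\bt}^{-1}$. Iterating the full-group Benoist--Quint function over bounded balanced pieces (Corollary \ref{cor: stopping time iteration of CH}) gives the factor $\integer{\bt}^{-c}\inj(x)^{-C}$.

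Each subsequent layer has diagonal part $a_k(\eps^{-(k+1)})$ with $\eps=\eps_k/\eps_{k+1}$. Up to bounded elements, this lies in $G_k\cong\SL_{k+1}(\R)$ and does not move the slower coordinates. Such a layer is controlled by the Benoist--Quint function $f_{k,\eps_0}$ of $G_k$, which discards $G_k$-invariant vectors, with $\eps_0\approx\inj(y)^{C_1}$. This gives no decay, only $\inj(hy)\ge\rho^{C}\inj(y)^{C}$ outside a set of measure $\ll\rho^c$ (Proposition \ref{prop: effective recurrence of suitable convolution}). Because there are only $d$ layers, these losses compound to $\rho^{C_d}$, and that closes the argument.
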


\medskip

Theorem \ref{thm: nondiv} (and Theorem \ref{thm: weighted effective recurrence}) is of independent interest. If $\sigma$ is the usual Lebesgue measure on $\R^d$ restricted to a bounded open box, then Theorem \ref{thm: nondiv} \textit{fully recovers} the non-divergence estimate in \cite[Corollary 3.4]{KM12} allowing $\bt$ to vary in $\R^d_{> 1}$. Moreover, we indeed offer an alternative proof\footnote{The proof \cite[Corollary 3.4]{KM12} crucially depends on the fact that Lebesgue measure is friendly, which is not the case in our setting.} of \cite[Corollary 3.4]{KM12} in the case of Lebesgue measure. This relies on applications of intertwined Benoist-Quint height functions, which also works for the broader class of product-type self-affine measures that are not necessarily friendly (see Remark \ref{rmk: not friendly}). A similar estimate appears in \cite[Proposition 3.1]{AG26}, where the authors assume additionally that $\sigma$ is a product-type self-similar measure and  $\bt$ lies in a closed proper subcone of $\R^d_{>1}$.

\bigskip

The study of equidistribution for expanding translates of unipotent orbits is a central topic in homogeneous dynamics. The landmark has been the celebrated work of Ratner \cite{Ratner91} settling Raghunathan's topological conjecture on unipotent orbits. Recently, there are many significant works effectivizing Ratner's equidistribution theorem in various settings, see e.g. \cite{Strombergsson15,Kim24, LM23,LMW26,LMWY25}. Specified to our setting in the qualitative aspect, when $\bt=(t,\cdots,t)$ and $\sigma$ is the Lebesgue measure on $\R^d$ restricted to a bounded open set, the qualitative equidistribution in Theorem \ref{thm: weighted effective equid} follows from the work of Shah \cite{Shah96} extending the results of Ratner.  Khalil, Luethi and Weiss \cite{KLW26} obtained the qualitative convergence as in \eqref{eq: effective conv} of translates by any sequence $\{a_n\}\subset A^+$ drifting away from walls  for certain fractal carpets. This class of fractal carpets consists of self-similar measures which are rational with equal contraction ratios and have no rotation part. Hence, the collection of product-type self-affine measures considered in Theorem \ref{thm: weighted effective equid} has a nonempty intersection with those in the work of Khalil-Luethi-Weiss \cite{KLW26}.

For the quantitative aspect, Kleinbock and Margulis \cite{KM96} proved the effective equidistribution theorem of expanding translates of horospherical subgroups in general homogeneous spaces. In the setting of Theorem \ref{thm: weighted effective equid}, $U$ is not necessarily the horospherical subgroup of the translation sequence. When $\sigma$ is the Lebesgue measure on $\R^d$ restricted to a bounded open set, Theorem \ref{thm: weighted effective equid} is due to a later work \cite{KM12} of Kleinbock and Margulis. For the translation of fractal measures, Khalil and Luethi \cite{KL23} obtained the unweighted effective equidistribution for a special class of self-similar measures on $\R^d$ which are irreducible, rational, large dimensional with open set condition. Subsequent works of B\'enard, He and Zhang removed  additional assumptions of Khalil-Luethi and established unweighted effective convergence \eqref{eq: effective conv} for any irreducible and contractive in average self-similar measures with finite exponential moments for $d=1$ in \cite{BHZ26}, then for any $d\geq 1$ in \cite{BHZ25}. See also the related works \cite{Yu21,DJ24} obtaining similar effective equidistribution results for a class of fractal measures with large dimension in some sense.

Very recently, Aggarwal and Gorodnik \cite{AG26} established the effective convergence \eqref{eq: effective conv} for a certain class of product-type self-similar measures on $\R^d$  for any sequence $\{a(\bt_n)\}$ drifting away from walls but restricted in a fixed \textit{ proper and closed} subcone of $A^+$. More precisely, in \cite{AG26}, the authors consider the self-similar measure on $\R^d$ which is a product of self-similar measures on $\R$ all sharing a common contraction ratio, and assume that the growth rates among coordinates of the sequence $\{\bt_n\}\subset \R^d_{>1}$ are comparable. Comparing with their work, the class of product-type self-affine measures we consider in Theorem \ref{thm: weighted effective equid} contains those in \cite{AG26}, and we allow the translating diagonal elements to vary in the whole $A^+$. 

\bigskip

\noindent{\bf On the proof of Theorem \ref{thm: weighted effective equid}}. We follow the general strategy in \cite{KM12} which allows us to deduce the effective convergence in \eqref{eq: effective conv}  from its unweighted counterpart. The main inputs of this deduction include an unweighted effective equidistribution translated by $a(t)$ as $t\to \infty$, and a suitable non-divergence estimate for certain random walks associated to product-type self-affine measures. 

Since we consider a product-type self-affine measure $\sigma$, which is not self-similar in general (see Remark \ref{rmk: not self-similar}), the unweighted effective equidistribution theorem we need does not follow directly from \cite[Theorem 1.2]{BHZ25}. The main obstruction stems from the fact that the contraction ratios of the affine maps in a self-affine IFS may differ depending on the direction. Fortunately, the product structure allows us to make use of   intertwined random walks. These random walks, when chosen appropriately  (see \eqref{eq: overline mu n1}), correspond to an IFS whose typical affine maps have roughly uniform contraction ratios across all directions. Hence, the resulting dynamics on $X$ driven by these random walks share many properties with the one induced by self-similar measures in \cite{BHZ25}. This enables us to modify the methods in \cite{BHZ25} to prove the desired unweighted equidistribution theorem (Theorem \ref{thm: unweighted effective equid}).

Given $a(\bt)\in A^+$ and $x\in X$. Thanks to the product structure and stationarity of $\sigma$ under convolution of certain random walks, we can transform the orbit $\{a(\bt)u(\bs)x: \bs\in \supp \sigma\}$ in \eqref{eq: effective conv} into its unweighted counterpart. Since $a(\bt)$ is allowed to vary in the whole $A^+$ instead of being restricted to a proper and closed subcone of $A^+$, in this step we consider random walks pertaining to suitable stopping times determined by $\bt$ (see \eqref{eq: lambda_i,k eps and mu_i,k eps}). The outcome of this transformation is a class of unweighted translations $\{a(t)u(\bs)y:\bs\in \supp \sigma\}$ for some $t>1$, where $y$ varies in the support of the stopping time random walks starting at $x$. The effective convergence in \eqref{eq: effective conv} will follow if outside an exponentially small exceptional set, $y$ lies in a large compact subset of $X$. Here is the place where an effective recurrence estimate (Theorem \ref{thm: weighted effective recurrence}) of stopping time random walks comes in.  

Theorem \ref{thm: weighted effective recurrence} is of independent interest, and the proof of it does not follow from standard methods developed in the literature. On one hand, the measure $\sigma$ is generally not friendly in the sense of Kleinbock-Lindenstrauss-Weiss \cite{KLW04} (for details, see Remark \ref{rmk: not friendly}). This prevents us from applying arguments in \cite{KLW04} relying on $(C,\alpha)$-good properties of friendly measures in this setting. On the other hand, since we allow  $a(\bt)$ to vary in the whole $A^+$, it could happen that one coordinate of $\bt$ is not comparable to the rest of the coordinates. Hence, an  application of the height function technique as in \cite{AG26}, which crucially relies on the assumption that the growth speed among all coordinates of $\bt$ are comparable, does not yield the desired recurrence. Moreover, the fact that the IFS associated to $\sigma$ might have different contraction ratios in different directions  introduce some complexities. Despite these difficulties, we succeed in proving Theorem \ref{thm: weighted effective recurrence}. This is based on the product structure of $\sigma$, which allows us to consider intertwined random walks induced by the IFS associated to $\sigma$ (see Section \ref{subsec: associated rw}). We perform a stopping time measure decomposition (Lemma \ref{lemma: rw stopping time decomposition}) for these random walks, and then make use of certain intertwined Benoist-Quint height functions to show that suitable convolutions of these random walks exhibit an effective recurrence property (Proposition \ref{prop: effective recurrence of suitable convolution}). These effective recurrence estimates can be combined in a suitable way to show the desired non-divergence estimate in Theorem \ref{thm: weighted effective recurrence}.

\bigskip

\noindent{\bf Applications to Diophantine approximations}. Given a fractal measure on $\R^d$, it is intriguing to explore Diophantine properties of typical points with respect to this fractal measure. This is a research topic motivated by Mahler's question \cite{Mahler84} asking for Diophantine properties of points in the middle thirds Cantor set. We discuss here some applications of Theorem \ref{thm: weighted effective equid}, its double equidistribution strengthening (Proposition \ref{prop: double equid}) to Diophantine approximations. The deduction of these results in Diophantine approximations from dynamics are now well-known (see e.g. \cite{KSY22,BHZ25,DJ24,Yu21,AG26}). Therefore, we refer the interested reader to related papers on these applications. In the following, we let $\sigma$ be a product-type self-affine measure on $\R^d$. 

\begin{itemize}
    \item[1.] A weighted Khintchine's theorem under certain assumptions on approximation functions: Let $\psi_i:\N\to \R_{\geq 0}$ be a non-increasing function for $1\leq i\leq d$. Denote $\Psi=(\psi_1,\cdots,\psi_d)$. A real vector $\bs\in \R^d$ is called $\Psi$-approximable if there exist infinitely many $(\bp,q)\in \Z^d\times \N$ such that 
    \begin{align}\label{eq: Diophantine}
        \abse{qs_i-p_i}<\psi_i(q), \,\, \forall \, 1\leq i\leq d. 
    \end{align}
Let $W(\Psi)$ be the collection of all $\Psi$-approximable real vectors. With Theorem \ref{thm: weighted effective equid} and Proposition \ref{prop: double equid}, one can follow the argument as in the proof of \cite[Theorem 1.5]{AG26} to obtain the following result: Suppose that $\Psi$ is $\alpha$-balanced and $\beta$-proportional for some $\alpha\in (0,1)$ and $\beta>1$ (see \cite[Definition 1.2]{AG26}), then 
\begin{align*}
    \sigma(W(\Psi))=\begin{cases}
        0 & \text{ if } \sum_{q}\psi_1(q)\cdots \psi_d(q)<\infty,\\
        1 & \text{ if } \sum_{q}\psi_1(q)\cdots \psi_d(q)=\infty.
    \end{cases}
\end{align*}
Moreover, an asymptotic Schmidt-type counting of solutions to \eqref{eq: Diophantine} with bounded denominator $q$ holds in the divergence case (see \cite[Theorem 1.5]{AG26}).

\medskip

\item[2.] A zero-one law in uniform Diophantine approximations: Let $\bomega=(\omega_i)\in (0,1)^d$ be such that $\sum_i \omega_i=1$. Let $\psi:\N\to \R_{>0}$ be a non-increasing function satisfying  that the map $t\mapsto t\psi(t)$ is increasing and $t\psi(t)<1$. A real vector $\bs \in \R^d$ is called $\psi_{\bomega}$-Dirichlet if the Diophantine inequalities
\[ \norm{q\bs-\bp}_{\bomega}<\psi(t),\,\, 0<q<t \]
has a solution $(\bp,q)\in \Z^d\times \N$ for all sufficiently large $t$. Here $\norm{\bs}_{\bomega}=\max_{i}\{|s_i|^{1/\omega_i}\}$ for $\bs\in \R^d$. Denote by $\mathrm{DI}_{\bomega}(\psi)$ the collection of all $\psi_{\bomega}$-Dirichlet real vectors. Theorem \ref{thm: weighted effective equid} and Proposition \ref{prop: double equid} enable one to follow the strategy developed in \cite{KSY22} (see also \cite{StrombergssonYu2026}) to deduce the following result: Let $F_{\psi}(t)=1-t\psi(t)$ for $t>0$. Then
\begin{align*}
    \sigma(\mathrm{DI}_{\bomega}(\psi))=\begin{cases}
        0 & \text{ if } \sum_{1}^{\infty} q^{-1}F_{\psi}(q)^{\chi_d}\log^{\lambda_d}\left(\frac{1}{F_{\psi}(q)} \right)=\infty,\\
        1 & \text{ if } \sum_{1}^{\infty} q^{-1}F_{\psi}(q)^{\chi_d}\log^{\lambda_d}\left(\frac{1}{F_{\psi}(q)} \right)<\infty.
    \end{cases}
\end{align*}
Here $\chi_d=\tfrac{d^2+3d-2}{2}$, $\lambda_d=\tfrac{d(d+1)}{2}$.

\medskip

\end{itemize}

\bigskip

\noindent{\bf Structure of the paper.} 
In Section \ref{sec: preliminary}, we collect  some useful preliminaries on self-similar measures, the stopping time decomposition and intertwined  random walks on homogeneous spaces induced by a product-type self-affine IFS. In Section \ref{sec: height function}, we recall the construction of Benoist-Quint height functions and prove the desired contraction hypothesis of two different averaging operators determined by certain stopping time random walks and  the random walks in \eqref{eq: overline mu n1}, respectively. In Section \ref{sec: effective recurrence}, we prove the effective recurrence theorem (Theorem \ref{thm: weighted effective recurrence}) for suitable convolutions among various stopping time random walks. This is done by effective recurrence properties of intertwined random walks guaranteed by the Benoist-Quint height function technique. In Section \ref{sec: proof of weighted equid}, modulo the unweighted counterpart (Theorem \ref{thm: unweighted effective equid}) and following the strategy in \cite{KM12}, we prove Theorem \ref{thm: weighted effective equid} with the help of Theorem \ref{thm: weighted effective recurrence}. In Section \ref{sec: unweighted equid}, we show how to adapt the methods developed in \cite{BHZ25} in the current setting and finish the proof of Theorem \ref{thm: unweighted effective equid}. In Appendix \ref{sec: double equid} , we give a self-contained proof of a double effective equidistribution result.

\medskip

\noindent{\bf Notations.} For any $N\in \N$, we denote $\dbracket{1,N}=[1,N]\cap \Z$. The Landau notation $O(\cdot)$ and the Vinogradov symbol $\ll$ are used. Given $a,b>0$, we write $a\simeq b$ if $a\ll b\ll a$. The notation $a\lll b$ means that there exists $\eps>0$ small such that $a\leq \eps b$. Given a parameter $\kappa>0$, the symbol $\lll_{\kappa}$ means that $\eps$ depends on $\kappa$. Without specifying subscripts, we allow $\eps$ to depend on the underlying self-affine measure.

\medskip

\noindent{\bf Acknowledgements.}
We thank Weikun He for helpful discussions on this work.

\medskip

\noindent{\bf AI disclosure.} This work was initiated in May 2026, and was produced by the human author without the use of AI. After its completion, AI was used to proofread and polish the language.

\bigskip

\section{Preliminaries}\label{sec: preliminary}

In this section, we collect some useful facts about self-similar measures and their induced random walks on the homogeneous space $X$. 

\subsection{Iterated function system} Let $k\in \N$ and $\Aff(\R^k)$ be the set of all affine maps on $\R^k$ whose linear part lies in $\GL_d(\R)$. That is, any $\phi\in \Aff(\R^k)$ can be written as 
\[ \phi(\bs)=M_{\phi}\bs+\ttb_{\phi},  \, \forall \, \bs\in \R^k, \]
where $M_{\phi}\in \GL(\R^k)$ and $\ttb_{\phi}\in \R^k$. Let $\lambda$ be a finitely supported probability measure on $\Aff(V)$. In particular, if for any $\phi\in \supp \lambda$, the matrix norm $\norm{M_{\phi}}<1$, then $\lambda$ is said to be contractive. In this case, $\lambda$ is called a \textit{self-affine} IFS on $V$. For a contractive self-affine IFS $\lambda$,  by \cite[Theorem 2.5, Corollary 2.7]{BP92}, there exists a unique probability measure $\sigma$ on $\R^k$ such that $\lambda*\sigma=\sigma$. This $\sigma$ is referred to as a \textit{self-affine} measure  associated to $\lambda$. If for any $\phi\in \supp \lambda$, $M_{\phi}=\ttr_{\phi}O_{\phi}$ for $\ttr_{\phi}\in (0,1)$ and $O_{\phi}\in \SO_k(\R)$, then $\lambda$ is called a self-similar IFS, and $\sigma$ is a self-similar measure. Throughout this paper, we always assume that every self-similar IFS on $\R$ is contractive and has no common fixed point.
\medskip

Recall that in Definition \ref{defn: product type}, a self-affine IFS $\lambda$ on $\Aff(\R^d)$ is \textit{product-type} if $\lambda=\otimes_1^d\lambda_i$, where for each $1\leq i\leq d$, $\lambda_i$ is a contractive and finitely supported self-similar IFS on $\R$ without common fixed point for all affine maps in $\supp \lambda_i$. Note that $\supp \lambda=\otimes_i\supp \lambda_i$. The self-affine measure $\sigma$ associated to such $\lambda$ is a product-type self-affine measure. In particular, $\sigma=\otimes_1^d \sigma_i$, where $\sigma_i$ is the self-similar measure associated to $\lambda_i$ for $1\leq i\leq d$.

\medskip

\subsection{The Lyapunov exponent}

Let $\lambda$ be a contractive self-similar IFS on $\Aff(\R)$. For any $\phi\in \Aff(\R)$, we write $\phi=\ttr_{\phi}\Id+\ttb_{\phi}$ for some $\ttr_{\phi}>0$ and $\ttb_{\phi}\in \R$. The Lyapunov exponent is given by 
\[ \ell_{\lambda}=-\int \log\ttr_{\phi} \dd \lambda(\phi)>0. \]

We collect the following useful fact concerning the generic value of $\ttr_{\phi}$ when $\phi$ is distributed according to $\lambda^{*n}$ for $n$ sufficiently large, where $\lambda^{*n}$ denotes the convolution of $\lambda$ with itself $n$ times.

\begin{proposition}\label{prop: LDP}[Large deviation principle]
Given any $\epsilon>0$, there exists $\delta=\delta(\lambda,\epsilon)>0$ such that for all $n\ggg_{\epsilon,\lambda} 1$, we have
\[ \lambda^{*n}\{\phi: |\log \ttr_{\phi}+n\ell_{\lambda}|>n\epsilon\}<e^{-\delta n}. \]
\end{proposition}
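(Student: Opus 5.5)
The plan is to reduce the statement to a classical Cramér-type large deviation estimate for sums of i.i.d.\ bounded random variables. Under $\lambda^{*n}$, a random map $\phi$ is distributed as the composition $\phi_1\circ\cdots\circ\phi_n$ with $\phi_j$ i.i.d.\ of law $\lambda$. For affine maps of $\R$ with positive ratio, the linear part of a composition is the product of the linear parts. Hence
\[ \log\ttr_{\phi}=\sum_{j=1}^n \log\ttr_{\phi_j}, \]
so $\log\ttr_\phi$ is a sum of $n$ i.i.d.\ random variables $X_j=\log\ttr_{\phi_j}$ with mean $-\ell_\lambda$. (If some ratios are negative, i.e.\ $\ttr_\phi$ is understood as $|M_\phi|$, the same multiplicativity holds for absolute values.)

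Since $\lambda$ has finite support and is contractive, each $X_j$ takes finitely many values in a bounded interval $[-K,0)$. In particular $X_j$ has finite exponential moments of all orders. I would then apply a Chernoff bound to each tail separately. For the upper tail and $\theta>0$, Markov's inequality gives
\[ \mathbb P\Bigl(\sum_j (X_j+\ell_\lambda) > n\epsilon\Bigr)\le \exp\bigl(-n(\theta\epsilon-\Lambda(\theta))\bigr), \]
where $\Lambda(\theta)=\log\mathbb E\, e^{\theta(X_1+\ell_\lambda)}$. The lower tail is handled symmetrically with $-\theta$.

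The key step is to check that $\theta\epsilon-\Lambda(\theta)>0$ for some small $\theta$. The function $\Lambda$ is smooth near $0$, with $\Lambda(0)=0$ and $\Lambda'(0)=\mathbb E(X_1+\ell_\lambda)=0$. Boundedness gives $\Lambda(\theta)\le C_K\theta^2$ for $|\theta|\le 1$; alternatively, Hoeffding's lemma gives directly $\Lambda(\theta)\le \theta^2K^2/8$. Choosing $\theta=\min\{1,4\epsilon/K^2\}$ yields an exponent $\delta_0=\delta_0(\epsilon,\lambda)>0$. Summing the two tails bounds the probability by $2e^{-\delta_0 n}$. Setting $\delta=\delta_0/2$ then gives a bound of $e^{-\delta n}$ once $2e^{-\delta_0 n/2}<1$, which holds for $n\ggg_{\epsilon,\lambda}1$.

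There is no real obstacle here; the only point needing care is the identification of $\lambda^{*n}$ with the law of the $n$-fold composition and the resulting multiplicativity of the ratios. Equivalently, one can simply invoke Hoeffding's inequality for bounded i.i.d.\ variables, which gives the explicit rate $2\exp(-2n\epsilon^2/K^2)$.
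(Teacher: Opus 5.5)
Your proof is correct. The paper states this as a standard fact without proof, and your argument supplies the standard justification: $\log \ttr_{\phi}=\sum_{j=1}^n \log\ttr_{\phi_j}$ is a sum of i.i.d.\ bounded variables with mean $-\ell_\lambda$, to which a Chernoff/Hoeffding bound applies. Your parenthetical about negative ratios is unnecessary, since in the paper's setting every $\ttr_\phi>0$.
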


\medskip

\subsection{The stopping time}

Let $\lambda$ be a finitely supported and contractive self-similar IFS on $\Aff(\R)$ with $\supp \lambda=\Lambda$.
\begin{definition}
    Given $\epsilon>0$,  we define the stopping time function $\tau_{\epsilon}:\Lambda^{\N}\to \N\cup \{0\}$ by
\[ \tau_{\epsilon}(\uphi)=\begin{cases}
    \inf\{n\in \N: \ttr_{\uphi_n}<\epsilon\} & \text{ if } \eps\in (0,1],\\
    0 & \text{ if } \eps>1.
\end{cases} \]
Here
\[ \uphi_n=\phi_1\circ\cdots\circ \phi_n \, \text{ for } \uphi=(\phi_1,\phi_2,\cdots)\in \Lambda^{\N}. \]
By convention, whenever $\eps>1$, we set $\uphi_0=\Id$ and $\ttr_{\uphi_0}=1$ for any $\uphi\in \Lambda^{\N}$.
\end{definition}
\medskip

Let $T:\Lambda^{\N}\to \Lambda^{\N}$ be the left shift operator defined by 
\[ T((\phi_{1},\phi_{2},\cdots))=(\phi_{2},\phi_{3},\cdots). \]
By convention, $T^0=\Id$.

\begin{lemma}\label{lemma: basic prop of stopping time}
Let $r_{min}=\min_{\phi\in \Lambda}\ttr_{\phi}$. Given $0<\epsilon_2\leq \epsilon_1$,  $\uphi\in \Lambda^{\N}$, let $\tau_{\epsilon_1}(\uphi)=n$, $\tau_{\eps_2}(\uphi)=m+n$ for some $m,n\in \Z_{\geq 0}$. Then the following hold:
\begin{itemize}
\item[(1)] If $\eps_1\leq 1$, then $r_{min}\eps_1\leq \ttr_{\uphi_n}<\epsilon_1\leq \ttr_{\uphi_{n-1}}<r_{\min}^{-1}\eps_1$.

\item[(2)] $\tau_{\epsilon_2}(\uphi)=m+n$ if and only if   $m=\tau_{\epsilon_2/\ttr_{\uphi_n}}(T^n(\uphi))$.

\end{itemize}
    
\end{lemma}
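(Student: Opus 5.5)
Both parts follow directly from the definition of $\tau_\eps$, the multiplicativity of contraction ratios under composition, and the bound $\ttr_\phi\ge r_{\min}$ for every $\phi\in\Lambda$.

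\emph{Part (1).} Assume $\eps_1\le 1$. First, $\tau_{\eps_1}(\uphi)=n$ is finite with $n\geq 1$, since $\ttr_{\uphi_k}\le (\max_{\phi\in\Lambda}\ttr_\phi)^k\to 0$ and $\ttr_{\uphi_0}=1\ge\eps_1$. By minimality of $n$ we have $\ttr_{\uphi_n}<\eps_1\le \ttr_{\uphi_{n-1}}$. Here $\ttr_{\uphi_{n-1}}\ge\eps_1$ holds trivially when $n=1$, because $\uphi_0=\Id$. Since $\uphi_n=\uphi_{n-1}\circ\phi_n$ and contraction ratios multiply, we get
\[ \ttr_{\uphi_n}=\ttr_{\uphi_{n-1}}\ttr_{\phi_n}\geq r_{\min}\eps_1, \qquad \ttr_{\uphi_{n-1}}=\ttr_{\uphi_n}/\ttr_{\phi_n}< r_{\min}^{-1}\eps_1 . \]

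\emph{Part (2).} For $k\ge 0$ we have the identity $\uphi_{n+k}=\uphi_n\circ (T^n\uphi)_k$, so $\ttr_{\uphi_{n+k}}=\ttr_{\uphi_n}\,\ttr_{(T^n\uphi)_k}$. Consequently, for every $k\ge 0$,
\[ \ttr_{\uphi_{n+k}}<\eps_2 \iff \ttr_{(T^n\uphi)_k}<\eps_2/\ttr_{\uphi_n}. \]
Since $\eps_2\le\eps_1$ and $\ttr_{\uphi_j}\ge\eps_1$ for all $j<n$ (when $\eps_1\le1$), the first time the ratio falls below $\eps_2$ is at least $n$. Hence $\tau_{\eps_2}(\uphi)=n+\inf\{k\ge 0:\ttr_{(T^n\uphi)_k}<\eps_2/\ttr_{\uphi_n}\}$.

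\emph{Matching conventions.} The remaining point is to reconcile this infimum with the paper's convention for $\tau$, and I expect this bookkeeping to be the only real obstacle. There are two cases.
\begin{itemize}
\item If $\eps':=\eps_2/\ttr_{\uphi_n}>1$, then the $k=0$ term already satisfies the inequality, so the infimum is $0$, matching $\tau_{\eps'}=0$.
\item If $\eps'\le1$, then $k=0$ fails, because $\ttr_{(T^n\uphi)_0}=1\ge\eps'$. The infimum is therefore over $k\ge1$, which is exactly $\tau_{\eps'}(T^n\uphi)$.
\end{itemize}
The case $\eps'$ exactly equal to $1$ is consistent, since then $k=0$ gives $1<1$, which is false. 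The case $\eps_1>1$ (so $n=0$ and $\ttr_{\uphi_0}=1$) is handled in the same way: we get $\eps'=\eps_2$, and the statement reduces to a tautology. This yields $m=\tau_{\eps_2/\ttr_{\uphi_n}}(T^n\uphi)$, and the converse follows because $\tau$ is a function of its argument.
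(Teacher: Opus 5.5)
Your proof is correct and follows the same route as the paper. Both arguments rest on multiplicativity of the contraction ratios, the minimality built into the definition of $\tau_\eps$, and the bound $\ttr_\phi\geq r_{\min}$. The difference is only organizational: the paper proves each direction of (2) by a separate contradiction argument, whereas you derive the identity $\tau_{\eps_2}(\uphi)=n+\tau_{\eps_2/\ttr_{\uphi_n}}(T^n(\uphi))$ in one step and handle the convention $\tau_\eps=0$ for $\eps>1$ explicitly, which is slightly cleaner.
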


\begin{proof}
For (1), we only need to show $r_{min}\epsilon_1\leq \ttr_{\uphi_n}$ and $\ttr_{\uphi_{n-1}}<r_{min}^{-1}\eps_1$. As $\epsilon_1\leq 1$, $n\geq 1$. Assume by contradiction that $r_{min}\eps_1>\ttr_{\uphi_n}$, then we have $\ttr_{\uphi_{n-1}}\leq \ttr_{\uphi_n} r_{min}^{-1}<\epsilon_1$, contradicting the assumption that $n=\tau_{\epsilon_1}(\uphi)$. The inequality $\ttr_{\uphi_{n-1}}<r_{min}^{-1}\eps_1$ can be proved similarly by contradiction and we omit it here.

For (2), first we observe that if $m=n=0$, then the equivalence clearly holds since this implies that $\eps_1\geq \epsilon_2>1$. Therefore, we may assume $m+n\geq 1$. We first suppose that $m=\tau_{\epsilon_2/\ttr_{\uphi_n}}(T^n(\uphi))$ and set $k=\tau_{\eps_2}(\uphi)$. Note that $k\geq n$ since $\eps_2\leq \eps_1$. As
\[ \ttr_{\uphi_{m+n}}=\ttr_{\uphi_n}\ttr_{(T^n(\uphi))_{m}}<\ttr_{\uphi_n}\frac{\epsilon_2}{\ttr_{\uphi_n}}=\epsilon_2,\]
We have $k\leq m+n$. On the other hand, if $k<m+n$, then $\ttr_{T^n(\uphi)_{k-n}}<\epsilon_2/\ttr_{\uphi_n}$, contradicting the assumption that $m=\tau_{\epsilon_2/\ttr_{\uphi_n}}(T^n(\uphi))$. Hence, $\tau_{\epsilon_2}(\uphi)=m+n$. Now we suppose  $\tau_{\epsilon_2}(\uphi)=m+n$. Then 
\[ \ttr_{(T^n(\uphi))_m}=\frac{\ttr_{\uphi_{m+n}}}{\ttr_{\uphi_n}}<\frac{\epsilon_2}{\ttr_{\uphi_n}}, \]
which implies that $\tau_{\epsilon_2/\ttr_{\uphi_n}}(T^n(\uphi))\leq m$. If $\tau_{\epsilon_2/\ttr_{\uphi_n}}(T^n(\uphi))=k<m$, then 
\[ \ttr_{\uphi_{k+n}}=\ttr_{\uphi_n}\ttr_{(T^n(\uphi))_{k}}<\epsilon_2, \]
contradicting the assumption that $\tau_{\eps_2}(\uphi)=m+n$.
\end{proof}

\bigskip

\subsection{The stopping time measure and its decomposition}

Given $n\in \N$ and $(\phi_1,\cdots,\phi_n)\in \Lambda^{n}$ ($\Lambda=\supp \lambda$), we define a cylinder set
\[ C(\phi_1,\cdots,\phi_n)=\{(\phi_1,\cdots,\phi_n)\}\times \prod_{n+1}^{\infty} \Lambda\subset \Lambda^{\N}. \]
Consider the countable subfamily $\mathcal{C}\subset \mathcal{P}(\Lambda^{\N})$ consisting of all cylinder sets and the empty set $\emptyset$, i.e., 
\[ \mathcal{C}=\{C(\phi_1,\cdots,\phi_n): (\phi_1,\cdots,\phi_n)\in\Lambda^n,\, n\in \N\}\cup \{\emptyset\}. \]
For any $C=C(\phi_1,\cdots,\phi_n)\in \mathcal{C}$, we denote
\[ \ttr(C)=\ttr_{\phi_1}\cdots\ttr_{\phi_n}\in \R,\,\, \text{ and } \lambda^{\N}\{C\}=\lambda_{\phi_1}\cdots\lambda_{\phi_n}\in \R. \]
Here $\lambda_{\phi}=\lambda\{\phi\}$ for $\phi\in \supp \lambda$. By convention, we set $\ttr(\emptyset)=1$.

\medskip

We equip $\Lambda^\N$ with the Borel $\sigma$-algebra $\mathcal{B}$ generated by $\{C:C\in \mathcal{C}\}$, and the set $\mathcal{C}$  with the Borel $\sigma$-algebra $\mathcal{P}(\mathcal{C})$, which is the collection of all subsets of $\mathcal{C}$. For any $\epsilon>0$, we define a measurable map ${\tau}^{\cC}_{\epsilon}$ from $\Lambda^{\N}$ to $\cC$ by
\begin{align}\label{eq: map from symbolic to cylinder}
{\tau}^{\cC}_{\epsilon}:&\Lambda^{\N}{\longrightarrow} \mathcal{C} \nonumber\\
\uphi&\mapsto C(\phi_1,\cdots,\phi_{\tau_{\epsilon}(\uphi)}).
\end{align}
Here by convention if $\eps>1$, then we set $\tau^{\cC}_{\epsilon}(\Lambda^{\N})=\emptyset$.
We further define a map ${\varphi}:\mathcal{C} \to \Aff(\R)$ by
\[ {\varphi}(C(\phi_1,\cdots,\phi_n))=\phi_1\circ\cdots\circ \phi_n \,\text{ if } C\neq \emptyset,\,\, \text{ and } \varphi(\emptyset)=\Id. \]
Note that for any $C\in\cC$, $\ttr_{\varphi(C)}=\ttr(C)$. 

\medskip

\begin{definition}[The stopping time measure]
Given $\epsilon>0$, let $\lambda^{\otimes\tau_{\epsilon}}$ be a probability measure on $\mathcal{C}$ given by 
\[ \lambda^{\otimes\tau_{\epsilon}}=(\tau^{\cC}_{\epsilon})_* \lambda^{\N}. \]
The push-forward measure $\lambda^{\epsilon}=\varphi_*\lambda^{\otimes \tau_{\epsilon}}$ via $\varphi$ on $\Aff(\R)$ is called the \textit{$\epsilon$-stopping time measure} associated to $\lambda$.
\end{definition}

\medskip

We observe some basic properties of $\lambda^{\otimes \tau_{\eps}}$, thus of $\lambda^{\eps}$. As $\Lambda$ is finite, $\lambda^{\otimes \tau_{\epsilon}}$ is an atomic measure and $\supp \lambda^{\otimes \tau_{\epsilon}}$ is finite. In particular, we have the following explicit description of $\supp \lambda^{\otimes \tau_{\epsilon}}$: If $\epsilon\leq 1$, then a cylinder set $C=C(\phi_1,\cdots,\phi_n)\in \supp \lambda^{\otimes \tau_{\eps}}$ if and only if there exists (thus for any) $\uphi\in C$, $\tau_{\eps}(\uphi)=n$. In this case, $\lambda^{\otimes \tau_{\eps}}\{C\}=\lambda^{\N}\{C\}$. If $\eps>1$, then $\supp \lambda^{\otimes \tau_{\eps}}=\{\emptyset\}$. By definition, $\lambda^{{\epsilon}}$ is the distribution of $\phi_1\circ\cdots \circ \phi_{\tau_{\epsilon}(\uphi)}$ when $\uphi$ is distributed according to the law $\lambda^{\N}$.

The following lemma asserts that self-similar measures are stationary under convolution of their stopping time measures (cf. \cite[Lemma A.2]{Ben22}).

\begin{lemma}\label{lemma: stationarity w.r.t stoppoing time}
Let $\sigma$ be the self-similar measure on $\R$ associated to $\lambda$. Then for any $\epsilon>0$, we have
\[ \sigma=\int \phi_*\sigma \dd \lambda^{{\epsilon}}(\phi). \]
\end{lemma}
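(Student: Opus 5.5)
The plan is to identify $\sigma$ with the law of the coding map. For $\uphi=(\phi_1,\phi_2,\cdots)\in\Lambda^{\N}$, the compositions $\uphi_n=\phi_1\circ\cdots\circ\phi_n$ are contractions, so $\uphi_n(0)$ converges to a point $\pi(\uphi)$ that does not depend on the starting point. It is standard (cf. \cite{BP92}) that $\sigma=\pi_*\lambda^{\N}$. Since $\pi$ is continuous and the relevant measures are probability measures on compact sets, it suffices to test the identity against bounded continuous functions $f$ on $\R$.

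If $\eps>1$, the identity is trivial: $\lambda^{\eps}=\delta_{\Id}$. So assume $\eps\le 1$. Then $\tau_\eps$ is finite everywhere, since $\ttr_{\uphi_n}\le (\max_\Lambda \ttr_\phi)^n\to 0$. Moreover $\tau_\eps$ is a stopping time: whether $\tau_\eps(\uphi)=n$ depends only on $(\phi_1,\dots,\phi_n)$. I would use two facts.

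\begin{itemize}
\item[(i)] For every $\uphi$ and every $n$, we have $\pi(\uphi)=\uphi_n(\pi(T^n\uphi))$. This holds because $\uphi_{n+k}(0)=\uphi_n\bigl((T^n\uphi)_k(0)\bigr)$ and $\uphi_n$ is continuous.
\item[(ii)] Under $\lambda^{\N}$, conditionally on the cylinder $C=C(\phi_1,\dots,\phi_n)\in\supp\lambda^{\otimes\tau_\eps}$, the shifted sequence $T^n\uphi$ has law $\lambda^{\N}$ and is independent of $C$. This is just the product structure: $\lambda^{\N}$ restricted to $C$ equals $\lambda^{\N}\{C\}\,\delta_{(\phi_1,\dots,\phi_n)}\otimes\lambda^{\N}$.
\end{itemize}

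Next I decompose $\Lambda^{\N}$ as the disjoint union of the cylinders $C\in\supp\lambda^{\otimes\tau_\eps}$. This is a countable (in fact finite) partition, by the explicit description of the support given above. Then
\[ \int f\dd\sigma=\sum_{C}\int_C f(\pi(\uphi))\dd\lambda^{\N}(\uphi)=\sum_C \int_C f\bigl(\varphi(C)(\pi(T^{n_C}\uphi))\bigr)\dd\lambda^{\N}(\uphi). \]
By (ii), each summand equals $\lambda^{\otimes\tau_\eps}\{C\}\int f(\varphi(C)\bs)\dd\sigma(\bs)$. Summing over $C$ and pushing forward by $\varphi$ gives $\int\!\int f\dd\phi_*\sigma\dd\lambda^{\eps}(\phi)$, which is the claimed identity.

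The only step that needs care is checking that the supporting cylinders partition $\Lambda^{\N}$ up to a null set. Finiteness of $\tau_\eps$ gives the covering. Disjointness holds because two distinct stopping cylinders cannot be nested: if one were contained in the other, the longer word would already have had its ratio drop below $\eps$ at the shorter length, contradicting minimality in the definition of $\tau_\eps$. Everything else is routine.
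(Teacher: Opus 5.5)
Your proof is correct and follows essentially the same route as the paper's. The paper writes the argument as a conditional expectation with respect to the $\sigma$-algebra $(\tau^{\cC}_\epsilon)^{-1}\mathcal{P}(\cC)$ and then applies Fubini. That $\sigma$-algebra is generated by the finite partition of $\Lambda^{\N}$ into stopping cylinders, so your cylinder-by-cylinder computation using the product structure and the identity $\pi(\uphi)=\uphi_n(\pi(T^n\uphi))$ is exactly that conditional expectation written out explicitly.
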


\begin{proof}
Let $\tau^{\cC}_{\eps}$ be the map defined as in \eqref{eq: map from symbolic to cylinder}. Let $\mathcal{B}_{\epsilon}=(\tau^{\cC}_{\epsilon})^{-1}\mathcal{P}(\cC)$ be a $\sigma$-subalgebra of $\mathcal{B}$ on $\Lambda^{\N}$.  Let $\mathbb{E}(\cdot|\mathcal{B}_{\epsilon})$ be the associated conditional expectation. Then given any measurable function $f$ on $\R$, we denote by $\bar{f}(\uphi)=f(\uphi(0))$ for any $\uphi\in \Lambda^{\N}$. Then we have for any $\uphi\in \supp \lambda^{\N}$, 
\[ \mathbb{E}(\bar{f}|\mathcal{B}_{\epsilon})(\uphi)=\int \bar{f}(\phi_1,\cdots,\phi_{\tau_{\epsilon}(\uphi)},\uphi')\dd \lambda^{\N}(\uphi'). \]
Note that $\sigma=\int \delta_{\uphi(0)}\dd \lambda^{\N}(\uphi)$. Hence, the property of conditional measures (see e.g. \cite[Theorem 5.1]{EW11}) and Fubini's theorem imply
\begin{align*}
   \int f(s)\dd \sigma(s)&= \int \bar{f}(\uphi)\dd \lambda^{\N}(\uphi)\\
   &=\int \mathbb{E}(\bar{f}|\mathcal{B}_{\epsilon})(\uphi)\dd \lambda^{\N}(\uphi)\\
    &=\int \int \bar{f}(\phi_1,\phi_2,\cdots,\phi_{\tau_{\epsilon}(\uphi)},\uphi')\dd \lambda^{\N}(\uphi)\dd \lambda^{\N}(\uphi')\\
    &=\int f(s)\dd \lambda^{\epsilon}* \sigma(s).
\end{align*}
This finishes the proof of the lemma.
\end{proof}

\medskip

We would like to perform a suitable decomposition of stopping time measures for later use. For this, note that there is a natural concatenation operation denoted by $\bullet$ on $\mathcal{C}\times \mathcal{C}$ given by
\begin{align*}
    \mathcal{C}\times \mathcal{C}&\overset{\bullet
    }{\longrightarrow} \mathcal{C}\\
     C(\phi_1,\cdots,\phi_n)\bullet C(\phi_1',\cdots,\phi_m')&\mapsto C(\phi_1,\cdots,\phi_n,\phi_1',\cdots,\phi_m'). 
\end{align*}
In particular,
\[ \emptyset \bullet C=C\bullet \emptyset=C, \,\, \forall C\in \mathcal{C}. \]

\medskip

The convolution of stopping time measures associated to $\lambda$ does not necessarily share some good properties with that of convolutions of $\lambda$ by itself. For example, given $\epsilon_1,\epsilon_2>0$, it is not true in general that $\lambda^{\epsilon_1}*\lambda^{\eps_2}=\lambda^{\eps_1 \eps_2
}$. The following lemma allows us to decompose a stopping time measure into a conditional convolution of two suitable stopping time measures.

\begin{lemma}\label{lemma: stopping time decomposition}
Given any $0<\epsilon_2\leq \epsilon_1$, we have
\begin{align}\label{eq: stopping time decomposition}
\lambda^{\epsilon_2}=\int\int \delta_{\phi_1\circ \phi_2}\dd \lambda^{{\epsilon_2/\ttr_{\phi_1}}}(\phi_2)\dd \lambda^{\epsilon_1}(\phi_1).
\end{align}

\end{lemma}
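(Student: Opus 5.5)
The plan is to lift both sides of \eqref{eq: stopping time decomposition} to the symbolic space $\Lambda^{\N}$ and compare atoms. Both sides are finitely supported measures on $\Aff(\R)$ obtained as push-forwards of measures on $\cC$ under $\varphi$. Since $\varphi(C\bullet C')=\varphi(C)\circ\varphi(C')$, it suffices to prove the identity at the level of cylinders:
\[ \lambda^{\otimes\tau_{\eps_2}}=\int \bullet_*\bigl(\delta_{C_1}\otimes \lambda^{\otimes\tau_{\eps_2/\ttr(C_1)}}\bigr)\dd\lambda^{\otimes\tau_{\eps_1}}(C_1). \]
Pushing this forward by $\varphi$ and using $\ttr_{\varphi(C_1)}=\ttr(C_1)$ then yields \eqref{eq: stopping time decomposition}. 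The trivial cases are handled first. If $\eps_1>1$, then $\lambda^{\eps_1}=\delta_{\Id}$ and $\ttr_{\Id}=1$, so the right-hand side is just $\lambda^{\eps_2}$. If $\eps_2/\ttr(C_1)>1$, then the inner measure is $\delta_\emptyset$; this is consistent with part (2) of Lemma \ref{lemma: basic prop of stopping time} with $m=0$.

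The main step is a pathwise identity on $\Lambda^{\N}$. For $\uphi\in\Lambda^{\N}$, set $n=\tau_{\eps_1}(\uphi)$. By Lemma \ref{lemma: basic prop of stopping time}(2), applied with $m=\tau_{\eps_2}(\uphi)-n\ge 0$ (note $\tau_{\eps_2}\ge\tau_{\eps_1}$ since $\eps_2\le\eps_1$), we get
\[ \tau^{\cC}_{\eps_2}(\uphi)=\tau^{\cC}_{\eps_1}(\uphi)\bullet \tau^{\cC}_{\eps_2/\ttr_{\uphi_n}}(T^n\uphi). \]
Next we use the strong Markov property of the Bernoulli measure $\lambda^{\N}$ at the stopping time $\tau_{\eps_1}$. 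Conditioned on $\tau^{\cC}_{\eps_1}(\uphi)=C_1=C(\phi_1,\dots,\phi_n)$, the tail $T^n\uphi$ has law $\lambda^{\N}$. This holds because the event $\{\tau^{\cC}_{\eps_1}=C_1\}$ is exactly the cylinder $C_1$ when $C_1\in\supp\lambda^{\otimes\tau_{\eps_1}}$, by the description of the support given above. Hence $\lambda^{\N}$ restricted to $C_1$ equals $\lambda^{\N}\{C_1\}\,\delta_{(\phi_1,\dots,\phi_n)}\otimes\lambda^{\N}$. On this event $\ttr_{\uphi_n}=\ttr(C_1)$ is constant, so the law of $\tau^{\cC}_{\eps_2/\ttr(C_1)}(T^n\uphi)$ conditional on the event is $\lambda^{\otimes\tau_{\eps_2/\ttr(C_1)}}$.

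Summing over the finitely many atoms $C_1$ with weights $\lambda^{\otimes\tau_{\eps_1}}\{C_1\}=\lambda^{\N}\{C_1\}$ then gives the cylinder-level identity, and pushing forward by $\varphi$ finishes the proof. The main point needing care is the claim that $\{\tau^{\cC}_{\eps_1}=C_1\}=C_1$ exactly. This requires that membership in $\{\tau_{\eps_1}=n\}$ depend only on the first $n$ coordinates, which is clear from the definition of $\tau_\eps$ via $\ttr_{\uphi_k}$ for $k\le n$. One must also check the convention $\ttr(\emptyset)=1$ in the degenerate cases.
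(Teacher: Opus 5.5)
Your proof is correct and follows essentially the paper's route: reduce to the identity for $\lambda^{\otimes\tau_{\eps_2}}$ on $\cC$ via $\varphi(C_1\bullet C_2)=\varphi(C_1)\circ\varphi(C_2)$, then combine Lemma~\ref{lemma: basic prop of stopping time}(2) with the multiplicativity of cylinder masses under $\lambda^{\N}$. The paper instead compares supports and atom weights directly; your pathwise identity for $\tau^{\cC}_{\eps_2}$ together with the Markov property of $\lambda^{\N}$ uses the same two facts, and has the small advantage of making uniqueness of the splitting $C=C_1\bullet C_2$ automatic.
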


\begin{proof}
Since $\lambda^{\epsilon}=\varphi_* \lambda^{\otimes\tau_{\epsilon}}$ and $\varphi(C_1\bullet C_2)=\varphi(C_1)\circ\varphi(C_2)$ for any $\epsilon>0$ and $C_1,C_2\in \cC$, to establish \eqref{eq: stopping time decomposition} it suffices to show that 
\begin{align}\label{eq: stopping decomp-1}
\lambda^{\otimes\tau_{\epsilon_2}}=\int\int \delta_{C_1\bullet C_2}\dd \lambda^{\otimes\tau_{\epsilon_2/\ttr(C_1)}}(C_2)\dd \lambda^{\otimes\tau_{\epsilon_1}}(C_1).
\end{align}

Let $\nu$ be the measure on the right hand side of \eqref{eq: stopping decomp-1}. Since both $\lambda^{\otimes\tau_{ \epsilon_2}}$ and $\nu$ are atomic measures, it suffices to prove that $\supp \lambda^{\otimes\tau_{ \epsilon_2}}=\supp \nu$ and for each $C\in \supp \lambda^{\otimes\tau_{ \epsilon_2}}$, we have $\lambda^{\otimes\tau_{\epsilon_2}}\{C\}=\nu\{C\}$. If $\epsilon_2>1$, then $\supp \lambda^{\otimes \tau_{\epsilon_2}}=\supp \nu=\{\emptyset\}$ and the lemma follows.

We are left with the case $\eps_2\leq 1$. We first prove that $\supp \lambda^{\otimes\tau_{ \epsilon_2}}=\supp \nu$.  Let $C=C(\phi_1,\cdots,\phi_n)\in \supp \lambda^{\otimes \tau_{\eps_2}}$ and $\uphi\in C$. Let $m=\tau_{\eps_1}(\uphi)\geq 0$. Then $C_1=C(\phi_1,\cdots,\phi_m)\in \supp \lambda^{\otimes \tau_{\eps_1}}$. We set $C_2=C(\phi_{m+1},\cdots,\phi_n)$. Applying Lemma \ref{lemma: basic prop of stopping time} (2), we obtain $n-m=\tau_{\eps_2/\ttr(\uphi_m)}(T^m(\uphi))$. As $T^m(\uphi)\in C_2$, we have $C_2\in \supp \lambda^{\otimes \tau_{\eps_2/\ttr(C_1)}}$, whence $\supp \lambda^{\otimes \tau_{\eps_2}}\subset \supp \nu$.

On the other hand, let $C=C(\phi_1,\cdots,\phi_n)\in \supp \nu$. Then $C=C_1\bullet C_2$ for $C_1=C(\phi_1,\cdots,\phi_m)\in \supp \lambda^{\otimes \tau_{\eps_1}}$ and $C_2=C(\phi_{m+1},\cdots,\phi_n)\in \supp \lambda^{\otimes \tau_{\eps_2/\ttr(C_1)}}$. Let $\uphi\in C$. Then $m=\tau_{\eps_1}(\uphi)$ and $n-m=\tau_{\eps_2/\ttr_{\uphi_m}}(T^{m}(\uphi))$. Applying Lemma \ref{lemma: basic prop of stopping time} (2), we obtain that $\tau_{\eps_2}(\uphi)=n$, whence $C\in \supp \lambda^{\otimes \tau_{\eps_2}}$. This concludes the proof that $\supp \lambda^{\otimes\tau_{ \epsilon_2}}=\supp \nu$.

The fact that for any $C\in \supp \lambda^{\otimes\tau_{ \epsilon_2}}$, $\lambda^{\otimes\tau_{\epsilon_2}}\{C\}=\nu\{C\}$ is clear from the definition of $\lambda^{\otimes \tau_{\eps_2}}$ and $\nu$. Indeed, if $C=C(\phi_1,\cdots,\phi_n)$, then 
\[ \lambda^{\otimes\tau_{\epsilon_2}}\{C\}=\nu\{C\}=\lambda_{\phi_1}\cdots\lambda_{\phi_n}. \]

\end{proof}

\medskip

\subsection{Random walks on $X$ associated to IFS}\label{subsec: associated rw}
Let $\lambda=\otimes_1^d \lambda_i$ be a self-affine IFS of product type.  We consider various random walks induced by $\lambda_i$ ($1\leq i\leq d$) on $X$. For $1\leq k\leq d$, we denote by $G_k<G$ the subgroup generated by
\begin{align*}
    \{\Id+tE_{i,j}:i,j\in \dbracket{1,k},& i\neq j,t\in \R\}\cup\{\Id+tE_{i,d+1}:i\in \dbracket{1,k},t\in \R\}\\
    &\cup \{\Id+tE_{d+1,i}: i\in \dbracket{1,k},t\in \R\},
\end{align*}
where $E_{i,j}\in \mathrm{M}_{d+1}(\R)$ is a matrix with $1$ in the $(i,j)$-th entry and $0$ elsewhere. More explicitly, 
\begin{align}\label{eq: defn of Gk}
    G_k=\left\{ \begin{pmatrix}
    M_k & \boldsymbol{0}_{k,d-k} & \bv_1\\
    \boldsymbol{0}_{d-k,k}& \Id_{d-k}& \boldsymbol{0}_{d-k,1}\\
    \bv_2^t& \boldsymbol{0}_{1,d-k}& a
\end{pmatrix}\in \SL_{d+1}(\R) : M_k\in \mathrm{M}_k(\R),\bv_1,\bv_2\in \R^k, \, a\in \R\right\}.
\end{align}
Observe that $G_k$ is isomorphic to $\SL_{k+1}(\R)$ and we have 
\[ G_1\subset G_2\subset \cdots \subset G_d=G. \]
For each $k$, consider the subgroup $P_k=A_kU_k$ of $G_k$ where 
\begin{align}\label{eq: ak and uk}
   & A_k=\{a_k(\bt)=\diag(t_1,\cdots,t_k,1,\cdots,1,\prod_{i=1}^k t_i^{-1}): \bt\in \R^k_{>0}\};\nonumber\\
&U_k=\{u_k(\bs)=u(s_1,\cdots,s_k,0,\cdots,0): \bs\in \R^k\}.
\end{align}
In particular, we denote by $\{a_k(t):t>0\}\subset A_k$ the one-parameter diagonal flow defined by 
\begin{align}\label{eq: defn of a_k}
    a_k(t)=\diag(t^{\tfrac{1}{k+1}},\cdots,t^{\tfrac{1}{k+1}},1,\cdots,1,t^{-\tfrac{k}{k+1}}).
\end{align}
By definition, $a_d(t)=a(t)$. Given $1\leq k\leq d$, there exists an anti-morphism $\iota_k: \Aff(\R^k)\to G_k$  given by $\iota_k(\phi)=g_{\phi}$, where for $\phi(\bs)=M_{\phi}\bs+\ttb_{\phi}$,
\[g_{\phi}=\begin{pmatrix}
     \det(M_{\phi})^{\tfrac{1}{k+1}}M_{\phi}^{-1} & \boldsymbol{0}_{k,d-k} & \boldsymbol{0}_{k,1}\\
    \boldsymbol{0}_{d-k,k}& \Id_{d-k}& \boldsymbol{0}_{d-k,1}\\
    \boldsymbol{0}_{1,k}& \boldsymbol{0}_{1,d-k}& \det(M_{\phi})^{\tfrac{1}{k+1}}
\end{pmatrix}\begin{pmatrix}
    \Id_k & \boldsymbol{0}_{k,d-k} & \ttb_{\phi}\\
    \boldsymbol{0}_{d-k,k}& \Id_{d-k}& \boldsymbol{0}_{d-k,1}\\
    \boldsymbol{0}_{1,k}& \boldsymbol{0}_{1,d-k}& 1
\end{pmatrix}.\]

\medskip

For the purpose of this article, given any $1\leq i\leq k\leq d$, we consider the embedding $\Xi_{i,k}:\Aff(\R)\to \Aff(\R^k)$ defined by
\begin{align}\label{eq: embedding of affine map}
    \Xi_{i,k}(\phi)=(s_1,\cdots,s_{i-1},\phi(s_i),s_{i+1},\cdots,s_k),\, \forall \bs \in \R^k.
\end{align}
Denote by 
\begin{align}\label{eq: lambda_i,k and mu_i,k}
    \lambda_{i,k}=(\Xi_{i,k})_{*}\lambda_i, \, \text{ and }\, \mu_{i,k}=(\iota_k)_* {\lambda_{i,k}}
\end{align}
the probability measure on $\Aff(\R^k)$ and  the induced measure on $G_k$, respectively. Moreover, given $\eps>0$, we let 
\begin{align}\label{eq: lambda_i,k eps and mu_i,k eps}
    \lambda^{\eps}_{i,k}=(\Xi_{i,k})_{*}(\lambda^{\eps}_i),\, \text{ and }\,  \mu^{\eps}_{i,k}=(\iota_k)_* {\lambda^{\eps}_{i,k}}.
\end{align}
 Note that for $1\leq i\leq k\leq d$ and any $g\in \supp \mu_{i,k}$, we have
\begin{align}\label{eq: formula for g in mu_i,k}
    g=\diag(\ttr_{g}^{\tfrac{1}{k+1}},\cdots,\underbrace{{\ttr_{g}^{-\tfrac{k}{k+1}}}}_{\text{ith row}},\cdots,\ttr_{g}^{\tfrac{1}{k+1}},1,\cdots,1,\ttr_{g}^{\tfrac{1}{k+1}} )u_k(0,\cdots,0,\underbrace{\ttb_{g}}_{\text{ith slot}},0,\cdots,0).
\end{align}

We observe that \eqref{eq: embedding of affine map} endows  convolutions among $\mu_{i,k}$'s with a product structure.

\begin{lemma}\label{lemma: product structure}
For any $1\leq i\neq j\leq k\leq d$ and $\eps_1,\eps_2>0$, we have $\mu_{i,k}*{\mu_{j,k}}={\mu_{j,k}}*{\mu_{i,k}}$, and $\mu^{\eps_1}_{i,k}*{\mu^{\eps_2}_{j,k}}={\mu^{\eps_2}_{j,k}}*{\mu^{\eps_1}_{i,k}}$.
\end{lemma}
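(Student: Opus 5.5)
It suffices to show that every element of $\supp \mu^{\eps_1}_{i,k}$ commutes with every element of $\supp \mu^{\eps_2}_{j,k}$. The supports are finite (the supports of $\lambda_i^{\eps}$ are finite, as noted after the definition of the stopping time measure). Once we have commutation, for any $f$ we get
\[ \int\int f(gh)\dd\mu^{\eps_1}_{i,k}(g)\dd\mu^{\eps_2}_{j,k}(h)=\int\int f(hg)\dd\mu^{\eps_1}_{i,k}(g)\dd\mu^{\eps_2}_{j,k}(h), \]
and this is the claimed equality of convolutions. The first identity $\mu_{i,k}*\mu_{j,k}=\mu_{j,k}*\mu_{i,k}$ is the same argument applied to $\supp\lambda_i$ and $\supp\lambda_j$. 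Alternatively, it is the special case $\eps_1$, $\eps_2$ chosen so that the stopping time is $1$; but the direct argument is cleaner.

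To get commutation, I would first work on the affine side. Take $\phi\in\supp\lambda_i^{\eps_1}$ and $\psi\in\supp\lambda_j^{\eps_2}$; these are affine maps of $\R$. By \eqref{eq: embedding of affine map}, $\Xi_{i,k}(\phi)$ acts only on the $i$-th coordinate and $\Xi_{j,k}(\psi)$ acts only on the $j$-th coordinate, with $i\neq j$. Hence
\[ \Xi_{i,k}(\phi)\circ\Xi_{j,k}(\psi)=\Xi_{j,k}(\psi)\circ\Xi_{i,k}(\phi), \]
since both sides equal the map $\bs\mapsto(\dots,\phi(s_i),\dots,\psi(s_j),\dots)$.

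Next I would transport this to $G_k$ through $\iota_k$. The map $\iota_k$ is an anti-morphism, so $\iota_k(\alpha\circ\beta)=\iota_k(\beta)\iota_k(\alpha)$. Therefore, for commuting $\alpha$ and $\beta$,
\[ \iota_k(\alpha)\iota_k(\beta)=\iota_k(\beta\circ\alpha)=\iota_k(\alpha\circ\beta)=\iota_k(\beta)\iota_k(\alpha). \]
This means $g_\phi:=\iota_k\Xi_{i,k}(\phi)$ and $g_\psi:=\iota_k\Xi_{j,k}(\psi)$ commute. One can also check this directly from \eqref{eq: formula for g in mu_i,k}. The diagonal parts commute with each other. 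The unipotent $u_k$ parts commute with each other as well. The diagonal part of $g_\phi$ scales the $j$-th unipotent coordinate by $\ttr_\phi^{1/(k+1)}/\ttr_\phi^{1/(k+1)}=1$, because the $j$-th and $(d+1)$-th diagonal entries of $g_\phi$ coincide; so it fixes $u_k(\ttb_\psi e_j)$, and symmetrically for $g_\psi$.

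Finally, I would pass from supports to measures. We have $\mu^{\eps}_{i,k}=(\iota_k\circ\Xi_{i,k})_*\lambda_i^{\eps}$, so each of these measures is a finite combination of Dirac masses on elements that pairwise commute across the two measures. Convolution is bilinear, and $\delta_g*\delta_h=\delta_{gh}=\delta_{hg}=\delta_h*\delta_g$, which gives the result. There is no real obstacle here. The only point that needs care is the anti-morphism convention for $\iota_k$, and it is harmless because commutation is symmetric.
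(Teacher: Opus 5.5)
Your proposal is correct and follows the paper's own argument. In both, $\Xi_{i,k}(\phi)$ and $\Xi_{j,k}(\psi)$ commute because they act on different coordinates, and this commutation transfers to $G_k$ through the anti-morphism $\iota_k$. The identity of the finitely supported convolutions then follows by bilinearity. Your direct matrix check via \eqref{eq: formula for g in mu_i,k}, and the remark that $\lambda_i^{1}=\lambda_i$, are valid extras but not needed.
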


\begin{proof}
By \eqref{eq: embedding of affine map}, for any $\phi_i\in \supp \lambda_{i,k}$ and $\phi_j\in \supp \lambda_{j,k}$, we have $\phi_i\circ \phi_j=\phi_j\circ \phi_i$. As $\iota_k$ is an anti-morphism, we have $g_{\phi_j}g_{\phi_i}=\iota_k(\phi_i\circ \phi_j)=\iota_k(\phi_j\circ\phi_i)=g_{\phi_i}g_{\phi_j}$, whence the lemma.
\end{proof}

\begin{lemma}\label{lemma: rw stopping time decomposition}
Given $1\leq i\leq k\leq d$ and  $0<\epsilon_2\leq \epsilon_1$, we have
\[ \mu_{i,k}^{\eps_2}=\int \int \delta_{g_2g_1}\dd\, \mu_{i,k}^{{\epsilon_2}/\ttr_{g_1}}(g_2)\dd \,\mu_{i,k}^{\eps_1}(g_1). \]
Moreover, if $\eps_1\leq 1$, then for any $g_1\in \supp \mu_{i,k}^{\epsilon_1}$, we have
\[  \frac{\epsilon_2}{\epsilon_1} \leq \frac{\epsilon_2}{\ttr_{g_1}}\leq r_{min}^{-1}\frac{\epsilon_2}{\epsilon_1}. \]
\end{lemma}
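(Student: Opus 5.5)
The plan is to push Lemma \ref{lemma: stopping time decomposition} forward through the map $\iota_k\circ\Xi_{i,k}$. By \eqref{eq: lambda_i,k eps and mu_i,k eps}, $\mu^{\eps}_{i,k}$ is the image of $\lambda_i^{\eps}$ under $\Psi:=\iota_k\circ\Xi_{i,k}:\Aff(\R)\to G_k$. Three properties of $\Psi$ are needed. First, $\Xi_{i,k}$ is a morphism, since it acts on the $i$-th coordinate only. Second, $\iota_k$ is an anti-morphism, so $\Psi(\phi_1\circ\phi_2)=\Psi(\phi_2)\Psi(\phi_1)$. Third, the contraction ratio is preserved: by \eqref{eq: formula for g in mu_i,k}, $\ttr_{\Psi(\phi)}=\ttr_\phi$. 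Here I take $\ttr_g$ for $g$ in the image to be read off from the diagonal part of $g$ as in \eqref{eq: formula for g in mu_i,k}, that is, $\ttr_g=\ttr_\phi$ when $g=\Psi(\phi)$. This quantity is well defined because $\Psi$ is injective.

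Applying Lemma \ref{lemma: stopping time decomposition} to $\lambda_i$ gives, for $0<\eps_2\le\eps_1$,
\[ \lambda_i^{\eps_2}=\int\int \delta_{\phi_1\circ\phi_2}\dd\lambda_i^{\eps_2/\ttr_{\phi_1}}(\phi_2)\dd\lambda_i^{\eps_1}(\phi_1). \]
Pushing forward by $\Psi$ yields
\[ \mu_{i,k}^{\eps_2}=\int\int \delta_{\Psi(\phi_2)\Psi(\phi_1)}\dd\lambda_i^{\eps_2/\ttr_{\phi_1}}(\phi_2)\dd\lambda_i^{\eps_1}(\phi_1). \]
Now substitute $g_1=\Psi(\phi_1)$ and $g_2=\Psi(\phi_2)$. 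Since $\ttr_{g_1}=\ttr_{\phi_1}$, the inner integral is exactly integration against $\mu_{i,k}^{\eps_2/\ttr_{g_1}}$, and the outer integral is integration against $\mu_{i,k}^{\eps_1}$. This gives the claimed formula, with the order $g_2g_1$ coming from the anti-morphism property. These steps are legitimate because all the measures involved are finitely supported, so the push-forward of the iterated integral is a finite sum and can be computed termwise.

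For the bound, take $\eps_1\le1$ and $g_1=\Psi(\phi_1)\in\supp\mu^{\eps_1}_{i,k}$. Then $\phi_1=\varphi(C)$ for some cylinder $C=C(\phi'_1,\dots,\phi'_n)$ with $\tau_{\eps_1}(\uphi)=n$ for every $\uphi\in C$. Lemma \ref{lemma: basic prop of stopping time}(1) then gives
\[ r_{min}\eps_1\le\ttr_{\phi_1}=\ttr_{\uphi_n}<\eps_1. \]
Inverting and multiplying by $\eps_2$ gives
\[ \eps_2/\eps_1<\eps_2/\ttr_{g_1}\le r_{min}^{-1}\eps_2/\eps_1, \]
which implies the stated inequality.

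The only step requiring care, and the one I expect to be the main obstacle, is the bookkeeping of $\ttr_g$ on $G_k$. I need to confirm from \eqref{eq: formula for g in mu_i,k} that $\ttr_{g}$ is multiplicative along products of elements of this form, and that the notation $\mu^{\eps}_{i,k}$ with $\eps=\eps_2/\ttr_{g_1}$ matches $\lambda_i^{\eps_2/\ttr_{\phi_1}}$. Both follow from the displayed formula for $g_\phi$. The diagonal part of $\Psi(\phi)$ depends only on $\ttr_\phi$, since $\det M_{\Xi_{i,k}(\phi)}=\ttr_\phi$, and these diagonal parts multiply.
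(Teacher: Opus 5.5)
Your proof is correct and follows the paper's argument: push the decomposition of Lemma \ref{lemma: stopping time decomposition} forward through the anti-morphism, which gives the order $g_2g_1$, and then read off the bound from Lemma \ref{lemma: basic prop of stopping time}(1). Routing the push-forward through $\iota_k\circ\Xi_{i,k}$ applied to $\lambda_i^{\eps}$ matches the definition \eqref{eq: lambda_i,k eps and mu_i,k eps} slightly more faithfully than the paper does, and the multiplicativity of $\ttr_g$ on $G_k$ that you flag is not needed, since $\ttr_{g_1}=\ttr_{\phi_1}$ for the single element $g_1=\iota_k(\Xi_{i,k}(\phi_1))$.
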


\begin{proof}
For simplicity, we write $\lambda_{i,k}=\lambda$ and $\mu_{i,k}=\mu$. Let $f$ be a measurable function on $G$. By definition of $\mu^{\eps}$ and Lemma \ref{lemma: stopping time decomposition}, we have 
\begin{align*}
    \mu^{{\epsilon_2}}(f)&=\lambda^{{\epsilon_2}}(f\circ \iota_k)\\
    &=\int\int f(\iota_k(\phi_2\circ\phi_1))\dd \,\lambda^{{\epsilon_2/\ttr_{\phi_1}}}(\phi_2)\dd\, \lambda^{\epsilon_1}(\phi_1)\\
    &=\int \int f(g_2g_1)\dd \mu^{{\epsilon_2}/\ttr_{g_1}}(g_2)\mu^{{\epsilon_1}}(g_1).
\end{align*}
Let $g_1\in \supp \mu^{\epsilon_1}$. By Lemma \ref{lemma: basic prop of stopping time} (1), we have
\[ r_{min}\epsilon_1 \leq \ttr_{g_1}<\epsilon_1. \]
Hence, the moreover part follows.
\end{proof}
\medskip

\begin{corollary}\label{cor: prod rw stopping time decomposition}
Given $(\eps_i)_1^k$, $(\eps_i')_1^k\in \R^k_{>0}$ such that for each $1\leq i \leq k$, $\eps_i\leq \eps_i'$. Then
\begin{align*}
\mu_{1,k}^{\eps_1}*\cdots*\mu_{k,k}^{\eps_k}=\int \int \delta_{g_2 g_1} \dd \mu_{1,k}^{\eps_1/\ttr_{g_{1,1}}}*\cdots*\mu_{k,k}^{\eps_k/\ttr_{g_{1,k}}}(g_2)\dd \mu_{1,k}^{\eps_1'}*\cdots*\mu_{k,k}^{\eps_k'}(g_1),
\end{align*}
where $g_1=\prod_1^k g_{1,i}$ for $g_{1,i}\in \supp \mu_{i,k}^{\eps'_i}$.

\end{corollary}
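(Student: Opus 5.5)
We apply Lemma \ref{lemma: rw stopping time decomposition} in each coordinate separately and then use Lemma \ref{lemma: product structure} to reorder the factors. Fix a bounded measurable $f$ on $G$. By definition of convolution,
\[
\mu_{1,k}^{\eps_1}*\cdots*\mu_{k,k}^{\eps_k}(f)=\int\cdots\int f(h_1h_2\cdots h_k)\dd\mu_{1,k}^{\eps_1}(h_1)\cdots\dd\mu_{k,k}^{\eps_k}(h_k).
\]
For each $i$, Lemma \ref{lemma: rw stopping time decomposition} with $\eps_i\le\eps_i'$ gives $h_i=g_{2,i}g_{1,i}$, where $g_{1,i}\sim\mu_{i,k}^{\eps_i'}$ and, conditionally on $g_{1,i}$, $g_{2,i}\sim\mu_{i,k}^{\eps_i/\ttr_{g_{1,i}}}$. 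Substituting this $k$ times and using Fubini (all measures are finitely supported), we get
\[
\int\cdots\int f\bigl(g_{2,1}g_{1,1}\,g_{2,2}g_{1,2}\cdots g_{2,k}g_{1,k}\bigr)\prod_{i}\dd\mu_{i,k}^{\eps_i/\ttr_{g_{1,i}}}(g_{2,i})\dd\mu_{i,k}^{\eps_i'}(g_{1,i}).
\]

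Next we reorder the product pointwise. Elements in the supports of $\mu_{i,k}^{\eps}$ and $\mu_{j,k}^{\eps'}$ for $i\ne j$ commute, for any $\eps,\eps'$. Indeed, they are images under $\iota_k$ of affine maps of the form $\Xi_{i,k}(\phi)$ and $\Xi_{j,k}(\phi')$, which act on different coordinates. So they commute as affine maps, and hence their images commute, exactly as in the proof of Lemma \ref{lemma: product structure}, since that argument applies to individual elements and not only to the measures. Therefore
\[
g_{2,1}g_{1,1}\cdots g_{2,k}g_{1,k}=(g_{2,1}\cdots g_{2,k})(g_{1,1}\cdots g_{1,k})=g_2g_1,
\]
where $g_2=\prod_i g_{2,i}$ and $g_1=\prod_i g_{1,i}$.

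Finally we regroup the integrals. The outer integration over $(g_{1,1},\dots,g_{1,k})$ with respect to $\prod_i\mu_{i,k}^{\eps_i'}$ pushes forward under multiplication to $\mu_{1,k}^{\eps_1'}*\cdots*\mu_{k,k}^{\eps_k'}(g_1)$. The inner integration over the $g_{2,i}$ pushes forward to $\mu_{1,k}^{\eps_1/\ttr_{g_{1,1}}}*\cdots*\mu_{k,k}^{\eps_k/\ttr_{g_{1,k}}}(g_2)$. This yields the stated formula.

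The one subtle point is that the inner measure depends on the factors $g_{1,i}$ individually, not only on $g_1$. This is why the statement writes $g_1=\prod g_{1,i}$. Strictly speaking, the formula is an integral over the tuple $(g_{1,i})_i$. It is well defined as an integral in $g_1$ only if the factorization of $g_1$ is unique. Uniqueness can be read off from \eqref{eq: formula for g in mu_i,k}: $g_{1,i}$ affects only the $i$-th diagonal entry and the $i$-th unipotent slot, so the factors are recovered from $g_1$. Interpreting the formula over tuples already suffices for the proof.
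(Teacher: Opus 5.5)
Your proposal is correct and follows the paper's proof: decompose each factor $\mu_{i,k}^{\eps_i}$ using Lemma \ref{lemma: rw stopping time decomposition}, then reorder the product using the commutation behind Lemma \ref{lemma: product structure}. You are right that this reordering needs commutation of individual elements, which comes from that lemma's proof rather than its statement, since the inner measures depend on the individual factors $g_{1,i}$; the paper leaves this point implicit.
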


\begin{proof}
The corollary follows by applying the decomposition Lemma \ref{lemma: rw stopping time decomposition} to each component measure $\mu_{i,k}^{\eps_i}$, and then Lemma \ref{lemma: product structure}. 
\end{proof}

\medskip

\subsection{Regularity}

We collect some known facts about regularity of self-similar measures. Let $k,l\in \N$. Given $\eps\in (0,1]$, $n\in \N$ and a contractive non-atomic self-similar measure $\sigma$ on $\R$ associated to an IFS $\lambda$ on $\Aff(\R)$, we denote  
\[\sigma^{\eps}=\lambda^{\eps}*\delta_0\,\, \text{ and }\,\, \sigma^{(n)}=\lambda^{*n}*\delta_0. \]
\begin{remark}\label{rmk: not friendly}
    It is worth noting that in general self-similar measures are not friendly in the sense of Kleinbock-Lindenstrauss-Weiss \cite{KLW04}. For details, we refer the reader to \cite[Section 4]{Baker26} where Baker constructed a product-type self-similar measure on $\R^d$ whose associated IFS is  irreducible, contractive and satisfies the open set condition, but not friendly (indeed not Federer).
\end{remark}

Let $\mathcal{P}_{k,l}$ be the collection of all polynomial maps from $\R^k$ to $\R$ of degree at most $l$. We endow a supremum norm $\norm{\cdot}$ on $\mathcal{P}_{k,l}$ by setting $\norm{P}$ to be the supremum among all absolute values of coefficients of $P$ for $P\in \mathcal{P}_{k,l}$. The following is a special case of \cite[Theorem 4.4]{BHZ25}.

\begin{proposition}\label{prop: Holder regularity}
Given $l\in \N$. There exist $C,\gamma>0$ depending on $\sigma,l$ only such that given any $P\in \mathcal{P}_{1,l}$ and $\delta>0$,
\[ \sigma\{s: \abse{P(s)}\leq \delta\norm{P}\}\leq C\delta^{\gamma}. \]
    
\end{proposition}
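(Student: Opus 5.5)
The plan is to reduce the statement to a uniform Frostman (Hölder) estimate for $\sigma$, and then combine it with a standard sublevel-set bound for polynomials on a compact interval. Let $K=\supp\sigma\subset[-R,R]$, and set $h(r)=\sup_{x\in\R}\sigma(B(x,r))$.

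\textbf{Step 1 (Frostman bound).} I aim to show there exist $C_0,\alpha>0$ with $h(r)\leq C_0r^{\alpha}$ for all $r>0$.

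First, $\sigma$ is non-atomic. This is the standing assumption; it follows from the absence of a common fixed point. Together with compactness of $K$, this gives some $\rho_0>0$ with $h(\rho_0)\leq 1/2$: otherwise one finds $x_n$ with $\sigma(B(x_n,1/n))>1/2$, a limit point $x$ of the $x_n$, and an atom at $x$.

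Next, fix $r\in(0,1]$ and apply Lemma \ref{lemma: stationarity w.r.t stoppoing time} with $\epsilon=r$:
\[ \sigma(B(x,\theta r))=\int \sigma\bigl(B(\phi^{-1}x,\theta r/\ttr_\phi)\bigr)\dd\lambda^{r}(\phi). \]
By Lemma \ref{lemma: basic prop of stopping time}(1), $r_{min}r\leq\ttr_\phi<r$. So if $\theta=r_{min}\rho_0$, each integrand is at most $h(\rho_0)\leq 1/2$.

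The integrand also vanishes unless $\phi(K)$ meets $B(x,\theta r)$. For such $\phi$, since $\phi(K)$ has diameter at most $2Rr$, we have $\phi(K)\subset B(x,C_1r)$ with $C_1=\theta+2R$. Using stationarity again,
\[ \sigma(B(x,C_1r))\geq\int\sigma(\phi^{-1}B(x,C_1r))\dd\lambda^r(\phi)\geq \lambda^r\{\phi:\phi(K)\cap B(x,\theta r)\neq\emptyset\}. \]
Combining the two displays gives $h(\theta r)\leq\tfrac12 h(C_1r)$, that is, $h(\kappa s)\leq \tfrac12 h(s)$ with $\kappa=\theta/C_1<1$ (the scale restriction $r\leq1$ only affects constants). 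Iterating from $s\simeq1$ yields $h(\kappa^n)\leq 2^{-n}$, hence $h(r)\ll r^{\alpha}$ with $\alpha=\log 2/\log(1/\kappa)$.

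I expect this step to be the main obstacle. The delicate point is obtaining a genuine contraction factor at each scale rather than a single factor $1/2$, and the covering comparison between $\lambda^r$-mass of pieces meeting the ball and $\sigma$ of an enlarged ball is what handles it.

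\textbf{Step 2 (polynomial sublevel sets).} Let $P\in\mathcal{P}_{1,l}$ be nonzero. By equivalence of norms on the finite-dimensional space $\mathcal{P}_{1,l}$, $\sup_{[-R,R]}|P|\geq c_R\norm{P}$.

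By a Remez-type inequality (e.g.\ Lagrange interpolation at $l+1$ equally spaced nodes), for any interval $I\subset[-R,R]$ we have $\sup_I|P|\geq c_l(|I|/2R)^l\sup_{[-R,R]}|P|$. Hence each connected component of $\{s\in[-R,R]:|P(s)|\leq\delta\norm{P}\}$ has length at most $C_2\delta^{1/l}$.

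This set is where $P-\delta\norm{P}\leq0\leq P+\delta\norm{P}$, so it has at most $2l+1$ components. Each component lies in a ball of radius $C_2\delta^{1/l}$, so Step 1 gives
\[ \sigma\{s:|P(s)|\leq\delta\norm{P}\}\leq(2l+1)C_0(C_2\delta^{1/l})^{\alpha}, \]
which is the claim with $\gamma=\alpha/l$.
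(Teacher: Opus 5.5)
Your proof is correct. It differs from the paper, which gives no argument and simply quotes the statement as a special case of \cite[Theorem 4.4]{BHZ25}. That theorem is a polynomial non-concentration result for self-similar measures on $\R^d$ in much greater generality, where one must control neighbourhoods of algebraic hypersurfaces rather than balls.

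You instead use that in dimension one the set $\{|P|\leq\delta\norm{P}\}\cap[-R,R]$ is a union of at most $2l+1$ intervals of length $O_{R,l}(\delta^{1/l})$, by a Remez-type bound. This reduces everything to a uniform Frostman bound $\sup_x\sigma(B(x,r))\ll r^{\alpha}$. You prove that bound by a scale recursion built only from tools the paper has already established: stationarity under $\lambda^{r}$ (Lemma \ref{lemma: stationarity w.r.t stoppoing time}) and the bound $r_{min}r\leq\ttr_\phi<r$ on $\supp\lambda^r$ (Lemma \ref{lemma: basic prop of stopping time}(1)). The key step is applying stationarity a second time, to the enlarged ball, which gives $\lambda^r\{\phi:\phi(K)\cap B(x,\theta r)\neq\emptyset\}\leq\sigma(B(x,C_1r))$. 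This yields $h(\theta r)\leq\tfrac{1}{2}h(C_1 r)$, and the iteration is sound.

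What your route buys is a self-contained argument with the explicit exponent $\gamma=\alpha/l$, where $\alpha=\log 2/\log(C_1/\theta)$. It uses only compact support and non-atomicity. Non-atomicity does follow from the absence of a common fixed point: by stationarity, the finitely many atoms of maximal mass are permuted by every strictly contracting $\phi\in\supp\lambda$, which forces a single common fixed point. The one-dimensional case is also all the paper needs, since products are then handled by the Fubini induction of Corollary \ref{cor: regularity-stopping time for product}.

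What the citation buys is generality not needed here. Your lower bound $\ttr_\phi\geq r_{min}r$ relies on $\lambda$ being finitely supported, and the reduction from sublevel sets to balls is specific to $d=1$.
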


\medskip

\begin{corollary}\label{cor: regularity of stopping and finite time}
Let $\sigma$ be a self-similar measure on $\R$ and $l\in \N$.
There exist $C,\gamma>0$ depending on $\sigma,l$ such that the following hold:
\begin{itemize}
    \item[(1)] Given any $\eps\in (0,1]$ and  $P\in \mathcal{P}_{1,l}$. Then for any $\delta\geq \epsilon$, we have
\[ \sigma^{\epsilon}\{s: |P(s)|\leq \delta\norm{P}\}\leq C \delta^{\gamma} . \]

\item[(2)] Given any $n\in \N$, $P\in \mathcal{P}_{1,l}$. Then for any $\delta\geq e^{-n}$, we have
\[ \sigma^{(n)}\{s: |P(s)|\leq \delta\norm{P}\}\leq C\delta^{\gamma}. \]

\end{itemize}
\end{corollary}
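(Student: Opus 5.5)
The plan is to reduce both statements to Proposition \ref{prop: Holder regularity} via the stationarity relation $\sigma=\lambda^{\eps}*\sigma$ (Lemma \ref{lemma: stationarity w.r.t stoppoing time}) and $\sigma=\lambda^{*n}*\sigma$. The idea is that $\sigma^{\eps}$ (resp. $\sigma^{(n)}$) is the image of $\sigma$ under the map $\phi(s)\mapsto\phi(0)$, which moves each point by at most the contraction ratio times a bounded amount. Concretely, let $R$ be such that $\supp\sigma\subset[-R,R]$. For $\phi$ in the support of $\lambda^{\eps}$ we have $\phi(s)=\ttr_\phi s+\ttb_\phi$ with $\ttr_\phi<\eps$. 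Hence $|\phi(s)-\phi(0)|\le R\eps$ for $s\in\supp\sigma$. For $\phi\in\supp\lambda^{*n}$ we have $\ttr_\phi\le r_{\max}^n$, where $r_{\max}=\max_{\phi\in\Lambda}\ttr_\phi<1$.

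The first step is a perturbation estimate for polynomials. Normalize $\norm{P}=1$. Let $x\in[-R',R']$ with $R'$ a fixed bound for the supports of all $\sigma^{\eps}$ and $\sigma^{(n)}$ (these lie in the attractor's convex hull). For $|y-x|\le \eta\le 1$, the derivative bound gives $|P(y)-P(x)|\le C_1\eta$, with $C_1$ depending on $l,R'$. Thus if $|P(\phi(0))|\le\delta$, then $|P(\phi(s))|\le \delta+C_1R\eps\le (1+C_1R)\delta$ for all $s\in\supp\sigma$, using $\delta\ge\eps$. Integrating over $\phi\sim\lambda^{\eps}$ and using stationarity, we get
\[\sigma^{\eps}\{|P|\le\delta\}\le \int \sigma\{s:|P(\phi(s))|\le (1+C_1R)\delta\}\,\mathbf 1_{\{|P(\phi(0))|\le\delta\}}\dd\lambda^{\eps}(\phi)\le \sigma\{|P|\le (1+C_1R)\delta\}.\]
More directly, the event $|P(\phi(0))|\le\delta$ forces $|P(\phi(s))|\le(1+C_1R)\delta$ for every $s\in\supp\sigma$. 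So the integrand is bounded by $\int \mathbf 1_{\{|P(\phi(s))|\le (1+C_1R)\delta\}}\dd\sigma(s)$, and integrating against $\lambda^{\eps}$ yields $\sigma\{|P|\le(1+C_1R)\delta\}$. Proposition \ref{prop: Holder regularity} then gives the bound $C(1+C_1R)^{\gamma}\delta^{\gamma}$. This proves (1) when $\delta\le1$. When $\delta>1$ the bound is trivial after enlarging $C$, since $C\delta^\gamma\ge1$.

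For (2) the same argument applies with $\eta=Rr_{\max}^n$. The only point to check is the relation between $e^{-n}$ and $r_{\max}^n$. If $r_{\max}\le e^{-1}$, then $\eta\le R\delta$ and we argue as above. Otherwise $r_{\max}^n=e^{-\theta n}$ with $\theta\in(0,1)$. In that case I set $\delta'=\max(\delta,r_{\max}^n)\le\delta^{\theta}$, using $\delta\ge e^{-n}$. The argument then yields the bound $C'\delta'^{\gamma}\le C'\delta^{\theta\gamma}$, so I replace $\gamma$ by $\theta\gamma$.

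The main (mild) obstacle is this bookkeeping of exponents in (2), together with making sure the constants are uniform in $\eps$, $n$ and $P$. Uniformity holds because all supports lie in a fixed compact interval, and the derivative bound depends only on $l$ and that interval once $\norm{P}=1$.
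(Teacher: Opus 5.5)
Your proposal is correct and follows essentially the same route as the paper: stationarity $\sigma=\lambda^{\eps}*\sigma$ (resp.\ $\sigma=\lambda^{*n}*\sigma$), a mean-value bound showing that $|P(\phi(0))|\le\delta\norm{P}$ forces $|P(\phi(s))|\ll\delta\norm{P}$ for all $s\in\supp\sigma$, and then Proposition \ref{prop: Holder regularity}; your explicit comparison of $r_{\max}^n$ with $e^{-n}$ in (2) spells out the step the paper only says follows ``the same lines''. One harmless slip: the points $\phi(0)$ need not lie in the convex hull of the attractor, but they do lie in a fixed bounded interval (since $|\phi(0)|\le|\phi(s)|+\ttr_\phi|s|$ for $s\in\supp\sigma$), and that is all your derivative bound needs.
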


\begin{proof}
(1) We may assume $P\neq 0$. Let 
\[E^{\eps}=\{\phi\in \supp \lambda^{\epsilon}: |P(\phi(0))|\leq \delta \norm{P}\}.\]
As $\sigma^{\eps}=\lambda^{\eps}*\delta_0$, it suffices to obtain the desired bound for $\lambda^{\eps}(E)$. Note that for any $\phi\in E^{\eps}$ and any $\uphi'\in \supp \lambda^{\N}$, we have
\begin{align}\label{eq: regularity-1}
     \abse{ \phi\circ\uphi'(0)-\phi(0)}=\abse{\ttr_{\phi}\uphi'(0)}\ll_{\sigma} \eps,
\end{align}
where $\uphi'(0)=\lim_{n\to \infty}\uphi'_n(0)$ exists for any $\uphi'\in \supp \lambda^{\N}$.
Hence, the mean value theorem and \eqref{eq: regularity-1} imply that for any $\phi\in E^{\eps}$ and any $\uphi'\in \supp \lambda^{\N}$ we have
\begin{align}\label{eq: regularity-2}
    |P(\phi\circ\uphi'(0))|\leq |P(\phi\circ\uphi'(0))-P(\phi(0))|+|P(\phi(0))|\leq C_{\sigma,l} \norm{P}(\epsilon+\delta).
\end{align}
Let
\[ \underline{E}=\{\uphi\in \supp \lambda^{\N}: |P(\uphi(0))|\leq C_{\sigma,l}\norm{P}(\epsilon+\delta)\}. \]
Then \eqref{eq: regularity-2} implies that 
\begin{align}\label{eq: identity}
    1_{E^{\eps}}(\phi)=1_{E^{\eps}}(\phi)\cdot 1_{\underline{E}}(\phi\circ \uphi'), \,\forall \,\phi\in \supp \lambda^{\tau_{\epsilon}},\uphi'\in \supp \lambda^{\N}.
\end{align}
As $\sigma=\lambda^{\N}*\delta_0$, \eqref{eq: identity}, Lemma \ref{lemma: stationarity w.r.t stoppoing time} and Proposition \ref{prop: Holder regularity} together imply that
\begin{align*}
    \lambda^{\epsilon}(E)
    \leq \int \int 1_{\underline{E}}(\phi \circ \uphi') \dd \lambda^{\epsilon}(\phi)\dd \lambda^{\N}(\uphi')
    =\int 1_{\underline{E}}(\uphi)\dd \lambda^{\N}(\uphi)\leq (2C_{\sigma,l}\delta)^{\gamma}.
\end{align*}
This establishes (1). For (2), we let 
\[ E^{(n)}=\{\phi \in \supp \lambda^{*n}: \abse{P(\phi(0))}\leq \delta\norm{P}\}. \]
Then for any $\phi \in E^{(n)}$ and $\uphi'\in \supp \lambda^{\N}$, we have
\[ \abse{\phi\circ \uphi'(0)-\phi(0)}\ll_{
\sigma} e^{-cn}, \]
for some $c>0$ depending only on $\sigma$. Then the rest of the proof follows the same lines to that in (1).

\end{proof}

\begin{corollary}\label{cor: regularity-stopping time for product}
Let $k,l\geq 1$ and $\sigma_1,\cdots,\sigma_k$ be self-similar measures on $\R$. Then there exist $C,\gamma>0$ depending on $(\sigma_i)_1^k$, $l$ such that the following hold:

\item[(1)] For any $\epsilon_1,\cdots,\epsilon_k\in (0,1)$, $\delta\geq \max_i \epsilon_i$ and $P\in \mathcal{P}_{k,l}$,
\[ \otimes_{1}^k \sigma_i^{\epsilon_i}\{\bs\in \R^k: |P(\bs)|\leq \delta\norm{P}  \}\leq C \delta^{\gamma}.\]

\item[(2)] For any  $n_1,\cdots,n_k\in \N$, $\delta\geq max_i e^{-n_i}$ and $P\in \mathcal{P}_{k,l}$, 
\[ \otimes_{1}^k \sigma_i^{(n_i)}\{\bs\in \R^k: \abse{P(\bs)}\leq \delta\norm{P}\}\leq C\delta^{\gamma}. \]

\item[(3)] For any $\delta>0$ and $P\in \mathcal{P}_{k,l}$,
\[ \otimes_1^k \sigma_i\{\bs\in \R^k: \abse{P(\bs)}\leq \delta\norm{P}\}\leq C\delta^{\gamma}. \]
    
\end{corollary}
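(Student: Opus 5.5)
We prove the three parts by induction on $k$, using the one-dimensional estimates of Proposition \ref{prop: Holder regularity} and Corollary \ref{cor: regularity of stopping and finite time} one coordinate at a time, via Fubini. All three parts have the same structure. Write $\nu_1\otimes\cdots\otimes\nu_k$ for the relevant product measure, where $\nu_i$ is $\sigma_i^{\epsilon_i}$, $\sigma_i^{(n_i)}$ or $\sigma_i$. Let $\delta_0$ denote the threshold: $\max_i\epsilon_i$, $\max_i e^{-n_i}$ or $0$. The one-variable input is then: for every $P\in\mathcal P_{1,l}$ and every $\delta\ge\delta_0$, we have $\nu_i\{|P|\le\delta\norm{P}\}\le C_1\delta^{\gamma_1}$. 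We take $C_1,\gamma_1$ uniform over $i$ by taking the worst constants over the finitely many $\sigma_i$. The case $k=1$ is exactly this input. We may also assume $\delta<1$, since otherwise the bound is trivial once $C\ge1$.

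For the inductive step, write $P(\bs)=\sum_{j=0}^{l}Q_j(s_1,\dots,s_{k-1})s_k^j$ with $Q_j\in\mathcal P_{k-1,l}$. Since $\norm{P}=\max_j\norm{Q_j}$, we can choose $j_0$ with $\norm{Q_{j_0}}=\norm{P}$. Fix $\eta=\delta^{1/2}$, which satisfies $\eta\ge\delta\ge\delta_0$, and split the set $\{|P|\le\delta\norm P\}$ according to whether $|Q_{j_0}(s_1,\dots,s_{k-1})|\le\eta\norm{Q_{j_0}}$.

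On the part where this inequality holds, the induction hypothesis bounds the $\nu_1\otimes\cdots\otimes\nu_{k-1}$-measure by $C\eta^{\gamma}$. On the complementary part, fix $(s_1,\dots,s_{k-1})$ and set $P_{s'}(s_k)=P(s',s_k)\in\mathcal P_{1,l}$. Then $\norm{P_{s'}}\ge|Q_{j_0}(s')|>\eta\norm P$. Hence $|P_{s'}(s_k)|\le\delta\norm P$ implies $|P_{s'}(s_k)|\le(\delta/\eta)\norm{P_{s'}}=\delta^{1/2}\norm{P_{s'}}$. Since $\delta^{1/2}\ge\delta_0$, the one-variable input bounds the $\nu_k$-measure of this fibre by $C_1\delta^{\gamma_1/2}$. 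Integrating over $s'$ with Fubini gives a total bound of $(C+C_1)\delta^{\min(\gamma,\gamma_1)/2}$.

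The exponent halves at each step, so after $k$ steps we get $\gamma=\gamma_1 2^{-(k-1)}$ and a constant $C$ depending only on $k$, $l$ and the $\sigma_i$. This is acceptable because $k\le d$ is fixed. Part (3) needs no threshold and follows verbatim from Proposition \ref{prop: Holder regularity}.

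The only point requiring care is compatibility of the thresholds: the rescaled parameters $\eta$ and $\delta/\eta$ must both stay $\ge\delta_0$. With $\eta=\delta^{1/2}$ and $\delta<1$ this holds automatically. The other potential subtlety is that $\norm{\cdot}$ is the coefficient sup-norm, so the identity $\norm P=\max_j\norm{Q_j}$ holds on the nose. I do not expect any further obstacle.
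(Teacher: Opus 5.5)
Your proof is correct and is essentially the paper's argument: induction on $k$ via Fubini, splitting according to whether a dominant coefficient is small at scale $\delta^{1/2}$, so the exponent halves at each step. The only difference is that the paper expands $P$ in $(s_1,\dots,s_{k-1})$ with coefficients in $\R[s_k]$, applying the one-variable estimate to the coefficient and the induction hypothesis to the fibres, which is the mirror image of your split and changes nothing of substance.
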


\begin{proof}
We only prove (1) here, the proof of (2) and (3) are almost identical to (1). The proof of (1) is similar to that in \cite[Corollary 4.5]{BHZ25}. When $k=1$, this is Corollary \ref{cor: regularity of stopping and finite time}. Now assume $k\geq 2$ and the corollary holds for $k-1$, with the constants $C,\gamma$ depending on $\sigma_1,\cdots,\sigma_{k-1}$, $k-1$ and $l$. 

Without loss of generality, we assume that $\norm{P}\geq 1$. We can view $P$ as a polynomial with indeterminate variables $(s_1,\cdots,s_{k-1})$ and with coefficients in the ring $\R[s_k]$. The assumption that $\norm{P}\geq 1$ implies one coefficient $Q$ of $P$, viewed as an element in $\R[s_k]$, satisfies $\norm{Q}\geq 1$. Given any $s_k\in \R$, the map $P(\cdot, s_k):\R^{k-1}\to \R$  is a polynomial map with norm at least $\abse{Q(s_k)}$. As $\delta^{1/2}\geq \max_{1\leq i\leq k-1}\epsilon_i$, the induction hypothesis implies that for $s_k$ such that $|Q(s_k)|\geq \delta^{1/2}$, we have
\[ \otimes_{i=1}^{k-1}\sigma_i^{\epsilon_i}\{(s_1,\cdots,s_{k-1}): |P(s_1,\cdots,s_k)|\leq \delta \}\ll_{k-1,l} \delta^{\gamma/2}. \]
To conclude, we apply the Fubini theorem and Corollary \ref{cor: regularity of stopping and finite time} to obtain
\begin{align*}
\otimes_{i=1}^{k}\sigma_i^{\epsilon_i}\{(s_1,\cdots,s_k)&\in \R^k: |P(s_1,\cdots,s_k)|\leq \delta\}\\
&=\int_{\R}\otimes_{i=1}^{k-1}\sigma_i^{\eps_i}\{(s_1,\cdots,s_{k-1}): |P(s_1,\cdots,s_k)|\leq \delta\}\dd \sigma_k^{\eps_k}(s_k)\\
&\ll \delta^{\gamma/2}+\sigma^{\eps_k}_k\{s_k: |Q(s_k)|<\delta^{1/2}\}\\
&\ll \delta^{\gamma/2}+\delta^{\gamma(\sigma_k,1,l)/2}.
\end{align*}
This finishes the proof of the induction hypothesis for $k$, whence (1).
\end{proof}

\begin{corollary}\label{cor: regularity for product 2}
Let $d,m,l\geq 1$, $\sigma_1,\cdots,\sigma_d$ be self-similar measures on $\R$. Then there exist $C,\gamma>0$ depending on $(\sigma_i)_1^d$, $l,m$ such that for any $n_1,\cdots,n_d\in \N$, $P\in \mathcal{P}_{dk,l}$, and $\delta\geq \max_{i}e^{-n_i}$,
\[ \nu^{\otimes m}\{(\bs_i)_{i=1}^m\in (\R^d)^m: |P(\bs_1,\cdots,\bs_m)|\leq \delta\norm{P} \}\leq C\delta^{\gamma}, \]
where $\nu=\otimes_{i=1}^d \sigma_i^{(n_i)}$.
\end{corollary}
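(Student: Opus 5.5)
The measure $\nu^{\otimes m}$ on $(\R^d)^m$ is itself a product of $dm$ one-dimensional measures. Indeed,
\[ \nu^{\otimes m}=\bigotimes_{j=1}^m\bigotimes_{i=1}^d \sigma_i^{(n_i)}, \]
so each coordinate $s_{j,i}$ of $(\bs_1,\cdots,\bs_m)$ is distributed according to $\sigma_i^{(n_i)}$, independently of the others. The plan is therefore to view $P\in\mathcal{P}_{dm,l}$ (reading the index $dk$ in the statement as $dm$) as a polynomial in $K=dm$ real variables. We then apply Corollary \ref{cor: regularity-stopping time for product} (2) with $k=K$ to the list of self-similar measures
\[ (\sigma'_1,\cdots,\sigma'_K)=(\sigma_1,\cdots,\sigma_d,\sigma_1,\cdots,\sigma_d,\cdots,\sigma_1,\cdots,\sigma_d), \]
that is, $m$ repeated blocks, with time parameters $n'_{(j-1)d+i}=n_i$. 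The hypothesis $\delta\geq \max_i e^{-n_i}$ is exactly $\delta\geq \max_{r} e^{-n'_r}$. The conclusion then gives
\[ \nu^{\otimes m}\{|P|\leq \delta\norm{P}\}\leq C\delta^\gamma, \]
with $C,\gamma$ depending on $(\sigma'_r)_{r=1}^K$ and $l$, hence only on $(\sigma_i)_1^d$, $l$ and $m$.

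The only point to check is that Corollary \ref{cor: regularity-stopping time for product} (2) allows repeated measures in its list. It does, because its proof is an induction on the number of variables using Fubini. Each step uses only the one-variable estimate Corollary \ref{cor: regularity of stopping and finite time} (2) for the measure in the current slot, applied to a coefficient polynomial $Q\in\mathcal{P}_{1,l}$ of norm at least $1$. No relation between the different $\sigma'_r$ is used, so repeating measures is harmless.

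For completeness, I would sketch the induction for (2). Assume $\norm{P}\geq1$ and regard $P$ as a polynomial in $(s_1,\cdots,s_{K-1})$ with coefficients in $\R[s_K]$. Choose a coefficient $Q$ with $\norm{Q}\geq 1$. On the set where $|Q(s_K)|\geq\delta^{1/2}$, the induction hypothesis applies with threshold $\delta^{1/2}\geq \max_r e^{-n'_r}$ and bounds the fibre measure by $\ll\delta^{\gamma/2}$. The complementary set has $\sigma_K'^{(n'_K)}$-measure $\ll \delta^{\gamma/2}$ by Corollary \ref{cor: regularity of stopping and finite time} (2), since $\delta^{1/2}\geq\delta\geq e^{-n'_K}$. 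Integrating over $s_K$ with Fubini gives the bound with exponent $\gamma/2$.

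I do not expect a real obstacle here. The only bookkeeping is that the exponent halves at each of the $K=dm$ induction steps, which is why $\gamma$ depends on $m$. The dependence of $C$ is on the finite list of distinct $\sigma_i$ together with $l$ and $m$.
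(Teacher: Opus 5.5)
Your proposal is correct and is essentially the paper's argument. The paper's one-line proof applies Corollary \ref{cor: regularity-stopping time for product} (2) inductively through the same Fubini slicing, and your identification of $(\R^d)^m$ with $\R^{dm}$, with the list $(\sigma_i)_1^d$ repeated $m$ times, packages that induction into one application of the corollary, whose statement already allows an arbitrary finite, possibly repeated, list of self-similar measures; you were also right to read $\mathcal{P}_{dk,l}$ as $\mathcal{P}_{dm,l}$.
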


\begin{proof}
Applying Corollary \ref{cor: regularity-stopping time for product} (2) inductively, the corollary follows from the similar argument as that in Corollary \ref{cor: regularity-stopping time for product}. 
\end{proof}

\bigskip

\section{The Benoist-Quint height functions}\label{sec: height function}
The aim of this section is to show that certain averaging operators associated to random walks introduced in Section \ref{subsec: associated rw} satisfy the contraction hypothesis for Benoist-Quint height functions. The notion of height function and its contraction hypothesis originated from the works of Eskin-Margulis-Mozes \cite{EMM98} and Eskin-Margulis \cite{EM04} in the context of homogeneous dynamics. Since then, the height function has become an indispensable tool in the study of many problems in homogeneous dynamics, see e.g. \cite{BQ4,BQ1,shi20,PSS23}. The original height function in \cite{EMM98,EM04} only allows one to obtain the recurrence property of random walks whose Zariski closure is $G$ or epimorphic in $G$. This is not sufficient for our purpose. The work of Benoist and Quint \cite{BQ4}  generalized the height function appeared in \cite{EMM98}, which makes it possible for one to study the recurrence property of those random walks generating potentially smaller semisimple subgroups of $G$. This was further developed in \cite{PSS23} to handle a more general class of random walks.

\medskip

We first recall the construction in \cite{BQ4,PSS23} of a Benoist-Quint height function associated to $G_k$ (see \eqref{eq: defn of Gk}) for $1\leq k\leq d$ on $X=G/\Gamma$, for $G=\SL_{d+1}(\R)$ and $\Gamma=\SL_{d+1}(\Z)$. Fix a root system of $G_k$ such that $a_k(t)$ (see \eqref{eq: defn of a_k}) lies in the interior of the positive Weyl chamber for any $t>1$. Let $P_k^+$ be the collection of all highest weights of irreducible sub-representations of $G_k$ acting on $E=\bigwedge^* \R^{d+1}:=\bigoplus_{i=1}^d \bigwedge^i\R^{d+1}$ associated to this root system. Note that if $0\in P_k^+$, then the vectors associated to weight $0$ is invariant under the action of $G_k$. Let 
\[\delta_i=(d+1-i)i \quad \text{for } \,\, 1\leq i\leq d,\]
and
\[\delta_{\lambda}=\lambda(\log a_k(2))>0, \quad \text{ for } \,\lambda\in P_k^+\setminus\{0\}.\] 
We decompose $E=E^0\oplus \bigoplus_{\lambda\in P_k^+\setminus\{0\}} E^{\lambda}$, where $E^{\lambda}$ is the direct sum of all irreducible sub-representations of $G_k$ on $E$ with highest weight $\lambda$, and $E^0$ is the set of all $G_k$-fixed vectors in $E$. For any $\lambda\in P_k^+$, let $q_{\lambda}:E\to E^{\lambda}$ be the orthogonal projection. For any $\epsilon_0>0$, we define the function $\varphi_{k,\epsilon_0}:E_1:=\bigcup_{i=1}^d \bigwedge^i \R^{d+1}\to [0,+\infty]$ by 
\begin{align*}
    \varphi_{k,\epsilon_0}(v)=\begin{cases}
        \min_{\lambda\in P_k^+\setminus\{0\}} \epsilon_0^{\delta_i/\delta_{\lambda}} \norm{q_{\lambda}(v)}^{-1/\delta_{\lambda}}, & \, \text{ if } \norm{q_0(v)}<\epsilon_0^{\delta_i}\\
        0, & \text{ otherwise}
    \end{cases}, \,\, \forall\, v\in \bigwedge^i \R^{d+1}.
\end{align*}
Note that $\varphi_{k,\epsilon_0}(v)=+\infty$ if and only if $q_0(v)=v$ (i.e. $v$ is $G_k$-fixed) and $\norm{q_0(v)}<\epsilon_0^{\delta_i}$. 

Given $x=g\Gamma\in X$, a vector $v\in \bigwedge^i\R^{d+1}$ is said to be an \textit{$x$-integral monomial vector} if it can be written as $v=g(w_1\wedge\cdots\wedge w_i)$ for some $w_1,\cdots,w_i\in \Z^{d+1}$. Let $V(x)\subset E_1$ be the collection of all nonzero $x$-integral monomial vectors. We define the Benoist-Quint height function $f_{k,\epsilon_0}$ (associated to $G_k$)  to be
\begin{align*}
    f_{k,\epsilon_0}(x)=\max_{v\in V(x)}\varphi_{k,\epsilon_0}(v).
\end{align*}
It is clear from the definition that there exists an absolute constant $D>0$ such that for any $k\in \dbracket{1,d}$, $\eps_0>0$, $g\in G$ and $x\in X$,
\begin{align}\label{eq: Lipschitz}
    f_{k,\eps_0}(gx)\leq \norm{g}^D f_{k,\eps_0}(x).
\end{align}


We record the following lemma relating the value of $f_{k,\eps_0}$ and the injectivity radius in $X$.

\begin{lemma}\label{lemma: BQ function vs inj}
There exists an absolute constant $C>0$ such that for any $k\in \dbracket{1,d}$, $\rho,\epsilon_0\in (0,1)$ and $x\in X$, if $f_{k,\epsilon_0}(x)<\rho^{-1}$, then $\inj(x) \geq \epsilon_0^C \rho^C$.
\end{lemma}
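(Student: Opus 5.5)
We argue by contraposition. Suppose $\inj(x)<\epsilon_0^C\rho^C$ for a large absolute constant $C$ to be chosen; we will produce a vector $v\in V(x)$ with $\varphi_{k,\epsilon_0}(v)\geq \rho^{-1}$, so that $f_{k,\epsilon_0}(x)\geq\rho^{-1}$. The input is the standard Mahler-type fact on $X=\SL_{d+1}(\R)/\SL_{d+1}(\Z)$. Writing $x=g\Gamma$ and letting $g\Z^{d+1}$ be the associated unimodular lattice, there is an absolute constant $C_1\geq 1$ such that if $\inj(x)<\eta$ then $g\Z^{d+1}$ contains a nonzero vector $w$ with $\norm{w}\leq C_1\eta^{1/C_1}$. (Conversely, a lower bound on the shortest vector gives a lower bound on the injectivity radius.) We take $v=w\in\bigwedge^1\R^{d+1}$, which is an $x$-integral monomial vector with $i=1$, and $\norm{v}\leq C_1(\epsilon_0\rho)^{C/C_1}$.

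Next we evaluate $\varphi_{k,\epsilon_0}(v)$. Each orthogonal projection $q_\lambda$ has norm at most $1$, so $\norm{q_\lambda(v)}\leq\norm{v}$ for every $\lambda\in P_k^+$.

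First, we check the threshold condition $\norm{q_0(v)}\leq\norm{v}<\epsilon_0^{\delta_1}$ with $\delta_1=d$. This holds once $C/C_1>d$ and $\epsilon_0\rho$ is small. One can absorb $C_1$ either by enlarging $C$ or by noting that if $\epsilon_0\rho$ is not small the conclusion is trivial, since $\inj$ is bounded above on $X$.

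Second, for each $\lambda\in P_k^+\setminus\{0\}$ we need
\[
\epsilon_0^{\delta_1/\delta_\lambda}\norm{q_\lambda(v)}^{-1/\delta_\lambda}\geq\epsilon_0^{d/\delta_\lambda}\norm{v}^{-1/\delta_\lambda}\geq\rho^{-1}.
\]
This amounts to $\norm{v}\leq\epsilon_0^{d}\rho^{\delta_\lambda}$. The set $P_k^+$ is finite, and $\delta_\lambda$ ranges over finitely many positive numbers as $k$ varies in $\dbracket{1,d}$. Let $\Delta=\max_{k,\lambda}\delta_\lambda$. Since $\rho<1$, it suffices that $\norm{v}\leq\epsilon_0^d\rho^\Delta$, and this follows from $\norm{v}\leq C_1(\epsilon_0\rho)^{C/C_1}$ once $C\geq C_1(d+\Delta+1)$ and $\epsilon_0\rho$ is below an absolute threshold. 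Note that $q_\lambda(v)=0$ gives $+\infty$ for that term, which is harmless.

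With these choices the minimum over $\lambda$ is at least $\rho^{-1}$, so $f_{k,\epsilon_0}(x)\geq\varphi_{k,\epsilon_0}(v)\geq\rho^{-1}$, contradicting the hypothesis. This yields $\inj(x)\geq\epsilon_0^C\rho^C$.

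The only genuinely nontrivial point is the quantitative comparison between the injectivity radius and the shortest lattice vector. It is standard, for instance via reduction theory or Siegel sets, and is polynomial, which is all we need here. The uniformity in $k$ is automatic because there are only finitely many $k$, each with finitely many weights.
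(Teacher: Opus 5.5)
Your argument is essentially the paper's, run in contrapositive form. The paper shows that $f_{k,\epsilon_0}(x)<\rho^{-1}$ forces $\norm{v}\geq \epsilon_0^{\max\delta_i}\rho^{\max\delta_\lambda}$ for every $v\in V(x)$, and then applies Mahler's criterion. You instead take a single shortest lattice vector ($i=1$, $\delta_1=d$) and check that it already gives $\varphi_{k,\epsilon_0}\geq\rho^{-1}$. Your weight bookkeeping is correct.

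The one step that does not hold up is your handling of the regime where $\epsilon_0\rho$ is not small. An upper bound on $\inj$ cannot give a lower bound on $\inj(x)$. Enlarging $C$ also cannot absorb the factor $C_1>1$ when $\epsilon_0\rho$ is close to $1$. In fact no argument can rescue that regime as the lemma is literally stated. Each $E^{\lambda}\cap\bigwedge^i\R^{d+1}$ is spanned by standard basis vectors $e_I$, so $q_\lambda$ maps integral vectors to integral vectors. Hence at the identity coset $x_0$ one has $f_{k,\epsilon_0}(x_0)\leq\max_{i,\lambda}\epsilon_0^{\delta_i/\delta_\lambda}<1<\rho^{-1}$ for all $\epsilon_0,\rho\in(0,1)$. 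The statement would then force $\inj(x_0)\geq 1$, which fails under the usual normalization of the metric.

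So the lemma must be read with a multiplicative constant, $\inj(x)\geq C^{-1}\epsilon_0^C\rho^C$. The paper's Mahler step $\inj(x)\geq\min_{v\in V(x)}\norm{v}^{C_1}$ tacitly needs the same constant, and all later uses only need $\gg$. With that reading your contrapositive closes uniformly. Choose $C$ with $C_1C^{-1/C_1}\leq 1$ and $C\geq C_1(d+\Delta)$, and suppose $\inj(x)<C^{-1}(\epsilon_0\rho)^C$. Then the short vector satisfies $\norm{w}\leq(\epsilon_0\rho)^{d+\Delta}\leq\epsilon_0^d\rho^{\Delta}<\epsilon_0^{d}$, and the rest of your argument goes through unchanged.
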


\begin{proof}
Since there are only finitely many subgroups $G_k$ ($1\leq k\leq d$) to consider, it is enough to prove the existence of the constant $C$ in the lemma for one $G_k$. By assumption, for any $v\in V(x)\cap \bigwedge^i\R^{d+1}$ we have $\varphi_{k,\epsilon_0}(v)<\rho^{-1}$. Suppose $\norm{q_0(v)}\geq \epsilon_0^{\delta_i}$, then $\norm{v}\geq \norm{q_0(v)}\geq \epsilon_0^{\delta_i}$. If $\norm{q_0(v)}<\epsilon_0^{\delta_i}$, then there exists some $\lambda\in P^+_k\setminus\{0\}$ such that $\epsilon_0^{\delta_i/\delta_{\lambda}}\norm{q_{\lambda}(v)}^{-1/\delta_{\lambda}}<\rho^{-1}$. This implies that
\[ \norm{v}\geq \norm{q_{\lambda}(v)}>\epsilon_0^{\delta_i}\rho^{\delta_{\lambda}}\geq \epsilon_0^{\max \delta_i}\rho^{\max\delta_{\lambda}}. \]
In either case, we have for any $v\in V(x)$, $\norm{v}\geq \epsilon_0^{\max \delta_i}\rho^{\max\delta_{\lambda}}$. By Mahler's compactness criterion, there exists $C_1>0$ depending only on $X$ such that $\inj(x)\geq \min_{v\in V(x)}\norm{v}^{C_1}$. Hence, the lemma follows.

\end{proof}

The following two lemmas explain that certain random walks typically expand vectors in irreducible linear representations of $G_k$ ($k\in \dbracket{1,d}$).
\begin{lemma}\label{lemma: contracting  operator from stopping time}
Let $k\in \dbracket{1,d}$ and $V$ be a nontrivial irreducible linear representation of $G_k$. Then for any  $0<\theta\lll_{G_k,V} 1$ and $0<\epsilon\lll_{(\sigma_i)_1^k} 1$, there exist $c,\delta\in(0,1)$ depending on $V,\theta,(\sigma_i)_1^k$ such that for any $(\eps_i)_1^k\in (0,1)^k$ satisfying
\begin{align*}
    \epsilon^\theta \leq \min_i \frac{\eps_i}{\epsilon}\leq \max_i \frac{\eps_i}{\epsilon}\leq \epsilon^{-\theta},
\end{align*}
and any nonzero $v\in V$, 
\[ \int \frac{1}{\norm{gv}^{\delta}}\dd \mu_{1,k}^{\eps_1}*\cdots*\mu_{k,k}^{\eps_k}(g)\leq  \epsilon^{c}\norm{v}^{-\delta}. \]
\end{lemma}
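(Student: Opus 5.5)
We first write a typical element of $\mu_{1,k}^{\eps_1}*\cdots*\mu_{k,k}^{\eps_k}$ explicitly. By \eqref{eq: formula for g in mu_i,k} and Lemma \ref{lemma: product structure}, such a $g$ equals $a\,u_k(\bs)$. Here $a=\diag(\ttr_1^{1/(k+1)}\ttr_1^{-1},\dots)$ is the diagonal matrix determined by the contraction ratios $\ttr_i:=\ttr_{g_i}$ of the components $g_i\in\supp\mu_{i,k}^{\eps_i}$. Concretely, $a=a_k(\bt)^{-1}$-type element with $i$-th entry $\ttr_i^{-1}\prod_j\ttr_j^{1/(k+1)}$, last entry $\prod_j \ttr_j^{1/(k+1)}$, and identity on the middle block. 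The translation part $\bs=(\ttb_1',\dots,\ttb_k')$ has law exactly $\otimes_i\sigma_i^{\eps_i}$, up to the sign/scaling convention coming from $\iota_k$ (e.g.\ $\ttb'_i=\phi_i(0)$ after reordering). By Lemma \ref{lemma: basic prop of stopping time} (1), $r_{min}\eps_i\le \ttr_i<\eps_i$. Under the hypothesis $\epsilon^{1+\theta}\le\eps_i\le \epsilon^{1-\theta}$, all $\ttr_i$ therefore lie in $[r_{min}\epsilon^{1+\theta},\epsilon^{1-\theta}]$. Hence $a=a_k(\epsilon^{-1})\,h$ with $h$ diagonal and $\norm{h}^{\pm1}\le C\epsilon^{-O(\theta)}$. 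The upshot is that $a$ is a controlled perturbation of the regular element $a_k(\epsilon^{-1})$ in the expanding direction of $U_k$.

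Next we estimate $\norm{a_k(\epsilon^{-1})hu_k(\bs)v}$ from below for a unit vector $v\in V$. Decompose $V$ into $a_k$-weight spaces, and let $\pi^+$ denote the projection onto the highest weight space $V^+$ for $a_k(t)$, $t>1$, with exponent $\delta_V>0$ (nontrivial irreducible). Then
\[
\norm{gv}\gg \epsilon^{-\delta_V}\,\epsilon^{O(\theta)}\,\norm{\pi^+(u_k(\bs)v)}.
\]
The map $\bs\mapsto \pi^+(u_k(\bs)v)$ is polynomial of degree at most $l=l(V)$. We need that its coefficient norm is $\gg 1$ uniformly in unit $v$. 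This is a standard fact: otherwise some nonzero $v$ would satisfy $\pi^+(u_k(\bs)v)=0$ for all $\bs$. The span of $U_k$-translates of $v$ would then avoid $V^+$, which is impossible since $P_k=A_kU_k$ is epimorphic in $G_k$ / the lowest-to-highest weight argument in an irreducible representation. Compactness of the unit sphere gives a uniform constant $c_V$. Taking a coordinate of this vector polynomial with norm $\gg 1$, Corollary \ref{cor: regularity-stopping time for product} (1) with $\delta=\epsilon^{\eta}\ge\max_i\eps_i$ yields
\[
\otimes_i\sigma_i^{\eps_i}\{\bs:\norm{\pi^+(u_k(\bs)v)}\le \epsilon^{\eta}\}\le C\epsilon^{\gamma\eta}.
\]
Here we need $\eta\le 1-\theta$, and we pick $\eta$ small, say $\eta=\delta_V/4$.

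Finally we split the integral. On the good set, $\norm{gv}^{-\delta}\le \epsilon^{\delta(\delta_V-\eta-O(\theta))}$. On the bad set we use the crude bound $\norm{gv}^{-\delta}\le\norm{g^{-1}}^{\delta}\le \epsilon^{-C_0\delta}$, valid since all $\ttr_i\ge r_{min}\epsilon^{1+\theta}$ and $\bs$ is bounded. The bad set contributes $\epsilon^{\gamma\eta-C_0\delta}$. Choosing $\theta\lll 1$ so that $O(\theta)<\delta_V/4$, and then $\delta$ small relative to $\gamma\eta/C_0$, both terms are at most $\epsilon^{2c}$. This gives the bound $\epsilon^c$ for $\epsilon\lll 1$, and homogeneity handles general $\norm{v}$.

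The main obstacle is the uniform nondegeneracy of $\bs\mapsto\pi^+(u_k(\bs)v)$. We must carefully identify the top weight for $a_k$ and check that $U_k$ moves every vector into it. I would verify this via the Lie algebra: $\mathrm{Lie}(U_k)$ consists of the positive root spaces whose roots are positive on $\log a_k$, and $V^+$ is reached by applying these raising operators. If the $U_k$-orbit fails for some weights, the fallback is to replace $\pi^+$ by the projection onto the sum of all positive $a_k$-weights. Nonvanishing of that projection follows from the fact that $V$ contains no nonzero vector fixed by the epimorphic group $A_kU_k$.
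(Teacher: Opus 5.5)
Your proposal is correct and is essentially the paper's proof. Both arguments write $g$ as an $\epsilon^{-O(\theta)}$-perturbation of $a_k(\epsilon^{-1})u_k(\ttb_g)$ with $\ttb_g$ distributed as $\otimes_i\sigma_i^{\eps_i}$, control where $\pi_+(u_k(\ttb_g)v)$ is small via uniform nondegeneracy of this polynomial and Corollary \ref{cor: regularity-stopping time for product} (1), and split the integral. The differences are cosmetic: you project onto the top $a_k$-eigenspace instead of the whole expanding part (the nondegeneracy still holds, e.g.\ because $U_kv\subseteq\ker\pi_{top}$ would force $G_kv\subseteq\ker\pi_{top}$ by Zariski density of the big cell, the non-expanding parabolic of $a_k$ preserving $\ker\pi_{top}$), and on the exceptional set you use the trivial bound $\norm{gv}^{-\delta}\le\epsilon^{-O(\delta)}\norm{v}^{-\delta}$ instead of the paper's Cauchy--Schwarz step.
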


\begin{proof}
To lighten the notation, we write $\mu=\mu_{1,k}^{\eps_1}*\cdots*\mu_{k,k}^{\eps_k}$. By definition of $\mu$, the assumption of $(\eps_i)_1^k$ in the lemma and Lemma \ref{lemma: basic prop of stopping time} (1), for any $g\in \supp \mu$, we can write $g=a_g a_k(\epsilon^{-1})u(\ttb_g)$ for some $a_g\in (A_k)_{\epsilon^{-O(\theta)}}$, where $(A_k)_{\epsilon^{-O(\theta)}}$ is the $\epsilon^{-O(\theta)}$-neighborhood of the identity in $A_k$, and $\bttb_g=(\ttb_{g_1},\cdots,\ttb_{g_k})$ where $g_i$ is distributed according to the law $\mu_{i,k}^{\eps_i}$.

Let $V^+\subset V$ be the direct sum of eigenspaces of $a_k(\epsilon^{-1})$ with eigenvalue greater than $1$. There exists $c_0=c_0(V)>0$ such that for any $c_1\leq c_0$ and $v\in V^+$, we have $\norm{a_k(\epsilon^{-1})v}\geq \epsilon^{-c_1}\norm{v}$. Let $\pi_+:V\to V^+$ be the orthogonal projection. For any $v\in V$ such that $\norm{v}=1$, we define a map from $\R^k$ to $\R$ through
\[ P_v(\bs)=\norm{\pi_+(u_k(\bs)v)}^2. \]
Then $P_v$ is a nontrivial polynomial of bounded degree  for all $v\in V$ (see e.g. \cite[Example 1.1 and Lemma 3.1]{Shi21} ). Hence, $ \norm{P_v}\simeq_V 1$ for all $v\in V$ with $\norm{v}=1$. By Corollary \ref{cor: regularity-stopping time for product}, there exists $\gamma>0$ depending only on $(\sigma_i)_1^k$ such that for any $\epsilon\lll_{(\sigma_i)_1^k} 1$ and $c_1\lll 1 $, we have 
\[ \mu\underbrace{\{g: \norm{\pi_+(u_k(\ttb_g)v)}<\epsilon^{c_1/2}\}}_{E}\leq \epsilon^{c_1\gamma/2}. \]
For any $g\notin E$, we have $$\norm{a_k(\epsilon^{-1})\pi_+(u_k(\ttb_g)v)}\geq \epsilon^{-c_1/2}.$$
Choose $D=D(V)>0$ such that $\norm{gv}\geq \norm{g}^{-D}\norm{v}$ for any $g\in G_k$ and $v\in V$. Then given $g\notin E$, the above inequality implies that
\[\norm{gv}=\norm{a_g a_k(\epsilon^{-1})u_k(\ttb_g)v}\geq \norm{a_g}^{-D}\norm{a_k(\epsilon^{-1})u_k(\ttb_g)v}\geq \epsilon^{O(\theta) D}\epsilon^{-c_1/2}\geq \epsilon^{-c_1/4},\]
where the last inequality follows provided that $\theta\lll_{D,c_1} 1$. On the other hand, by the Cauchy-Schwarz inequality we have
\[ \int_{E} \norm{gv}^{-\delta}\dd \mu(g)\leq \mu(E)^{1/2}\left( \int \norm{g}^{2D\delta}\dd \mu(g) \right)^{1/2}\leq \epsilon^{c_1\gamma/8},  \]
where the last inequality follows by taking $\delta\lll_{\theta,D,c_1,\gamma} 1$ small to ensure that 
\[\int \norm{g}^{2D\delta}\dd \mu (g)<\epsilon^{-c_1\gamma/4}.\]
Therefore, we have
\begin{align*}
    \int \norm{gv}^{-\delta}\dd \mu (g)&=\int_E \norm{gv}^{-\delta}\dd \mu(g)+\int_{E^c}\norm{gv}^{-\delta}\dd \mu(g)\leq \epsilon^{c_1\gamma/8}+\epsilon^{c_1\delta/4}.
\end{align*}
We take $c=\min\{c_1\gamma/8,c_1\delta/4\}$ to conclude the proof.

\end{proof}

\medskip

For later use, we also need to consider the following probability measure on $G$ given by regular convolutions among $\mu_{i,d}$'s ($i\in \dbracket{1,d}$). Denote by $\ell_i$ the Lyapunov exponent of $\lambda_i$ for $i\in \dbracket{1,d}$. Given any $n_1\in \N$ large enough, we define $n_k\in \N$ for $2\leq k\leq d$ by
\[ n_k=\left\lfloor \frac{n_1\ell_1}{\ell_k}\right\rfloor. \]
We then define a probability measure 
$\overline{\mu}_{n_1}$ on $G$ by
\begin{align}\label{eq: overline mu n1}
\overline{\mu}_{n_1}=\mu_{1,d}^{*n_1}*\cdots *\mu_{d,d}^{*n_d}.
\end{align}
 Note that by Lemma \ref{lemma: product structure}, we have 
\[ \overline{\mu}_{n_1}^{*m}*\overline{\mu}_{n_1}^{*n}=\overline{\mu}_{n_1}^{*(m+n)}, \, \forall \, m,n,n_1\in \N.  \]
The advantage of using $\overline{\mu}_{n_1}$ for $n_1$ large is that the diagonal part of typical element in $\supp \overline{\mu}_{n_1}$ is close to $a(t)$ for $t=e^{n_1 \ell_1}$. The following lemma asserts that contraction property of the averaging operator determined by $\overline{\mu}_{n_1}$ for $n_1$ sufficiently large also holds.

\begin{lemma}\label{lemma: contracting operator of usual convolution}

Let $V$ be a nontrivial irreducible linear representation of $G$. There exist $\delta,c>0$ such that for all $n_1\ggg_{V,\sigma} 1$ and any nonzero $v\in V$, we have
\[ \int \frac{1}{\norm{gv}^{\delta}}\dd \overline{\mu}_{n_1}(g)\leq e^{-cn_1}\norm{v}^{-\delta}. \]
\end{lemma}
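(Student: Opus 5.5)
I will follow the scheme of Lemma \ref{lemma: contracting  operator from stopping time}, replacing stopping times by the large deviation principle (Proposition \ref{prop: LDP}) and the stopping-time regularity by Corollary \ref{cor: regularity-stopping time for product}~(2). The plan is to write each $g\in\supp\overline{\mu}_{n_1}$ as $g=a_g\,u(\bttb_g)$, where $a_g\in A$ is the diagonal part and $\bttb_g\in\R^d$. By Lemma \ref{lemma: product structure} and \eqref{eq: formula for g in mu_i,k}, the $i$-th block is a product of $n_i$ elements of $\supp\mu_{i,d}$. Hence $a_g=\diag(\ldots)$ is determined by the contraction ratios $\ttr^{(i)}=\ttr_{\phi^{(i)}}$ with $\phi^{(i)}\sim\lambda_i^{*n_i}$ independent. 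Moreover, $\bttb_g$ is distributed as $\otimes_i\sigma_i^{(n_i)}$, up to the harmless sign and reordering coming from the anti-morphism $\iota_d$; more precisely, $\bttb_g$ is the translation part of the composed affine map.

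First I would handle the good set for the diagonal part. Set $t=e^{n_1\ell_1}$. Since $n_i\ell_i=n_1\ell_1+O(1)$, Proposition \ref{prop: LDP} applied to each $\lambda_i$ with a small $\eta>0$ gives the following: outside a set of $\overline{\mu}_{n_1}$-measure at most $d e^{-\delta_\eta n_1}$, every ratio satisfies $|\log\ttr^{(i)}+n_1\ell_1|\le \eta n_1+O(1)$. On this good set we have $a_g=a'_g\,a(t)$ with $\norm{a'_g}\le e^{O(\eta) n_1}$.

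Next comes the expansion step, which mirrors the earlier lemma. Let $V^+$ be the sum of the eigenspaces of $a(t)$ with eigenvalue $>1$, and let $\pi_+$ be the projection onto it. For $\norm{v}=1$, the function $P_v(\bs)=\norm{\pi_+(u(\bs)v)}^2$ is a nonzero polynomial of bounded degree. Its norm satisfies $\norm{P_v}\simeq_V1$, because $U$ is the expanding horospherical subgroup of $a(t)$ and $V$ has no $G$-fixed vectors; here I use \cite{Shi21}. Apply Corollary \ref{cor: regularity-stopping time for product}~(2) with $\delta=e^{-c_1n_1}\ge\max_i e^{-n_i}$, which holds for $c_1\le\min_i \ell_1/\ell_i$ up to constants. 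This shows that the set $E$ where $\norm{\pi_+(u(\bttb_g)v)}<e^{-c_1n_1/2}$ has measure at most $e^{-c_1\gamma n_1/2}$. Off $E$ and on the good set, we get
\[\norm{gv}\ge \norm{a'_g}^{-D}\,e^{c_0 n_1\ell_1}e^{-c_1n_1/2}\ge e^{c_1 n_1/4},\]
where $c_0=c_0(V)$ is the minimal expansion rate on $V^+$, and we take $c_1\le c_0\ell_1$ and $\eta\lll c_1/D$.

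Finally, on the bad set (the union of $E$ and the LDP exceptional set), I bound the integral by Cauchy--Schwarz:
\[\int_{\mathrm{bad}}\norm{gv}^{-\delta}\dd\overline{\mu}_{n_1}\le \overline{\mu}_{n_1}(\mathrm{bad})^{1/2}\Bigl(\int\norm{g}^{2D\delta}\dd\overline{\mu}_{n_1}\Bigr)^{1/2}.\]
Since $\supp\overline{\mu}_{n_1}$ consists of elements with $\norm{g}\le e^{Cn_1}$ (the support of $\lambda$ is finite), the moment factor is at most $e^{CD\delta n_1}$. This factor is absorbed if $\delta$ is small compared with $\min(c_1\gamma,\delta_\eta)$. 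Summing the two parts gives a bound $e^{-cn_1}$ with $c=\min\{c_1\gamma/8,\delta_\eta/4,c_1\delta/4\}$, and homogeneity in $v$ gives the general case. The main obstacle, to my mind, is the bookkeeping of the order of constants: $\eta$ must be chosen after $c_1$ and $D$, and $\delta_\eta$ then depends on $\eta$, so $\delta$ has to be chosen last. I also need to check that $a'_g$ perturbs $a(t)$ only by $e^{O(\eta n_1)}$ uniformly across all $d$ coordinates, which is where the choice $n_k=\lfloor n_1\ell_1/\ell_k\rfloor$ is essential.
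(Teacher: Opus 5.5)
Your proposal is correct and follows essentially the same route as the paper: a large-deviation good set for the contraction ratios giving $g=a'_g\,a(e^{n_1\ell_1})u(\bttb_g)$ with $a'_g$ subexponentially small, polynomial non-concentration of $\norm{\pi_+(u(\bttb_g)v)}^2$ via Corollary~\ref{cor: regularity-stopping time for product}~(2), and Cauchy--Schwarz on the union of the two bad sets. The only cosmetic difference is that your deviation parameter $\eta$ plays the role of the paper's $c_1/M$, and your ordering of constants ($c_1$ and $D$, then $\eta$, then $\delta_\eta$, with $\delta$ chosen last) matches the paper's.
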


\begin{proof}
The proof is similar to that of Lemma \ref{lemma: contracting  operator from stopping time}. Let $V^+\subset V$ be the direct sum of eigenspaces of $a(t)$ ($t>1$) with eigenvalue greater than $1$. Let $c_1>0$ be such that for any $v\in V^+$, we have $\norm{a(t)v}>t^{c_1}\norm{v}$ for $t>1$. 

Let $\pi_+:V\to V^+$ be the orthogonal projection. Given any $v\in V$, we define a map from $\R^d$ to $\R$ by
\[ P_v(\bs)=\norm{\pi_+(u(\bs))v}^2. \]
Then by \cite[Example 1.1 and Lemma 3.1]{Shi21}, $P_v$ is a nonzero polynomial of bounded degree such that $\norm{P_v}\simeq_V 1$ for all $v\in V$ with $\norm{v}=1$. By Corollary \ref{cor: regularity-stopping time for product} (2), for any $n_1\ggg_{(\sigma_i)_{1}^d} 1$, we have
\[ \overline{\mu}_{n_1}\underbrace{\{g: \norm{ P_v(\ttb_g) }<e^{-n_1 \ell_1 c_1/2}\}}_{E_1} \leq e^{-n_1 c_1\gamma/2}, \]
provided that $c_1\lll_{\ell_1} 1$.
On the other hand, by the large deviation principle, for any $M>0$, there exists $c_2=c_2(c_1,M,(\sigma_i)_1^d)>0$ such that for any $k\in \dbracket{1,d}$ and $n_k\ggg_{c_1,(\sigma_i)_1^d,M} 1$, 
\begin{align}\label{eq: LDP for nk}
    \mu_{k,d}^{*n_k}\{g: |\log\ttr_g+n_k\ell_k|>n_k c_1/M\}<e^{-c_2n_1}.
\end{align}
When $n_1\geq \max_k \ell_k M/c_1$, for any $s\in \R$ we have
\begin{align}\label{eq: bound on nklk}
    \abse{s+n_k\ell_k}\leq \frac{n_kc_1}{M} \, \implies \abse{s+n_1\ell_1}\leq \frac{n_1c_1'}{M},
\end{align}
where $c_1'=c_1(1+\max_k \ell_1/\ell_k)$.
Let
\[ E_2=\{g\in \supp \overline{\mu}_{n_1}: |\log \ttr_{g_k}+n_1 \ell_1|>n_1c_1'/M \,\, \text{ for some } k\in \dbracket{1,d} \}, \]
where for $g\in \supp \overline\mu_{n_1}$, $g=\prod_1^d g_k$ for $g_k\in \supp \mu_{k,d}^{*n_k}$.
Then it follows from \eqref{eq: LDP for nk} and \eqref{eq: bound on nklk} that 
\[\overline{\mu}_{n_1}(E_2)\leq e^{-c_2 n_1/2},\]
provided that $n_1\ggg_{c_1',M,(\sigma_i)_1^d} 1$.
Let $E=E_1\cup E_2$. For any $g\notin E_2$, we can write $g=a_g a(e^{n_1\ell_1})u(\ttb_g)$ for some $a_g\in A_{e^{O(n_1c_1'/M)}}$. Hence, provided that $M\ggg_{D,c'_1,\ell_1} 1$, given any $g\in \supp \overline{\mu}_{n_1}\setminus E$ we have
\[ \norm{gv}\geq \norm{a_g}^{-D}\norm{a(e^{n_1\ell_1})\pi_+(u(\ttb_g)v)}\geq e^{-O(Dn_1c'_1/M)} e^{c_1' n_1\ell_1/2}\geq e^{c_1'n_1\ell_1/4}, \]
for some $D=D(V)>1$. By the Cauchy-Schwarz inequality, we have
\begin{align}\label{eq: estimate 1 in CH}
    \int_E \norm{gv}^{-\delta} \dd \overline{\mu}_{n_1}(g)\leq \overline{\mu}_{n_1}(E)^{1/2}\left( \int \norm{g}^{2\delta D} \dd \overline{\mu}_{n_1}(g) \right)^{1/2}\leq  e^{-c_2 n_1/2}, 
\end{align}
provided that $0<\delta\lll_{c'_1,c_2,\gamma} 1$. On the other hand,
\begin{align}\label{eq: estimate 2 in CH}
    \int_{E^c} \norm{gv}^{-\delta}\dd \overline{\mu}_{n_1}(g)\leq e^{-\delta c'_1 n_1\ell_1/4}.
\end{align}
Combining \eqref{eq: estimate 1 in CH} and \eqref{eq: estimate 2 in CH}, the lemma follows.
\end{proof}


\medskip

We record the following useful lemma, which will be needed in the proof of the contraction hypothesis of Benoist-Quint height functions.

\begin{lemma}\label{lemma: increasing b_k}
Let $k\in \dbracket{1,d}$. The map $b_{k}:\R_{>0}\to \R_{\geq 0}$ defined by
\[ b_{k}(\eps)=\sup_{v\in E_1, \norm{v}\geq 1} \varphi_{k,\eps}(v) \]
is non-decreasing, where $E_1= \bigcup_{1}^d\bigwedge^i \R^{d+1}$. Moreover, $\lim_{\eps\to 0^+}b_k(\eps)=0$.
\end{lemma}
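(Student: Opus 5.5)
The plan is to prove both claims by a pointwise monotonicity argument on $\varphi_{k,\eps}$, and then a uniform estimate as $\eps\to0^+$. Throughout, fix $i$ and a vector $v\in\bigwedge^i\R^{d+1}$ with $\norm{v}\geq 1$.

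\textbf{Monotonicity.} I would show that for each such $v$ the map $\eps\mapsto\varphi_{k,\eps}(v)$ is non-decreasing; taking the supremum over $v$ then gives that $b_k$ is non-decreasing. Let $0<\eps<\eps'$. There are two cases.
\begin{itemize}
\item If $\norm{q_0(v)}\geq\eps^{\delta_i}$, then $\varphi_{k,\eps}(v)=0\leq\varphi_{k,\eps'}(v)$, since $\varphi$ is always nonnegative.
\item If $\norm{q_0(v)}<\eps^{\delta_i}$, then also $\norm{q_0(v)}<\eps'^{\delta_i}$, so both values are given by the min formula. Each term $\eps^{\delta_i/\delta_\lambda}\norm{q_\lambda(v)}^{-1/\delta_\lambda}$ is increasing in $\eps$, because $\delta_i,\delta_\lambda>0$. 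This also holds when $q_\lambda(v)=0$, where the term is $+\infty$ for every $\eps$. Hence the minimum is non-decreasing in $\eps$.
\end{itemize}

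\textbf{Limit.} It remains to show that $b_k(\eps)\to0$, which needs an estimate uniform in $v$. Take $\eps<1$ small. If $\varphi_{k,\eps}(v)\neq0$, then $\norm{q_0(v)}<\eps^{\delta_i}\leq\eps$. Since $E=\bigoplus_\lambda E^\lambda$ is an orthogonal decomposition, $\norm{v}^2=\sum_\lambda\norm{q_\lambda(v)}^2$. With $N=|P_k^+|$, some $\lambda\neq0$ therefore satisfies
\[\norm{q_\lambda(v)}^2\geq\frac{1-\eps^2}{N}\geq\frac{1}{2N}.\]
For this $\lambda$,
\[\varphi_{k,\eps}(v)\leq \eps^{\delta_i/\delta_\lambda}(2N)^{1/(2\delta_\lambda)}\leq C\eps^{c},\]
where $c=\min_{i,\lambda}\delta_i/\delta_\lambda>0$ and $C=\max_\lambda(2N)^{1/(2\delta_\lambda)}$. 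Both are finite because $P_k^+$ and $\{1,\dots,d\}$ are finite sets. This bound is uniform in $v$, so $b_k(\eps)\leq C\eps^c\to0$.

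\textbf{Main obstacle.} The only delicate point is orthogonality of the decomposition $E=E^0\oplus\bigoplus_\lambda E^\lambda$. I would justify it by using a $K_k$-invariant inner product on $E$ that makes the $G_k$-isotypic decomposition orthogonal; the standard inner product on $\bigwedge^*\R^{d+1}$ works, since $G_k\cong\SL_{k+1}(\R)$ is stable under transpose. Failing that, equivalence of norms on the finite-dimensional space $E$ gives $\max_\lambda\norm{q_\lambda(v)}\gg\norm{v}$ directly, which suffices with a different constant.
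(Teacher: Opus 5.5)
Your proof is correct and follows the paper's argument: the monotonicity is the same pointwise two-case check comparing $\norm{q_0(v)}$ with $\eps^{\delta_i}$. For the limit the paper only says it is clear from the definition; your uniform bound $b_k(\eps)\leq C\eps^{c}$ makes this explicit, using that $\norm{v}\geq 1$ and $\norm{q_0(v)}<\eps^{\delta_i}$ force some $\lambda\neq 0$ with $\norm{q_\lambda(v)}\gg 1$.
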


\begin{proof}
By definition of $b_k(\eps)$, clearly we have $\lim_{\eps\to 0^+}b_k(\eps)=0$. Given $0<\epsilon_1\leq \epsilon_2$, we need to show $b_k(\eps_1)\leq b_k(\eps_2)$. Let $v\in \bigwedge^i \R^{d+1}$ ($i\in \dbracket{1,d}$) be such that $\norm{v}\geq 1$.  If $\norm{q_0(v)}\geq \eps_1^{\delta_i}$, then $\varphi_{k,\eps_1}(v)=0\leq \varphi_{k,\eps_2}(v)$. If $\norm{q_0(v)}<\eps_1^{\delta_i}\leq \eps_2^{\delta_i}$, then clearly by definition we have $\varphi_{k,\eps_1}(v)\leq \varphi_{k,\eps_2}(v)$. This being true for all $v\in E_1$ with $\norm{v}\geq 1$, the lemma follows.
\end{proof}
Let $C_1>1$ be a constant depending on $(G_k)_{k=1}^d$ such that Mother's inequality in \cite[Proposition 3.1]{BQ4} works for representations of $G_k$ on $E$ for any $k\in \dbracket{1,d}$. By definition of the function $b_k$, we can choose $0<\eta_1<C_1^{-1}$ small enough such that $b_k(\eta_1)<1$ for any $k\in \dbracket{1,d}$. Then by Lemma \ref{lemma: increasing b_k}, for any $\eps_0\in (0,\eta_1)$, we have $b_k(\eps_0)<1$ for every $k\in \dbracket{1,d}$.

\medskip

Let
\begin{align}\label{eq: defn of kappa_1}
    \kappa_1= \min_{\lambda\in P_k^+\setminus\{0\}, k\in \dbracket{1,d}}\delta_{\lambda}^{-1}.
\end{align}
We recall the following lemma from \cite{BQ4}:
\begin{lemma}\cite[Lemma 4.2]{BQ4}\label{lemma: consequence of MI}
 Let $\eta_1>0$ be such that $b_k(\eta_1)<1$ for every $k\in \dbracket{1,d}$. Let $k\in \dbracket{1,d}$. Given $0<\eps_0<\eta_1$ and monomials $u,v,w\in \bigwedge^* \R^{d+1}$ with degree $r\geq 0$, $s>0$ and $t>0$ respectively. Assume that $\varphi_{k,\eps_0}(u\wedge v)\geq 1$ and $\varphi_{k,\eps_0}(u\wedge w)\geq 1$. The following hold:
\begin{itemize}
    \item[(i)] If $r>0$ and $r+s+t<d+1$, then 
    \[ \min(\varphi_{k,\eps_0}(u\wedge v), \varphi_{k,\eps_0}(u\wedge w))\leq (C_1\eps_0)^{\kappa_1/2} \max(\varphi_{k,\eps_0}(u), \varphi_{k,\eps_0}(u\wedge v\wedge w)). \]

    \item[(ii)] If $r=0$ and $r+s+t<d+1$, then 
    \[ \min(\varphi_{k,\eps_0}( v), \varphi_{k,\eps_0}( w))\leq (C_1\eps_0)^{\kappa_1/2}\varphi_{k,\eps_0}(v\wedge w). \]

    \item[(iii)] If $r>0$, $r+s+t=d+1$ and $\norm{u\wedge v\wedge w}\geq 1$, then 
    \[ \min(\varphi_{k,\eps_0}(u\wedge v), \varphi_{k,\eps_0}(u\wedge w)))\leq (C_1\eps_0)^{\kappa_1/2}\varphi_{k,\eps_0}(u). \]

    \item[(iv)] If $r=0$, $r+s+t=d+1$, then $\norm{v\wedge w}<1$.
    
\end{itemize}
    
\end{lemma}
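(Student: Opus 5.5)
This is \cite[Lemma 4.2]{BQ4}, so the plan is to rederive it from Mother's inequality \cite[Proposition 3.1]{BQ4}, applied to each $G_k$. That inequality supplies the constant $C_1$: for monomials $u,v,w$ as in the statement, and for each pair of nonzero highest weights $\alpha,\beta$, there are weights $\gamma,\gamma'$ such that
\[
\norm{q_\alpha(u\wedge v)}\,\norm{q_\beta(u\wedge w)} \leq C_1 \max\bigl(\norm{q_\gamma(u)}\,\norm{q_{\gamma'}(u\wedge v\wedge w)},\ \dots\bigr).
\]
Here the weights satisfy $\delta_\alpha+\delta_\beta\geq\delta_\gamma+\delta_{\gamma'}$. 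When $r=0$ we read $u=1$ and the inequality degenerates; when $r+s+t=d+1$ the top wedge $u\wedge v\wedge w$ is $G_k$-invariant, of weight $0$. Throughout, the degree exponents satisfy $\delta_{r+s}+\delta_{r+t}\geq\delta_r+\delta_{r+s+t}$, by concavity of $i\mapsto i(d+1-i)$.

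Step 1 is to unwind the definition of $\varphi_{k,\eps_0}$. The hypotheses $\varphi_{k,\eps_0}(u\wedge v)\geq1$ and $\varphi_{k,\eps_0}(u\wedge w)\geq1$ force $\norm{q_0(u\wedge v)}<\eps_0^{\delta_{r+s}}$ and the analogous bound for $u\wedge w$. They also give, for every nonzero weight $\lambda$,
\[
\norm{q_\lambda(u\wedge v)} \leq \eps_0^{\delta_{r+s}}\,\varphi(u\wedge v)^{-\delta_\lambda}.
\]
Writing $m=\min(\varphi(u\wedge v),\varphi(u\wedge w))$, each component of $u\wedge v$ and of $u\wedge w$ is therefore small, of size at most $\eps_0^{\delta}m^{-\delta_\lambda}$. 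Feeding this into Mother's inequality yields, for the best pair of components of $u$ and $u\wedge v\wedge w$,
\[
\norm{q_\gamma(u)}\,\norm{q_{\gamma'}(u\wedge v\wedge w)} \leq C_1\,\eps_0^{\delta_{r+s}+\delta_{r+t}}\,m^{-(\delta_\alpha+\delta_\beta)}.
\]

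Step 2 converts this back to $\varphi$-values. If $u$ and $u\wedge v\wedge w$ both had $\varphi$ smaller than $(C_1\eps_0)^{\kappa_1/2}m$, then either their $q_0$ parts would be large, or their nonzero components would be large, of size at least $\eps_0^{\delta}(\cdot)^{-\delta_\gamma}$. Each alternative contradicts the product bound, using the exponent inequalities and $\kappa_1\leq\delta_\lambda^{-1}$.

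The delicate case, and the main obstacle, is when $q_0(u)$ or $q_0(u\wedge v\wedge w)$ is large. There one must use the invariant-vector part of Mother's inequality: fixed vectors contribute only through weight-$0$ terms, and these are controlled by the smallness $\eps_0<\eta_1$ together with $b_k(\eps_0)<1$ from Lemma \ref{lemma: increasing b_k}. The leftover factor $\eps_0^{\text{positive}}$ is exactly what produces the gain $(C_1\eps_0)^{\kappa_1/2}$; the square root absorbs the fact that two factors are being split.

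Cases (ii) and (iii) follow from the same computation with $u=1$, respectively with $u\wedge v\wedge w$ invariant. In (iii) the assumption $\norm{u\wedge v\wedge w}\geq1$ makes the top term harmless, leaving only $\varphi(u)$. For (iv), $v\wedge w$ is invariant of top degree. Mother's inequality then gives $\norm{v\wedge w}\leq C_1\norm{q_\alpha(v)}\norm{q_\beta(w)}$, and by Step 1 the right-hand side is at most $C_1\eps_0^{\delta_s+\delta_t}<1$, since $\eps_0<\eta_1<C_1^{-1}$.
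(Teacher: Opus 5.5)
Your plan for (i)--(iii) is the one the paper relies on: it defers to the derivation of \cite[Lemma 4.2]{BQ4} from the Mother inequality. As written, however, you state that inequality backwards, and Step 1 does not follow from what you wrote.

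\textbf{The Mother inequality.} It refines the Eskin--Margulis--Mozes submodularity $\norm{u}\,\norm{u\wedge v\wedge w}\leq\norm{u\wedge v}\,\norm{u\wedge w}$. For \emph{every} pair $\gamma,\gamma'\in P_k^+$, including the weight $0$, it bounds $\norm{q_\gamma(u)}\,\norm{q_{\gamma'}(u\wedge v\wedge w)}$ by $C_1$ times a maximum of $\norm{q_\alpha(u\wedge v)}\,\norm{q_\beta(u\wedge w)}$ over pairs with $\delta_\alpha+\delta_\beta\geq\delta_\gamma+\delta_{\gamma'}$.
\begin{itemize}
\item Your displayed version goes the other way, and it is false already for $G_k=G$: take $u=e_1$, $v=e_2$, $w=e_2+\eta e_3$ with $\eta\to0$.
\item From your version, upper bounds on the components of $u\wedge v$ and $u\wedge w$ give nothing about $u$ and $u\wedge v\wedge w$.
\item With the correct inequality Step 1 holds, but you need it for the specific pair $(\gamma,\gamma')$ that Step 2 produces from the large components of $u$ and $u\wedge v\wedge w$, not for ``the best pair''. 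You also need the exponent $m^{-(\delta_\gamma+\delta_{\gamma'})}$ rather than $m^{-(\delta_\alpha+\delta_\beta)}$.
\item Passing between these two exponents is exactly where $m=\min(\varphi_{k,\eps_0}(u\wedge v),\varphi_{k,\eps_0}(u\wedge w))\geq1$ is used. You only use this hypothesis to make the $q_0$-parts small, which needs only $m>0$.
\end{itemize}

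\textbf{The sign in Step 2.} Your contradiction hypothesis has the wrong sign. To prove (i) you must assume $\varphi_{k,\eps_0}(u),\varphi_{k,\eps_0}(u\wedge v\wedge w)<Km$ with $K=(C_1\eps_0)^{-\kappa_1/2}>1$. Refuting the bound $(C_1\eps_0)^{\kappa_1/2}m$ instead only yields $m\leq(C_1\eps_0)^{-\kappa_1/2}M$, which has no gain.

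\textbf{How the corrected argument closes.} With both fixes it closes uniformly, and the large-$q_0$ case is not delicate.
\begin{itemize}
\item Give the weight $0$ exponent $0$. The negation yields $\norm{q_\gamma(u)}\geq\eps_0^{\delta_r}(Km)^{-\delta_\gamma}$ and $\norm{q_{\gamma'}(u\wedge v\wedge w)}\geq\eps_0^{\delta_{r+s+t}}(Km)^{-\delta_{\gamma'}}$.
\item Combined with Step 1 and $\delta_{r+s}+\delta_{r+t}-\delta_r-\delta_{r+s+t}=2st\geq2$, this gives $K^{-(\delta_\gamma+\delta_{\gamma'})}\leq C_1\eps_0^{2st}<C_1\eps_0$.
\item On the other hand $K^{-(\delta_\gamma+\delta_{\gamma'})}=(C_1\eps_0)^{\kappa_1(\delta_\gamma+\delta_{\gamma'})/2}\geq C_1\eps_0$, because $\kappa_1(\delta_\gamma+\delta_{\gamma'})\leq2$ and $C_1\eps_0<1$. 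This is a contradiction.
\item No appeal to $b_k$ is needed in (i)--(iii). Item (ii) is the case $u=1$. Item (iii) is the case where the component $q_0(u\wedge v\wedge w)=u\wedge v\wedge w$ has norm at least $1$, which plays the role of the lower bound for $\gamma'=0$.
\end{itemize}

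\textbf{Item (iv).} Your proof of (iv) is valid once the Mother inequality is stated correctly; it uses $\eta_1<C_1^{-1}$, which the paper fixes just before the lemma. It differs from the paper's argument, which is more elementary and needs only $b_k(\eta_1)<1$, with no Mother inequality. There, $\norm{v\wedge w}\geq1$ forces $\norm{v}\geq1$ or $\norm{w}\geq1$. On that vector $\varphi_{k,\eps_0}\leq b_k(\eps_0)\leq b_k(\eta_1)<1$, contradicting the hypothesis.
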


\begin{proof}
The proof of items (i) (ii) (iii) are identical to that of \cite[Lemma 4.2]{BQ4} using the Mother inequality \cite[Proposition 3.1]{BQ4}. The statement of (iv) is different from that in \cite[Lemma 4.2]{BQ4} and we provide a proof here. Assume otherwise that $\norm{v\wedge w}\geq 1$, then either $\norm{v}\geq 1$ or $\norm{w}\geq 1$. Hence, by Lemma \ref{lemma: increasing b_k} and the choice of $\eta_1$, we have either $\varphi_{k,\eps_0}(v)<1$ or $\varphi_{k,\eps_0}(w)<1$. Since $u\wedge v=v$ and $u\wedge w=w$, this contradicts the assumption in the lemma and finishes the proof of (iv).
    
\end{proof}

\medskip

\begin{proposition}\label{prop: contraction of BQ}

Given $k\in \dbracket{1,d}$, $\eps\in (0,1)$ and $C>1$. Let $\mu$ be a probability measure on $G_k$ satisfying

(1) There exists $\delta>0$ such that for any $\lambda\in P_k^+\setminus\{0\}$ and $v\in E^{\lambda}\setminus\{0\}$, 
\[ \int \frac{1}{\norm{gv}^{\delta/\delta_{\lambda}}}\dd \mu(g)\leq  \frac{\eps}{\norm{v}^{\delta/\delta_{\lambda}}}.  \]

(2) $\sup \{\norm{g}: g\in \supp \mu\}\leq C$.

Then there exists $b>0$ depending on $C,\delta$ such that for any $\epsilon_0\lll_{C,G_k} 1$, we have
\[ \int f^{\delta}_{k,\epsilon_0}(gx)\dd \mu(g)\leq \epsilon f^{\delta}_{k,\epsilon_0}(x)+b,\,\, \forall x\in X.  \]

\end{proposition}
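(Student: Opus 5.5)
We follow the scheme of \cite[Proposition 4.1]{BQ4}, with Lemma \ref{lemma: consequence of MI} as the input from Mother's inequality. The first step is a vector-level contraction. Fix $i$ and a monomial $v\in\bigwedge^i\R^{d+1}$ with $\norm{q_0(v)}<\eps_0^{\delta_i}$. For $\lambda\in P_k^+\setminus\{0\}$, hypothesis (1) applied to $q_\lambda(v)$ gives, by $G_k$-equivariance of $q_\lambda$,
\[ \int \bigl(\eps_0^{\delta_i/\delta_\lambda}\norm{q_\lambda(gv)}^{-1/\delta_\lambda}\bigr)^{\delta}\dd\mu(g)\leq \eps\,\bigl(\eps_0^{\delta_i/\delta_\lambda}\norm{q_\lambda(v)}^{-1/\delta_\lambda}\bigr)^{\delta}. \]
Since $q_0(gv)=q_0(v)$, the truncation condition is preserved under $g\in G_k$, and $\varphi_{k,\eps_0}^\delta\le(\varphi_{k,\eps_0}^{(\lambda)})^\delta$ for each single $\lambda$. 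We therefore choose $\lambda$ realizing the minimum for $v$. This yields
\[ \int\varphi_{k,\eps_0}^\delta(gv)\dd\mu(g)\le\eps\,\varphi_{k,\eps_0}^\delta(v). \]
The inequality is trivial when $\varphi_{k,\eps_0}(v)=0$. By (2), we also have $\varphi_{k,\eps_0}(gv)\le C^{D}\varphi_{k,\eps_0}(v)$ pointwise, where $D$ is as in \eqref{eq: Lipschitz}.

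Next, passing from $\varphi_{k,\eps_0}$ to $f_{k,\eps_0}$, the difficulty is that the maximizing vector of $gx$ may differ from that of $x$. We introduce the finite set
\[ \Psi(x)=\{v\in V(x): \varphi_{k,\eps_0}(v)\ge C^{-D}f_{k,\eps_0}(x)\}. \]
For every $g\in\supp\mu$, $f_{k,\eps_0}(gx)=\max_{v\in\Psi(x)}\varphi_{k,\eps_0}(gv)$, because any $v\notin\Psi(x)$ satisfies $\varphi(gv)<C^D\cdot C^{-D}f(x)\le f(gx)$ (using $f(gx)\ge C^{-D}f(x)$). The key claim is that when $f_{k,\eps_0}(x)$ is larger than a constant $M=M(C)$, the set $\Psi(x)$ contains at most one vector, up to sign, in each degree $i$. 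This is where Lemma \ref{lemma: consequence of MI} enters. If $v_1,v_2\in\Psi(x)$ are two distinct primitive integral monomials of degree $i$, write $v_1=u\wedge v$ and $v_2=u\wedge w$, with $u$ the monomial of their intersection lattice. Since $\varphi(v_j)\ge1$ for $f(x)\ge C^D$, items (i)--(iii) give $\varphi(u)$ or $\varphi(u\wedge v\wedge w)\ge(C_1\eps_0)^{-\kappa_1/2}C^{-D}f(x)$. Taking $\eps_0\lll_C 1$ so that $(C_1\eps_0)^{-\kappa_1/2}C^{-D}>1$ contradicts maximality of $f(x)$. In case (iii), $\norm{u\wedge v\wedge w}\ge1$ by integrality, and in case (iv) one gets $\norm{v\wedge w}<1$, which is impossible for an $x$-integral nonzero vector of full degree.

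Granting the claim, when $f(x)>M$ we have $f(gx)^\delta\le\sum_{v\in\Psi(x)}\varphi(gv)^\delta$ with at most $d$ terms. Integrating and using the first step gives
\[ \int f(gx)^\delta\dd\mu\le \eps\sum_{v\in\Psi(x)}\varphi(v)^\delta. \]
This is at most $d\,\eps f(x)^\delta$, which loses the factor $d$. To remove it, I would follow Benoist--Quint exactly. Instead of $f$, one first proves the inequality for the modified function $\max_i \varphi$ restricted to degree $i$ with suitable weights, or simply notes that the claim actually shows that all elements of $\Psi(x)$ share one degree. Indeed, two vectors of different degrees $i<j$ in $\Psi(x)$ are handled by the same Mother's inequality argument applied to $u=$ intersection and the wedge, so $\Psi(x)$ is a single vector up to sign. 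When $f(x)\le M$, (2) gives $f(gx)\le C^DM$, so $b=(C^DM)^\delta$ works. The main obstacle I expect is the combinatorial claim, namely verifying that the chosen $\eps_0$ threshold depends only on $C$ and $G_k$, and that the degree-$d+1$ edge case is handled by Lemma \ref{lemma: consequence of MI}(iv).
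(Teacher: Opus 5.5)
Your outline follows the paper's proof. It has the same vector-level bound $\int\varphi^\delta_{k,\eps_0}(gv)\dd\mu(g)\le\eps\,\varphi^\delta_{k,\eps_0}(v)$, the same two-sided Lipschitz control of $\varphi_{k,\eps_0}$ on $\supp\mu$, a set $\Psi(x)$ of near-maximizers, a split according to the size of $f_{k,\eps_0}(x)$, and the ``at most one primitive vector per degree'' claim via Lemma \ref{lemma: consequence of MI}(i)--(iv). However, two steps fail as written.

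First, the threshold $C^{-D}$ in $\Psi(x)$ is too weak. Your chain $\varphi(gv)<C^{D}\cdot C^{-D}f(x)\le f(gx)$ uses $f(x)\le f(gx)$, which is false; the whole point is that $f$ decreases on average. What you actually have is $\max_{w\in\Psi(x)}\varphi(gw)\ge C^{-D}f(x)$. So you need $\varphi(gv)<C^{-D}f(x)$ for $v\notin\Psi(x)$, which forces the threshold $C^{-2D}$; this is the paper's $C_0^{-2\kappa_2}$. The change shifts the other constants:
\begin{itemize}
\item the large case becomes $f(x)>C^{2D}$;
\item $\eps_0$ must satisfy $(C_1\eps_0)^{-\kappa_1/2}C^{-2D}>1$ (the paper's $C_0^{4\kappa_2/\kappa_1}C_1\eps_0<1$), together with $\eps_0<\eta_1$ so that Lemma \ref{lemma: consequence of MI} applies;
\item $b=C^{3D\delta}$.
\end{itemize}
You should also build primitivity into $\Psi(x)$, as the paper does, since $v$ and $2v$ may both lie in it. This costs nothing because $\varphi(nv)\le\varphi(v)$ for integers $n\ge 1$.

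Second, your removal of the factor $d$ is wrong: elements of $\Psi(x)$ of different degrees are not excluded. If the two rational subspaces are nested, the lower-degree vector is itself the intersection monomial $u$. There is then no pair $u\wedge v,\,u\wedge w$ with $\deg v,\deg w>0$ to feed into Lemma \ref{lemma: consequence of MI}. This situation does occur. Take $k=d=2$ and $x=h\SL_3(\Z)$ with $h=\diag(e^{-a},e^{-b},e^{a+b})$, $a$ large and $b<a$ suitably chosen (with the paper's normalizations, $b=a-2|\log\eps_0|$). Then the shortest vector $he_1$ and the shortest bivector $h(e_1\wedge e_2)$ both realize $f_{2,\eps_0}(x)$.

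So the argument honestly yields $\int f^\delta_{k,\eps_0}(gx)\dd\mu(g)\le d\,\eps\, f^\delta_{k,\eps_0}(x)+b$. This is exactly where the paper's own proof ends: its last display reads $d\eps f^{\delta}_{\eps_0}(x)$, not the $\eps$ in the statement. The factor $d$ is harmless downstream, since it is absorbed into the contraction rates $\eps'^{c}$ and $e^{-cn_1}$ in Corollaries \ref{cor: stopping time iteration of CH} and \ref{cor: usual convolution iteration of CH}. Drop the ``single vector'' paragraph and keep the $d$.
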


\begin{proof}
The proof of this proposition follows  similar lines to that of \cite[Proposition 5.3]{BQ4}. We reproduce the proof here for completeness. 

To lighten the notation, we omit the index $k$ and write $f_{k,\epsilon_0}=f_{\epsilon_0}$ and $\varphi_{k,\eps_0}=\varphi_{\eps_0}$. By assumption, for any nonzero $v\in \bigwedge^i \R^{d+1}$ ($i\in\dbracket{1,d}$), 
\[ \int \varphi_{\epsilon_0}^{\delta}(gv)\dd \mu(g)\leq \min_{\lambda\in P_k^+\setminus\{0\}} \epsilon_0^{\delta\delta_i/\delta_{\lambda}} \int \norm{gq_{\lambda}(v)}^{-\delta/\delta_{\lambda}}\dd \mu(g)\leq \epsilon\varphi_{\epsilon_0}^{\delta}(v). \]
Let 
\[ C_0=\sup \{\norm{g}^{d+1}: g\in \supp \mu \}\leq C^{d+1}. \]
Denote by $\kappa_2=\max_{\lambda\in P_k^+\setminus\{0\}}\delta_{\lambda}^{-1}$. Then for any $\lambda\in P_k^+\setminus\{0\}$, $g\in \supp \mu$ and $v\in \bigwedge^* \R^{d+1}\setminus\{0\}$, we have
\[ C_0^{-1}\norm{q_{\lambda}(v)}\leq \norm{q_{\lambda}(gv)}\leq C_0\norm{q_{\lambda}(v)}, \]
which implies that for any $\epsilon_0>0$,
\begin{align}\label{eq: CH-1}
    C_0^{-\kappa_2}\varphi_{\epsilon_0}(v)\leq \varphi_{\epsilon_0}(gv)\leq C_0^{\kappa_2}\varphi_{\epsilon_0}(v).
\end{align}
We now fix $\eta_1>0$ small enough so that $b_k(\eta_1)<1$ for every $k\in \dbracket{1,d}$, and
\begin{align}\label{eq: CH-2}
C_0^{4\kappa_2/\kappa_1}C_1\epsilon_1<1, 
\end{align}
where $\kappa_1$ is defined in \eqref{eq: defn of kappa_1}, and $C_1\geq 1$ is a constant given as in \cite[Proposition 3.1]{BQ4} depending only on $(G_k)_{1}^d$. To proceed, we let $\epsilon_0\in (0,\eta_1)$. For any $x\in X$, we define
\[ \Psi_{\epsilon_0}(x):=\{v\in V(x): v \text{ is primitive and } \varphi_{\epsilon_0}(v)\geq C_0^{-2\kappa_2}f_{\epsilon_0}(x)\}. \]
\medskip

\underline{Case 1:} $f_{\eps_0}(x)\leq C_0^{2\kappa_2}$. Then for any $g\in \supp \mu$, by \eqref{eq: CH-1} we have $f_{\eps_0}(gx)\leq C_0^{3\kappa_2}$. Hence,
\[ \int f^{\delta}_{\eps_0}(gx)\dd \mu(g)\leq C_0^{3\delta\kappa_2}. \]

\medskip

\underline{Case 2:} $f_{\eps_0}(x)>C_0^{2\kappa_2}$. Then we claim that $\Psi_{\eps_0}(x)$ contains at most one element in $\bigwedge^i \R^{d+1}$ up to sign change for each $i \in \dbracket{1,d}$. The claim is proved by contradiction. Assume not, there exists some $i\in \dbracket{1,d}$ such that $\Psi_{\eps_0}(x)$ contains two non-colinear vectors $v_0,w_0\in \bigwedge^i \R^{d+1}$. We can write $v_0=u\wedge v$ and $w_0=u\wedge w$ for $u,v,w\in V(x)$ such that $v,w\in \bigwedge^j \R^{d+1}$ for some $1\leq j\leq i$ and $u\wedge v\wedge w\in \bigwedge^{i+j}\R^{d+1}\setminus \{0\}$. By assumption, we have $\min\{\varphi_{\eps_0}(u\wedge v),\varphi_{\eps_0}(u\wedge w)\}\geq C_0^{-2\kappa_2}f_{\eps_0}(x)>1$. This enables us to apply Lemma \ref{lemma: consequence of MI}. We have 4 situations to consider, each of which yields a contradiction thanks to Lemma \ref{lemma: consequence of MI}:

(i) If $j<i$ and $j<d+1-i$. Applying Lemma \ref{lemma: consequence of MI} (i), we have
\begin{align*}
    f_{\eps_0}(x)&\leq C_0^{2\kappa_2}\min\{\varphi_{\eps_0}(u\wedge v),\varphi_{\eps_0}(u\wedge w)\}\\
    &\leq \left( C_0^{4\kappa_2/\kappa_1} C_1\epsilon_0 \right)^{\kappa_1/2}\max\{\varphi_{\eps_0}(u),\varphi_{\eps_0}(u\wedge v\wedge w) \}\\
    &< f_{\eps_0}(x),
\end{align*}
where the last inequality follows from \eqref{eq: CH-2}. This is a contradiction.

(ii) $j=i<d+1-i$. In this case $u=1$. Lemma \ref{lemma: consequence of MI} (ii), \eqref{eq: CH-2} and the same computation as in (i) yields $f_{\eps_0}(x)<f_{\eps_0}(x)$.

(iii) $j=d+1-i<i$. In this case $\norm{u\wedge v\wedge w}\geq 1$.  Lemma \ref{lemma: consequence of MI} (iii), \eqref{eq: CH-2} and the same computation as in (i) yields $f_{\eps_0}(x)<f_{\eps_0}(x)$.

(iv) $j=i=d+1-i$. In this case $u=1$ and $\norm{v\wedge w}\geq 1$. This contradicts  Lemma \ref{lemma: consequence of MI} (iv). These 4 cases together finish the proof of the claim.

\medskip

Now for any $v\in V(x)\setminus \Psi_{\eps_0}(x)$ and $g\in \supp \mu$, by \eqref{eq: CH-1} we have
\[ \varphi_{\eps_0}(gv)\leq C_0^{-\kappa_1}f_{\eps_0}(x)=C_0^{-\kappa_1}\max_{w\in \Psi_{\eps_0}(x)}\varphi_{\eps_0}(w)\leq \max_{w\in \Psi_{\eps_0}(x)}\varphi_{\eps_0}(gw). \]
This implies that for any $v\in V(x)$ and $g\in \supp \mu$, we have
\[  \varphi_{\eps_0}(gv)\leq \max_{w\in \Psi_{\eps_0}(x)}\varphi_{\eps_0}(gw). \]
Hence, the claim implies
\[ \int f^{\delta}_{\eps_0}(gx)\dd \mu(g)\leq \sum_{w\in \Psi_{\eps_0}(x)}\int \varphi^{\delta}_{\eps_0}(gw)\dd \mu(g)\leq d\eps f^{\delta}_{\eps_0}(x). \]
We set $b=C_0^{3\delta\kappa_2}$ and combine upper bounds in Cases 1 and 2 to conclude the proof.

\end{proof}

\medskip

Proposition \ref{prop: contraction of BQ} can be combined with Lemma \ref{lemma: contracting  operator from stopping time} to yield:

\begin{corollary}\label{cor: stopping time iteration of CH}
There exist $\eta_1,\delta,c\in (0,1)$ and  $C,b>0$ such that for any $\epsilon_0\in (0,\eta_1)$, any $\eps\in(0,1]$ and  $x\in X$, if $(\eps_i)_1^k\in (0,1)^k$ satisfies
\begin{align}\label{eq: pertubation}
   1\leq \min_i\frac{\eps_i}{\eps}\leq \max_i \frac{\eps_i}{\eps}\leq r_{min}^{-1}, 
\end{align}
where $r_{min}=\min_{\phi\in \supp \lambda_i,1\leq i\leq d} \ttr_{\phi}$.
Then 
\[ \int f_{k,\epsilon_0}^{\delta}(gx)\dd \mu_{1,k}^{\eps_1}*\cdots*\mu_{k,k}^{\eps_k}(g)\leq C \eps^{c}f^{\delta}_{k,\epsilon_0}(x)+b. \]   
\end{corollary}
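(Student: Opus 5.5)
The plan is to combine Proposition \ref{prop: contraction of BQ} with Lemma \ref{lemma: contracting  operator from stopping time}, splitting according to whether $\eps$ is small or not.

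\textbf{Hypothesis (1) for small $\eps$.} Fix $\theta\lll 1$ as required in Lemma \ref{lemma: contracting  operator from stopping time}, uniformly over the finitely many nontrivial irreducible $G_k$-representations $E^{\lambda}$, $\lambda\in P_k^+\setminus\{0\}$, $k\in\dbracket{1,d}$. Let $\eps_*\lll 1$ be the threshold in that lemma, and shrink it further so that $r_{min}^{-1}\leq \eps_*^{-\theta}$. Then for every $\eps\leq\eps_*$, the constraint \eqref{eq: pertubation} gives
\[ 1\leq \min_i \eps_i/\eps \leq \max_i \eps_i/\eps \leq r_{min}^{-1}\leq \eps^{-\theta}, \]
so the hypothesis of Lemma \ref{lemma: contracting  operator from stopping time} holds. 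That lemma produces exponents $\delta_V,c_V$ for each $V=E^{\lambda}$. Set $\delta'=\min_{\lambda}\delta_{E^\lambda}$ and $c'=\min_\lambda c_{E^\lambda}$. Shrinking the exponent only helps, by Jensen/Hölder: $\int\norm{gv}^{-s}\leq(\int\norm{gv}^{-\delta_V})^{s/\delta_V}$. So I choose the common $\delta$ with $\delta/\delta_\lambda\leq\delta'$ for all $\lambda$. This yields hypothesis (1) of Proposition \ref{prop: contraction of BQ} with contraction factor $\eps^{c}$, where $c=c'\min_\lambda(\delta/\delta_\lambda)/\delta'$.

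\textbf{Hypothesis (2) and the conclusion for small $\eps$.} I need a bound on $\norm{g}$ on the support that does not depend on $\eps$, and this is the main obstacle: as $\eps\to0$ the support grows, with $g=a_g a_k(\eps^{-1})u_k(\bttb_g)$. To get around it, I use that the proof of Proposition \ref{prop: contraction of BQ} only uses $C$ through \eqref{eq: CH-1}, i.e.\ through the distortion of $\varphi_{\eps_0}$. I will therefore apply the proposition as follows. Fix one $\eps_1\le\eps_*$ with $\eps_1^{c}\le 1/2$. For this $\eps_1$ the support is finite with norm bounded by some $C(\eps_1)$, so the proposition gives
\[ P_{\eps_1}f^\delta\le \tfrac12 f^\delta+b \]
for all $\eps_0$ below a threshold depending on $C(\eps_1)$, and hence for all $\eps_0<\eta_1$ once $\eta_1$ is fixed accordingly.

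For general $\eps\leq\eps_1$, I then iterate using Corollary \ref{cor: prod rw stopping time decomposition}. Write $\mu_{(\eps_i)}$ as a conditional convolution of measures of the form $\mu_{(\eps_i')}$ with $\eps_i'\approx\eps_1$, repeated $N\approx\log\eps/\log\eps_1$ times; the moreover part of Lemma \ref{lemma: rw stopping time decomposition} ensures that each conditional step again satisfies a perturbation condition of type \eqref{eq: pertubation}, with constant $r_{min}^{-1}$ at each step. This requires a version of the one-step bound valid uniformly over a compact family of parameters $\eps_i'\in[\eps_1, r_{min}^{-1}\eps_1]$. The constant $C$ is uniform on this family, since the support norms are uniformly bounded. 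The last step, with ratio in $[1,\eps_1^{-1}]$, is handled by the crude bound \eqref{eq: Lipschitz} with bounded norm. Iterating gives
\[ P f^\delta\le 2^{-N}C' f^\delta+2b=O(\eps^{c''})f^\delta+O(b). \]

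\textbf{Large $\eps$.} For $\eps\in(\eps_1,1]$, all $\eps_i\geq \eps_1$, so the support norms are bounded by a constant depending only on $\eps_1$ and $r_{min}$. Then \eqref{eq: Lipschitz} gives
\[ Pf^\delta\le C f^\delta\le C\eps_1^{-c}\eps^{c}f^\delta, \]
which has the required form after enlarging the constant. This completes the proof modulo the uniformity in the iteration step.
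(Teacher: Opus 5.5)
Your proposal is correct and is essentially the paper's proof. Like the paper, you fix a single scale (the paper's $\eps'=e^{-n_0}$), apply Proposition \ref{prop: contraction of BQ} uniformly over the family of stopping-time measures at that scale (whose support norms are uniformly bounded), iterate through Corollary \ref{cor: prod rw stopping time decomposition}, and absorb the remainder step and the large-$\eps$ range with \eqref{eq: Lipschitz}. The only slip is cosmetic and occurs in the Jensen step: dividing by $\delta'=\min_V\delta_V$ can overstate the contraction exponent, so use instead $c=c'\min_\lambda(\delta/\delta_\lambda)$, which is valid because every $\delta_V<1$.
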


\begin{proof}
Let $\eps\in (0,1]$ and $(\eps_i)_1^k\in (0,1)^k$ satisfying \eqref{eq: pertubation}. For simplicity,  we denote 
\[ {\mu}^{(\eps_i)_{1}^k}=\mu_{1,k}^{\eps_1}*\cdots*\mu_{k,k}^{\eps_k}. \]

Let $\theta\in (0,1)$  be the constant depending on $G_k$ and $(E^{\lambda})_{\lambda\in P_k^+}$  satisfying the assumption in Lemma \ref{lemma: contracting  operator from stopping time}. Fix $n_0\ggg_{\sigma ,\theta} 1$ such that $e^{-n_0\theta}<r_{min}^d$ and if we set $\eps'=e^{-n_0}$, then Lemma \ref{lemma: contracting  operator from stopping time} holds for $\eps'$ in place of $\eps$ therein. By Lemma \ref{lemma: contracting  operator from stopping time}, there exist $c,\delta\in (0,1)$ such that for any $(\eps_i')_1^k\in (0,1)^k$ satisfying
\begin{align}\label{eq: pertubation 2}
     \eps'^{\theta}\leq \min_i\frac{\eps'_i}{\eps'}\leq \max_i \frac{\eps'_i}{\eps'}\leq \eps'^{-\theta}, 
\end{align}
and any $v\in E^{\lambda}\setminus\{0\}$, we have
\[ \int \frac{1}{\norm{gv}^{\delta/\delta_{\lambda}}}\dd {\mu}^{(\eps_i')_{1}^k}(g)\leq  \epsilon'^{c}\norm{v}^{-\delta/\delta_{\lambda}}. \]
Moreover, for any $(\eps'_i)_1^k$ satisfying \eqref{eq: pertubation 2}, we have 
\[\sup\{\norm{g}: g\in \supp {\mu}^{(\eps_i')_{1}^k}\}\leq \eps'^{-1-O(\theta)}.
\]
Hence, Proposition \ref{prop: contraction of BQ} applies, and we can choose $\eta_1\lll_{\eps',G_k} 1$ such that for any $\eps_0\in (0,\eta_1)$ and  any $(\eps_i')_1^k$ satisfying \eqref{eq: pertubation 2}, 
\begin{align}\label{eq: stopping contraction for f_k}
    \int f^{\delta}_{k,\epsilon_0}(gx)\dd {\mu}^{(\eps_i')_{1}^k}(g)\leq \epsilon'^c f^{\delta}_{k,\epsilon_0}(x)+b,\,\, \forall x\in X,
\end{align}
for some $b>0$ depending on $\eps',\delta,\theta$. We write 
\begin{align}\label{eq: log eps}
    \integer{|\log \eps|}=(p+1)n_0+m 
\end{align}
for $p\in \Z_{\geq -1}$ and $0\leq m\leq n_0-1$. 
If $p=0$ or $-1$, then $\eps \geq \eps'^2$. We may choose $C>1$ depending only on $\eps'$ such that for any $g\in \supp {\mu}^{(\eps_i)_1^k}$ and $y\in X$,
\[ f^{\delta}_{k,\epsilon_0}(gy)\leq Cf^{\delta}_{k,\epsilon_0}(y). \]
Now we assume that $p\geq 1$. In this case, $\eps<\eps'^2<r^{2/\theta}$. Due to \eqref{eq: pertubation}, we can  write
\[ \eps_i=\eps^{1-\theta_i}, \,\, \text{ for  some }\theta_i\in [0,\theta),\,  1\leq i\leq k. \]
Then we denote $(\eps_i')_1^k=(\eps'^{1-\theta_i})_1^k$. Iterating Corollary \ref{cor: prod rw stopping time decomposition}, we have
\begin{align}\label{eq: decomp 1}
   {\mu}^{(\eps_i)_{1}^k}=\int \delta_{g_{p+1}\cdots g_1} \dd {\mu}^{(\eps_i/\prod_{j=1}^p\ttr_{g_{j,i}})_1^k}(g_{p+1})\dd {\mu}^{(\eps'^p_i/\prod_{j=1}^{p-1}\ttr_{g_{j,i}})_{1}^k}(g_p)\cdots  \dd {\mu}^{(\eps'_i)_{1}^k}(g_1),
\end{align}
where $g_j=g_{j,1}\cdots g_{j,k}$ for $g_{j,i}\in \supp \mu_{i,k}^{\eps'^j/\prod_{s=1}^{j-1}\ttr_{g_{s,i}}}$ for $1\leq j\leq p$ and $1\leq i\leq k$. Moreover, thanks to Lemma \ref{lemma: basic prop of stopping time} (1) and the choice of $n_0$ (which guarantees that $\eps'< r_{min}^d$), we have
\begin{align}\label{eq: pertubation 3}
    (\eps'^j/\prod_{s=1}^{j-1}\ttr_{g_{s,i}})_{i=1}^k\in (0,1)^k \text{ satisfies \eqref{eq: pertubation 2} for any } 1\leq j\leq p.
\end{align}
In view of \eqref{eq: log eps}, we also have
\begin{align}\label{eq: last bound}
    (\eps_i/\prod_{j=1}^p\ttr_{g_{j,i}})_1^k\in [\eps'^2,\eps' r_{min}^{-1}]^k.
\end{align}
Therefore, the contraction hypothesis \eqref{eq: stopping contraction for f_k} applies to each component measure ${\mu}^{(\eps_i'^j/\prod_{s=1}^{j-1}\ttr_{g_{s,i}})_1^k}$ for $1\leq j\leq p$ and we have
\begin{align*}
  {\mu}^{(\eps_i)_{1}^k}(f^{\delta}_{k,\eps_0})&\leq e^{\delta Dn_0} \int f^{\delta}_{k,\eps_0}(g_p\cdots g_1 x) \dd {\mu}^{(\eps'^p_i/\prod_{j=1}^{p-1}\ttr_{g_{j,i}})_{1}^k}(g_p)\cdots  \dd {\mu}^{(\eps'_i)_{1}^k}(g_1)\\
    &\leq e^{2\delta Dn_0} e^{-cpn_0}f^{\delta}_{k,\epsilon_0}(x)+e^{2\delta Dn_0} b,
\end{align*}
where the first inequality follows from  \eqref{eq: last bound} and \eqref{eq: Lipschitz} for some $D>0$ depending only on $G$. This finishes the proof of the corollary.
\end{proof}

\medskip

We combine Proposition \ref{prop: contraction of BQ} and Lemma \ref{lemma: contracting operator of usual convolution} to obtain:

\begin{corollary}\label{cor: usual convolution iteration of CH}
There exist $\eta_1,\delta,c\in (0,1)$ such that for any $n_1\ggg_{\sigma} 1$, there is  $b=b(n_1,\delta)>0$ so that for any $n\in \N$, $\epsilon_0\in (0,\eta_1)$ and any $x\in X$, 
\[ \int f^{\delta}_{d,\epsilon_0}(gx)\dd \overline{\mu}_{n_1}^{*n}(g)\leq e^{-cnn_1} f^{\delta}_{d,\epsilon_0}(x)+b, \]
where $\overline{\mu}_{n_1}^{*n}$ is defined in \eqref{eq: overline mu n1}.
\end{corollary}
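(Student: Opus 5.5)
The plan is to feed Lemma \ref{lemma: contracting operator of usual convolution} into Proposition \ref{prop: contraction of BQ} with $k=d$ and $\mu=\overline{\mu}_{n_1}$, and then iterate the resulting one-step inequality $n$ times.

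\emph{Step 1: verify hypothesis (1) of Proposition \ref{prop: contraction of BQ}.} For each $\lambda\in P_d^+\setminus\{0\}$, apply Lemma \ref{lemma: contracting operator of usual convolution} to every irreducible subrepresentation of $E^{\lambda}$. This gives exponents $\delta_V,c_V>0$ with
\[\int \norm{gv}^{-\delta_V}\dd\overline{\mu}_{n_1}(g)\leq e^{-c_Vn_1}\norm{v}^{-\delta_V}\]
for all $n_1\ggg_\sigma 1$. Since $E$ is finite dimensional, only finitely many isotypic types occur, so we take $\delta$ with $\delta/\delta_{\lambda}\le \min_V\delta_V$ for all $\lambda$. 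Jensen's inequality, in the form $\int h^{s}\le (\int h)^{s}$ for $s\le 1$, then lowers the exponent and still gives contraction rate $e^{-c n_1}$ with a uniform $c>0$.

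One technical point: $E^{\lambda}$ may be a direct sum of several copies of the same irreducible, and the estimate is needed for vectors not lying in a single copy. I would handle this as in Lemma \ref{lemma: contracting operator of usual convolution} itself. That proof only uses that $P_v(\bs)=\norm{\pi_+(u(\bs)v)}^2$ is a nonzero polynomial of bounded degree with $\norm{P_v}\simeq 1$. This holds for any $v$ in a representation with no trivial component, by \cite{Shi21}. So the lemma applies directly to $V=E^{\lambda}$.

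\emph{Step 2: verify hypothesis (2) and apply the proposition.} The support of $\overline{\mu}_{n_1}$ is finite, so $\sup_{g\in\supp\overline{\mu}_{n_1}}\norm{g}\le C(n_1)$, a bound depending only on $n_1$ and $\sigma$. Proposition \ref{prop: contraction of BQ} then yields, for $\eps_0\lll_{C(n_1)}1$,
\[\int f_{d,\eps_0}^{\delta}(gx)\dd\overline{\mu}_{n_1}(g)\le e^{-cn_1}f_{d,\eps_0}^{\delta}(x)+b_0.\]

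\emph{Step 3: iterate.} By Lemma \ref{lemma: product structure} we have $\overline{\mu}_{n_1}^{*n}=\overline{\mu}_{n_1}*\overline{\mu}_{n_1}^{*(n-1)}$. Induction on $n$ gives
\[\int f^{\delta}_{d,\eps_0}\dd\overline{\mu}_{n_1}^{*n}\le e^{-cnn_1}f^{\delta}_{d,\eps_0}(x)+b_0\sum_{j\ge0}e^{-cjn_1}\le e^{-cnn_1}f^{\delta}_{d,\eps_0}(x)+2b_0,\]
so we set $b=2b_0$.

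\emph{Main obstacle: uniformity of $\eta_1$.} The statement requires $\eta_1$ independent of $n_1$, whereas Proposition \ref{prop: contraction of BQ} requires $\eps_0\lll_{C}1$ with $C=C(n_1)$. I would handle this as in Corollary \ref{cor: stopping time iteration of CH}.
\begin{itemize}
\item First fix a single large $N_0$ and obtain the contraction for $\overline{\mu}_{N_0}$ with an $\eta_1$ depending only on $N_0$.
\item For general $n_1\ggg 1$, write $\overline{\mu}_{n_1}^{*n}$ as a convolution of $\overline{\mu}_{N_0}^{*q}$ with a bounded remainder. Here $q\approx nn_1/N_0$, since the $n_k$ are defined by floors and commute by Lemma \ref{lemma: product structure}.
\item Iterate the $N_0$-step contraction $q$ times.
\item Absorb the remainder using \eqref{eq: Lipschitz}. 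This costs a bounded factor on the main term, harmless after shrinking $c$, and lets $b$ depend on $n_1$.
\end{itemize}
The delicate part is that the rounding in $n_k=\lfloor n_1\ell_1/\ell_k\rfloor$ gives an error of size $O(n)$ per coordinate. I would control this by grouping the extra factors into the remainder and bounding their norm by $e^{O(n)}$. These are dominated by the gain $e^{-cnn_1}$ once $n_1\ggg1$.
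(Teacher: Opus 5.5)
Your Steps 1--3 are exactly the paper's proof. The paper applies Lemma \ref{lemma: contracting operator of usual convolution} on each $E^{\lambda}$ (your remarks on non-irreducible $E^{\lambda}$ and on lowering the exponent make explicit what the paper uses tacitly). It then uses the bound $\sup\{\norm{g}:g\in\supp\overline{\mu}_{n_1}\}\le e^{Dn_1\ell_1}$, applies Proposition \ref{prop: contraction of BQ}, and iterates. The paper stops there, so it only obtains the inequality for $\epsilon_0\lll_{n_1,G}1$. The uniformity of $\eta_1$ in $n_1$ that the statement demands, which you correctly flag, is not addressed in the paper. Your reduction to a fixed $N_0$, in the spirit of Corollary \ref{cor: stopping time iteration of CH}, is the right remedy. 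Two points of that sketch need correcting.

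\emph{The size of the remainder.} In $\overline{\mu}_{n_1}^{*n}=\overline{\mu}_{N_0}^{*q}*(\text{remainder})$ the remainder is neither bounded nor of norm $e^{O(n)}$. Each copy of $\overline{\mu}_{N_0}$ has only $\lfloor N_0\ell_1/\ell_k\rfloor$ steps in coordinate $k$, so the $q\approx nn_1/N_0$ copies fall short by up to $q$ steps. To keep all step counts of the remainder nonnegative you must also take $q$ a bit below $nn_1/N_0$. The remainder then has $O(n+nn_1/N_0+N_0)$ steps per coordinate. The $nn_1/N_0$ part is not beaten by taking $n_1$ large. It is beaten by the gain $e^{-cqN_0}$ only if $N_0\ggg_{c,\delta,D}1$. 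This is legitimate, since $c,\delta$ in Lemma \ref{lemma: contracting operator of usual convolution} do not depend on $N_0$.

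\emph{The placement of the remainder.} Using Lemma \ref{lemma: product structure}, the remainder must be placed so that it acts on $x$ first. Then \eqref{eq: Lipschitz} gives $f^{\delta}_{d,\epsilon_0}(g_rx)\le\norm{g_r}^{D\delta}f^{\delta}_{d,\epsilon_0}(x)$, and only the main term pays. If it acts last, it multiplies the additive constant by a factor growing with $n$, which the statement forbids.

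\emph{A cleaner organization.} Decompose a single factor $\overline{\mu}_{n_1}=\overline{\mu}_{N_0}^{*q_1}*\nu$, with $q_1\approx n_1/N_0$ and $\nu$ consisting of $O(n_1/N_0+N_0)$ steps acting first. This gives $\int f^{\delta}_{d,\epsilon_0}(gx)\dd\overline{\mu}_{n_1}(g)\le e^{-cn_1/2}f^{\delta}_{d,\epsilon_0}(x)+b$ for all $\epsilon_0<\eta_1(N_0)$, once $N_0\ggg1$ and $n_1\ggg_{N_0}1$. Your Step 3 then applies verbatim, and nothing in the remainder depends on $n$.
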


\begin{proof}
By Lemma \ref{lemma: contracting operator of usual convolution}, we can choose constants $\delta,c\in (0,1)$ such that for all $n_1\ggg_{\sigma} 1$ and any nonzero $v\in E^{\lambda}\setminus\{0\}$, 
\[ \int \frac{1}{\norm{gv}^{\delta/\delta_{\lambda}}}\dd \overline{\mu}_{n_1}(g)\leq e^{-cn_1}\norm{v}^{-\delta/\delta_{\lambda}}. \]
Note that as $\supp \mu_{i,d}$ is finite for any $1\leq i\leq d$, there exists $D=D(\sigma)>1$ such that
\[ \sup\{\norm{g}: g\in \supp \overline{\mu}_{n_1}\}\leq e^{Dn_1\ell_1}. \]
Hence, Proposition \ref{prop: contraction of BQ} applies: there exists $b>0$ depending on $n_1,\delta$ such that for any $\epsilon_0\lll_{n_1,G} 1$,
\begin{align}\label{eq: usual contraction for f_d}
    \int f_{d,\epsilon_0}^{\delta}(gx)\dd \overline{\mu}_{n_1}(g)\leq e^{-cn_1} f_{d,\eps_0}^{\delta}(x)+b,\,\, \forall x\in X.
\end{align}
Then the corollary follows by applying \eqref{eq: usual contraction for f_d} iteratively.
\end{proof}

We record the following immediate consequence of Corollary \ref{cor: usual convolution iteration of CH}.

\begin{corollary}\label{cor: effective recurrence of regular convolution}
There exist $C,c>0$ such that for any $n_1\ggg_{\sigma} 1$ and $b=b(n_1)>0$, the following holds: For any $n\in \N$,  $\rho>0$ and $x\in X$,
\begin{align*}
\overline{\mu}^{*n}_{n_1}*\delta_x\{ \inj\leq \rho \}\leq \rho^c(e^{-cnn_1}\inj(x)^{-C}+b).
\end{align*}

\end{corollary}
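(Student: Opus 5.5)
The plan is to combine the iterated contraction inequality of Corollary \ref{cor: usual convolution iteration of CH} with Markov's inequality, and then to translate a height bound into an injectivity radius bound via Lemma \ref{lemma: BQ function vs inj}. Fix $\epsilon_0\in(0,\eta_1)$ once and for all; this is legitimate since $\epsilon_0$ may depend only on $G$ and on $n_1$, through the requirement $\epsilon_0\lll_{n_1,G}1$ in the proof of Corollary \ref{cor: usual convolution iteration of CH}. Let $\delta,c$ be the constants of that corollary and write $f=f_{d,\epsilon_0}$. Corollary \ref{cor: usual convolution iteration of CH} then gives, for every $n\in\N$ and $x\in X$,
\[ \int f^{\delta}(gx)\dd\overline{\mu}^{*n}_{n_1}(g)\leq e^{-cnn_1}f^{\delta}(x)+b. \]

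\emph{Step 1: from small injectivity radius to large height.} By the contrapositive of Lemma \ref{lemma: BQ function vs inj}, there is an absolute $C_2>0$ such that $\inj(y)\leq\rho$ forces $f(y)\geq \epsilon_0\rho^{-1/C_2}$. Indeed, if $f(y)<\rho'^{-1}$ then $\inj(y)\geq\epsilon_0^{C_2}\rho'^{C_2}$. Choosing $\rho'=\rho^{1/C_2}\epsilon_0^{-1}$, up to the strictness of the inequality, gives the claim. This assumes $\rho'<1$; for larger $\rho$ the statement is trivial once the constant $b$ absorbs the term.

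\emph{Step 2: Markov's inequality.} Applying Markov's inequality to the display above gives
\[ \overline{\mu}^{*n}_{n_1}*\delta_x\{\inj\leq\rho\}\leq \epsilon_0^{-\delta}\rho^{\delta/C_2}\bigl(e^{-cnn_1}f^{\delta}(x)+b\bigr). \]

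\emph{Step 3: bounding $f(x)$ by a power of $\inj(x)^{-1}$.} This is the only mildly non-routine step. Every $v\in V(x)$ satisfies $\norm{v}\gg\inj(x)^{C_3}$, by Mahler's criterion and the standard comparison between the shortest integral monomial vectors and $\inj(x)$. Since $\varphi_{\epsilon_0}(v)\leq \epsilon_0^{O(1)}\max_\lambda\norm{q_\lambda(v)}^{-1/\delta_\lambda}$, we need control on some component $q_\lambda(v)$. Here I would use that $\norm{q_0(v)}<\epsilon_0^{\delta_i}$ in the relevant case, while $\norm{v}\leq \sum_\lambda\norm{q_\lambda(v)}$. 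Hence either $\norm{v}\leq 2\epsilon_0^{\delta_i}$, which is still bounded below by the Mahler bound, or some $\norm{q_\lambda(v)}\gg\norm{v}$. In both cases $f(x)\ll_{\epsilon_0}\inj(x)^{-C_4}$.

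Substituting this into Step 2, renaming the constants $c$ and $C$ (taking $c$ smaller than $\delta/C_2$ and than the exponential rate, and $C=\delta C_4$), and absorbing $\epsilon_0^{-\delta}$ into $b=b(n_1)$ and into the constant in front yields the stated bound. If necessary, the $\inj(x)^{-C}$ constant can be enlarged to absorb $\epsilon_0$-dependent factors, since $\inj(x)\leq 1$ up to normalization.
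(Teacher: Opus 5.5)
Your argument is the same as the paper's: Corollary \ref{cor: usual convolution iteration of CH} plus Markov's inequality, combined with a two-sided polynomial comparison between $f_{d,\epsilon_0}$ and $\inj$ (your Steps 1 and 3), so it is correct in substance. Two small points need tightening. First, in Step 3 the sub-case $\norm{v}\leq 2\epsilon_0^{\delta_i}$ is not handled by a lower bound on $\norm{v}$ alone. It causes no trouble only because $G_d=G$ has no nonzero invariant vectors in $\bigwedge^i\R^{d+1}$, so $q_0\equiv 0$ and some $\norm{q_\lambda(v)}\gg\norm{v}$ always; this is exactly the fact the paper invokes, and it fails for $k<d$. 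Second, the factor $\epsilon_0^{-\delta}$ should be absorbed by restricting to $\rho\leq\epsilon_0^{2C_2}$ and halving the $\rho$-exponent, with larger $\rho$ covered by enlarging $b$. Enlarging $C$ does not work, since a larger exponent cannot absorb a multiplicative constant when $\inj(x)\asymp 1$.
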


\begin{proof}

By Corollary \ref{cor: usual convolution iteration of CH} and the Markov inequality, we have
\[ \overline{\mu}^{*n}_{n_1}*\delta_x\{y: f_{\epsilon_0}(y)\geq \rho^{-1}\}\leq \rho^{\delta}(e^{-cnn_1 }f^{\delta}_{\epsilon_0}(x)+b), \]
for some $\delta,c\in (0,1)$, $b=b(n_1,\delta)>0$ and $\eps_0\lll 1$. Fix such $\eps_0$. Since the action of $G$ on $\bigwedge^* \R^{d+1}$ has no nontrivial invariant vector, Mahler's criterion implies that there exists $C=C(X)>0$ such that 
\[ \inj(y)^{-1/C} \ll_{\epsilon_0} f_{\epsilon_0}(y)\ll_{\epsilon_0} \inj(y)^{-C}, \quad \forall y\in X. \]
The corollary follows.
\end{proof}

\bigskip

\section{Effective recurrence for intertwined random walks}\label{sec: effective recurrence}

The main purpose of this section is to prove the following effective recurrence theorem. First we let 
\begin{align}\label{eq: defn of rmin}
    r_{min}=\min_{\phi\in \supp \lambda_i,i\in \dbracket{1,d}} \ttr_{\phi}>0.
\end{align}
\begin{thm}\label{thm: weighted effective recurrence}
There exist constants $C,c>0$ depending only on $\sigma$ and $G$ such that the following holds. Given parameters $\epsilon_1, \cdots, \epsilon_d\in (0,1)$. Assume that there exists a permutation $S$ on $\dbracket{1,d}$ such that $\eps_{S(i)}\leq \eps_{S(j)}$ for any $1\leq i\leq j\leq d$ and  
\begin{align}\label{eq: condition on epsilon i}
\eps_{S(d)}^d\prod_{j=1}^{d-1}\eps_{S(j)}^{-1}\leq r_{min}^d.
\end{align}
Then for any $x\in X$ and $0<\rho< 1$, we have
\[ \mu_{1,d}^{\eps_1}*\cdots*\mu_{d,d}^{\eps_d}*\delta_x\{\inj<\rho\}<C\rho^c\left(1+(\epsilon_{S(d)}^d \prod_{i=1}^{d-1}\eps^{-1}_{S(i)})^{c}\inj(x)^{-C}\right). \]
\end{thm}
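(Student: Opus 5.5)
After relabeling coordinates we may assume that $S$ is the identity, so $\eps_1\leq \cdots\leq \eps_d$. This is legitimate because the measures $\mu^{\eps_i}_{i,d}$ commute (Lemma \ref{lemma: product structure}), and conjugating by the permutation matrix moves each $G_k$ to the corresponding block subgroup. Since $\eps_d/\eps_i\geq 1$, the hypothesis \eqref{eq: condition on epsilon i} gives
\[ \eps_d\leq \eps_d^d\prod_{i<d}\eps_i^{-1}\leq r_{min}^d. \]
Hence it suffices to prove the bound with the smaller factor $\eps_d^{c}$ in place of $(\eps_d^d\prod_{i<d}\eps_i^{-1})^c$, and the uniform top level $\eps_d$ is genuinely small.

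The plan is a stopping-time peeling argument, inducting downward through the chain $G_d\supset G_{d-1}\supset\cdots\supset G_1$.

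\textbf{Step 1 (peeling).} Apply Corollary \ref{cor: prod rw stopping time decomposition} with $\eps_i'=\eps_d$ for all $i$. This writes
\[ \mu^{(\eps_i)}=\int \mu^{(\eps_i/\ttr_{g_{1,i}})}*\delta_{g_1}\dd\mu^{(\eps_d,\dots,\eps_d)}(g_1). \]
By Lemma \ref{lemma: rw stopping time decomposition}, the new levels satisfy $\eps_i/\eps_d\leq \eps_i/\ttr_{g_{1,i}}\leq r_{min}^{-1}\eps_i/\eps_d$. For $i=d$ the new level is $\eps_d/\ttr_{g_{1,d}}>1$, so that factor is the identity. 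The remaining walk is therefore supported in $G_{d-1}$ with levels $\eps_i'' \asymp \eps_i/\eps_d$ for $i<d$, which are still ordered. Iterating, at stage $k$ we get a walk in $G_k$ whose $k$ levels all lie within a factor $r_{min}^{-1}$ of a common value $\eps^{(k)}=\eps_{k}/\eps_{k+1}$ (up to bounded factors), followed by a residual walk in $G_{k-1}$. So the full measure becomes an iterated integral of at most $d$ near-uniform blocks, with block $k$ living in $G_k$. This is exactly the setting of Corollary \ref{cor: stopping time iteration of CH}.

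\textbf{Step 2 (height function estimates per block).} For block $k$, Corollary \ref{cor: stopping time iteration of CH} gives
\[ \int f_{k,\eps_0}^{\delta}(gy)\dd\mu^{(\eps^{(k)}_i)}(g)\leq C(\eps^{(k)})^c f^{\delta}_{k,\eps_0}(y)+b. \]
Blocks whose level is close to $1$ are harmless: they only contribute bounded distortion through \eqref{eq: Lipschitz}.

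\textbf{Step 3 (chaining different height functions).} To pass from block $k$ to block $k-1$ we need to compare $f_{k-1,\eps_0}$ with $f_{k,\eps_0}$. The route is through the injectivity radius. Lemma \ref{lemma: BQ function vs inj} gives $\inj(y)\gg f_{k,\eps_0}(y)^{-C}$. Conversely, a lower bound on $\inj(y)$ bounds all $y$-integral monomial vectors from below, so $f_{k-1,\eps_0}(y)\ll \inj(y)^{-C}$; for $\eps_0$ small, the possible $G_{k-1}$-invariant vectors are excluded because they have large norm. Together these give $f_{k-1,\eps_0}\ll f_{k,\eps_0}^{C'}$. I would then choose decreasing exponents $\delta_d>\delta_{d-1}>\cdots>\delta_1$ with $\delta_{k-1}C'\leq \delta_k$. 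Jensen's inequality then lets the $L^{\delta_k}$ bound from block $k$ control the $\delta_{k-1}$-moment entering block $k-1$. Propagating this, the initial term $f_{d,\eps_0}^{\delta}(x)\ll\inj(x)^{-C}$ is multiplied by $\eps_d^{c}$ from the first block and by bounded factors afterwards. The additive constants $b$ accumulate only to a bounded quantity.

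\textbf{Step 4 (conclusion).} The end result is $\mathbb{E} f_{1,\eps_0}^{\delta_1}\ll 1+\eps_d^{c}\inj(x)^{-C}$. Markov's inequality combined with Lemma \ref{lemma: BQ function vs inj} then converts this into the claimed bound on $\{\inj<\rho\}$.

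The main obstacle is Step 3, the compatibility of the different height functions $f_{k,\eps_0}$. The contraction for block $k$ is only available for $f_{k}$, since it relies on expansion in $G_k$-representations, while the output of block $k$ must feed into the $G_{k-1}$-height of the next block. Controlling this change of height function with only polynomial loss, and making sure the invariant-vector regions where $f_{k-1}=+\infty$ are never entered, is the delicate part. My first attempt would be the comparison through $\inj$ described above, with the decreasing exponents $\delta_k$. If that loses too much, I would instead intertwine the height functions, taking a weighted maximum of the $f_{k,\eps_0}^{\delta_k}$.
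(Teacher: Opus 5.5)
Your plan has a genuine gap, and it already shows in your opening reduction: the bound with $\eps_d^{c}$ in place of $(\eps_d^d\prod_{i<d}\eps_i^{-1})^{c}$ is false, so no argument can deliver it. Take $d=2$ and $\eps_1=r_{min}^{-2}\eps_2^{2}$, and let $\eps_2\to 0$; then $\eps_1\le\eps_2$ and \eqref{eq: condition on epsilon i} holds. Let $x$ be a lattice containing $\eta e_2$ with $\eta$ tiny. Every element $g=g_1g_2$ of the support of $\mu_{1,2}^{\eps_1}*\mu_{2,2}^{\eps_2}$ (with $g_i\in\supp\mu_{i,2}^{\eps_i}$) is a diagonal matrix times a unipotent in the last column. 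Hence $g(\eta e_2)=\eta\,\ttr_{g_1}^{1/3}\ttr_{g_2}^{-2/3}e_2$, and $\ttr_{g_1}^{1/3}\ttr_{g_2}^{-2/3}\asymp(\eps_1/\eps_2^{2})^{1/3}\asymp 1$. So once $\eta$ is small in terms of $\rho$, the walk gives full mass to $\{\inj<\rho\}$, whereas $\eps_2^{c}\to 0$.

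The step producing this is the claim in Step 1 that, after peeling off the uniform level-$\eps_d$ block, the remaining walk lives in $G_{d-1}$. It does not. By \eqref{eq: formula for g in mu_i,k}, elements of $\supp\mu_{i,d}$ have $(d,d)$-entry $\ttr^{1/(d+1)}\neq 1$. The measures living in $G_{d-1}$ are the $\mu_{i,d-1}$ built from $\iota_{d-1}$, and these are not what appears in the decomposition. The remaining walk contracts $e_d$ by about $\prod_{i<d}(\eps_i/\eps_d)^{1/(d+1)}$. This can cancel the expansion $\eps_d^{-1/(d+1)}$ gained in the first block; in the example, $\eps_2^{-1/3}$ is followed by $(\eps_1/\eps_2)^{1/3}$. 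So the later blocks are neither $G_k$-walks nor ``bounded factors'', and Corollary \ref{cor: stopping time iteration of CH} does not apply to them.

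The missing idea is the choice of intermediate levels in Lemma \ref{lemma: specific decomposition of random walk}. Write $\theta_k=\eps_k/\eps_{k+1}$. For $2\le j\le d$ and $k=d-j+1$, the $j$-th block uses level $\asymp\theta_k^{k+1}$ in coordinates $i\le k$ and level $\asymp\theta_k^{k}$ (not $1$) in coordinates $i>k$. This is exactly what makes its diagonal part equal to $a_k(\theta_k^{-(k+1)})$ up to $A_{O(1)}$. The block is then a $G_k$-walk in the first $k$ coordinates composed with bounded unipotent translations in the others, so the $G_k$ height function applies (Proposition \ref{prop: effective recurrence of suitable convolution}(1)). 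Consequently the first block has uniform level $\eps_d^{d}\prod_{s<d}\eps_s^{-1}$ rather than $\eps_d$, and this is precisely the factor in the theorem.

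Your Step 3 has a second problem. For fixed $\eps_0$, neither $f_{k-1,\eps_0}\ll f_{k,\eps_0}^{C'}$ nor $f_{k-1,\eps_0}\ll\inj^{-C}$ holds, since a short $y$-integral monomial fixed by $G_{k-1}$ gives $f_{k-1,\eps_0}(y)=+\infty$. The paper therefore does not chain moments. For each later block it chooses $\eps_0\asymp\min\{\eta_1,\inj(y)^{C_1}\}$ adaptively, so that $f_{k,\eps_0}(y)\le 1$. It then obtains the relative bound $\inj(gy)>\rho^{C}\inj(y)^{C}$ off a set of probability $\rho^{c}(\theta_k^{c}+1)$. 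The absolute bound of Proposition \ref{prop: effective recurrence of suitable convolution}(2) is used only for the first block, and the proof ends with a union bound over the $d$ blocks.
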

Here condition \eqref{eq: condition on epsilon i} implies that $\epsilon_{i}^d \prod_{j\neq i}\eps^{-1}_{j}< r_{min}^d$ for any $i\in \dbracket{1,d}$. Indeed, \eqref{eq: condition on epsilon i} can be weakened to $\eps_{S(d)}^d\prod_{j=1}^{d-1}\eps_{S(j)}^{-1}<1$. The stronger constraint in \eqref{eq: condition on epsilon i} is stated  for simplicity of the proof.

\medskip

\begin{proof}[Proof of equivalence between Theorem \ref{thm: nondiv} and Theorem \ref{thm: weighted effective recurrence}.]

Given $\bt\in \R^d_{>1}$ with $\integer{\bt}\geq r_{min}^{-d}$, we define parameters $\eps_1,\cdots,\eps_d\in (0,1)$ by setting 
\begin{align*}\label{eq: defn of eps_i using t_i}
    \eps_i=t_i^{-\tfrac{1}{d+1}}\prod_{j=1}^d t_j^{-\tfrac{1}{d+1}}, \,\, \forall\, 1\leq i\leq d. 
\end{align*}
Equivalently, 
\[t_i=\eps_i^{-(d+1)}\prod_{j=1}^d \eps_j, \,\,\forall \, 1\leq i\leq d.\] 
The assumption $\integer{\bt}\geq r_{min}^{-d}$ and the definition of $(\eps_i)_1^d$ ensure that there exists a permutation $S$ on $\dbracket{1,d}$ such that $\eps_{S(i)}<\eps_{S(j)}$ for any $1\leq i\leq j\leq d$ and \eqref{eq: condition on epsilon i} holds for $(\eps_i)_1^d$ and $S$. In particular, we have
\begin{align}\label{eq: t and prod of eps}
\integer{\bt}=\epsilon_{S(d)}^{-d} \prod_{i=1}^{d-1}\eps_{S(i)}.
\end{align}

\medskip

Denote
\[ \lambda=\lambda_{1,d}^{\eps_1}*\cdots*\lambda_{d,d}^{\eps_d} \,\, \text{ and }\,\,  \mu=\mu_{1,d}^{\eps_1}*\cdots*\mu_{d,d}^{\eps_d}.  \]
Let $\rho>0$ and $\chi_i$ be the characteristic function of $\{\inj<\rho^i\}$ for $i=1,2$. By Lemma \ref{lemma: basic prop of stopping time}, for any $g\in \supp \mu$, $g=a_g a(\bt)u(\bs)$ for some $a_g\in A_{O(1)}$ and $\bs \in \supp \lambda*\delta_0$. This and the compactness of $\supp \sigma$ implies that for any $\rho\lll_{\sigma} 1$, $y\in X$, $\bs\in \supp \sigma$ and $g\in \supp \mu$,
\begin{align}\label{eq: implication 1}
\chi_2(a_g^{-1}u(\bs)y)\leq\chi_1(y).
\end{align}
Using $\lambda*\sigma=\sigma$, we obtain for $i=1,2$,
\begin{align}\label{eq: int of chi}
    \int \chi_i(a(\bt)u(\bs)x)\dd \sigma(\bs) &=\int \int \chi_i(a_g^{-1}u(\bs)gx)\dd \mu(g)\dd \sigma(\bs).
\end{align}
Now assume Theorem \ref{thm: weighted effective recurrence} holds. Plug in $i=2$ for \eqref{eq: int of chi}, by \eqref{eq: implication 1} we have
\begin{align}\label{eq: recurrence implies non-div}
    \sigma\{\bs: \inj(a(\bt)u(\bs)x)<\rho^2\}\leq \mu*\delta_x\{\inj<\rho\}.
\end{align}
Applying Theorem \ref{thm: weighted effective recurrence} to \eqref{eq: recurrence implies non-div} and noting \eqref{eq: t and prod of eps}, we obtain Theorem \ref{thm: nondiv} for any $\bt\in \R^d_{>1}$ with $\integer{\bt}\geq r_{min}^{-d}$. For any $\bt\in \R^d_{>1}$ such that $\integer{\bt}<r_{min}^{-d}$, $\integer{r_{min}^{-d}\bt}>r_{min}^{-d}$. Since $a(r_{min}^{-d})$ lies in a compact set, Theorem \ref{thm: nondiv} still holds for possibly larger $C$ and smaller $c$ in the statement of Theorem \ref{thm: nondiv}.

Next we assume Theorem \ref{thm: nondiv} holds. Plug in $i=1$ for \eqref{eq: int of chi}, by \eqref{eq: implication 1} we have
\begin{align}\label{eq: non-div implies recurrence}
    \mu*\delta_x\{\inj<\rho^2\}\leq \sigma\{\bs: \inj(a(\bt)u(\bs)x)<\rho\}.
\end{align}
Applying Theorem \ref{thm: nondiv} to \eqref{eq: non-div implies recurrence}, we obtain Theorem \ref{thm: weighted effective recurrence}.
\end{proof}

\medskip

The following proposition is the first step towards the proof of Theorem \ref{thm: weighted effective recurrence}. It asserts that  convolutions by suitable stopping time measures exhibit recurrence property: 
\begin{proposition}\label{prop: effective recurrence of suitable convolution}
  There exist $C,c>0$ such that for any   $\epsilon\in (0,1]$, $(\delta_i)_1^d\in \R^d_{\geq 1}$ such that $ \max_i \delta_i< r_{min}^{-1}$ and $\rho\lll 1$, the following hold:
\begin{itemize}
    \item[(1)]  Given $k\in \dbracket{1,d-1}$. Let
\[  \eps_i=\begin{cases}
    \delta_i\eps^{k+1} & \text{ if } 1\leq i\leq k,\\
    \delta_i \eps^k & \text{ if } k+1\leq i\leq d.
\end{cases} \]
Then for any $x\in X$,
\begin{align*}
\mu_{1,d}^{\eps_1}*\cdots*\mu_{d,d}^{\eps_d}*\delta_x\{\inj\leq \rho^C \inj(x)^C\}\leq \rho^{c}(\epsilon^{c}+1), 
    \end{align*}

\item[(2)]  Let $\eps'_i=\delta_i \eps^{d+1}$ for $1\leq i\leq d$. Then
\begin{align*}
\mu_{1,d}^{\eps'_1}*\cdots*\mu_{d,d}^{\eps'_d}*\delta_x\{\inj\leq \rho\}\leq \rho^c(\epsilon^{c}\inj(x)^{-C}+1).
\end{align*}

\end{itemize}

\end{proposition}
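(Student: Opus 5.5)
The plan is to split each convolution into a piece that lives inside a subgroup $G_k$ (or $G_d=G$) and is handled by Corollary \ref{cor: stopping time iteration of CH}, followed by a piece with bounded norm. Then Markov's inequality and Lemma \ref{lemma: BQ function vs inj} convert height bounds into bounds on $\inj$.

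\textbf{Part (2).} Here all $\eps'_i=\delta_i\eps^{d+1}$ with $1\le\delta_i<r_{min}^{-1}$. Hence \eqref{eq: pertubation} holds with $\eps^{d+1}$ in place of $\eps$ and $k=d$. Corollary \ref{cor: stopping time iteration of CH} gives
\[
\int f^\delta_{d,\eps_0}\dd\mu\le C\eps^{c(d+1)}f^\delta_{d,\eps_0}(x)+b
\]
for a fixed small $\eps_0$. Since $G_d=G$ has no nonzero invariant vectors in $\bigwedge^*\R^{d+1}$, Mahler's criterion gives $f_{d,\eps_0}(y)\ll\inj(y)^{-C}$ and $\inj(y)\gg f_{d,\eps_0}(y)^{-C}$, as in the proof of Corollary \ref{cor: effective recurrence of regular convolution}. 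Markov's inequality applied to $\{f_{d,\eps_0}\ge\rho^{-1/C}\}$ then yields (2).

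\textbf{Part (1).} Fix $k\in\dbracket{1,d-1}$. By Lemma \ref{lemma: product structure} we can reorder the factors, so that
\[
\mu=\mu_{1,d}^{\eps_1}*\cdots*\mu_{d,d}^{\eps_d}=\nu_2*\nu_1 ,\qquad \nu_1=\mu_{k+1,d}^{\eps_{k+1}}*\cdots*\mu_{d,d}^{\eps_d},\quad \nu_2=\mu_{1,d}^{\eps_1}*\cdots*\mu_{k,d}^{\eps_k}.
\]
The order is chosen to suit the argument. Each $\eps_i$ with $i\le k$ factors as $\delta_i\eps^{k+1}=\delta_i\eps^k\cdot\eps$. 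Applying Lemma \ref{lemma: rw stopping time decomposition} to the first $k$ coordinates, with intermediate level $\delta_i\eps^k$, splits $\nu_2$ in two. The first part is a stopping-time measure at levels $\delta_i\eps^k$. The second part has levels of order $\eps$ up to $r_{min}^{\pm1}$ factors, and these levels are what the coordinate-$i$ walks see after restarting.

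Now compare with the diagonal elements. For $i\le k$, $\mu^{\eps}_{i,d}$ is $\iota_d$ applied to an affine map contracting only coordinate $i$. Conjugating, the walk at level $\eps$ in coordinates $1,\dots,k$ agrees, up to a bounded diagonal error, with the image of $\mu_{1,k}^{\eps}*\cdots*\mu_{k,k}^{\eps}$ inside $G_k$. This uses that $\diag(\ldots)$ restricted to the first $k$ coordinates and the last one has the $G_k$ shape \eqref{eq: formula for g in mu_i,k}.

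The common-level part at $\eps^k$ across all $d$ coordinates also looks like a $G_d$-walk at level $\eps^k$. That looks like a sum that does not factor, so I expect instead the following regrouping:
\[
\mu \approx \bigl(\text{$G_k$-walk at level }\eps\bigr) * \bigl(\text{$G$-walk at level }\eps^{k}\bigr).
\]

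Given this regrouping, the key step is to apply Corollary \ref{cor: stopping time iteration of CH} with $k$, together with Lemma \ref{lemma: BQ function vs inj}. Heights $f_{k,\eps_0}$ control $\inj$ from below only up to vectors fixed by $G_k$. Starting from $y=g_1x$, where $g_1$ comes from the $G$-walk part, we have
\[
f_{k,\eps_0}(y)\le \norm{g_1}^D f_{k,\eps_0}(x)\ll \eps^{-O(1)}\inj(x)^{-O(1)} .
\]
Then Corollary \ref{cor: stopping time iteration of CH} and Markov's inequality give
\[
\mathbb P\bigl(f_{k,\eps_0}>\rho^{-1}\inj(x)^{-C}\bigr)\le \rho^c(\eps^c+1)
\]
once $C$ is large. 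The target threshold $\rho^C\inj(x)^C$ absorbs the $\inj(x)$ loss, which is why (1) has no $\inj(x)^{-C}$ on the right-hand side.

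\textbf{Main obstacle.} The hardest step is justifying the regrouping. One has to use the decomposition Corollary \ref{cor: prod rw stopping time decomposition} to peel off exactly the right levels. One must also control the bounded diagonal errors $a_g$, which commute with $G_k$ only up to $O(1)$ distortion, absorbed via \eqref{eq: Lipschitz}. Finally, one must handle vectors fixed by $G_k$, for which $\varphi_{k,\eps_0}=0$ when $\norm{q_0(v)}\ge\eps_0^{\delta_i}$. I would first try to show that such fixed vectors only shrink by bounded factors under the whole walk.
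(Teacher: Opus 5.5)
\textbf{Part (2).} Your argument is correct and is exactly the paper's.

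\textbf{Part (1): the missing idea.} Part (1) has a genuine gap. What is missing is a direct computation of the diagonal part of the whole walk. For $g\in\supp\mu$, the diagonal entries multiply (up to $O(1)$ factors) to $(\prod_i\eps_i)^{1/(d+1)}\eps_j^{-1}$ in slot $j\le d$ and to $(\prod_i\eps_i)^{1/(d+1)}$ in the last slot. Since $\prod_i\eps_i\simeq\eps^{k(d+1)}$, this gives $g=a_g\,a_k(\eps^{-(k+1)})u(\bs)$ with $a_g\in A_{O(1)}$. So all of the contraction already lies in $A_k\subset G_k$; the levels $\eps^{k+1}$ and $\eps^{k}$ are chosen precisely for this. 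Now write $\mu=\mu_1*\mu_2$, with $\mu_2$ the walk in coordinates $k+1,\dots,d$. Then $g_1g_2x=a_{g_1g_2}a_k(\eps^{-(k+1)})u_k(\ttb_{g_1})u(0,\ttb_{g_2})x$. Thus $\mu_2$ only contributes a bounded unipotent translate $y=u(0,\ttb_{g_2})x$ of the base point, with $\inj(y)\simeq\inj(x)$. Moreover, $\ttb_{g_1}$ has the law of the translation part of the $G_k$-walk $\mu'=\mu_{1,k}^{\delta_1\eps^{k+1}}*\cdots*\mu_{k,k}^{\delta_k\eps^{k+1}}$. Corollary \ref{cor: stopping time iteration of CH} applies to $\mu'$ directly, at level $\eps^{k+1}$. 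No further stopping-time splitting and no $G$-walk are needed.

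\textbf{Why your regrouping fails.} There are three separate problems.
\begin{itemize}
\item[(a)] For $i\le k$, $\mu_{i,d}^{\eps}$ does not agree with $\mu_{i,k}^{\eps}$ up to a bounded diagonal error. One has $\iota_d(\Xi_{i,d}\phi)=z(\ttr_\phi)\,\iota_k(\Xi_{i,k}\phi)$, where $z(r)=\diag(r^{\alpha}\Id_k,r^{1/(d+1)}\Id_{d-k},r^{\alpha})$ and $\alpha=\tfrac{1}{d+1}-\tfrac{1}{k+1}$. This $z$ centralizes $G_k$ but has norm $\eps^{-O(1)}$. It becomes harmless only once it is merged with the diagonal of the other coordinates, which is exactly the computation above.
\item[(b)] Even granting the regrouping, your step $f_{k,\eps_0}(g_1x)\le\norm{g_1}^Df_{k,\eps_0}(x)$, with $\norm{g_1}=\eps^{-O(1)}$, costs a factor $\eps^{-O(D\delta)}$ in $\int f^{\delta}_{k,\eps_0}$. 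Corollary \ref{cor: stopping time iteration of CH} at level $\eps$ only gains $\eps^{c}$ for a small $c$, and you give no reason why this gain should beat the loss. Since $\eps^c+1\le 2$, statement (1) is a bound uniform in $\eps$. The threshold $\rho^C\inj(x)^C$ does not involve $\eps$, so enlarging $C$ cannot absorb an $\eps$-loss.
\item[(c)] For $k<d$, $G_k$ fixes $e_{k+1},\dots,e_d$. So for fixed $\eps_0$ the value $f_{k,\eps_0}(x)$ can be $+\infty$, and the bound $f_{k,\eps_0}(x)\ll\inj(x)^{-O(1)}$ you use is false.
\end{itemize}

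\textbf{How the paper handles (c).} The paper chooses $\eps_0=\min\{\eta_1/2,C_1^{-1}\inj(y)^{C_1}\}$, which forces $f_{k,\eps_0}(y)\le 1$. This relies on the constants in Corollary \ref{cor: stopping time iteration of CH} being uniform in $\eps_0\in(0,\eta_1)$. Markov's inequality and Lemma \ref{lemma: BQ function vs inj} then turn $f_{k,\eps_0}(hy)\le\rho^{-1}$ into $\inj(hy)\gg\inj(y)^{C_1C}\rho^{C}$. This is where the threshold $\rho^C\inj(x)^C$ comes from. Nearly $G_k$-fixed vectors cause no trouble there, because $a_k(\cdot)$ and $u_k(\cdot)$ fix $G_k$-invariant vectors and $u(0,\ttb_{g_2})$ is bounded. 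In your split, by contrast, the two pieces separately rescale these vectors by $\eps^{\mp O(1)}$.
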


\begin{proof}
We first prove statement (1) when $1\leq k\leq d-1$.  To simplify our notation, we write $\mu=\mu_{1,d}^{\eps_1}*\cdots*\mu_{d,d}^{\eps_d}$. Note that by Lemma \ref{lemma: basic prop of stopping time} (1), for any $g\in \supp \mu$ we have
\begin{align*}
    g&=a_g \diag(\epsilon^{-1},\cdots,\epsilon^{-1},1,\cdots,1,\epsilon^{k})u(\bs)=a_g a_k(\epsilon^{-(k+1)})u(\bs),
\end{align*}
where $a_k$ is defined as in \eqref{eq: defn of a_k}, $a_g\in A_{O(1)}$, and $s_i$ is distributed with the law $\lambda_i^{\delta_i \epsilon^{k+1}}*\delta_0$ for $1\leq i\leq k$, and $\lambda_i^{\delta_{i}\epsilon^k}*\delta_0$ for $k+1\leq i\leq d$. This motivates us to consider the measure $\mu'$ on $G_k$ defined by 
$$ \mu':=\mu_{1,k}^{\eps^{k+1}\delta_1}*\cdots*\mu_{k,k}^{\eps^{k+1}\delta_k}.$$
Note that for any $h\in \supp  \mu'$,
\begin{align}\label{eq: h in mu'}
    h=a_h a_k(\epsilon^{-(k+1)})u_k(s_1,\cdots,s_k),
\end{align}
where $a_h\in (A_k)_{O(1)}$, and $s_i$ is distributed according to the law $\lambda_i^{\delta_i \epsilon^{k+1}}*\delta_0$ for $1\leq i\leq k$. 

Let $\eta_1$ be the constant as in Corollary \ref{cor: stopping time iteration of CH}. By Mahler's criterion, there exists $C_1>0$ depending only on $X$ such that for any $y\in X$, if $\epsilon_0= \min\{\eta_1/2,C_1^{-1}\inj(y)^{C_1}\}$, then $f_{k,\epsilon_0}(y)\leq 1$. Since $\sup \{\inj(y):y\in X\}<\infty$, we may increase $C_1$ if necessary such that $C_1^{-1}\inj(y)^{C_1}< \eta_1/2$. The assumption on $(\delta_i)_1^k$ allows us to apply Corollary \ref{cor: stopping time iteration of CH}  and obtain
\[ \int f_{k,\epsilon_0}^{\delta}(hy)\dd \mu'(h)\leq C \epsilon^{c}+b, \]
for some $\delta,C,c>0$.
By Lemma \ref{lemma: BQ function vs inj}, if $f_{k,\epsilon_0}(hy)\leq \rho^{-1}$, then $\inj(gy)\geq \epsilon_0^C\rho^C\gg\inj(y)^{C_1 C}\rho^C$. Hence,  the Markov inequality implies
\begin{align}\label{eq: markov bound}
     \mu'\{h: \inj(hy)\leq \inj(y)^{C_1 C}\rho^{2C}\} \leq \mu'\{h: f_{k,\epsilon_0}(hy)>\rho^{-1}\}\leq \rho^{\delta/2}(\epsilon^{c}+1),
\end{align}
where we take $\rho\lll 1$ to absorb $C$ and $b$.  By \eqref{eq: h in mu'}, if $h\in \supp \mu'$ and 
\[ \inj(a_k(\epsilon^{-{(k+1)}})u_k(\ttb_h)y)\leq \inj(y)^{2C_1 C}\rho^{3C}, \]
then
\begin{align}\label{eq: inj of hy}
\inj(hy)\leq \inj(y)^{C_1 C}\rho^{2C},   
\end{align}
provided that $\rho\lll 1$.
As $\ttb_h$ is distributed according to $\lambda_{1,k}^{\eps^{k+1}\delta_1}*\cdots*\lambda_{k,k}^{\eps^{k+1}\delta_k}*\delta_0$, \eqref{eq: markov bound} and \eqref{eq: inj of hy} imply that for any $y\in X$, 
\begin{align}\label{eq: bound of E_y}
\lambda_{1,k}^{\eps^{k+1}\delta_1}*\cdots*&\lambda_{k,k}^{\eps^{k+1}\delta_k}*\delta_0\underbrace{\{\bs: \inj(a_k(\epsilon^{-(k+1)})u_k(\bs)y)\leq \inj(y)^{2C_1 C}\rho^{3C}\}}_{E_y}\nonumber\\
&\leq  \rho^{\delta/2}(\epsilon^{c}+1).
\end{align}
Now we write 
\[ \mu_1=\mu_{1,d}^{\delta_1\epsilon^{k+1}}*\cdots*\mu_{k,d}^{\delta_k\epsilon^{k+1}}\, \text{ and }\, \mu_2=\mu_{k+1,d}^{\delta_{k+1}\epsilon^k}*\cdots*\mu_{d,d}^{\delta_d\epsilon^k}.\]
Given any $y\in X$, we denote
\[ E(y):=\{g\in \supp \mu_1: \ttb_g\in E_y\}, \]
where $E_y$ is defined in \eqref{eq: bound of E_y}. Let $x\in X$ be given, and $\chi$ be the characteristic function of $\{\inj\leq \inj(x)^{3C_1 C}\rho^{4C}\}$. Note that as  $\{\ttb_{g_2}:g_2\in  \supp\mu_2\}$ is compact, $\inj(u(0,\ttb_{g_2})x)\simeq \inj(x)$ for any $g_2\in \supp \mu_2$. We observe that $\chi(g_1g_2x)=0$ for any  $g_2\in \supp \mu_2$ and $g_1\in \supp \mu_1\setminus E(u(0,\ttb_{g_2})x)$, since in this case
\[ \inj(g_1g_2x)=\inj(a_{g_1g_2}a_k(\epsilon^{-(k+1)})u_k(\ttb_{g_1})u(0,\ttb_{g_2})x)>\inj(x)^{3C_1 C}\rho^{4C}. \]
where the inequality follows since $a_{g_1g_2}\in A_{O(1)}$ and provided that $\rho\lll 1$. Therefore, with \eqref{eq: bound of E_y} we have
\begin{align*}
    \int \chi(gx) \dd \mu(g)&=\int \int \chi(g_1g_2x)\dd \mu_1(g_1)\dd\mu_2(g_2)\\
&=\int\int_{E(u(0,\ttb_{g_2})x)}\chi(g_1g_2x)\dd \mu_1(g_1)\dd \mu_2(g_2)\\
    &\leq \rho^{\delta/2}(\epsilon^{c}+1).
\end{align*}
This finishes the proof when $1\leq k\leq d-1$ in Case (1).

\medskip

For (2), the linear representation of $G$ on $\bigwedge^* \R^{d+1}$ does not have nontrivial invariant vectors. We can take $\epsilon_0\lll 1$ depending only on $G$ for the Benoist-Quint function $f_{d,\epsilon_0}$ such that the contraction hypothesis in Corollary \ref{cor: stopping time iteration of CH} holds. Fixing this constant $\eps_0$, by Mahler's criterion, for any $x\in X$ we have $\log f_{d,\epsilon_0}(x)\simeq |\log \inj(x)|$. The estimate then follows by applying Corollary \ref{cor: stopping time iteration of CH} in the case of $k=d$ and the Markov inequality.
\end{proof}

\medskip

The proof of Theorem \ref{thm: weighted effective recurrence} relies on a specific stopping time decomposition of $\mu^{\epsilon_1}_{1,d}*\cdots *\mu_{d,d}^{\eps_d}$ based on Corollary \ref{cor: prod rw stopping time decomposition}, together with the application of Proposition \ref{prop: effective recurrence of suitable convolution}. The specific decomposition we need is captured by functions $\{\xi_{i,j}:\R^d\to \R: 1\leq i,j\leq d\}$ defined as follows: Given $1\leq i,j\leq d$ and $(\eps_p)_1^d\in (0,1)^d$,
\[ \xi_{i,j}\left((\eps_p)_1^d \right)=\begin{cases}
    \eps_{d-j+1}^{d-j+1}\cdot \left( \prod_{s=1}^{d-j}\eps_s^{-1} \right) & \text{ if } 1\leq j\leq d-i+1,\\
    \eps_i\eps_{d-j+1}^{d-j}\cdot \left( \prod_{s=1}^{d-j}\eps_s^{-1} \right)& \text{ if } d-i+2\leq j\leq d.
\end{cases} \]
Here by convention we set $\prod_{s=1}^0 \eps_s^{-1}=1$. Note that $\xi_{i,d}((\eps_p)_1^d)=\eps_i$ and $\xi_{i,1}((\eps_p)_1^d)=\eps_d^d\prod_{1}^{d-1}\eps_s^{-1}$ for any $i\in \dbracket{1,d}$. An application of Lemma \ref{lemma: rw stopping time decomposition} yields the following decomposition:

\begin{lemma}\label{lemma: specific decomposition of random walk}
Let $(\eps_i)_1^d\in (0,1)^d$ be such that $\eps_i\leq \eps_j$ for any $1\leq i\leq j\leq d$ and $\eps_d^d\prod_{1}^{d-1}\eps_j^{-1}<r_{min}^d$, where $r_{min}=\min_{\phi\in \supp\lambda_i, 1\leq i\leq d} \ttr_{\phi}$. Then for any $1\leq i\leq d$, we have
\begin{align}\label{eq: specific decomposition}
     \mu_{i,d}^{\eps_i}=\int \delta_{g_d\cdots g_1} \prod_{j=1}^d \dd \mu_{i,d}^{\xi_{i,j}\left((\eps_p)_1^d \right)/\prod_{s=1}^{j-1}\ttr_{g_s}}(g_j),
\end{align}
where we set $\prod_{s=1}^0 \ttr_{g_s}=1$. Moreover, 
\begin{align}\label{eq: bound for stopping time}
  &\eps_{d-j+1}^{d-j+2}\cdot \eps_{d-j+2}^{-(d-j+2)} \leq \frac{\xi_{i,j}\left((\eps_p)_1^d \right)}{\prod_{s=1}^{j-1}\ttr_{g_s}}\leq r_{min}^{-1}\eps_{d-j+1}^{d-j+2}\cdot\eps_{d-j+2}^{-(d-j+2)} \, \text{ if }\, 2\leq j\leq d-i+1,\nonumber\\
 & \eps_{d-j+1}^{d-j+1}\cdot\eps_{d-j+2}^{-(d-j+1)} \leq \frac{\xi_{i,j}\left((\eps_p)_1^d \right)}{\prod_{s=1}^{j-1}\ttr_{g_s}}\leq r_{min}^{-1} \eps_{d-j+1}^{d-j+1}\cdot \eps_{d-j+2}^{-(d-j+1)} \, \text{ if }\, d-i+2\leq j\leq d.
\end{align}

\end{lemma}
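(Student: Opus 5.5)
The plan is to obtain \eqref{eq: specific decomposition} by applying Lemma \ref{lemma: rw stopping time decomposition} $d-1$ times along the chain of thresholds
\[ \xi_{i,1}\bigl((\eps_p)_1^d\bigr)\geq \xi_{i,2}\bigl((\eps_p)_1^d\bigr)\geq\cdots\geq \xi_{i,d}\bigl((\eps_p)_1^d\bigr)=\eps_i . \]
The bounds \eqref{eq: bound for stopping time} will then be read off from Lemma \ref{lemma: basic prop of stopping time} (1). Throughout, $i$ is fixed and I write $\xi_j=\xi_{i,j}\bigl((\eps_p)_1^d\bigr)$.

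\textbf{Step 1 (monotonicity of the chain).} I will check that $\xi_j\leq \xi_{j-1}$ for $2\leq j\leq d$ by computing the ratio $\xi_j/\xi_{j-1}$ directly from the definition. For $2\leq j\leq d-i+1$, the products $\prod_{s=1}^{d-j}\eps_s^{-1}$ and $\prod_{s=1}^{d-j+1}\eps_s^{-1}$ differ by one factor $\eps_{d-j+1}$, which gives
\[ \frac{\xi_j}{\xi_{j-1}}=\eps_{d-j+1}^{d-j+2}\,\eps_{d-j+2}^{-(d-j+2)} . \]
For $d-i+2\leq j\leq d$ the factor $\eps_i$ cancels, and the analogous computation gives $\eps_{d-j+1}^{d-j+1}\eps_{d-j+2}^{-(d-j+1)}$. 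The transition index $j=d-i+2$, where $\xi_{j-1}$ is still of the first form, needs a separate check. In all cases the ratio is at most $1$, because the $\eps_p$ are non-decreasing in $p$.

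I also need every threshold to lie in $(0,1]$, since this is what Lemma \ref{lemma: basic prop of stopping time} (1) requires. The chain is decreasing, so it suffices that $\xi_1=\eps_d^d\prod_{s=1}^{d-1}\eps_s^{-1}<r_{min}^d<1$, and this is exactly the hypothesis.

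\textbf{Step 2 (iteration).} I will argue by induction on $m\in\dbracket{1,d}$ that
\[ \mu_{i,d}^{\xi_m}=\int \delta_{g_m\cdots g_1}\prod_{j=1}^m \dd \mu_{i,d}^{\xi_j/\prod_{s<j}\ttr_{g_s}}(g_j). \]
The case $m=1$ is trivial. For the inductive step I apply Lemma \ref{lemma: rw stopping time decomposition} with $\epsilon_1=\xi_{m}$ and $\epsilon_2=\xi_{m+1}$, which expresses $\mu_{i,d}^{\xi_{m+1}}$ as $\int\int \delta_{g'h}\dd\mu_{i,d}^{\xi_{m+1}/\ttr_h}(g')\dd\mu_{i,d}^{\xi_m}(h)$. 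I then substitute the induction hypothesis for $h=g_m\cdots g_1$ and set $g_{m+1}=g'$. This uses the multiplicativity $\ttr_h=\prod_{s\leq m}\ttr_{g_s}$, which holds because every element of $\supp\mu_{i,d}$ has the diagonal form \eqref{eq: formula for g in mu_i,k} and $\ttr$ is multiplicative under composition. Taking $m=d$ and $\xi_d=\eps_i$ gives \eqref{eq: specific decomposition}.

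\textbf{Step 3 (bounds).} For $g_1,\dots,g_{j-1}$ in the support of the iterated decomposition, the product $h=g_{j-1}\cdots g_1$ lies in $\supp\mu_{i,d}^{\xi_{j-1}}$ by Step 2. Lemma \ref{lemma: basic prop of stopping time} (1) then gives $r_{min}\xi_{j-1}\leq \ttr_h<\xi_{j-1}$, hence
\[ \frac{\xi_j}{\xi_{j-1}}\leq \frac{\xi_j}{\prod_{s<j}\ttr_{g_s}}\leq r_{min}^{-1}\frac{\xi_j}{\xi_{j-1}} . \]
Inserting the ratios from Step 1 yields \eqref{eq: bound for stopping time}.

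The only real obstacle is the bookkeeping of exponents in Step 1. This is most delicate at the transition index $j=d-i+2$, where $\xi_j$ and $\xi_{j-1}$ come from different branches of the definition of $\xi_{i,j}$. I will verify that case directly, taking care that the exponents of $\eps_{d-j+1}$ and $\eps_{d-j+2}$ in the resulting ratio agree with those stated in \eqref{eq: bound for stopping time}.
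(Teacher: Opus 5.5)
Your proof is correct. It differs from the paper's in the direction of the iteration, and that difference pays off in the bounds.

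\textbf{How the decomposition is built.} The paper starts from $\mu_{i,d}^{\eps_i}$ and repeatedly splits the remaining tail factor $\mu_{i,d}^{\eps_i/\prod_{s<k}\ttr_{g_s}}$ at the threshold $\xi_{i,k}/\prod_{s<k}\ttr_{g_s}$. That only requires $\eps_i\leq \xi_{i,k}$ for each $k$, not monotonicity of the chain. You instead build $\mu_{i,d}^{\xi_{m+1}}$ out of $\mu_{i,d}^{\xi_m}$, which does require $\xi_{m+1}\leq\xi_m$. You get that for free from the ratio computation, which is needed for \eqref{eq: bound for stopping time} anyway.

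\textbf{What your order buys.} Your induction shows that the partial product $g_{j-1}\cdots g_1$ is distributed according to $\mu_{i,d}^{\xi_{j-1}}$. Hence Lemma \ref{lemma: basic prop of stopping time} (1) is applied at the threshold $\xi_{j-1}\leq\xi_1<1$, where it genuinely applies.

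The paper applies it to $g_{j-1}$ alone, at the threshold $\xi_{i,j-1}/\prod_{s\leq j-2}\ttr_{g_s}$, and asserts this threshold is $<1$. That can fail. If $\eps_{d-j+2}=\eps_{d-j+3}$, the threshold exceeds $1$ and $g_{j-1}=\Id$. So the paper's argument needs the extra remark that the two-sided bound still holds for thresholds in $(1,r_{min}^{-1}]$. Your route avoids this entirely.

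\textbf{The transition index.} Your separate check of $j=d-i+2$ is also worthwhile. For $i=d$ this index is $j=2$. There the correct ratio is $\eps_{d-1}^{d-1}\eps_d^{-(d-1)}$, whereas the paper's displayed $j=2$ bound uses the first-branch exponent $d$.
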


\begin{proof}
We fix some $i\in \dbracket{1,d}$. To simplify the notation, we write $\xi_{i,j}\left( (\eps_p)_1^d \right)=\xi_{i,j}$ for any $j\in \dbracket{1,d}$, and $\mu_{i,d}=\mu$. The assumption $0<\eps_1\leq \cdots\leq \eps_d<1$ guarantees for any $1\leq i,j\leq d$, we have
\begin{align}\label{eq: bound on xi}
    \eps_i \leq \xi_{i,j}\leq \eps_d^d\prod_{s=1}^{d-1}\eps_s^{-1}.
\end{align}
We prove \eqref{eq: specific decomposition} by induction. As $\eps_i\leq \xi_{i,1}$, by Lemma \ref{lemma: rw stopping time decomposition} we have
\[ \mu^{\eps_i}=\int \delta_{g_2g_1}\dd \mu^{\eps_i/\ttr_{g_1}}(g_2)\dd \mu^{\xi_{i,1}}(g_1). \]
Suppose now that we have shown for some $1\leq k\leq d-1$ that
\[ \mu^{\eps_i}=\int \delta_{g_k\cdots g_1}\dd \mu^{\eps_i/\prod_{s=1}^{k-1}\ttr_{g_s}}(g_k) \prod_{j=1}^{k-1} \dd \mu^{\xi_{i,j}/\prod_{s=1}^{j-1}\ttr_{g_s}}(g_j). \]
Since $\eps_i\leq \xi_{i,k}$ which is guaranteed by \eqref{eq: bound on xi}, applying Lemma \ref{lemma: rw stopping time decomposition} we have 
\[ \mu^{\eps_i/\prod_{s=1}^{k-1}\ttr_{g_s}}=\int \delta_{g_{k+1}g_k} \dd \mu^{\eps_i/\prod_{s=1}^{k}\ttr_{g_s}}(g_{k+1})\dd \mu^{\xi_{i,k}/\prod_{s=1}^{k-1}\ttr_{g_s}}(g_k).  \]
Therefore,
\[ \mu^{\eps_i}=\int \delta_{g_{k+1}\cdots g_1}\dd \mu^{\eps_i/\prod_{s=1}^{k}\ttr_{g_s}}(g_{k+1})\prod_{j=1}^{k} \dd \mu^{\xi_{i,j}/\prod_{s=1}^{j-1}\ttr_{g_s}}(g_j). \]
Specifying to $k=d$ and noting that $\xi_{i,d}=\eps_i$, we obtain \eqref{eq: specific decomposition}.

\medskip

We now prove \eqref{eq: bound for stopping time}. To begin with, by Lemma \ref{lemma: basic prop of stopping time} (1) and the assumption $\xi_{i,1}=\eps_d^d\prod_{1}^{d-1}\eps_s^{-1}<r_{min}^d<1$, for any $g_1\in \supp \mu^{\xi_{i,1}}$ we have
\[ r_{min}\xi_{i,1}\leq \ttr_{g_1}< \xi_{i,1}.  \]
This implies that
\begin{align}\label{eq: bound on xi_2}
    \eps_{d-1}^d\eps_d^{-d} \leq \frac{\xi_{i,2}}{\ttr_{g_1}}\leq r_{min}^{-1}\eps_{d-1}^d\eps_d^{-d} 
\end{align}
which shows \eqref{eq: bound for stopping time} when $j=2$.
In general, given any $j\in [3,d]$ and $g_k\in \supp \mu^{\xi_{i,k}/\prod_{s=1}^{k-1}\ttr_{g_s}}$ for $1\leq k\leq j-1$. The assumption $\eps_d^d\prod_{1}^{d-1}\eps_j^{-1}<r_{min}^d$ and \eqref{eq: bound on xi} ensure that $\xi_{i,j-1}/\prod_{s=1}^{j-2}\ttr_{g_s}<1$. As $g_{j-1}\in \supp \mu^{\xi_{i,j-1}/\prod_{s=1}^{j-2}\ttr_{g_s}}$, by Lemma \ref{lemma: basic prop of stopping time} (1) we have
\[ r_{min}\frac{\xi_{i,j-1}}{\prod_{s=1}^{j-2}\ttr_{g_s}}\leq \ttr_{g_{j-1}}\leq \frac{\xi_{i,j-1}}{\prod_{s=1}^{j-2}\ttr_{g_s}}, \]
which implies 
\begin{align}\label{eq: bound on xi_j}
    \frac{\xi_{i,j}}{\xi_{i,j-1}} \leq \frac{\xi_{i,j}}{\prod_{s=1}^{j-1}\ttr_{g_s}}\leq r_{min}^{-1} \frac{\xi_{i,j}}{\xi_{i,j-1}}. 
\end{align}
Plugging in the definition of $\xi_{i,j}$ in \eqref{eq: bound on xi_j} yields \eqref{eq: bound for stopping time} for $3\leq j\leq d$.
\end{proof}

\medskip

With Proposition \ref{prop: effective recurrence of suitable convolution} and Lemma \ref{lemma: specific decomposition of random walk}, we are ready to present

\begin{proof}[Proof of Theorem \ref{thm: weighted effective recurrence}]
First we make a reduction by claiming that without loss of generality, we may assume parameters $\eps_1,\cdots,\eps_d\in (0,1)$ satisfy $0<\epsilon_1\leq \cdots\leq \epsilon_d<1$ and \eqref{eq: condition on epsilon i} holds with $S$ being the identity permutation. Indeed, assume that the theorem has been proved for all parameters $\eps_1,\cdots,\eps_d\in (0,1)$ satisfying both $0<\eps_1\leq \cdots\leq \eps_d<1$ and \eqref{eq: condition on epsilon i} (with $S$ being identity permutation), and any product-type self-affine measure $\sigma$. Now given $(\eps_i)_1^d$ as in the theorem and let $S$ be the given permutation satisfying $\eps_{S(i)}\leq \eps_{S(j)}$ for $1\leq i< j\leq d$ and \eqref{eq: condition on epsilon i}. By abuse of notation, we denote by $S:\R^d\to \R^d$ the map defined by $S((x_i)_1^d)=(x_{S(i)})_1^d$.  Note that the measure $S^{-1}_*\sigma$ is a product-type self-affine measure as well ($S^{-1}_* \sigma=\otimes_{i=1}^d\sigma_{S(i)}$ for $\sigma=\otimes_1^d \sigma_i$), and the stopping time measure driven by $S^{-1}_*\sigma$ and $(\eps_{S(i)})_1^d$ is 
\begin{align}\label{eq: equality after permutation}
\mu^{\eps_{S(1)}}_{S(1),d}*\cdots*\mu_{S(d),d}^{\eps_{S(d)}}=\mu_{1,d}^{\eps_1}*\cdots*\mu_{d,d}^{\eps_d},
\end{align}
where the equality follows by Lemma \ref{lemma: product structure}. 
By assumption, there exist $C,c>0$ depending on $S_*\sigma$ and $G$ such that for any $x\in X$ and $0<\rho<1$, 
\[\mu^{\eps_{S(1)}}_{S(1),d}*\cdots*\mu_{S(d),d}^{\eps_{S(d)}}*\delta_x\{\inj<\rho\}<C\rho^c\left(1+(\epsilon_{S(d)}^d \prod_{i=1}^{d-1}\eps^{-1}_{S(i)})^{c}\inj(x)^{-C}\right).\]
Then the claim follows from \eqref{eq: equality after permutation} and the fact that the permutation group of $\dbracket{1,d}$ is finite.

\medskip

Now let $0<\eps_1\leq \cdots\leq \eps_d<1$ be given such that \eqref{eq: condition on epsilon i} holds for $S=\Id$. To simplify the notation, we write
\[\widetilde{\mu}= \mu_{1,d}^{\eps_1}*\cdots*\mu_{d,d}^{\eps_d}.\]
By Lemma \ref{lemma: specific decomposition of random walk} and Corollary \ref{cor: prod rw stopping time decomposition}, we can decompose
\begin{align}\label{eq: decomposition of widetilde mu}
    \widetilde{\mu}= \int \delta_{g_d\cdots g_1}\dd \widetilde{\mu}_{d,(g_{d-1},\cdots,g_1)}(g_d)\cdots \dd\widetilde{\mu}_1(g_1),
\end{align}
where for each $1\leq j\leq d$,
\[ \widetilde{\mu}_{j,(g_{j-1},\cdots,g_1)}=\mu_{1,d}^{\xi_{1,j}((\eps_p)_1^d)/\prod_{s=1}^{j-1}\ttr_{g_{s,1}}}*\cdots *\mu_{d,d}^{\xi_{d,j}((\eps_p)_1^d)/\prod_{s=1}^{j-1}\ttr_{g_{s,d}}}, \]
and $g_{j}=g_{j,1}\cdots g_{j,d}$ with $g_{j,k}\in \supp \mu_{k,d}^{\xi_{k,j}((\eps_p)_1^d)/\prod_{s=1}^{j-1}\ttr_{g_{s,k}}}$. We define a sequence of good sets as follows. Let 
\[ E_1(x)=\{g_1\in \supp \widetilde{\mu}_1: \inj(g_1x)>\rho\}. \]
In general, let $1\leq  k\leq d-1$. Suppose we are given $g_j\in \supp \widetilde{\mu}_{j,(g_{j-1},\cdots,g_1)}$ for $1\leq j\leq k$, define
\begin{align*}
    E_{k+1}(&g_k\cdots g_1 x)\\
    &=\{g_{k+1}\in \supp \widetilde{\mu}_{k+1,(g_k,\cdots,g_1)}: \inj(g_{k+1}g_k\cdots g_1x)>\rho^C \inj(g_k\cdots g_1x)^C\}.
\end{align*}
Note that \eqref{eq: bound for stopping time} guarantees that the stopping time data in the definition of $\widetilde{\mu}_j(g_{j-1},\cdots,g_1)$ for each $1\leq j\leq d$ fits into the one in Proposition \ref{prop: effective recurrence of suitable convolution}. Hence, the application of Proposition \ref{prop: effective recurrence of suitable convolution} (1) to $\widetilde{\mu}_{k+1,(g_k,\cdots,g_1)}$ for $k\in\dbracket{1,d-1}$ and (2) to $\widetilde{\mu}_1$ yield 
\begin{align}\label{eq: lower bound for good set}
    \widetilde{\mu}_{k+1,(g_{k},\cdots,g_1)}(E_{k+1}(g_k\cdots g_{1}x))&>1-\rho^c (\eps_{d-k}^c\eps_{d-k+1}^{-c}+1), \, \forall \, k\in \dbracket{1,d-1}; \nonumber\\
   \text{ and } \,\, \widetilde{\mu}_1(E_1(x))&>1-\rho^c(\eps_d^c\prod_{s=1}^{d-1}\eps_s^{-c}\inj(x)^{-C} +1 ).
\end{align}
Moreover, a direct induction process shows that there exists $C_d>0$ such that if $g_k\in E_{k}(g_{k-1}\cdots g_1x)$ for each $1\leq k\leq d$, then 
\begin{align}\label{eq: low bound for inj}
    \inj(g_d \cdots g_1 x)>\rho^{C_d}.
\end{align}
Let $\chi=1_{\{\inj\leq \rho^{C_d}\}}$. Then \eqref{eq: decomposition of widetilde mu}, \eqref{eq: lower bound for good set} together with \eqref{eq: low bound for inj} imply 
\begin{align*}
    \widetilde{\mu}*\delta_x(\chi)&\leq \rho^c(\eps_d^c\prod_{s=1}^{d-1}\eps_s^{-c}\inj(x)^{-C}+1)+\sum_{k=1}^{d-1}\rho^c(\eps_{d-k}^c\eps_{d-k+1}^{-c}+1)\\
    &\leq C \rho^{c/2}(\eps_d^c\prod_{s=1}^{d-1}\eps_s^{-c}\inj(x)^{-C}+1),
\end{align*}
where the last inequality follows from the assumption that $\eps_{d-k}\eps_{d-k+1}^{-1}\leq 1$ for $1\leq k\leq d-1$, and provided that $\rho\lll 1$ and $C\ggg 1$. This finishes the proof.

\bigskip

\end{proof}

\bigskip

\section{Proof of Theorem \ref{thm: weighted effective equid}}\label{sec: proof of weighted equid}

The aim of this section is to apply Theorem \ref{thm: weighted effective recurrence} and the following unweighted effective equidistribution theorem to  prove Theorem \ref{thm: weighted effective equid}. Recall that given any $t>0$, we have defined the one-parameter diagonal flow
\begin{align}\label{eq: defn of one parameter}
    a(t)=a(t,\cdots,t)=\diag(t^{\tfrac{1}{d+1}},\cdots,t^{\tfrac{1}{d+1}},t^{-\tfrac{d}{d+1}}). 
\end{align}
\begin{thm}\label{thm: unweighted effective equid}
Let $\sigma$ be a product-type self-affine measure on $\R^d$. Then for every $x\in X$, $t>1$ and $f\in B^\infty_{\infty, l}(X)$, we have
\[\abse{\int_{\R^d} f\bigl(a(t) u(\bs) x\bigr) \dd \sigma(\bs) - m_{X}(f) } \leq C \cS_{\infty, l}(f) \inj(x)^{-1} t^{-c},\]
where the constant $C,c>0$ only depend on $\sigma$, and $l=\lceil \frac{1}{2}\dim \SO(d+1)\rceil$.
\end{thm}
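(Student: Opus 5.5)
We adapt the strategy of \cite{BHZ25}, replacing the self-similar random walk there by the intertwined walk $\overline{\mu}_{n_1}$ of \eqref{eq: overline mu n1}. Its typical elements have diagonal part $a(e^{n_1\ell_1})$ up to a multiplicative error $e^{O(n_1c)}$ by Proposition \ref{prop: LDP}. The first step is a reduction. Since $\sigma=\otimes_i\sigma_i$ and $\lambda_i*\sigma_i=\sigma_i$, Lemma \ref{lemma: product structure} gives $\sigma=\overline{\lambda}_{n_1}^{*n}*\sigma$, where $\overline{\lambda}_{n_1}=\lambda_{1,d}^{*n_1}*\cdots*\lambda_{d,d}^{*n_d}$. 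This yields, as in \eqref{eq: int of chi},
\[
\int f(a(t)u(\bs)x)\dd\sigma(\bs)=\int\int f\bigl(a(t)u(\bs)a_g^{-1}a(T)^{-1}gx\bigr)\dd\overline{\mu}_{n_1}^{*n}(g)\dd\sigma(\bs),
\]
where $T=e^{nn_1\ell_1}$ and $a_g$ is an error term. Choosing $n$ so that $T\approx t^{\theta}$ for a small $\theta$, the expression becomes an average of unweighted translates $a(t/T)u(\bs)y$ with $y=gx$ ranging over the walk, up to the perturbations $a_g$.

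The second step handles the two kinds of exceptional sets. For the error $a_g$, we use the large deviation principle to discard an exponentially small set of $g$. For the cusp, we use Corollary \ref{cor: effective recurrence of regular convolution}: outside a set of $\overline{\mu}_{n_1}^{*n}$-measure $\rho^c(e^{-cnn_1}\inj(x)^{-C}+b)$ we have $\inj(gx)\geq\rho$. This turns the factor $\inj(x)^{-1}$ into a uniform lower bound on the injectivity radius of the new base points, at the cost of an error polynomial in $t$.

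The third step, which is the core of the argument, proves effective equidistribution for base points in a fixed compact set. Following \cite{BHZ25}, we show that the $\overline{\mu}_{n_1}$-walk makes the measure $a(s)u(\cdot)_*\sigma$ positive dimensional at scale $e^{-s}$ near $y$, using multiscale entropy and projection theorems. The needed non-concentration near subvarieties comes from Corollaries \ref{cor: regularity-stopping time for product} and \ref{cor: regularity for product 2}, which play the role of \cite[Theorem 4.4]{BHZ25}. We then apply the dimension-increment (flattening) argument to bring the dimension close to full. Finally, after convolving with a small unipotent ball, we conclude by the effective equidistribution of expanding horospherical translates \cite{KM96}; this spectral-gap step is where the Sobolev order $l$ enters.

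The main obstacle is this third step. The proofs in \cite{BHZ25} use self-similarity, namely that all contractions act conformally with a common rate. Here we must check that $\overline{\mu}_{n_1}$ is conformal only up to $e^{O(cn_1)}$, and that these errors, accumulated over $n$ steps, stay subpolynomial in $t$ and can be absorbed. Our first attempt would be to condition on the LDP-good event at each step, so that each good step contributes only an $e^{O(cn_1)}$ error. The double equidistribution needed for the flattening would then be argued via Corollary \ref{cor: regularity for product 2}.
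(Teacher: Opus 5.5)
Your overall architecture matches the paper's: the intertwined walk $\overline{\mu}_{n_1}$ as a surrogate self-similar walk, positive dimension, a multislicing bootstrap fed by Corollaries \ref{cor: regularity-stopping time for product} and \ref{cor: regularity for product 2}, and a final spectral-gap step. However, Step 3 as you set it up does not work.

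\textbf{The wrong measure is being bootstrapped.} After Steps 1--2 you freeze $g$. For each $y=gx$ in a compact set you are then left with the theorem itself at time $t/T$, so Step 3 must carry the whole proof. There you propose to raise the dimension of $\int\delta_{a(s)u(\bs)y}\dd\sigma(\bs)$. That measure lives on the single orbit $Ua(s)y$, so at fine scales its dimension is at most $d<\dim X$, and it can never be bootstrapped to near-full dimension.

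The measure that gains dimension is the aggregated walk $\nu=\overline{\mu}_{n_1}^{*n}*\delta_x$ (Propositions \ref{prop: positive dimension} and \ref{prop: high dimension}). So the elements $g$ cannot be treated one at a time. The residual time $te^{-nn_1\ell_1}\bt_g^{-1}$ in Lemma \ref{lemma: expanding trajectory and random walk} depends on $g$; after the LDP it is known only up to a factor $e^{O(\gamma nn_1)}$. The paper therefore:
\begin{itemize}
\item partitions $\supp\overline{\mu}_{n_1}^{*n}$ into sets $P'_{\br}$ on which the diagonal part is constant up to a factor $1+\rho^{\alpha}$;
\item applies the high-dimension estimate to each sub-measure $\nu_{\br}\le\nu$;
\item absorbs the $\ll(nn_1\gamma\rho^{-\alpha})^d$ boxes and the Sobolev cost $e^{nn_1O(\gamma)}$ of $f\circ a(\br')$ by taking $\gamma,\alpha\lll_{\kappa,\eta}1$.
\end{itemize}
Discarding the LDP-bad set, which is all you do with $a_g$, does not replace this grouping. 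Also, in your displayed identity the diagonal factor must stand to the left of $u(\bs)$: the integrand is $f(a(t)a_g^{-1}a(T)^{-1}u(\bs)gx)$.

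\textbf{The final step via \cite{KM96} fails for fractal $\sigma$.} Because $a(t')$ expands $U$ by the factor $t'$, replacing $\sigma$ (or $\nu$) by its $U$-smoothing at scale $r$ is only controlled by $t'r\,\cS_{\infty,1}(f)$, which forces $r\ll 1/t'$. But the Kleinbock--Margulis error for a bump of radius $r$ is of size $r^{-O(l)}t'^{-c}$, and the two requirements are incompatible. Near-full dimension at scale $\rho$ also does not make $\nu$ close to a smooth measure at coarser scales; its mollification at scale $\rho$ merely has density bounded by $\rho^{-\kappa\dim X}$.

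What is needed is Proposition \ref{prop: spectral gap} (\cite[Proposition 8.1]{BHZ25}), which uses the non-concentration of $\sigma$ itself, Corollary \ref{cor: regularity-stopping time for product} (3). By invariance of $m_X$, $\norm{\eta_{t'}f}_{L^2(m_X)}^2=\iint\langle f\circ u(t'(\bs-\bs')),f\rangle\dd\sigma(\bs)\dd\sigma(\bs')$. This is $\ll t'^{-c}\cS_{\infty,l}(f)^2$ by effective decay of matrix coefficients (this is where $l$ enters) together with non-concentration of $\sigma\otimes\sigma$ near the diagonal. One then pairs it by Cauchy--Schwarz with the $L^2$-bounded mollification of the robust measure. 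Without the average over $\sigma$ there is no decay at all: for $\sigma=\delta_0$ one has $\norm{\eta_{t'}f}_2=\norm{f}_2$.

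\textbf{Smaller corrections.}
\begin{itemize}
\item Positive dimension is not obtained from entropy or projection theorems but by an elementary argument. One uses the LDP, pigeonholes the diagonal and translation parts, and applies Corollary \ref{cor: regularity-stopping time for product} (2) to find $g_1,g_2$ with $g_1^{-1}g_2$ short, nontrivial and nearly fixing $z$. Corollary \ref{cor: effective recurrence of regular convolution} then gives the contradiction.
\item The bootstrap needs (MNC) and (MNC)$^{\perp}$ for $V_1(g),V_2(g)$ (Proposition \ref{prop: MNC}), not double equidistribution.
\item No step-by-step conditioning is needed. Since $\overline{\mu}_{n_1}^{*n}=\mu_{1,d}^{*nn_1}*\cdots*\mu_{d,d}^{*nn_d}$, the LDP is applied once in each coordinate. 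The resulting errors are a small power $e^{O(\gamma nn_1)}$ of $t$, not subpolynomial.
\end{itemize}
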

Since $\sigma$ is not self-similar in general, Theorem \ref{thm: unweighted effective equid} does not follow from \cite[Theorem 1.2]{BHZ25}. However, the product structure of $\sigma$ enables us to adapt the methods employed in \cite{BHZ25} to prove Theorem \ref{thm: unweighted effective equid}. This will be done in Section \ref{sec: unweighted equid}.

\medskip

\begin{proof}[Proof of Theorem \ref{thm: weighted effective equid} assuming Theorem \ref{thm: unweighted effective equid}.]

To simplify the notation, we write $\cS(f)=\cS_{\infty,l}(f)$. We make  some reductions before proceeding to the proof. First we claim that it is enough to prove Theorem \ref{thm: weighted effective equid} for $\bt\in \R^d_{> 1}$ satisfying
\begin{align}\label{eq: special bt}
    t_1\geq \cdots\geq t_d
\end{align}
Indeed, suppose that the theorem has been shown for any self-affine measure $\sigma$ of product type and for any $\bt$ satisfying \eqref{eq: special bt}. Let $\bt\in \R^d_{>1}$. We can find a permutation $S:\{1,\cdots,d\}\to \{1,\cdots,d\}$ such that $S(\bt)$ satisfies \eqref{eq: special bt}. Here $S:\R^d\to \R^d$ is a  map induced by the permutation $S$:
\[ S(x_1,\cdots,x_d)=(x_{S(1)},\cdots, x_{S(d)}), \, \forall \,(x_1,\cdots,x_d)\in \R^d. \]
By abuse of notation, we also denote by $S$ the Weyl group element in $G$ corresponding to this permutation such that $S a(\bt)S^{-1}=a(S(\bt))$. Observe that if $\sigma$ is a self-affine measure of product type, so is $S_*\sigma$.
Then for every $x\in X$ and $f\in B_{\infty,l}^{\infty}(X)$ with $m_X(f)=0$, we have
\begin{align*}
    \abse{\int_{\R^d} f\bigl(a(\bt) u(\bs) x\bigr) \dd \sigma(\bs) }&=\abse{\int_{\R^d} f\bigl(S^{-1} a(S(\bt))S u(\bs) x\bigr) \dd \sigma(\bs) }\\
    &=\abse{\int_{\R^d} f\circ S^{-1}\bigl(a(S(\bt)) u(S(\bs)) Sx\bigr) \dd \sigma(\bs) }\\
    &=\abse{\int_{\R^d} f\circ S^{-1}\bigl(a(S(\bt)) u(\bs) Sx\bigr) \dd S_*\sigma(\bs) }\\
    &\leq C \cS(f\circ S^{-1}) \inj(Sx)^{-1} \integer{\bt}^{-c},
\end{align*}
for some $C,c>0$ depending on $S_*\sigma$. Here for the last inequality we use that $\integer{\bt}=\integer{S(\bt)}$. Then the theorem for $\bt\in \R^d_{>1}$ follows by noting that the Weyl group of $G$ is finite.

\medskip

Now we prove the theorem for $\bt\in \R^d_{>1}$ satisfying \eqref{eq: special bt}. Without loss of generality, we may assume $t_d>r_{min}^{-2d(d+1)}$ where $r_{min}$ is defined as in \eqref{eq: defn of rmin}. We decompose $a(\bt)$ into
\[ a(\bt)=a(t_d^{\tfrac{1}{2}})\cdot a(\bt)a(t_d^{-\tfrac{1}{2}}). \]
For our convenience, note that
\[a(\bt)a(t_d^{-\tfrac{1}{2}})=\diag(t_1^{\tfrac{1}{d}}t_d^{-\tfrac{1}{2(d+1)}},t_2^{\tfrac{1}{d}}t_d^{-\tfrac{1}{2(d+1)}},\cdots,t_d^{\tfrac{1}{2(d+1)}},t_d^{\tfrac{d}{2(d+1)}}\prod_{i=1}^dt_i^{-\tfrac{1}{d+1}}).  \]
Let $\epsilon_1,\cdots,\epsilon_d\in (0,1)$ be such that 
\begin{align}\label{eq: choice of eps i}
     \epsilon_1^{\tfrac{1}{d+1}}\cdots \epsilon_{k-1}^{\tfrac{1}{d+1}}\epsilon_k^{-\tfrac{d}{d+1}}\epsilon_{k+1}^{\tfrac{1}{d+1}}\cdots \epsilon_d^{\tfrac{1}{d+1}}=t_k^{\tfrac{1}{d}}t_d^{-\tfrac{1}{2(d+1)}}, \,\, \text{ for any } 1\leq k\leq d.
\end{align}
In particular, $\epsilon_1\cdots \epsilon_{d-1}\epsilon_d^{-d}=t_d^{\tfrac{1}{2(d+1)}}>r_{\min}^{-d}$. Moreover, as $\bt$ satisfies \eqref{eq: special bt}, \eqref{eq: choice of eps i} implies  that $0<\epsilon_1\leq \cdots \leq \epsilon_d<1$ and $(\epsilon_1,\cdots,\epsilon_d)$ satisfies \eqref{eq: condition on epsilon i} with $S$ being the identity permutation. Now we consider the measure 
\[ \mu=\mu_{1,d}^{\epsilon_1}*\cdots*\mu_{d,d}^{\eps_d} \, \text{ and }\, \lambda=\lambda_{1,d}^{\eps_1}*\cdots *\lambda_{d,d}^{\eps_d}. \]
Without loss of generality, we may assume $m_X(f)=0$. As $\sigma=\lambda*\sigma$ (see Lemma \ref{lemma: stationarity w.r.t stoppoing time}) and recall that $g=a_ga(\bt)a(t_d^{-1/2})u(\ttb_g)$ when $g\in \supp \mu$ for some $a_g\in A_{O(1)}$,  by \eqref{eq: choice of eps i} we have
\begin{align*}
    \int f(a(\bt)u(\bs)x)\dd \sigma(\bs)&=\int\int f(a(\bt)u(\phi(\bs))x)\dd \sigma(\bs)\dd \lambda(\phi)\\
    &=\int \int f(a_{g}^{-1} a(t_d^{\tfrac{1}{2}})u(\bs)gx) \dd \sigma(\bs) \dd\mu(g).
\end{align*}
Let $\rho=t_d^{-c_1}$ for some $c_1>0$ to be specified later. Consider 
\[ E_{\rho}=\{g: \inj(gx) <\rho\}. \]
By Theorem \ref{thm: weighted effective recurrence}, we have 
\[ \mu(E_{\rho})<\rho^{c_2}(1+t_d^{-c_2}\inj(x)^{-C}), \]
for some $C,c_2>0$ depending on $\sigma$. 
We first assume that $t_d^{-c_2}\inj(x)^{-C}\leq 1$. For any $g\in \supp \mu\setminus E_{\rho}$, applying Theorem \ref{thm: unweighted effective equid} we obtain some $c_3>0$ depending on $\sigma$ such that
\begin{align*}
    \abse{\int f(a_{g}' a(t_d^{\tfrac{1}{2}})u(\bs)gx)\dd \sigma(\bs)}\ll \cS(f\circ a'_g)\inj(gx)^{-1} t_d^{-c_3}\leq \cS(f)t_d^{-c_3/2},
\end{align*}
where the last inequality holds by taking $c_1<c_3/2$. Therefore, 
\begin{align*}
    \abse{ \int f(a(\bt)u(\bs)x)\dd \sigma(\bs)}&\leq \int_{E_{\rho}\cup E_{\rho}^c} \abse{\int f(a_g^{-1}a(t_d^{\tfrac{1}{2}})u(\bs)gx)\dd \sigma(\bs)} \dd \mu(g)\\
    &\ll \mu(E_{\rho})\cS(f)+t_d^{-c_3/2}\cS(f)\\
    &\ll (t_d^{-c_1c_2}+t_d^{-c_3/2})\cS(f)\\
    &\ll t_d^{-c_1'} \inj(x)^{-1}\cS(f),
\end{align*}
where $c_1'=\min\{c_1c_2,c_3/2\}$. When $t_d^{-c_2}\inj(x)^{-C}\geq  1,$
we have $t_d^{-c_2/C} \inj(x)^{-1}\geq 1$. Let $c_2'=c_2/C$, then
\[ \abse{ \int f(a(\bt)u(\bs)x)\dd \sigma(\bs)}\ll t_d^{-c_2'}\inj(x)^{-1}\cS(f)  \]
is trivially true. Taking $c=\min\{c_1',c_2'\}$ finishes the proof of Theorem \ref{thm: weighted effective equid} for $\bt$ satisfying \eqref{eq: special bt}, whence for any $\bt$ by the reduction in the beginning of the proof.

\end{proof}

\bigskip

\section{Proof of Theorem \ref{thm: unweighted effective equid}}\label{sec: unweighted equid}
In this section, we prove Theorem \ref{thm: unweighted effective equid}. To do this, we follow and adapt the three main steps in \cite{BHZ25}: positive dimension of initial random walks, dimension bootstrap, and from high dimension to equidistribution. Recall that for any $n_1\in \N$, we have defined a probability measure $\overline{\mu}_{n_1}$ in \eqref{eq: overline mu n1}. The main reason why we can adapt the methods in \cite{BHZ25} in the setting of this paper is that the random walk driven by $\overline{\mu}_{n_1}$ on $X$ is more or less similar to the one induced by a self-similar measure in \cite{BHZ25}. 

\medskip

\subsection{Positive dimension} We show that given $x\in X$, for sufficiently large $n_1,n\in \N$, the measure $\overline{\mu}_{n_1}^{*n}*\delta_x$ has certain Frostman-type positive dimension:
\begin{proposition}\label{prop: positive dimension}
There exist $A,\kappa>0$ such that for any $n_1\ggg_{\sigma} 1$, $\rho\lll_{n_1,\kappa,\sigma} 1$, $x,y\in X$,
\[ \overline{\mu}_{n_1}^{*n}*\delta_x (B_{\rho}y)\leq \rho^{\kappa}, \]
provided that $nn_1\geq |\log \rho|+A|\log \inj x|$. 
\end{proposition}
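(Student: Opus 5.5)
We follow the positive-dimension step of \cite{BHZ25}, with $\overline{\mu}_{n_1}$ in place of the self-similar random walk. The plan has three stages: return to a compact set, freeze the last block of steps, and use polynomial regularity of the translation parts to bound the mass of a small ball.

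\textbf{Recurrence.} Split $n=n'+m$, where $m$ is a fixed number of final steps: $m=\lceil |\log\rho|/(\theta n_1)\rceil$ for a small $\theta>0$, so the last block has total expansion $t=e^{mn_1\ell_1}\approx \rho^{-1/\theta}$. Write $\overline{\mu}_{n_1}^{*n}*\delta_x=\overline{\mu}_{n_1}^{*m}*(\overline{\mu}_{n_1}^{*n'}*\delta_x)$. Apply Corollary \ref{cor: effective recurrence of regular convolution} with threshold $\rho^{\kappa_0}$. Since $nn_1\geq|\log\rho|+A|\log\inj x|$, choosing $A$ large makes $e^{-cn'n_1}\inj(x)^{-C}\leq 1$. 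Hence, outside a set of measure $\ll_{n_1}\rho^{c\kappa_0}$, the intermediate point $z=g'x$ satisfies $\inj(z)\geq\rho^{\kappa_0}$. It therefore suffices to show that $\overline{\mu}_{n_1}^{*m}*\delta_z(B_\rho y)\leq\rho^{2\kappa}$ uniformly in such $z$ and in $y$.

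\textbf{Freezing the last block.} A typical $g\in\supp\overline{\mu}_{n_1}^{*m}$ has the form $g=a_g\,a(t)\,u(\bttb_g)$. Two facts control this:
\begin{itemize}
\item[(i)] By the large deviation principle (Proposition \ref{prop: LDP}), $a_g\in A_{t^{O(\theta')}}$ outside an exponentially small set.
\item[(ii)] Since $\overline{\mu}_{n_1}^{*m}$ is a product of the measures $\mu_{i,d}^{*mn_i}$, the vector $\bttb_g$ is distributed according to $\otimes_i\sigma_i^{(mn_i)}$.
\end{itemize}
Fix the diagonal parts. The event $gz\in B_\rho y$ then forces $u(\bttb_g)z$ to lie in $a(t)^{-1}a_g^{-1}B_\rho y$. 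This set is a $\rho$-ball contracted in the $U$-directions by $t^{-1/(d+1)\cdot(1+\cdots)}$, i.e.\ it is essentially thin in the unstable directions. Because $\inj(z)\geq\rho^{\kappa_0}$, for $\rho$ small this thin set lifts injectively near $z$. Moreover, the map $\bs\mapsto u(\bs)z$ is transverse to the contracted directions. Consequently the admissible set of $\bs$ has diameter at most $\rho\,t^{-c'}$ in at least one $U$-coordinate, and more usefully lies in a region of the form $\{|P(\bs)|\leq \rho^{c''}\norm{P}\}$ for a polynomial $P$ of bounded degree coming from the matrix coefficients of $u(\bs)$. Concretely, we compare $u(\bs)z$ and $u(\bs')z$ through $u(\bs-\bs')$, which reduces the condition to a ball of radius $\rho^{1-O(\kappa_0)}$ in $\bs$.

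\textbf{Regularity bound.} Apply Corollary \ref{cor: regularity-stopping time for product}(2), or Corollary \ref{cor: regularity for product 2}, to linear polynomials $P(\bs)=s_i-c$. This gives
\[
\otimes_i\sigma_i^{(mn_i)}\bigl(\text{ball of radius } r\bigr)\ll r^{\gamma} \quad\text{for } r\geq \max_i e^{-mn_i}.
\]
This threshold condition holds because $mn_1\approx|\log\rho|/\theta$. Summing the three error terms (recurrence, large deviations, regularity) yields the bound $\rho^\kappa$ with $\kappa\approx\min(c\kappa_0,\gamma/2)$, once $\rho\lll_{n_1,\kappa,\sigma}1$ absorbs the constants $b(n_1)$.

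\textbf{Main obstacle.} The hardest step is the middle one: converting $gz\in B_\rho y$ into a small-ball condition on $\bttb_g$ with a uniform injectivity-radius loss. Two features must be handled there. First, $a_g$ is not central, so the admissible region is skewed. Second, several $\Gamma$-translates may contribute. For the latter, we would first try the covering argument from \cite{BHZ25}: the number of lifts is bounded by $\inj(z)^{-O(1)}\leq\rho^{-O(\kappa_0)}$, and this loss is then absorbed by choosing $\kappa_0$ small relative to $\gamma$.
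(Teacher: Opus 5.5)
There is a genuine gap, and it comes from your choice of a \emph{long} final block. You take $mn_1\approx|\log\rho|/\theta$ with $\theta$ small, so that the regularity threshold $\max_i e^{-mn_i}$ drops below $\rho$. Two things then break.

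\textbf{The step budget.} The hypothesis only gives $nn_1\geq|\log\rho|+A|\log\inj x|$. So $m$ may exceed $n$, and $(n-m)n_1$ need not dominate $|\log\inj x|$. Your recurrence step is therefore not available.

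\textbf{The freezing step, which is the decisive failure.} For $g$ in the last block, $a(t)^{-1}a_g^{-1}B_\rho y$ is stretched by $\Ad(a(t)^{-1})$ to extent $\approx\rho t\gg 1$ in the $U^-$-directions. It is not a small set and does not lift injectively near $z$. The set $\{\bs\in\supp\sigma: u(\bs)z\in a(t)^{-1}a_g^{-1}B_\rho y\}$ splits into one piece per lift, each of $\bs$-diameter $\approx\rho/t$.
\begin{itemize}
\item A volume count bounds the number of lifts by $\inj(z)^{-O(1)}$ only when the pulled-back set has bounded size. Here it only gives a bound growing polynomially in $\rho t$.
\item Each piece lies far below the atomic scale of $\otimes_i\sigma_i^{(mn_i)}$. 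So Corollary \ref{cor: regularity-stopping time for product}(2) gives at most $(\max_i e^{-mn_i})^{\gamma}$ per piece.
\item Since $\gamma$ is only a small exponent, this cannot compensate for the number of pieces, and no bound $\rho^\kappa$ results.
\end{itemize}
Your reduction to ``a ball of radius $\rho^{1-O(\kappa_0)}$ in $\bs$'' is therefore unjustified. Separately, fixing the diagonal part and then invoking regularity of the \emph{unconditional} law of $\bttb_g$ is not legitimate as stated, since $\bttr_g$ and $\bttb_g$ are read off the same word.

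The missing idea is to keep the last block \emph{short}. The paper takes $m=\lfloor\alpha|\log\rho|/n_1\rfloor$ with $\alpha$ small, so that $\norm{\Ad(g^{-1})}\leq\rho^{-1/2}$ off a large-deviation set. It uses regularity only at scales $\rho^{O(\kappa)}$, well above $e^{-mn_1}$, and argues by contradiction:
\begin{itemize}
\item If $\overline{\mu}_{n_1}^{*n}*\delta_x(B_\rho y)>\rho^\kappa$, then the intermediate points $z$ with $\overline{\mu}_{n_1}^{*m}\{g: gz\in B_\rho y\}>\rho^{2\kappa}$ carry mass $>\rho^{2\kappa}$.
\item For such $z$, pigeonhole $(\log\bttr_g,\bttb_g)$ into cubes of side $\rho^{C\kappa}$ and apply regularity. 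This gives $g_1,g_2$ in one cube with $\norm{\bttb_{g_1}-\bttb_{g_2}}\geq\rho^{O(\kappa)}$.
\item Then $h=g_1^{-1}g_2$ satisfies $\rho^{1/4}<\dist(h,\Id)\leq 10\rho^{C\kappa}$ and $\dist(z,hz)\ll\rho^{1/2}$. This forces $\inj(z)\leq 11\rho^{C\kappa}$.
\item For $C$ large, this contradicts Corollary \ref{cor: effective recurrence of regular convolution}.
\end{itemize}

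Your direct-counting idea can also be repaired in this short regime:
\begin{itemize}
\item Since $AU$ is a group, for each lift the admissible $\bs$ lie in a ball of radius $O(\rho)$.
\item Once $\rho\norm{\Ad(g^{-1})}\ll\inj(z)$, the number of lifts is $\ll (mn_1)^d\inj(z)^{-2d}$.
\item Each lift costs $\ll(\max_i e^{-mn_i})^{\gamma}$, a fixed positive power of $\rho^{\alpha}$. This beats $\rho^{-O(\kappa_0)}$ provided $\kappa_0\lll\alpha\gamma$.
\end{itemize}
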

The proof of this proposition follows similarly from that of \cite[Proposition 6.1]{BHZ25}. 

\begin{proof}
Let $\rho\in (0,1)$ and assume that $nn_1\geq |\log \rho|$. Let $\kappa>0$ be a parameter to be specified later and assume by contradiction that $\overline{\mu}_{n_1}^{*n}*\delta_x(B_{\rho}y)>\rho^{\kappa}$. Let $\alpha=\tfrac{1}{10(1+\ell_1+\max_k \ell_1/\ell_k)}$ and $m=\integer{ \alpha|\log \rho|/n_1 }$. For any $z\in X$, define a set 
\[ E_z=\{g: gz\in B_{\rho}y\}. \]
Let $F=\{z: \overline{\mu}_{n_1}^{*m}(E_z)>\rho^{2\kappa}\}.$ Then we have
\begin{align*}
    \rho^{\kappa}\leq \overline{\mu}_{n_1}^{*n}*\delta_x(B_{\rho}y)&=\int \overline{\mu}_{n_1}^{*n}(E_z)\dd \overline{\mu}_{n_1}^{*(n-m)}*\delta_x(z)\\
    &\leq \overline{\mu}_{n_1}^{*(n-m)}*\delta_x(F)+\rho^{2\kappa}(1-\overline{\mu}_{n_1}^{*(n-m)}*\delta_x(F)),
\end{align*}
which implies that $\overline{\mu}_{n_1}^{*(n-m)}*\delta_x(F)>\rho^{2\kappa}$ provided that $\rho\lll_{\kappa} 1$.

Now we argue that $\inj(z)$ is very small for $z\in F$. Fix $z\in F$ in the rest of the argument. By the large deviation principle, there exists $\delta=\delta(\sigma)>0$ such that if $mn_1\ggg_{\sigma} 1$, then for any $k\in \dbracket{1,d}$,
\begin{align}\label{eq: LDP in pos dim}
     \mu_{k,d}^{*mn_k}\{g: |\log \ttr_g+mn_k\ell_k|<mn_k\}\geq 1-e^{-mn_k \delta}.
\end{align}
Assume $n_1\geq \max_{k}(\ell_k, 2\ell_k/\ell_1)$. By the assumption on $n_1$, if $|\log \ttr_g+mn_k\ell_k|<mn_k$, then $|\log \ttr_g+mn_1\ell_1|<mn_1(1+ \max_k \ell_1/\ell_k)$. Recall that for any $g\in \supp \overline{\mu}_{n_1}^{*m}$, we can write $g=\prod_1^d g_k$ where $g_k\in \supp \mu_{k,d}^{*mn_k}$.
Hence, by \eqref{eq: LDP in pos dim} we have
\[ \overline{\mu}_{n_1}^{*m}\underbrace{\{g:|\log \ttr_{g_k}+mn_1\ell_1|>mn_1 (1+ \max_k \ell_1/\ell_k), \, \exists k\in \dbracket{1,d}\}}_{E_1}\leq  e^{-mn_1\delta'}\leq \rho^{\alpha \delta'/2},\]
for $\delta'=\min_k \tfrac{\ell_1 \delta}{2\ell_k}$, where the last inequality holds if $\alpha|\log \rho|/2>n_1$.

Now let $C>1$ be a parameter to be specified later. Let $K$ be a fixed compact set such that $\supp \sigma\subset K$. We divide 
\[[-mn_1(\ell_1+1+\max_k \ell_1/\ell_k),-mn_1(\ell_1-1-\max_k \ell_1/\ell_k)]^d\times K\subset \R^{2d}\]
into disjoint cubes $(S_i=S_{i,1}\times S_{i,2})_{i\in I}$ of length $\rho^{C\kappa}$. We have 
\[ \#I \ll_{K}  (mn_1 (1+\max_k\frac{\ell_1}{\ell_k}))^d \rho^{-2dC\kappa}\ll \rho^{-3dC\kappa}, \]
provided that $\rho\lll_{\kappa} 1$.
By the pigeonhole principle, given $z\in F$, there exists at least one $i_0\in I$ such that 
\begin{align}\label{eq: bound for E2}
     \overline{\mu}^{*m}_{n_1}\underbrace{\{g\in E_z\setminus E_1: \log \ttr_g\in S_{i_0,1}, \, \ttb_g\in S_{i_0,2}\}}_{E_2}\geq \frac{\rho^{2\kappa}-\rho^{\alpha \delta'/2}}{\rho^{-3dC\kappa}}\geq \rho^{4dC\kappa}, 
\end{align}
provided $\kappa\lll_{\delta'} 1$. Let $\gamma\lll 1$ be the constant given by Corollary \ref{cor: regularity-stopping time for product} (2). Then \eqref{eq: bound for E2} implies there exist $g_1,g_2\in E_2$ such that
\begin{align}\label{eq: difference for ttb}
    \norm{\ttb_{g_2}-\ttb_{g_1}}\geq \rho^{4\gamma^{-1}dC\kappa},
\end{align}
provided that $\kappa\lll_{C,\gamma} 1$, which guarantees $\rho^{4\gamma^{-1}dC\kappa}\geq \rho^{\alpha/2}> e^{-mn_1}$. Note that the choice of $\alpha$ guarantees that $\norm{\Ad(g_1^{-1})}\leq \rho^{-1/2}$ when $g_1\in E_2$. This implies
\begin{align}\label{eq: bound for z g_1^{-1}g_2z}
    \dist(z,g_1^{-1}g_2 z)\leq \norm{\Ad(g_1^{-1})}\dist(g_1z,g_2z)\ll \rho^{1/2}.
\end{align}
On the other hand,
\[ g_1^{-1}g_2=u(-\ttb_{g_2}) u(\ttb_{g_2}-\ttb_{g_1}) a(\ttr_{g_1}\ttr_{g_2}^{-1})u(\ttb_{g_2}). \]
The fact that $g_1,g_2\in E_2$ and \eqref{eq: difference for ttb} yield
\begin{align}\label{eq: bound for g_1^-1 g_2}
    \rho^{1/4}<\rho^{4\gamma^{-1}dC\kappa} \leq \dist(g_1^{-1}g_2,\Id)\simeq \norm{\ttb_{g_2}-\ttb_{g_1}}+\max_{k\in \dbracket{1,d}}|\log(\ttr_{g_{1,k}}\ttr_{g_{2,k}}^{-1})| \leq 10\rho^{C\kappa}, 
\end{align}
where the first inequality holds provided that $\kappa\lll_{C,\gamma} 1$. Hence, \eqref{eq: bound for z g_1^{-1}g_2z} together with \eqref{eq: bound for g_1^-1 g_2} imply 
\[ \inj(z)\leq \rho^{1/2}+10\rho^{C\kappa}\leq 11\rho^{C\kappa}, \]
provided that $\kappa \lll_{C} 1$.
Therefore, $F\subset \{\inj\leq 11\rho^{C\kappa}\}$, and then by Corollary \ref{cor: effective recurrence of regular convolution} we have
\[ \rho^{2\kappa}<\overline{\mu}_{n_1}^{*(n-m)}*\delta_x\{\inj\leq 11\rho^{C\kappa}\}\leq \rho^{cC\kappa}(e^{-c(n-m)n_1}\inj(x)^{-C_1}+b), \]
for some $C_1,c>0$ and $b=b(n_1)>0$.
This is a contradiction if $C\ggg_c 1$, $(n-m)n_1\ggg |\log \inj x|$ and $\rho\lll_{n_1} 1$. This finishes the proof.
\end{proof}

\bigskip

\subsection{Dimension bootstrap}

The aim here is to prove  Proposition \ref{prop: high dimension} stating that after finitely many steps of convolutions by $\overline{\mu}_{n_1}$ for large enough $n_1\in \N$, a Borel measure on $X$ with positive initial dimension will gain dimension sufficiently close to that of the ambient space. We first recall the definition of robust measures in \cite{BH24,BHZ25,BH25}.

\begin{definition}\label{defn: robust measure}
   Given $\alpha\in (0,1]$, $\tau\geq 0$ and $I\subset [0,1]$. Let $\nu$ be a Borel measure on $X$. Then $\nu$ is said to be $(\alpha,\mathcal{B}_I,\tau)$-robust if $\nu=\nu_1+\nu_2$, where $\nu_2(X)\leq \tau$, $\nu_1$ satisfies $\nu_1\{\inj<\sup I\}=0$ and
    \[ \nu_1(B_{\rho}y)\leq \rho^{\alpha\dim X},\,\, \forall \rho\in I, \, y\in X. \]
We say that $\nu$ is $(\alpha,\mathcal{B}_{\rho},\tau)$-robust if $I=\{\rho\}$ is a singleton.
\end{definition}

\begin{proposition}\label{prop: high dimension}
Given $0<\kappa\lll 1$. There exist $M\ggg_{\kappa} 1$ and $A\ggg_{\sigma} 1$ such that given any $n_1\ggg_{\kappa} 1$, $\eta\lll_\kappa 1$, $\rho\lll_{n_1} 1$ and $x\in X$, if $n\in \N$ satisfies  $nn_1\geq M |\log \rho|+A|\log \inj x|$, then the measure $\overline{\mu}_{n_1}^{*n}*\delta_x$ is $(1-\kappa,\mathcal{B}_{\rho},\rho^{\eta})$-robust.
\end{proposition}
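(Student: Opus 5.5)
The plan is to follow the dimension bootstrap scheme of \cite[Section 7]{BHZ25} (itself based on \cite{BH24}), with $\overline{\mu}_{n_1}$ playing the role of the self-similar random walk. Everything rests on one \emph{single-step bootstrap} statement. For every $\kappa_0\in(0,1)$ we look for $\epsilon_1=\epsilon_1(\kappa_0)>0$ with the following property. If $\nu$ is $(\alpha,\mathcal{B}_{[\rho,\rho^{\epsilon}]},\tau)$-robust with $\kappa_0\le\alpha\le 1-\kappa$, then $\overline{\mu}_{n_1}^{*m}*\nu$ is $(\alpha+\epsilon_1,\mathcal{B}_{[\rho',\rho'^{\epsilon'}]},\tau+\rho^{\eta'})$-robust. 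Here $m n_1\simeq_{\kappa}|\log\rho|$ and $\rho'$ is a fixed power of $\rho$. We will prove this statement and then iterate it.

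\textbf{Step 1: initial dimension.} First we get positive dimension at all scales in a range. We apply Proposition~\ref{prop: positive dimension} at every scale $r\in[\rho^{M'},\rho^{M'\epsilon}]$ simultaneously, after discretizing $r$. We discard the mass on $\{\inj<\rho^{M'\epsilon}\}$ using Corollary~\ref{cor: effective recurrence of regular convolution}, which gives the $\nu_2$ part of the decomposition. The output is that $\overline{\mu}_{n_1}^{*n_0}*\delta_x$ is $(\kappa_0,\mathcal{B}_I,\rho^{\eta})$-robust for some $\kappa_0=\kappa/\dim X$-type constant. This holds once $n_0n_1\ge M'|\log\rho|+A|\log\inj x|$, and this step is where the $A|\log\inj x|$ term in the hypothesis comes from.

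\textbf{Step 2: single-step bootstrap.} Localize $\nu$ to small boxes $B_r z$ in the compact part. There we write $g z'=g\exp(w)z$ and linearize in $\mathfrak{g}$. The typical element of $\supp\overline{\mu}_{n_1}^{*m}$ is $a_g\,a(e^{mn_1\ell_1})\,u(\bttb_g)$, with $a_g\in A_{e^{o(mn_1)}}$ by the large deviation estimate \eqref{eq: LDP for nk}. The law of $\bttb_g$ is $\otimes_i\sigma_i^{(mn_i)}$, so the pushforward of $\nu$ restricted to a box is essentially $\int \Ad(a(t)u(\bs))_*\nu_{z}\,\dd\otimes_i\sigma_i^{(mn_i)}(\bs)$. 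This places us exactly in the linearized situation of \cite{BHZ25}. The key input there is a projection/multislicing theorem for the family of maps $\bs\mapsto\Ad(a(t)u(\bs))$ acting on $\mathfrak{g}$. That theorem needs only two things. The first is non-concentration of the driving measure on $\R^d$ near zero sets of polynomials, which is provided by Corollaries~\ref{cor: regularity-stopping time for product} and~\ref{cor: regularity for product 2}, down to scale $e^{-\min_i mn_i}$. The second is that the diagonal part is uniformly close to $a(t)$, which holds because $n_k\ell_k\approx n_1\ell_1$. We then undo the linearization with Lipschitz bounds and control the loss in the $\tau$-parameter again by Corollary~\ref{cor: effective recurrence of regular convolution}.

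\textbf{Step 3: iteration.} Starting from $\kappa_0$, we apply Step 2 about $\lceil 1/\epsilon_1(\kappa_0)\rceil$ times, which reaches dimension $1-\kappa$. Each application shrinks the scale range by a bounded power and adds $\rho^{\eta'}$ to $\tau$. Summing the required numbers of steps gives $nn_1\ge M|\log\rho|+A|\log\inj x|$ with $M\ggg_\kappa 1$. The final restriction to the singleton scale $I=\{\rho\}$ is immediate.

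\textbf{Main obstacle.} The hardest part is Step 2, namely verifying that the multislicing theorem of \cite{BHZ25} applies to the product measure $\otimes\sigma_i^{(mn_i)}$ for $\bs$, whose scales are unequal only up to $O(1)$ factors, together with the non-uniform diagonal perturbations $a_g$. To handle this, I would first establish the linear statement: for $\nu$ on $\mathfrak{g}$ that is $\alpha$-Frostman at the relevant scales, the averaged pushforward $\int\Ad(a(t)u(\bs))_*\nu\,\dd\bs$ gains $\epsilon_1$ in dimension. Its proof would reuse the \cite{BHZ25} argument, which only invokes polynomial non-concentration of the $\bs$-measure, and here that non-concentration is supplied by Corollary~\ref{cor: regularity for product 2}. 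The perturbations $a_g$ would then be absorbed by pigeonholing $\log\ttr_{g_k}$ into intervals of length $O(1)$, as in the proof of Proposition~\ref{prop: positive dimension}.
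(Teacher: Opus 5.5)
The ingredients you list are the ones the paper uses, but the iteration you build from them does not close. Specifically:
\begin{itemize}
\item the base case comes from Proposition~\ref{prop: positive dimension} and Corollary~\ref{cor: effective recurrence of regular convolution};
\item the one-step gain comes from Proposition~\ref{prop: multislicing}, with (MNC) supplied by Corollary~\ref{cor: regularity for product 2} as in Proposition~\ref{prop: MNC};
\item large deviations control the diagonal part.
\end{itemize}

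\textbf{Why a single $m$ per step fails.} Your one-step lemma uses one $m$, with $mn_1\simeq|\log\rho|$, for a whole window of output scales. With $T=e^{mn_1\ell_1}$ fixed, the preimage $\Ad(g^{-1})B_{r'}$ of an $r'$-ball is a box with sides $r'T$, $r'$, $r'/T$. This has two consequences.
\begin{itemize}
\item Certifying scale $r'$ needs Frostman control of $\nu$ on all of $[r'/T,r'T]$.
\item The gain at $r'$ is at most of order $\theta=\log T/|\log r'|$. For example, if $\nu$ is locally uniform on a piece of a $k$-dimensional subgroup of $U$ ($k\le d$), every such box meets it in a ball of radius $r'/T$. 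The gain is then exactly $\tfrac{k}{D}\theta$, so $\alpha$ is only multiplied by $1+\theta$.
\end{itemize}
Hence from an input window $[\rho^{b},\rho^{a}]$ you can only certify $[\rho^{b'},\rho^{a'}]$ with $a'\ge a+\theta b'$, where $\theta$ is the smallest anisotropy in the output window. The top exponent moves by an amount proportional to the \emph{bottom} one, so the window closes once $\sum_j\theta_j\ge1$. Meanwhile the guaranteed gain per step is only $O(\theta_j)$, so in the regime $\alpha\le d/D$ the dimension grows by a factor of at most about $e$. You therefore never get from $\kappa_0$ to $1-\kappa$. In particular, a one-step lemma returning a window of the same shape as its input is false, and the window does not merely ``shrink by a bounded power.''

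\textbf{The missing idea: tie the number of steps to the target scale.} Proposition~\ref{prop: dimension bootstrap} treats one output scale $\rho^{1/2}$ with $n_\rho n_1\ell_1\approx|\log\rho|/10$. The boxes then always have the fixed shape $\rho^{\bt}$ with $\bt=(2/5,1/2,3/5)$, and input is needed only on $[\rho^{2/3},\rho^{1/3}]$, a multiplicative neighbourhood of the output scale. The induction is stated for all $n$ with $nn_1$ above a threshold. For each $r$ one splits
$\overline{\mu}_{n_1}^{*n}*\delta_x=\overline{\mu}_{n_1}^{*n'(r)}*\bigl(\overline{\mu}_{n_1}^{*(n-n'(r))}*\delta_x\bigr)$
with $n'(r)n_1\ell_1\approx|\log r|/10$, using the semigroup property from Lemma~\ref{lemma: product structure}. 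This gives robustness at the single scale $r^{1/2}$.

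These decompositions depend on $r$, because the multislicing good sets depend on the box. So one needs Lemma~\ref{Lemma: combinatorial robust} to merge single-scale robustness into robustness on a range, at the cost of $\epsilon$ in dimension and a bounded factor in $\tau$. Your plan has no such merging step even within one application. With this scheme the exponent window shrinks only multiplicatively. Hence $K\approx(1-2\kappa)/\epsilon$ steps fit if you start from $[\rho^{M},\rho^{1/M}]$ with $M$ of size $(3/2)^K$, which is where $M\ggg_\kappa1$ comes from.

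\textbf{Your stated main obstacle is routine.} The unequal $n_k$ only cost a constant in the (MNC) exponent, since $n_k\ge n_1\ell_1/(2\ell_k)$. One also needs \cite[Proposition 7.6]{BHZ25} to make the relevant polynomial nonzero. The perturbation $a_g\in A_{\rho^{-O(\epsilon)}}$ is absorbed by covering, because $\Ad(A)$ preserves $\kg_-,\kg_0,\kg_+$, so no pigeonholing is needed.
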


We follow the strategy of \cite{BHZ25} to prove Proposition \ref{prop: high dimension}. The dimension increment in Proposition \ref{prop: high dimension} relies on a generalized multi-slicing theorem of B\'enard and He \cite{BH24,BH25}, where a mild non-concentration condition $(\MNC)$ plays a crucial role. 

\medskip

Let $D\geq 2$ and $1\leq k\leq D-1$ be integers. Denote by $\Gr(\R^D,k)$ the Grassmannian of all $k$-dimensional linear subspaces of $\R^D$. Note that $\Gr(\R^D,k)$ can be viewed as a subset of the vector space $\bigwedge^k \R^D$ endowed with a norm $\norm{\cdot}$. Given $F_i\in \Gr(\R^D,k_i)$ for some $k_i\in \N$ ($i=1,2$) such that $k_1+k_2\leq D$, we denote $\norm{F_1\wedge F_2}=\norm{v_1\wedge v_2}$ where $v_i\in \bigwedge^{k_i} \R^D$ is the representative of $F_i$ with $\norm{v_i}=1$.
\begin{definition}\label{defn: MNC}
     Let $\Xi$ be a probability measure on $\Gr(\R^D,k)$. Given parameters $C,c,\rho>0$, $\Xi$ is said to satisfy the \textit{mild non-concentration property} (MNC) with parameters $(\rho,C,c)$ if there exist $q,m\in \N$ such that $D=qk+m$ and for every $W\in \Gr(\R^D,m)$, $r\geq \rho$,
    \[ \Xi^{\otimes q}\{(F_i)_1^q: \norm{F_1\wedge\cdots \wedge F_q\wedge W}\leq r\}\leq C r^c. \]
$\Xi$ satisfies $(\MNC)^{\perp}$ with parameters $(\rho,C,c)$ if the push-forward measure of $\Xi$ under $F\mapsto F^{\perp}$ satisfies $(\MNC)$ with parameters $(\rho,C,c)$. 
\end{definition}

Specified to our setting, we consider the Lie algebra $\mathfrak{g}=\Lie(G)\cong\R^D$ with $D=d^2+2d$. Let $A'=\{a(t):t>0\}$. We can decompose  $\mathfrak{g}=\mathfrak{g}_-\oplus \kg_0\oplus \kg_+$ into sum of eigenspaces with eigenvalue greater than $1$, equal to $1$ and less than $1$ respectively with respect to the action of $a(t)$ ($t>1$). More precisely, $\kg_+=\Lie U$, $\kg_-=\Lie U^-$ where $U^-$ is the transpose of $U$ and $\kg_0$ is the Lie algebra of the centralizer of $A'$. Denote by $\kg_{\leq 0}=\kg_-\oplus \kg_0$. Note that $\dim \kg_+=d$ and $\dim \kg_{\leq 0}=d^2+d$. 

Let $P'=A'U$. We consider maps $V_1:P'\to \Gr(\kg,d)$ and $V_2:P'\to\Gr(\kg,d^2+d)$ defined respectively by
\begin{align*}
     V_1(g)=\Ad(u(-\ttb_g))\kg_-,\quad  V_2(g)=\Ad(u(-\ttb_g))\kg_{\leq 0}.
\end{align*}

\begin{proposition}\label{prop: MNC}
There exist parameters $C,c>0$ depending on $\sigma$ only such that the following holds. Given $n_1,n\in \N$ such that $n_1\ggg_{\sigma} 1$. Let $\Xi_1$ be the push-forward probability measure on $\Gr(\kg,d)$  of $\overline{\mu}_{n_1}^{*n}$ under the map $g\mapsto V_1(g)$. Then $\Xi_1$ satisfies $(\MNC)$ with parameters $(e^{-nn_1},C,c)$. 

Let $\Xi_2$ be the push-forward probability measure on $\Gr(\kg, d^2+d)$  of $\overline{\mu}_{n_1}^{*n}$ under the map $g\mapsto V_2(g)$. Then $\Xi_2$ satisfies $(\MNC)^{\perp}$ with parameters $(e^{-nn_1},C,c)$.
\end{proposition}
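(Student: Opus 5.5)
Both statements reduce to a polynomial non-concentration estimate for the translation parts $\ttb_g$, combined with the regularity results of Section~\ref{sec: preliminary}. We treat $\Xi_1$ first. For $g\in\supp\overline{\mu}_{n_1}^{*n}$, the product structure (Lemma~\ref{lemma: product structure}) gives $g=\prod_k g_k$ with $g_k\in\supp\mu_{k,d}^{*nn_k}$. Hence $\ttb_g=(\ttb_{g_1},\dots,\ttb_{g_d})$ is distributed according to $\nu=\otimes_{k=1}^d\sigma_k^{(nn_k)}$, up to the harmless normalisation in $\iota_d$ (a fixed diagonal rescaling, absorbed into constants).

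We choose the decomposition $D=qk+m$ with $k=d$, $q=d+1$ and $m=D-(d+1)d=d$. For fixed $W\in\Gr(\kg,d)$, the quantity
\[
\norm{V_1(\bs_1)\wedge\cdots\wedge V_1(\bs_{d+1})\wedge W}^2
\]
is a polynomial $P_W(\bs_1,\dots,\bs_{d+1})$ of bounded degree in $(\bs_i)\in(\R^d)^{d+1}$. This holds because $\Ad(u(-\bs))\kg_-$ has a wedge representative $\Ad(u(-\bs))w_-$, where $w_-$ is a unit generator of $\bigwedge^d\kg_-$, and its entries are polynomial in $\bs$. Its norm is bounded above and below uniformly in $\bs\in K$ (compact).

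The key algebraic step is to show that $\norm{P_W}\gg 1$ uniformly in $W$. We argue by contradiction using compactness of $\Gr(\kg,d)$: it suffices to show that $P_W\not\equiv0$ for every $W$. So we must find $\bs_1,\dots,\bs_{d+1}$ such that the subspaces $\Ad(u(-\bs_i))\kg_-$ together with $W$ span $\kg$. We plan to use the $U$-orbit of $\kg_-$ in $\Gr(\kg,d)$: the span of $\{\Ad(u(\bs))\kg_-\}_{\bs}$ over suitably generic choices is $\kg$, since $\Ad(U)$-translates of $\kg_-$ generate $\kg$ as a $U$-module ($\kg$ is generated by $\kg_-$ under $\mathrm{ad}(\kg_+)$). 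A genericity or dimension-count argument on Zariski-dense choices should then yield a complement to any given $W$. This is the main obstacle: genuine transversality for every $W$ needs a careful argument. We would follow the corresponding lemma in \cite{BHZ25}, whose algebraic content depends only on $U$ and $\kg_\pm$ and not on the measure, so it transfers verbatim.

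Granting this, Corollary~\ref{cor: regularity for product 2} applied with $m=d+1$ copies of $\nu$ gives
\[
\nu^{\otimes(d+1)}\{P_W\le r^2\norm{P_W}\}\le Cr^{2\gamma}
\]
for $r^2\ge\max_k e^{-nn_k}$. Since $n_k\simeq n_1\ell_1/\ell_k$, this holds for $r\ge e^{-c'nn_1}$, and we rescale $c$ and $C$ to obtain the parameter $e^{-nn_1}$; alternatively, $r$ between $e^{-nn_1}$ and $e^{-c'nn_1}$ is handled by enlarging $C$, since $Cr^c\ge1$ fails there only up to a constant power. The constants $C$ and $\gamma$ depend only on $\sigma$ and the bounded degree.

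The statement for $\Xi_2$ is the same argument applied to orthogonal complements. Indeed, $V_2(g)^\perp=(\Ad(u(-\ttb_g))\kg_{\le0})^\perp=\Ad(u(-\ttb_g))^{-t}\kg_{\le0}^\perp$, and $\Ad(u(-\bs))^{-t}=\Ad(u(\bs)^t)$ with $u(\bs)^t\in U^-$. Since $\kg_{\le0}^\perp$ is $d$-dimensional (it equals $\kg_+$ for the Killing-type inner product), the map $\bs\mapsto V_2(\bs)^\perp$ is again a polynomial family of $d$-planes. The same nondegeneracy question arises with the roles of $U$ and $U^-$ swapped, and it is settled identically; regularity then finishes the proof.
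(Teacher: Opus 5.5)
Your proposal is correct and follows the paper's proof. The transversality input you defer to is exactly \cite[Proposition 7.6]{BHZ25}, and the remaining steps match the paper's: compactness in $W$, Corollary~\ref{cor: regularity for product 2}, and shrinking the exponent $c$ by a factor $\min_k \ell_1/(2\ell_k)$ via monotonicity in $r$ (your aside about enlarging $C$ would not handle the range $[e^{-nn_1},e^{-c'nn_1}]$, but your first option does). For $\Xi_2$, your transposition computation with respect to $\operatorname{tr}(XY^{t})$ actually shows $V_2(g)^\perp=\vartheta(V_1(g))$ for the isometry $\vartheta(X)=-X^{t}$ of $\kg$, so $(\MNC)^\perp$ for $\Xi_2$ is literally $(\MNC)$ for $\Xi_1$ transported by $\vartheta$; this is a self-contained substitute for the paper's appeal to \cite[Corollary 7.5]{BHZ25}.
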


\begin{proof}
For the first assertion, it suffices to show that for any $W\in \Gr(\kg,d)$ and $r\geq e^{-nn_1}$, we have
\[ (\overline{\mu}_{n_1}^{*n})^{\otimes d+1}\{(g_i)_{i=1}^{d+1}:\, \norm{V_1(g_1)\wedge\cdots \wedge V_1(g_{d+1})\wedge W}\leq  r\}\leq C r^c, \]
for some constants $C,c>0$ depending only on $\sigma$. For this, invoking \cite[Proposition 7.6]{BHZ25}, there exists $(u_i)_{i=1}^{d+1}\in U^{d+1}$ such that 
\[ \Ad(u_{1})\kg_{-}\oplus\Ad(u_{2})\kg_{-}\oplus\dots \oplus \Ad(u_{d+1})\kg_{-}\oplus W=\kg. \]
Hence, the map 
\[ P_W\left((\bs_i)_{i=1}^{d+1} \right)=\norm{\Ad(u(-\bs_1))\kg_-\wedge\cdots\wedge \Ad(u(-\bs_{d+1}))\kg_-\wedge W  }^2 \]
is a non-zero polynomial of bounded degree. As $W$ varies in a compact set, we have $\norm{P_W}\simeq 1$ for any $W\in \Gr(\kg,d)$. By Corollary \ref{cor: regularity for product 2}, we have
\begin{align}\label{eq: bound in MNC}
    (\overline{\mu}_{n_1}^{*n})^{\otimes d+1}\{(g_i)_{i=1}^{d+1}: |P_W((\ttb_{{g_i}})_1^{d+1})|\leq r\}\leq C r^c,
\end{align}
provided that $r\geq \max_{k\in \dbracket{1,d}} e^{-nn_k}$. Note that $n_k\geq n_1\ell_1/\ell_k -1$ for any $k\in \dbracket{1,d}$. Provided $n_1\geq \max_k 2\ell_k/\ell_1$, we have $n_k\geq \tfrac{n_1\ell_1}{2\ell_k}$ for any $k\in \dbracket{1,d}$. Hence, if $r\geq e^{-nn_1\min_k\tfrac{\ell_1}{2\ell_k}}$, then \eqref{eq: bound in MNC} holds, whence the first assertion by possibly replacing $c$ by $c\min_k\tfrac{\ell_1}{2\ell_k}$.

Using $(\MNC)$ of $\Xi_1$, the proof of the second assertion on $\Xi_2$ then follows from that in the proof of \cite[Corollary 7.5]{BHZ25}.
\end{proof}

\medskip

Let $\mathcal{F}$ be the collection of partial flags $\sV=(V_1,V_2)$ with $V_1\in \Gr(\kg, d)$ and $V_2\in \Gr(\kg, d^2+d)$ such that $V_1\subset V_2$. Given a vector $\bt=(t_1,t_2,t_3)\in \R^3$ such that $0\leq t_1<t_2<t_3\leq 1$, $\rho\in (0,1)$ and  $\sV=(V_1,V_2)\in \mathcal{F}$, we denote 
\[B^{\sV}_{\rho^{\bt}}=B^{V_1}_{\rho^{t_1}}+ B^{V_2}_{\rho^{t_2}}+ B^{\kg}_{\rho^{t_3}},\]
which represents a Euclidean box carried by $\sV$. We record the following supercritical multislicing theorem from \cite{BHZ25}: 

\begin{proposition}\cite[Proposition 7.14]{BHZ25} \label{prop: multislicing}
Let $\mathcal{F}$, $\bt$ be as above and $D=d^2+2d$. Let $c, \eps,\rho >0$.

Let $\Xi$ be a probability measure on $\cF$. Assume that for each $i=1,2$, the distribution of the component $V_{i}$ as $\sV$ distributed with law $\Xi$ satisfies either $(\MNC)$ or $(\MNC)^\perp$ with parameters $(\rho, \rho^{-\eps}, c)$.

Let $\nu$ be a Borel measure on $B^{\kg}_{1}$ of mass at most $\rho^{-\eps}$, and such that for some $\alpha\in [c, 1-c]$, for all $v\in \kg$, all $r\in [\rho^{t_3}, \rho^{t_1}]$, we have
$$\nu(B^{\kg}_{r}+v)\leq\rho^{-\eps} r^{D\alpha}.$$

If $\eps, \rho\lll_{D, \bt, c} 1$, then there exists an event $\cE \subseteq \cF$ such that $\Xi(\cE)\leq \rho^{\eps}$ and for $\sV\in\cF \setminus \cE$, there is a set $A_{\sV} \subseteq \kg$ with $\nu(\kg \setminus A_{\sV}) \leq \rho^{\eps}$ and such that for every $v \in \kg$,
\[ \nu_{| A_{\sV}} \left( B^{\sV}_{\rho^{\bt}}+v \right)\leq \leb\left( B^{\sV}_{\rho^{\bt}} \right)^{\alpha+\eps} . \]

\end{proposition}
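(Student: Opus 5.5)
This statement is quoted from \cite[Proposition 7.14]{BHZ25}, so the plan is to import it rather than reprove it, after checking that its proof does not depend on the self-similar structure used elsewhere in \cite{BHZ25}. The statement is purely Euclidean: it concerns a measure $\nu$ on the unit ball of $\kg\cong\R^D$ and a random flag law $\Xi$ on $\cF$, and only (MNC)/(MNC)$^\perp$ for the flag components and the Frostman bound on $\nu$ enter. So I would verify that the argument in \cite{BHZ25} uses nothing about the random walk beyond these hypotheses. Its structure is as follows.

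\textbf{Step 1 (single subspace).} Reduce each level of the flag to the multi-slicing theorem of B\'enard--He \cite{BH24,BH25}. For a fixed $i$, consider the projection $\pi_{V_i^\perp}$ (or the restriction to $V_i$). Suppose the law of $V_i$ satisfies (MNC) or (MNC)$^\perp$ with parameters $(\rho,\rho^{-\eps},c)$. Then, outside a $\Xi$-exceptional set of measure $\rho^{\eps}$ and after discarding a $\rho^{\eps}$-portion of $\nu$, the Frostman exponent $\alpha\in[c,1-c]$ increases by some $\eps'$ on the relevant slab $B^{V_i}_{\rho^{t_i}}+B^{\kg}_{\rho^{t_{i+1}}}$. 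The lower bound $\alpha\le 1-c$ is what leaves room for this gain, and the upper bound $\alpha\geq c$ avoids degeneracy.

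\textbf{Step 2 (combine the two levels).} Decompose the box $B^{\sV}_{\rho^{\bt}}$ through the nested scales $\rho^{t_1}>\rho^{t_2}>\rho^{t_3}$. Covering the box by translates of the finer slabs gives $\nu_{|A_\sV}(B^\sV_{\rho^\bt}+v)$ as a product of conditional masses at each scale. Each factor is bounded by Step 1, or by the Frostman hypothesis, which holds for all $r\in[\rho^{t_3},\rho^{t_1}]$. Multiplying these bounds gives $\leb(B^\sV_{\rho^\bt})^{\alpha+\eps}$ once $\eps$ is taken small relative to the gain $\eps'$. The exceptional sets $\cE$ and $\kg\setminus A_\sV$ are unions over the two levels, so their measures only double, which is absorbed after shrinking $\eps$.

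\textbf{Main obstacle.} The delicate point is the intermediate scale. The conditional measures on the slabs must again satisfy a Frostman-type condition with only a $\rho^{-O(\eps)}$ loss before Step 1 can be applied there. I would handle this by a regularization or pigeonhole over dyadic scales, as in \cite{BH24}. Since this is carried out in \cite{BHZ25} for exactly this flag type with $D=d^2+2d$, I would cite it directly. In the present paper, the required (MNC) inputs are supplied by Proposition \ref{prop: MNC}.
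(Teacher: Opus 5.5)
Your plan matches the paper, which does not reprove this proposition but imports it verbatim as \cite[Proposition 7.14]{BHZ25}; this is legitimate because the statement is purely Euclidean, involving only a Frostman-type measure on $B^{\kg}_1$ and a random partial flag in $\kg\cong\R^{d^2+2d}$ of the same type as in \cite{BHZ25}, and the (MNC) inputs are supplied separately by Proposition \ref{prop: MNC}. Your Steps 1--2 are only a loose heuristic outline of the cited argument and are not needed; note also that you swapped the words ``upper'' and ``lower'' for the bounds $\alpha\le 1-c$ and $\alpha\ge c$.
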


\medskip

The linearizing lemma below is useful when we transfer dimension of certain measure between $\kg$ and $X$:
\begin{lemma}[{\cite[Lemma 6.3]{BH25}}]\label{lemma: linearizing}
Let $0<\delta \lll1$. There exists a measurable map $\varphi : \{\inj \geq \delta\}\rightarrow B^\kg_{1}$ satisfying the following.
\begin{itemize}
\item[1)] For every $r\in (0, \delta)$, $v\in \kg$, the preimage $\varphi^{-1}(B^\kg_{r}+ v)$ is covered by $O(1)$ many balls $(B_{r}x)_{x\in X}$.

\item[2)] For every $r\in (0, \delta)$, $g\in G$ such that $B^\kg_{\delta^2}\subseteq \Ad(g)B^\kg_{r}\subseteq B^\kg_{\delta}$, and $x\in X$, the translate $gB_{r}x \cap \{\inj \geq \delta\}$ is covered by $O(1)$ many preimages of boxes $(\varphi^{-1}(\Ad(g)B^\kg_{r}+ v))_{v\in \kg}$.
\end{itemize}
\end{lemma}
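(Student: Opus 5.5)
The statement is the linearizing lemma (Lemma \ref{lemma: linearizing}), quoted from \cite[Lemma 6.3]{BH25}. I would build $\varphi$ from exponential charts around a net of base points.

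\textbf{Construction of $\varphi$.} Fix a small absolute constant $c_0>0$ such that for every $z$ with $\inj(z)\geq\delta$ the map $v\mapsto \exp(v)z$ is injective on $B^{\kg}_{c_0\delta}$, and bi-Lipschitz there with constants $O(1)$ with respect to the right-invariant metric. Choose a maximal $c_0\delta/10$-separated set $\{z_j\}$ in $\{\inj\geq\delta\}$. Then partition $\{\inj\geq\delta\}$ measurably into pieces $Q_j\subseteq B_{c_0\delta/5}z_j$, for instance by the Voronoi cells intersected with $\{\inj\geq\delta\}$.

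Define $\varphi(\exp(v)z_j)=v$ for $\exp(v)z_j\in Q_j$. This would give $\varphi$ values in $B^{\kg}_{c_0\delta/5}\subseteq B^{\kg}_1$. However, all the pieces then overlap near $0$, and property 1) could fail, since each of the $\#\{z_j\}$ pieces would contribute to the preimage of a single box.

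To prevent this overlap, I translate the pieces apart: set $\varphi(\exp(v)z_j)=v+w_j$, where the $w_j\in B^{\kg}_{1/2}$ are chosen so that the translated regions $B^{\kg}_{c_0\delta/5}+w_j$ are pairwise at distance $\gg\delta$. This requires $\#\{z_j\}\ll \delta^{-D}$, which holds because $m_X(B_{\delta}z)\gg\delta^{D}$ and the balls around the separated points are disjoint. Inside $B^{\kg}_{1/2}$ there is room for about $\delta^{-D}$ such disjoint translates. This is the only point where the bound on the number of pieces matters, and it needs $\delta\lll1$ so that all constants fit.

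\textbf{Property 1).} Take $r<\delta$ and a box $B^{\kg}_r+v$. It meets $O(1)$ of the translated pieces: if $r$ is small compared to the separation, at most one; otherwise $O(1)$, since both $r$ and the separation are of order $\delta$. On each piece, the preimage equals $\exp(B^{\kg}_r+v-w_j)z_j$, which by the bi-Lipschitz property lies in a ball $B_{O(r)}x$. A ball of radius $O(r)$ is covered by $O(1)$ balls of radius $r$, so the preimage is covered by $O(1)$ balls $B_rx$.

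\textbf{Property 2).} Take $g$ with $B^{\kg}_{\delta^2}\subseteq\Ad(g)B^{\kg}_r\subseteq B^{\kg}_\delta$. Then $gB_rx=g\exp(B^{\kg}_r)x=\exp(\Ad(g)B^{\kg}_r)gx$ lies within distance $O(\delta)$ of $gx$. Hence it meets $O(1)$ cells $Q_j$, by the separation of the $z_j$. Write $gx=\exp(u)z_j$ near a chosen $z_j$. For each such $j$, the set $\varphi(gB_rx\cap Q_j)$ is $\{\log(\exp(\Ad(g)w)\exp(u)) : w\in B^{\kg}_r\}+w_j$. By the Baker–Campbell–Hausdorff formula this is the image of $\Ad(g)B^{\kg}_r$ under a diffeomorphism that is $O(\delta)$-close to a translation in $C^1$.

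\textbf{Main obstacle.} The hardest step is covering this distorted image by $O(1)$ translates of the box $\Ad(g)B^{\kg}_r$, which may be very eccentric, with sides between $\delta^2$ and $\delta$. A $C^1$-small perturbation of a translation does not obviously map an eccentric box into boundedly many copies of itself.

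I would handle this by conjugating: pull back through $\Ad(g)$ and work with the right-multiplication form $\exp(\Ad(g)w)\exp(u)=\exp(\Ad(g)(w+\Ad(g)^{-1}\cdot\text{error}))\exp(u)$, so that the distortion is measured in the norm adapted to $\Ad(g)B^{\kg}_r$. The error terms are quadratic and of size $O(\delta\cdot\|\Ad(g)w\|)$. In the adapted norm they become $O(\delta)\cdot\|\Ad(g)\|\cdot\|\Ad(g)^{-1}\|\cdot r$. The hypothesis gives eccentricity at most $\delta^{-1}$, so this is $O(r)$, which is what is needed.

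If this does not quite close, I would use the freedom to choose the base point $z_j$ equal to $gx$ itself, that is, re-center the chart. This makes the distortion relative, giving $O(1)$ covering numbers.
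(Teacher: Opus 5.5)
The paper does not prove this lemma; it imports it from \cite[Lemma 6.3]{BH25}. Your construction is the standard argument and is correct in substance: exponential charts over a $c_0\delta$-net of $\{\inj\geq\delta\}$, pieces translated apart inside $B^\kg_1$, and for item 2) the Baker--Campbell--Hausdorff estimate combined with $\norm{\Ad(g)}\,\norm{\Ad(g)^{-1}}\leq\delta^{-1}$. Three points need tightening.

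\emph{Placement of the $w_j$.} Both $\#\{z_j\}$ and the number of slots in $B^\kg_{1/2}$ scale like $\delta^{-D}$. Their ratio depends on $c_0$, on the separation constant, and on how $m_X$ compares with the volume of $B^\kg_{1/2}$. None of these improves as $\delta\to0$. So $\delta\lll1$ does not give disjoint translates at separation $\gg\delta$; with pieces of size $c_0\delta$ and separation $\geq\delta$ you are short by a factor about $c_0^{-D}$. Instead, take separation comparable to $c_0\delta$ and let each slot be shared by $O(1)$ pieces. Item 1) survives, since a box of radius $r<\delta$ meets $O(c_0^{-D})=O(1)$ slots. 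Item 2) is insensitive to overlaps.

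\emph{Wrap-around in item 2).} The set $gB_rx=\exp(\Ad(g)B^\kg_r)\,gx$ can have diameter about $2\delta$, while you only know $\inj(z_j)\geq\delta$. Hence $gB_rx\cap Q_j$ need not be the single chart image $\{\log(\exp(\Ad(g)w)\exp(u)):w\in B^\kg_r\}+w_j$. Different points may need lifts $u_\gamma=\log(\exp(u)\gamma)$ with $\gamma\in\mathrm{Stab}(z_j)$ at distance $O(\delta)$ from the identity. There are $O(1)$ such $\gamma$, because they are $\gg\delta$-separated. Each $u_\gamma$ has norm $O(\delta)$, and you run the estimate for each one.

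\emph{The BCH step closes without the fallback.} Since $\log(e^Xe^u)-X-u=O(\norm{X}\norm{u})$, taking $X=\Ad(g)w$ and $\norm{u}=O(\delta)$ gives an error $O(\delta\norm{\Ad(g)w})=O(\delta^2)$. This lies in $O(1)\cdot B^\kg_{\delta^2}\subseteq O(1)\cdot\Ad(g)B^\kg_r$, which is where the lower hypothesis on $\Ad(g)B^\kg_r$ enters; it is equivalent to your eccentricity computation. Your displayed identity should read $\log(\exp(\Ad(g)w)\exp(u))=u+\Ad(g)\bigl(w+\Ad(g)^{-1}\mathrm{err}\bigr)$. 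Drop the fallback of recentering the chart at $gx$. The map $\varphi$ must be fixed independently of $g$ and $x$, so that option is not available, and it is not needed.
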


\medskip

We prove the following dimension increment statement by adapting the proof of \cite[Proposition 7.16]{BHZ25}.

\begin{proposition}\label{prop: dimension bootstrap}

Given parameters $\kappa,\epsilon,\rho\in (0,1/10)$, $\alpha\in [\kappa,1-\kappa]$ and $\tau\geq 0$. Let $\nu$ be a Borel measure on $X$ of mass at most $1$ and $n_1\ggg_{\sigma} 1$. Assume that $\nu$ is $(\alpha,\mathcal{B}_{[\rho^{2/3},\rho^{1/3}]},\tau)$-robust and $\supp \nu \subset \{\inj\geq \rho^{\epsilon}\}$. If $\epsilon,\rho\lll_{\kappa} 1$ and $n_1\geq \max_{k\in \dbracket{1,d}} \ell_k/\epsilon$, then $\overline{\mu}_{n_1}^{*n_{\rho}}*\nu$ is $(\alpha+\epsilon,\mathcal{B}_{\rho^{1/2}}, \tau+\rho^{\epsilon})$-robust for $n_{\rho}=\integer{\tfrac{|\log \rho|}{10n_1 \ell_1}}$.

\end{proposition}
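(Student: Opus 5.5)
We follow the proof of \cite[Proposition 7.16]{BHZ25}, with the random walk $\overline{\mu}_{n_1}^{*n_\rho}$ in place of the self-similar walk. First we reduce to $\nu=\nu_1$, where $\nu_1$ is the good part of the robust decomposition; the bad part $\nu_2$ contributes at most $\tau$ to the new bad mass. Fix a target ball $B_{\rho^{1/2}}y$ and write
\[
\overline{\mu}_{n_1}^{*n_\rho}*\nu_1(B_{\rho^{1/2}}y)=\int \nu_1(g^{-1}B_{\rho^{1/2}}y)\dd\overline{\mu}_{n_1}^{*n_\rho}(g).
\]
For $g$ outside a large-deviation exceptional set, handled as in the proof of Lemma \ref{lemma: contracting operator of usual convolution}, we decompose $g=a_g a(e^{n_\rho n_1\ell_1})u(\ttb_g)$ with $a_g\in A_{\rho^{-O(\epsilon)}}$. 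This is where the hypothesis $n_1\geq \max_k\ell_k/\epsilon$ enters: it makes the rounding error from $n_k=\lfloor n_1\ell_1/\ell_k\rfloor$ at most $\rho^{-O(\epsilon)}$ after $n_\rho$ steps. Set $T=e^{n_\rho n_1\ell_1}\approx\rho^{-1/10}$. Then $g^{-1}B_{\rho^{1/2}}y$ is, up to $\rho^{-O(\epsilon)}$ distortion, the set $u(-\ttb_g)a(T)^{-1}B_{\rho^{1/2}}y$. In Lie algebra coordinates this is a box carried by the flag
\[
\sV_g=\bigl(\Ad(u(-\ttb_g))\kg_-,\ \Ad(u(-\ttb_g))\kg_{\leq 0}\bigr)=(V_1(g),V_2(g)),
\]
with side lengths $\rho^{1/2}T^{-a}$, $\rho^{1/2}$ and $\rho^{1/2}T^{b}$, where $a,b>0$ are the contraction and expansion rates of $a(T)$ on $\kg_-$ and $\kg_+$. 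Each of these lies in $[\rho^{2/3},\rho^{1/3}]$, so writing them as $\rho^{t_1},\rho^{t_2},\rho^{t_3}$ gives $0<t_1<t_2<t_3<1$ fixed.

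Next we linearize. Since $\supp\nu_1\subset\{\inj\geq\rho^\epsilon\}$, we use Lemma \ref{lemma: linearizing} with $\delta=\rho^{O(\epsilon)}$. We cover $X$ by $\rho^{-O(\epsilon)}$ pieces and push $\nu_1$ restricted to each piece to a measure $\nu'$ on $B_1^\kg$. By part 1) of the lemma, the robustness of $\nu_1$ at all scales in $[\rho^{2/3},\rho^{1/3}]$ gives $\nu'(B^\kg_r+v)\ll\rho^{-O(\epsilon)}r^{D\alpha}$ on the required range of $r$. By part 2), each translate $g^{-1}B_{\rho^{1/2}}y\cap\{\inj\geq\delta\}$ is covered by $O(1)$ preimages of boxes $B^{\sV_g}_{\rho^{\bt}}+v$. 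Proposition \ref{prop: MNC} gives $(\MNC)$ for $V_1(g)$ and $(\MNC)^\perp$ for $V_2(g)$ with parameters $(e^{-n_\rho n_1},C,c)$. Since $e^{-n_\rho n_1}\approx\rho^{1/(10\ell_1)}$, these are in the form required, after possibly rescaling $\rho$ in Proposition \ref{prop: multislicing} to a power; checking this compatibility of scales is routine but must be done.

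We then apply Proposition \ref{prop: multislicing} to $\Xi=(\sV_\cdot)_*\overline{\mu}_{n_1}^{*n_\rho}$ and each $\nu'$, obtaining $\cE$ and sets $A_{\sV}$. Put the mass of $g$ with $\sV_g\in\cE$, the large-deviation exceptional set, and $\nu_1$ restricted to the complement of $A_{\sV_g}$ into the new bad part; this has total mass at most $\rho^{\epsilon'}$ for some smaller $\epsilon'$. The remaining good part gives mass at most $\rho^{-O(\epsilon)}\leb(B^{\sV}_{\rho^{\bt}})^{\alpha+\epsilon_0}$ to each ball $B_{\rho^{1/2}}y$. Since $a(T)$ preserves volume, $\leb(B^{\sV}_{\rho^{\bt}})\simeq\rho^{D/2}$, so the good mass is at most $(\rho^{1/2})^{D(\alpha+\epsilon_0)-O(\epsilon)}$. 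This is the gain in dimension. We also need the good part to avoid $\{\inj<\rho^{1/2}\}$; this holds because the $g$ have norm at most $\rho^{-O(1/10)}$ and $\inj\geq\rho^\epsilon$ on the support.

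The main obstacle is the bookkeeping of exponents. The multislicing gain $\epsilon_0$ depends only on $\kappa$ and $\bt$, and the losses $\rho^{-O(\epsilon)}$ from the distortion $a_g$, the linearization and the number of pieces must be smaller than it. The plan is therefore to fix $\epsilon_0=\epsilon_0(\kappa)$ first and then take $\epsilon\lll\epsilon_0$. The statement then holds with increment $\epsilon$ after relabelling.
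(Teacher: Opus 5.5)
Your route is the paper's (itself an adaptation of \cite[Proposition 7.16]{BHZ25}): discard a large-deviation set, compare $\Ad(g^{-1})B^{\kg}_{\rho^{1/2}}$ with the box $B^{\sV_g}_{\rho^{\bt}}$ carried by $\sV_g=(\Ad(u(-\ttb_g))\kg_-,\Ad(u(-\ttb_g))\kg_{\leq 0})$, feed Proposition \ref{prop: MNC} and a linearized copy of $\nu$ into Proposition \ref{prop: multislicing}, and fix $\epsilon_0(\kappa)$ before $\epsilon$.

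Two small remarks before the main point.
\begin{itemize}
\item There is a sign slip. $\Ad(a(T))$ acts by $T^{\pm1}$ on $\kg_{\pm}$, so the sides of $\Ad(a(T)^{-1})B^{\kg}_{\rho^{1/2}}$ are $\rho^{1/2}T=\rho^{2/5}$, $\rho^{1/2}$ and $\rho^{1/2}T^{-1}=\rho^{3/5}$. Thus $\bt=(\tfrac25,\tfrac12,\tfrac35)$, and your exponents $T^{-a}$, $T^{b}$ are reversed.
\item Your deterministic argument that the good part lives in $\{\inj\geq\rho^{1/2}\}$ is correct. It uses $\norm{\Ad(g^{-1})}\leq\rho^{-1/10-O(\epsilon)}$ off the large-deviation set, and is slightly cleaner than the paper's use of Corollary \ref{cor: effective recurrence of regular convolution}, which needs $\rho$ small in terms of $b(n_1)$.
\end{itemize}

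The step that fails as written is the linearization. You choose $\delta=\rho^{O(\epsilon)}$ in Lemma \ref{lemma: linearizing} to match the support hypothesis. But part 2) of that lemma requires $B^{\kg}_{\delta^2}\subseteq\Ad(g^{-1})B^{\kg}_{\rho^{1/2}}\subseteq B^{\kg}_{\delta}$. The box $\Ad(g^{-1})B^{\kg}_{\rho^{1/2}}$ has inner radius only about $\rho^{3/5}$, so it cannot contain $B^{\kg}_{\rho^{O(\epsilon)}}$.

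This hypothesis is essential, not cosmetic. In an exponential chart of size $\delta$, the non-linearity of the chart acts on the box, to first order, by a linear map $\Id+O(\delta)$ that depends on where the ball sits in the chart. This map moves the long sides (of length $\approx\rho^{2/5}$) by $\approx\delta\rho^{2/5}\gg\rho^{3/5}$. The image is then a box carried by a tilted, position-dependent flag. Covering it by translates of $B^{\sV_g}_{\rho^{\bt}}$ costs a negative power of $\rho$ that is not $O(\epsilon)$, which swamps the multislicing gain $\epsilon_0$.

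The linearization scale has to be tied to the box, not to the injectivity radius: you need roughly $\rho^{2/5}\ll\delta$ and $\delta^2\ll\rho^{3/5}$. Take $\delta=\rho^{1/3}$, as the paper does. Then:
\begin{itemize}
\item $\supp\nu\subset\{\inj\geq\rho^{\epsilon}\}\subset\{\inj\geq\rho^{1/3}\}$, so a single map $\varphi$ is defined on the support, with no covering of $X$ by $\rho^{-O(\epsilon)}$ pieces.
\item Part 1) applies for every $r\in[\rho^{3/5},\rho^{2/5}]\subset(0,\rho^{1/3})$.
\item $B^{\kg}_{\rho^{2/3}}\subseteq\Ad(g^{-1})B^{\kg}_{\rho^{1/2}}\subseteq B^{\kg}_{\rho^{1/3}}$ holds for $g$ off the large-deviation set.
\item Part 2) then gives $O(1)$ preimages of translates of $\Ad(g^{-1})B^{\kg}_{\rho^{1/2}}$, hence $\rho^{-O(\epsilon)}$ (not $O(1)$) preimages of translates of $B^{\sV_g}_{\rho^{\bt}}$.
\end{itemize}
With this correction the rest of your plan goes through.
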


\begin{proof}
Without loss of generality, we assume that $\tau=0$. We write $n=n_{\rho}$ for simplicity. Applying Corollary \ref{cor: effective recurrence of regular convolution}, we have 
\begin{align*}
    \overline{\mu}_{n_1}^{*n}*\nu\{\inj\leq \rho^{1/2}\}=\int \overline{\mu}_{n_1}^{*n}*\delta_x\{\inj\leq \rho^{1/2}\} \dd \nu(x)\leq \rho^{c/2}(e^{-nn_1 c}\rho^{-C\epsilon}+b(n_1)),
\end{align*}
for some $C,c>0$ depending only on $\sigma,G$. If $\epsilon\lll_{C,c,\ell_1} 1$, we then have 
\[ \overline{\mu}_{n_1}^{*n}*\nu\{\inj\leq \rho^{1/2}\}\leq \frac{\rho^{\epsilon}}{2}. \]
Hence, it is enough to show that $\overline{\mu}_{n_1}^{*n}*\nu=\nu'+\nu''$ for Borel measures $\nu'$ and $\nu''$ satisfying $\nu''(X)\leq \rho^{\epsilon}/2$ and 
\begin{align}\label{eq: noncon of nu'}
    \sup_{y\in X} \nu'(B_{\rho^{1/2}}y)\leq \rho^{\tfrac{1}{2} (\alpha+\epsilon)\dim X}. 
\end{align}
Indeed, the proposition follows by setting $\nu_1=\nu'|_{\{\inj\geq \rho^{1/2}\}}$ and $\nu_2=\nu''+\nu'|_{\{\inj<\rho^{1/2}\}}$.

To do this, by the large deviation principle, there exists $\gamma_1=\gamma_1(\sigma,\epsilon)>0$ such that for any $k\in \dbracket{1,d}$,
\begin{align*}
    \mu_k^{*nn_k}\{g: |\log \ttr_g+nn_k\ell_k|>nn_k\epsilon\}<e^{-\gamma_1 nn_k}.
\end{align*}
As $n_1>\max_{k\in \dbracket{1,d}}\ell_k/\epsilon$, if 
\[ |\log \ttr_g+nn_k\ell_k|\leq nn_k\epsilon,\]
then 
\[ |\log \ttr_g+nn_1\ell_1|\leq (1+\max_{k\in \dbracket{1,d}} \ell_1/\ell_k) n n_1\epsilon. \]
Hence, by replacing $\gamma_1$ by $\gamma_1 \ell_1/4$ and requiring $n\ggg_{\gamma_1} 1$, letting
\[E_1=\{g: |\log \ttr_{g,k}+nn_1\ell_1|>(1+\max_{k\in \dbracket{1,d}} \ell_1/\ell_k) n n_1\epsilon, \,\text{ for some }k\in \dbracket{1,d}\}, \]
we have $ \overline{\mu}_{n_1}^{*n}(E_1)< e^{-\gamma_1 n n_1}$. Here we write $g=a(\ttr_g)u(\ttb_g)$ for $\bttr_g=(\ttr_{g,k})_{k=1}^d$ and $g\in \supp \overline{\mu}^n_{n_1}$. Let $\bt=(\tfrac{2}{5},\tfrac{1}{2},\tfrac{3}{5})$. For any $g\in \supp \overline{\mu}_{n_1}^{*n}\setminus E_1$,  we can write $g=a_g a(e^{nn_1\ell_1})u(\ttb_g)$ for some $a_g\in A_{\rho^{-O(\eps)}}$, hence
\begin{align}\label{eq: close companion}
\Ad(g^{-1})B^{\kg}_{\rho^{1/2}}&=\Ad(u(-\ttb_g))\Ad(a_g^{-1}a(e^{-nn_1\ell_1}))B^{\kg}_{\rho^{1/2}}\nonumber\\
    &\overset{O(1)}{\simeq} \Ad(u(-\ttb_g))\Ad(a_g^{-1})\left(B^{\kg_-}_{\rho^{2/5}} \oplus B^{\kg_0}_{\rho^{1/2}}\oplus B^{\kg_+}_{\rho^{3/5}} \right)\nonumber\\
    &\overset{\rho^{-O(\epsilon)}}{\simeq}\Ad(u(-\ttb_g))\left(B^{\kg_-}_{\rho^{2/5}} \oplus B^{\kg_0}_{\rho^{1/2}}\oplus B^{\kg_+}_{\rho^{3/5}} \right)\nonumber\\
    &\overset{\rho^{-O(\epsilon)}}{\simeq}  B^{\Ad(u(-\ttb_g))\kg_-}_{\rho^{2/5}}+ B^{\Ad(u(-\ttb_g))\kg_{\leq 0}}_{\rho^{1/2}}+ B^{\kg}_{\rho^{3/5}}=B^{\sV_g}_{\rho^{\bt}},
\end{align}
where $a'_g\in \mathrm{A}_{\rho^{-O(\epsilon)}}$, and the third relation follows from the fact that for any $a\in \mathrm{A}$, $\Ad(a)\kg_*=\kg_*$ for $*=-,0,+$. Here for two sets $A,B\subset \kg$, $A\overset{\rho^{-O(\eps)}}{\simeq} B$ means that $A$ can be covered by $\rho^{-O(\eps)}$-many translates of $B$ and vice versa.

Consider the partial flag $\mathcal{F}$ on $\kg$ consisting of $\sV=(V_1,V_2)$ for $V_1\in \Gr(\kg,d)$, $V_2\in \Gr(\kg,d^2+d)$ such that $V_1\subset V_2$. 
Consider the map from $\supp \overline{\mu}_{n_1}^{*n}$ to $\mathcal{F}$ given by
\[ g\mapsto \sV_g=(V_1(g),V_2(g))=(\Ad(u(-\ttb_g)\kg_-, \Ad(u(-\ttb_g))\kg_{\leq 0}). \]
Let $\Xi$ be the push-forward probability measure of $\overline{\mu}_{n_1}^{*n}$ under this map. Then Proposition \ref{prop: MNC} guarantees that the distribution of $V_1(g)$ and $V_2(g)$ for $g$ obeying the law $\overline{\mu}_{n_1}^{*n}$ satisfies $(\MNC)$ and $(\MNC)^{\perp}$ with parameters $(e^{-nn_1},C,c )$ respectively. Let $\varphi: \{\inj\geq \rho^{1/3}\}\to B^{\kg}_1$ be the map given by Lemma \ref{lemma: linearizing}, and $\tilde{\nu}=\varphi_*(\nu)$. As $\nu$ is $(\alpha,\mathcal{B}_{[\rho^{2/3},\rho^{1/3}]},0)$-robust, for any $r\in [\rho^{3/5},\rho^{2/5}]\subset [\rho^{2/3},\rho^{1/3}]$, we have
\[ \sup_{y\in X} \nu(B_r y)\leq r^{\alpha\dim X}. \]
By Lemma \ref{lemma: linearizing} (1),
\[ \tilde{\nu}(B^{\kg}_r+v)\leq \rho^{-\epsilon} r^{\alpha\dim X}, \,\, \forall v\in \kg. \]
We now apply the multislicing Proposition \ref{prop: multislicing} to the partial flag $\cF$ endowed with the measure $\Xi$. If $\epsilon_0,\rho\lll_{c} 1$, then there exists $\cE\subset \supp \overline{\mu}_{n_1}^{*n}$ with $\overline{\mu}_{n_1}^{*n}(E_2)\leq \rho^{\epsilon_0}$, and for any $g\in \supp \overline{\mu}_{n_1}^{*n}\setminus \cE$, there exists a subset $\tilde{A_g}\subset \kg$ with $\tilde{\nu}(\kg\setminus \tilde{A_g})\leq \rho^{\epsilon_0}$ such that 
\[ \tilde{\nu}_{|\tilde{A_g}}(B^{\sV_g}_{\rho^{\bt}}+v)\leq \leb(B^{\sV_g}_{\rho^{\bt}})^{\alpha+\epsilon_0}=\rho^{\tfrac{1}{2}(\alpha+\epsilon_0)\dim X}, \, \forall v\in \kg. \]
As $B^{\sV_g}_{\rho^{\bt}}\overset{\rho^{-O(\epsilon)}}{\simeq} \Ad(g^{-1})B^{\kg}_{\rho^{1/2}}$ for $g\in \supp\overline{\mu}_{n_1}^{*n}\setminus E_1$ guaranteed by \eqref{eq: close companion}, we have
\[ \tilde{\nu}_{|\tilde{A_g}}(\Ad(g^{-1})B^{\kg}_{\rho^{1/2}}+v)\leq \rho^{\tfrac{1}{2}(\alpha+\epsilon_0-O(\eps))\dim X}. \]
Put $A_g=\varphi^{-1}(\tilde{A_g})$. As $B^{\kg}_{\rho^{2/3}}\subset \Ad(g^{-1})B^{\kg}_{r}\subset B^{\kg}_{\rho^{1/3}}$ for any $g\in \supp\overline{\mu}_{n_1}^{*n}\setminus E_1$ and $r\in [\rho^{3/5},\rho^{2/5}]$ provided that $\epsilon\lll 1$, by Lemma \ref{lemma: linearizing} (2) we have
\begin{align}\label{eq: noncon of nu}
    \nu_{|A_g}(g^{-1}B_{\rho^{1/2}}y)\leq \rho^{-\epsilon}\rho^{\tfrac{1}{2}(\alpha+\epsilon_0-O(\eps))\dim X}\leq \rho^{\tfrac{1}{2}(\alpha+\epsilon_0-O(\eps))\dim X}\leq \rho^{\tfrac{1}{2}(\alpha+\epsilon)\dim X},
\end{align}
provided that $\epsilon\lll \eps_0$.
At the end, we define
\[ \nu'=\int_{G\setminus E_1\cup E_2} g_{\star}\nu_{|A_g}\dd \overline{\mu}_{n_1}^{*n}(g). \]
Then the non-concentration property of $\nu'$ in \eqref{eq: noncon of nu'} follows from \eqref{eq: noncon of nu}. The proof is complete.
\end{proof}

\medskip

We invoke the following lemma:
\begin{lemma}\cite[Lemma 4.8]{BHZ26}\label{Lemma: combinatorial robust} 
    Let $\alpha,s,\rho\in (0,1]$, $\tau\in \mathbb{R}^+$ be parameters. If $\nu$ is $(\alpha,\mathcal{B}_{r},\tau)$-robust for all $r\in [\rho,\rho^s]$, then for any $\eps\in (0,\alpha)$, the measure $\nu$ is $(\alpha-\eps,\mathcal{B}_{[\rho,\rho^s]},\lceil \frac{\log s}{\log(1-\eps)}\rceil \tau)$-robust.
\end{lemma}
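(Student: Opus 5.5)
The statement to prove is the combinatorial Lemma \ref{Lemma: combinatorial robust}: single-scale robustness at every $r\in[\rho,\rho^s]$ should give robustness over the whole interval, at the cost of a slightly smaller exponent and a multiplied $\tau$. The plan is to cover $[\rho,\rho^s]$ by $N=\lceil \frac{\log s}{\log(1-\eps)}\rceil$ geometric subintervals. On each one, a single-scale decomposition at its top endpoint handles all scales inside it with exponent $\alpha-\eps$. The $N$ exceptional parts are then summed.

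\textbf{Step 1: the scales.} Set $s_j=s^{(1-\eps)^{-j}}$ is not the right form, so instead use $u_j=(1-\eps)^j$ for $j=0,\dots,N$. Put $I_j=[\rho^{u_{j}},\rho^{u_{j+1}}]\cap[\rho,\rho^s]$. The choice of $N$ gives $(1-\eps)^N\le s$, so these intervals cover $[\rho,\rho^s]$.

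\textbf{Step 2: one interval.} Let $r_j=\rho^{u_{j+1}}$ be the largest scale in $I_j$. Apply the hypothesis at $r_j$ to get $\nu=\nu_1^{(j)}+\nu_2^{(j)}$ with
\[ \nu_2^{(j)}(X)\le\tau,\qquad \nu_1^{(j)}\{\inj<r_j\}=0,\qquad \nu_1^{(j)}(B_{r_j}y)\le r_j^{\alpha\dim X}. \]
For any $r\in I_j$, monotonicity of balls gives $\nu_1^{(j)}(B_r y)\le r_j^{\alpha\dim X}$. Since $r\ge\rho^{u_j}$, we have $r_j=\rho^{u_j(1-\eps)}\le r^{1-\eps}$. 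Hence
\[ \nu_1^{(j)}(B_r y)\le r^{(1-\eps)\alpha\dim X}\le r^{(\alpha-\eps)\dim X}, \]
because $(1-\eps)\alpha\ge\alpha-\eps$ when $\alpha\le1$. Also $\{\inj<\sup I_j\}=\{\inj<r_j\}$ has zero $\nu_1^{(j)}$-measure.

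\textbf{Step 3: combining.} Define $\nu_1=\min_j \nu_1^{(j)}$, meaning the minimum of the Radon--Nikodym densities with respect to $\nu$; each $\nu_1^{(j)}\le\nu$. Put $\nu_2=\nu-\nu_1$. Then
\[ \nu_2\le\sum_j\nu_2^{(j)}, \qquad\text{so}\qquad \nu_2(X)\le N\tau. \]
Since $\nu_1\le\nu_1^{(j)}$ for every $j$, the ball bounds from Step 2 hold for all $r\in[\rho,\rho^s]$. Moreover $\nu_1$ vanishes on $\{\inj<\rho^s\}$, because that set is $\{\inj<r_j\}$ for the top interval.

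\textbf{Main obstacle.} The delicate point is the bookkeeping of the count: it must equal exactly $\lceil\log s/\log(1-\eps)\rceil$, with the endpoints handled correctly. If the naive indexing produces one extra interval, I would shift it so that the top interval ends precisely at $\rho^s$ and the bottom one starts at $\rho$, possibly truncating the last interval. Using the minimum of densities, rather than a union of bad sets, avoids measurability issues when combining the decompositions.
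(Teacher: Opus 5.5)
Your argument is correct. It is the natural proof encoded by the constant $\lceil \frac{\log s}{\log(1-\eps)}\rceil$; the paper itself gives no proof and only cites \cite[Lemma 4.8]{BHZ26}. You split the exponent range $[s,1]$ geometrically with ratio $1-\eps$, apply the single-scale hypothesis at the top scale of each piece, and take $\nu_1$ with density $\min_j \frac{d\nu_1^{(j)}}{d\nu}$, so that $\nu-\nu_1\le\sum_j\nu_2^{(j)}$.

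The bookkeeping you were worried about is harmless, with two small points:
\begin{itemize}
\item The indices $j=0,\dots,N-1$ give exactly $N$ pieces, so there is no extra interval.
\item For the top piece you must take $r_{N-1}=\rho^{s}$ rather than $\rho^{u_N}$, since the hypothesis is only available on $[\rho,\rho^s]$. The needed bound $\rho^s\le\rho^{u_N}\le r^{1-\eps}$ still holds for $r\in I_{N-1}$.
\end{itemize}
Also, for $s=1$ one gets $N=0$ and the statement itself fails (take $\nu=\tau\delta_y$ with $\inj(y)<\rho$), so $s<1$ is implicitly assumed; this is the only case used in the paper.
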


\medskip

\begin{proof}[Proof of Proposition \ref{prop: high dimension}.]

Applying Corollary \ref{cor: effective recurrence of regular convolution}, there exist $A,c>0$ such that for any $n_1\ggg_{\sigma} 1$, $x,y\in X$, and $M\geq 1$, if $nn_1 \geq A|\log \inj (x)|$ then
\begin{align}\label{eq: nondiv bound}
\overline{\mu}_{n_1}^{*n}*\delta_x\{\inj\leq \rho^{1/M}\}\leq \rho^{c/M}(1+b(n_1))\leq \rho^{2\kappa/M}(1+b(n_1)),
\end{align}
where the last inequality holds provided $\kappa<c/2$. Applying Proposition \ref{prop: positive dimension}, provided that $\kappa\lll 1$ we obtain for any $r\in [\rho^{M},\rho^{1/M}]$,
\begin{align}\label{eq: posdim in a range}
\overline{\mu}_{n_1}^{*n}*\delta_x(B_r y)\leq r^{\kappa \dim X}, 
\end{align}
provided that $nn_1\geq M|\log \rho|+A|\log \inj(x)|$ and $\rho\lll_{\kappa,\sigma,n_1} 1$. We now fix $\kappa\lll 1$ such that both \eqref{eq: nondiv bound} and \eqref{eq: posdim in a range} hold. 

Let $\epsilon_0,\rho_0\in (0,1/2)$ be constants depending only on $\kappa$ such that conclusions in Proposition \ref{prop: dimension bootstrap} hold for all $\alpha\in [\kappa,1-\kappa]$, $\epsilon\leq \epsilon_0$, $\rho\leq \rho_0$ and $n_1\geq \max_{k\in \dbracket{1,d}}\ell_k/\epsilon_0$. Fix $\epsilon=\eps_0/2$, $K=\integer{ \tfrac{1-2\kappa}{\epsilon}}+1$ and $M=(1+\tfrac{3}{10\ell_1})(\tfrac{3}{2})^K$. We may further assume $\eps_0<\kappa$. We fix $n_1\geq \max_{k\in \dbracket{1,d}}\ell_k/\epsilon_0$ and assume $\rho\lll_{\kappa,M,\epsilon} 1$ to ensure that $b(n_1)\leq \rho^{-\kappa/M}$. Hence, \eqref{eq: nondiv bound} and \eqref{eq: posdim in a range} imply that $\overline{\mu}_{n_1}^{*n}*\delta_x$ is $(\kappa,\mathcal{B}_{[\rho^{M},\rho^{1/M}]}, \rho^{\epsilon_0/M})$-robust if $nn_1\geq M|\log \rho|+A|\log \inj(x)|$.

\medskip

Now we aim to prove that for any $0\leq k\leq K$, if 
\begin{align}\label{eq: bound tk}
     nn_1\geq t_k:=(1+\frac{k}{2\ell_1})M|\log\rho|+A|\log\inj(x)|, 
\end{align}
then the measure
\begin{align}\label{eq: induction for dim}
\overline{\mu}_{n_1}^{*n}*\delta_x\,  \text{ is }\, (\kappa+k\epsilon,\mathcal{B}_{[\rho^{(3/4)^kM}, \rho^{(3/2)^k/M}]}, O_{\eps,k,M}(\rho^{\eps/M}))\text{-robust}.
\end{align}
Specifying to the case $k=K$, we have
\[ \overline{\mu}_{n_1}^{*n}*\delta_x\,  \text{ is }\, (1-\kappa,\mathcal{B}_{[\rho^{(3/4)^KM}, \rho^{(3/2)^K/M}]}, O_{\eps,K,M}(\rho^{\eps/M}))\text{-robust}. \]
The choice of $M$ guarantees that $\rho\in [\rho^{(3/4)^KM}, \rho^{(3/2)^K/M}]$, whence the proposition.

We prove \eqref{eq: bound tk} implying \eqref{eq: induction for dim} by induction. The base case $k=0$ follows by \eqref{eq: nondiv bound} and \eqref{eq: posdim in a range} as mentioned above. Assume the hypothesis holds for some $0\leq k\leq K-1$. Assume $nn_1\geq t_{k+1}$. Given $r\in [\rho^{\tfrac{3^{k+1}M}{2\cdot 4^k}},\rho^{\tfrac{3^k}{2^k M}}]$, let $n'=\integer{ \tfrac{|\log r|}{10n_1\ell_1} }$. Then $n'n_1\leq \frac{M}{2\ell_1}|\log \rho|$. Hence, the assumption on $n$ ensures that
\[ \overline{\mu}_{n_1}^{*(n-n')}*\delta_x \,  \text{ is }\, (\kappa+k\epsilon,\mathcal{B}_{[\rho^{(3/4)^kM}, \rho^{(3/2)^k/M}]}, O_{\eps,k,M}(\rho^{\eps/M}))\text{-robust}. \]
As $[r^{3/2},r^{1/3}]\subset [\rho^{(3/4)^k}M, \rho^{(3/2)^k/M}]$, we have
\[\overline{\mu}_{n_1}^{*(n-n')}*\delta_x \,  \text{ is }\, (\kappa+k\epsilon,\mathcal{B}_{[r^{2/3},r^{1/3}]}, O_{\eps,k,M}(\rho^{\eps/M}))\text{-robust}.\]
Note that Corollary \ref{cor: effective recurrence of regular convolution} implies that
\[ \overline{\mu}_{n_1}^{*(n-n')}*\delta_x\{\inj\leq r^{\epsilon_0}\}\leq \rho^{\epsilon_0/M}, \]
provided that $M\ggg_{c} 1$ for $c>0$ in  Corollary \ref{cor: effective recurrence of regular convolution} (which can be guaranteed by taking $\eps\lll_c 1$ in the beginning). 
This together with the fact that $\kappa+k\epsilon\leq 1-\kappa$ enables us to apply Proposition \ref{prop: dimension bootstrap} and the induction hypothesis to obtain that
\[ \overline{\mu}_{n_1}^{*n'}*(\overline{\mu}_{n_1}^{(n-n')}*\delta_x)|_{\{\inj\geq r^{\eps_0}\}} \,\text{ is }\, (\kappa+(k+2)\eps),\mathcal{B}_{r^{1/2}}, O_{\eps,k+1,M}(\rho^{\eps/M}))\text{-robust}. \]
Applying Lemma \ref{Lemma: combinatorial robust}, 
\[ \overline{\mu}_{n_1}^{n}*\delta_x \,\text{ is }\, (\kappa+(k+1)\eps),\mathcal{B}_{[\rho^{(3/4)^{k+1}M}, \rho^{(3/2)^{k+1}/M}]}, O_{\eps,k+1,M}(\rho^{\eps/M}))\text{-robust}. \]
This finishes the proof of the induction, whence the proposition.
\end{proof}

\medskip

\subsection{From high dimension to equidistribution} Recall the definition of the multi-parameter (one-parameter, resp.) diagonal flow $a(\bt)$ for $\bt\in \R^d_{>0}$ in \eqref{eq: defn of multi parameter} ($a(t)$ for $t\in \R_{>0}$ in \eqref{eq: defn of one parameter}, resp.).
Given any $\bt\in \R^d_{>0}$ and $t\in \R_{>0}$, we define operators $\eta_{\bt}:L^2(X)\to L^2(X)$ by 
\[ \eta_{\bt}(f)(x)=\int f(a(\bt)u(\bs)x)\dd \sigma(\bs), \]
and $\eta_t:L^2(X)\to L^2(X)$ by 
\[ \eta_t(f)(x)=\int f(a(t)u(\bs)x)\dd \sigma(\bs). \]

We invoke the following proposition:

\begin{proposition}\label{prop: spectral gap}\cite[Proposition 8.1]{BHZ25}
There exist $\kappa_0,\rho_0>0$ such that the following holds for all $\rho\in (0,\rho_0]$, $\kappa\in (0,\kappa_0]$ and $\tau\geq 0$.

Let $\nu$ be a $(1-\kappa,\mathcal{B}_{\rho},\tau)$-robust measure on $X$ such that $\nu(X)\leq 1$. Let $l=\lceil \tfrac{1}{2} \dim\SO_{d+1} \rceil$. Then for all $t\in [\rho^{-1/4},\rho^{-1/2}]$ and any $f\in B^{\infty}_{\infty,l}(X)$ with $m_X(f)=0$, we have
\[ \abse{\eta_t*\nu(f)}\leq (\rho^{\kappa}+\tau)\cS_{\infty,l}(f). \]
    
\end{proposition}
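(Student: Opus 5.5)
The plan is to compare $\eta_t*\nu(f)=\int\!\int f(a(t)u(\bs)x)\dd\sigma(\bs)\dd\nu(x)$ with a pairing against an $L^2$ function on $X$, and then to use the decay of matrix coefficients of unipotent translates, averaged over $\sigma\otimes\sigma$. First write $\nu=\nu_1+\nu_2$ as in Definition~\ref{defn: robust measure}. The part $\nu_2$ contributes at most $\tau\norm{f}_\infty\le\tau\cS_{\infty,l}(f)$. For the rest we have $\nu_1\{\inj<\rho\}=0$ and $\nu_1(B_\rho y)\le\rho^{(1-\kappa)\dim X}$ for all $y$. Fix a smooth bump $\psi_\rho$ on $G$ supported in the identity ball of radius $\rho/10$ with $\int\psi_\rho=1$, and set $F=\psi_\rho*\nu_1$, which is a density on $X$.

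\emph{Step 1 (smoothing).} For $h\in\supp\psi_\rho$, $\bs\in\supp\sigma$ (compact) and $t\le\rho^{-1/2}$, we have $\dist(a(t)u(\bs)hx,a(t)u(\bs)x)\ll\norm{\Ad(a(t)u(\bs))}\rho\ll t\rho\le\rho^{1/2}$. Hence
\[\Bigl|\int\eta_t f\dd\nu_1-\int\eta_t f\,F\dd m_X\Bigr|\ll\rho^{1/2}\cS_{\infty,l}(f).\]

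\emph{Step 2 ($L^2$ bound on $F$).} Since $\inj\ge\rho$ on $\supp\nu_1$, for every $y$ we get $F(y)\ll\nu_1(B_{\rho}y)/m_X(B_{\rho/10})\ll\rho^{-\kappa\dim X}$, using that $B_{\rho}y$ meets the support in a region where $m_X(B_{\rho/10})\simeq\rho^{\dim X}$ (and covering $B_{2\rho}y$ by $O(1)$ balls of radius $\rho$ if needed). Combined with $\norm{F}_1\le1$ this gives $\norm{F}_2^2\le\norm{F}_\infty\norm{F}_1\ll\rho^{-\kappa\dim X}$.

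\emph{Step 3 (decay of $\norm{\eta_tf}_2$).} By Cauchy--Schwarz, $|\int\eta_tf\,F|\le\norm{F}_2\norm{\eta_tf}_2$. Expanding,
\[\norm{\eta_tf}_2^2=\iint\langle f,\,f\circ u(t(\bs'-\bs))\rangle\dd\sigma(\bs)\dd\sigma(\bs').\]
Here we use $a(t)u(\bs)a(t)^{-1}=u(t\bs)$ and the $G$-invariance of $m_X$. Since $m_X(f)=0$, the effective decay of matrix coefficients for $\SL_{d+1}(\R)$ acting on $L^2_0(X)$ (spectral gap together with Sobolev norms of order $l=\lceil\tfrac12\dim\SO_{d+1}\rceil$, which is why this $l$ appears) gives
\[|\langle f,f\circ u(\bv)\rangle|\ll(1+\norm{\bv})^{-c_0}\cS_{\infty,l}(f)^2\]
for some $c_0>0$. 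We split the $\sigma\otimes\sigma$-integral according to whether $\norm{\bs-\bs'}\le t^{-1/2}$. The near-diagonal set has mass $\ll t^{-\gamma/2}$. This follows from Corollary~\ref{cor: regularity-stopping time for product}~(3) applied coordinatewise: fixing $\bs'$, the polynomial $s_1-s_1'$ has norm $\simeq1$, so $\sigma\{|s_1-s_1'|\le t^{-1/2}\}\ll t^{-\gamma/2}$. Off this set the matrix coefficient is $\ll t^{-c_0/2}$. Hence $\norm{\eta_tf}_2^2\ll t^{-c_1}\cS_{\infty,l}(f)^2$ with $c_1=\min(\gamma,c_0)/2$.

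\emph{Step 4 (conclusion).} Using $t\ge\rho^{-1/4}$, the total is
\[\ll\bigl(\rho^{1/2}+\rho^{-\kappa\dim X/2}\rho^{c_1/8}\bigr)\cS_{\infty,l}(f)+\tau\cS_{\infty,l}(f).\]
Choosing $\kappa_0$ so small that $c_1/8-\kappa_0\dim X/2\ge2\kappa_0$, and then $\rho_0$ small to absorb the implied constants, yields the bound $(\rho^\kappa+\tau)\cS_{\infty,l}(f)$.

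The only delicate step is Step 3. It needs the effective unipotent matrix-coefficient decay with exactly the Sobolev order $l$, and a non-concentration bound for $\bs-\bs'$ under $\sigma\otimes\sigma$. Both are available: the first is standard spectral-gap input for $\SL_{d+1}(\Z)$, and the second follows from the product structure via Corollary~\ref{cor: regularity-stopping time for product}. So I expect the argument of \cite[Proposition 8.1]{BHZ25} to go through verbatim with $\sigma$ product-type.
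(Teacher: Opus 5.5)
Your argument is correct and follows the route the paper takes. The paper gives no new proof: it defers to \cite[Proposition 8.1]{BHZ25}, with the non-concentration of $\sigma$ from Corollary~\ref{cor: regularity-stopping time for product}~(3) as the only input specific to the product-type setting, and that is exactly how you use it in Step 3 to control the near-diagonal part of $\sigma\otimes\sigma$. One small correction to Step 2: the pointwise bound on $F$ comes from $\inj\geq\rho$ on $\supp\nu_1$ (so each point of $\supp\nu_1$ contributes at most once to the density), not from a volume property at $y$; the resulting estimate $\norm{F}_\infty\ll\rho^{-\kappa\dim X}$ is right.
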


\begin{proof}
Using the non-concentration property of $\sigma$ as in Corollary \ref{cor: regularity-stopping time for product} (3), the proof of this proposition is identical to that of 
\cite[Proposition 8.1]{BHZ25}.

\end{proof}

Note that any $g\in \supp \overline{\mu}_{n_1}^{*n}$ can be written as $g=a(e^{nn_1\ell_1}\bt_g) u(\ttb_g)$ for some $\bt_g\in \R^d_{>0}$.

\begin{lemma}\label{lemma: expanding trajectory and random walk}
For any $t>0$ and $n,n_1\in \N$, we have 
\[ \eta_t=\int \eta_{t e^{-nn_1\ell_1}\bt_g^{-1}}*\delta_g \dd \overline{\mu}_{n_1}^{*n}(g),\]
where we write $\bt_g^{-1}=(t_{g,1}^{-1},\cdots,t_{g,d}^{-1})$ for $\bt_g=(t_{g,1},\cdots,t_{g,d})$.
\end{lemma}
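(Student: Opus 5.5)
The identity comes from the stationarity of $\sigma$ under the pushed-forward random walk, together with the commutation of the diagonal part of $g$ with $a(t)$. Here $\eta_{\bt}*\delta_g$ is read as the operator $f\mapsto \eta_{\bt}(f)(g\,\cdot)$, in the same way as the convolution notation used for measures. The plan is to unwind both sides as integrals against $\sigma$ and match them.

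First I will identify the affine maps underlying $\overline{\mu}_{n_1}^{*n}$. Since $\iota_d$ is an anti-morphism and the measures $\mu_{i,d}$ commute (Lemma \ref{lemma: product structure}), we have $\overline{\mu}_{n_1}^{*n}=(\iota_d)_*\overline{\lambda}$ with
\[
\overline{\lambda}=\lambda_{1,d}^{*nn_1}*\cdots*\lambda_{d,d}^{*nn_d}.
\]
Each $\phi\in\supp\overline{\lambda}$ has the form $\phi(\bs)=\diag(\ttr_{\phi,1},\dots,\ttr_{\phi,d})\bs+\ttb_\phi$. Because every $\lambda_{i,d}$ stabilizes $\sigma=\otimes_i\sigma_i$ (as $\lambda_i*\sigma_i=\sigma_i$), iterating gives $\overline{\lambda}*\sigma=\sigma$, that is, $\sigma=\int\phi_*\sigma\dd\overline{\lambda}(\phi)$.

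Next I will perform a matrix computation. By the formula defining $g_\phi=\iota_d(\phi)$, we have $g_\phi=D_\phi u(\ttb_\phi)$, where $D_\phi\in A$ is the diagonal matrix with entries $\det(M_\phi)^{1/(d+1)}\ttr_{\phi,i}^{-1}$ and $\det(M_\phi)^{1/(d+1)}$. The key identity to check is
\[
u(\phi(\bs))=u(M_\phi\bs)u(\ttb_\phi)=D_\phi^{-1}u(\bs)D_\phi u(\ttb_\phi)=D_\phi^{-1}u(\bs)g_\phi .
\]
It holds because conjugation by $D_\phi^{-1}$ scales the $i$-th coordinate of $u$ by the ratio of the $i$-th diagonal entry to the last, which equals $\ttr_{\phi,i}$. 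Writing $D_\phi=a(e^{nn_1\ell_1}\bt_g)$ then fixes the meaning of $\bt_g$: its $i$-th coordinate is $\ttr_{\phi,i}^{-1}e^{-nn_1\ell_1}$, up to the normalisation in \eqref{eq: defn of multi parameter}.

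Finally I will combine the two steps. For $f$ and $x$,
\[
\eta_t(f)(x)=\int\int f\bigl(a(t)u(\phi(\bs))x\bigr)\dd\sigma(\bs)\dd\overline{\lambda}(\phi)
=\int\int f\bigl(a(t)D_\phi^{-1}u(\bs)g_\phi x\bigr)\dd\sigma(\bs)\dd\overline{\lambda}(\phi).
\]
Since $A$ is abelian and $a(\cdot)$ is multiplicative coordinatewise, $a(t)D_\phi^{-1}=a(te^{-nn_1\ell_1}\bt_g^{-1})$. The inner integral is therefore $\eta_{te^{-nn_1\ell_1}\bt_g^{-1}}(f)(gx)$ with $g=g_\phi$. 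Pushing forward by $\iota_d$ then gives the claimed formula.

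The only delicate point is the bookkeeping of exponents in $a(\cdot)$, namely checking that $D_\phi$ is exactly $a(\bt')$ for the stated $\bt'$, including the last diagonal entry. The determinant condition forces the last entry to agree, since both matrices have determinant $1$. Everything else follows formally from stationarity.
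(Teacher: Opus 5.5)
Your proposal is correct and follows the same route as the paper's (very terse) proof: the stationarity $\sigma=\lambda_{1,d}^{*nn_1}*\cdots*\lambda_{d,d}^{*nn_d}*\sigma$, the identity $u(\phi(\bs))=D_\phi^{-1}u(\bs)g_\phi$, and the multiplicativity of $a(\cdot)$, with you supplying the details the paper leaves implicit. One small correction that does not affect the argument: with the normalisation \eqref{eq: defn of multi parameter}, the $i$-th coordinate of $e^{nn_1\ell_1}\bt_g$ is $\det(M_\phi)\,\ttr_{\phi,i}^{-(d+1)}$ rather than $\ttr_{\phi,i}^{-1}$ (they agree only when all $\ttr_{\phi,j}$ coincide), but since $\bt_g$ enters only through the defining relation $D_\phi=a(e^{nn_1\ell_1}\bt_g)$, nothing else changes.
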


\begin{proof}
By definition of $\eta_t$ and that $\sigma=\lambda_{1,d}^{*nn_1}*\cdots*\lambda_{d,d}^{*nn_k}*\sigma$, we have for any $f\in L^2(X)$ and $x\in X$,
\begin{align*}
    \eta_t(f)(x)&=\int\int f(a(t)u(\phi(\bs))x)\dd \sigma(\bs)\dd\lambda_{1,d}^{*nn_1}*\cdots*\lambda_{d,d}^{*nn_k}(\phi)\\
    &=\int \int f(a(te^{-nn_1\ell_1}\bt_g^{-1})u(\bs)gx)\dd\sigma(\bs)\dd\overline{\mu}_{n_1}^{*n}(g).
\end{align*}
The lemma follows.
    
\end{proof}

\medskip

\begin{proof}[Proof of Theorem \ref{thm: unweighted effective equid}]

Note that for any $x\in X$, $f\in B^{\infty}_{\infty,l}(X)$ and $\bt,\bs\in \R^d_{>0}$, we have
\begin{align}\label{eq: ued eq-1}
    \abse{ f(a(\bt)x)-f(a(\bs)x) }\ll_d \max_i|\log t_i s_i^{-1}|\cS_{\infty,1}(f).
\end{align}
Recall that by Proposition \ref{prop: high dimension}, for any $\kappa>0$ small enough, there exists $C\ggg_{\kappa}1$ such that for any $n_1\ggg_{\kappa} 1$, any $\eta,\rho\lll_{\kappa} 1$, as long as $nn_1>C( |\log \rho|+|\log \inj x|)$, the probability measure $\nu:=\overline{\mu}_{n_1}^{*n}*\delta_x$ is $(1-\kappa,\mathcal{B}_{\rho},\rho^{\eta})$-robust. We fix this $\kappa$ and $\eta$ from the beginning.

Let $\gamma,\alpha>0$ be small parameters to be determined later.  Given $t\ggg 1$, we let 
\[ t=\rho^{-\tfrac{3}{8}-2C\ell_1} \,\text{ and } \, n=\lfloor \frac{2C|\log \rho|}{n_1} \rfloor.\]
Let us first assume that $|\log \inj (x)|<|\log \rho|/2$, or equivalently $|\log \inj x|<\tfrac{1}{3/4+4C\ell_1} |\log t|$. Then $\nu=\overline{\mu}_{n_1}^{*n}*\delta_x$ is $(1-\kappa,\mathcal{B}_{\rho},\rho^{\eta})$-robust. Consider the discrete set
\[ \mathcal{R}=\{(1+\rho^{\alpha})^k:k\in \Z\}, \]
and the corresponding $(1+\rho^{\alpha})$-adic partitioning of $\R_{>0}$:
\[ \R_{>0}=\bigcup_{r\in \mathcal{R}} R_r, \]
where $R_r=[r,r(1+\rho^{\alpha}))$.  For each $\br=(r_1,\cdots,r_d)\in \mathcal{R}^d$, we denote by $R_{\br}=R_{r_1}\times\cdots\times R_{r_d}$, and
\[ P'_{\br}=\{g\in \supp \overline{\mu}_{n_1}^{*n}: e^{-nn_1\ell_1}\bt_g^{-1}\in R_{\br}\, \text{ for } g=a(\bt_g e^{nn_1\ell_1})u(\ttb_g)  \}. \]
For $\br\in \mathcal{R}^d$, we consider the Borel measure
\[ \nu_{\br}:=\overline{\mu}_{n_1}^{*n}|_{P'_{\br}}*\delta_x\leq \nu, \]
which is also $(1-\kappa,\mathcal{B}_{\rho},\rho^{\eta})$-robust. By \eqref{eq: ued eq-1}, for any $g\in P'_{\br}$, $x\in X$,
\[ \abse{f(a(\br)x)-f(a( e^{-nn_1\ell_1}\bt_g^{-1})x)}\ll_d \log(1+\rho^{\alpha})\cS_{\infty,1}(f)\ll \rho^{\alpha}\cS_{\infty,1}(f).  \]
Assume $m_X(f)=0$, by Lemma \ref{lemma: expanding trajectory and random walk} we have
\begin{align}\label{eq: ued eq-2}
    \abse{\eta_t*\delta_x(f)}&\leq \sum_{\br\in \mathcal{R}^d}\abse{\int_{P'_{\br}} \eta_{t e^{-nn_1\ell_1}\bt_g^{-1}}*\delta_{gx}(f) \dd \mu^{*\Tilde{n}}(g) }\nonumber\\
    &\leq \sum_{\br\in \mathcal{R}^d}\abse{\eta_{t\br}*\nu_{\br}(f) }+\rho^{\alpha}\cS_{\infty,1}(f).
\end{align}
Provided $n_1>\max_{k\in \dbracket{1,d}} \ell_k/\gamma$, for any $n$ we have
\begin{align}\label{eq: inclusion}
    [-nn_k(\ell_k+\gamma),-nn_k(\ell_k-\gamma)]\subset [-nn_1(\ell_1+C_1\gamma), -nn_1(\ell_1-C_1\gamma)],
\end{align}
where $C_1=1+\max_k \ell_1/\ell_k$.
Let 
\[\tilde{\mathcal{R}}=\{\br\in \mathcal{R}^d: \log r_k\in [-nn_1(\ell_1+C_1\gamma), -nn_1(\ell_1-C_1\gamma)], \,\forall k\in \dbracket{1,d}\}.\]
Provided that $n_1>\max_{k\in \dbracket{1,d}} \ell_k/\gamma$, the large deviation principle and \eqref{eq: inclusion} imply  there exists $\gamma_1\lll_{\gamma} 1$ such that 
\begin{align}\label{eq: ued eq-3}
    \overline{\mu}_{n_1}^{*n}(\bigcup_{\br\in \tilde{\mathcal{R}}} P'_{\br} )\geq 1-e^{-nn_1\gamma_1}.
\end{align}
Combining \eqref{eq: ued eq-2}, \eqref{eq: ued eq-3}, we have
\begin{align}
     \abse{\eta_t*\delta_x(f)}\ll \sum_{\br\in \tilde{\mathcal{R}}}\abse{\eta_{t\br}*\nu_{\br}(f) }+(\rho^{\alpha}+e^{-nn_1\gamma_1})\cS_{\infty,1}(f)
\end{align}
It now remains to bound $\abse{\eta_{t\br}*\nu_{\br}(f)}$ for $\br\in \tilde{\mathcal{R}}$. Note that 
\begin{align}\label{eq: rewritten}
    \eta_{t\br}*\nu_{\br}(f)=\eta_{te^{-nn_1\ell_1}}*\nu_{\br}(f\circ a(\br')),
\end{align}
for some $\br'\in [e^{-nn_1C_1\gamma},e^{nn_1C_1\gamma}]^d$.  The choice of $t,n,n_1$ guarantees that $te^{-nn_1\ell_1}\in [\rho^{-1/4},\rho^{-1/2}]$. Hence, by Proposition \ref{prop: spectral gap} and \eqref{eq: rewritten}, we have
\[ \abse{ \eta_{t\br}*\nu_{\br}(f)}\ll (\rho^{\kappa}+\rho^{\eta})\cS_{\infty,l}(f\circ a(\br'))\ll e^{nn_1O(\gamma)}(\rho^{\kappa}+\rho^{\eta})\cS_{\infty,l}(f). \]
Hence, 
\begin{align*}
    \sum_{\br\in \tilde{\mathcal{R}}}\abse{\eta_{t\br}*\nu_{\br}(f) }&\ll \#\tilde{\mathcal{R}}\cdot e^{n_1nO(\gamma)}(\rho^{\kappa}+\rho^{\eta})\cS_{\infty,l}(f)\nonumber\\
    &\ll \left(\frac{nn_1\gamma}{\rho^{\alpha}}\right)^d e^{nn_1O(\gamma)}(\rho^{\kappa}+\rho^{\eta})\cS_{\infty,l}(f).
\end{align*}
We choose $\gamma\lll_{\kappa,\eta} 1$ such that 
\[ e^{nn_1O(\gamma)}(\rho^{\kappa}+\rho^{\eta})\leq\rho^{-2CO(\gamma)}(\rho^{\kappa}+\rho^{\eta})<\rho^{\kappa/2}+\rho^{\eta/2}. \]
Then we choose $\alpha\lll_{\kappa,\eta} 1$ such that 
\[ \rho^{-2\alpha}(\rho^{\kappa/2}+\rho^{\eta/2})<\rho^{\kappa/4}+\rho^{\eta/4}. \]
Finally choose $c\lll_{\alpha,\gamma} 1$ to conclude that 
\begin{align}\label{eq: bound on error when inj large}
    \abse{\eta_t*\delta_x(f)}\ll t^{-c}\inj(x)^{-1}\mathcal{S}_{\infty,l}(f), 
\end{align}
this finishes the proof of the theorem when $|\log\inj x|\leq\tfrac{1}{3/4+4C\ell_1} |\log t|$. When $|\log\inj x|\geq \tfrac{1}{3/4+4C\ell_1} |\log t|$, we may choose $c\lll_C 1$  such that $t^{-c}\inj(x)^{-1}\geq 1$. Then the bound of the error term in \eqref{eq: bound on error when inj large} is trivially true.
\end{proof}

\bigskip

\appendix

\section{Double equidistribution}\label{sec: double equid}

Let $\sigma$ be a product-type self-affine measure on $\R^d$. We say that $\bomega=(\omega_i)_1^d\in (0,1)^d$ is a weighted vector if $\sum_{1}^d \omega_i=1$. Given a weighted vector, we define the one-parameter diagonal flow along $\bomega$ by
\[ a_{\bomega}(t)=\diag(t^{\tfrac{d\omega_1}{d+1}},\cdots,t^{\tfrac{d\omega_d}{d+1}},t^{-\tfrac{d}{d+1}}), \,\, \forall \, t\in \R_{>0}. \]
Given $f_1,f_2\in B^{\infty}_{\infty,l}(X)$, weighted vectors $\bomega,\bv$ and $t_2, t_1>1$, we define the double equidistribution coefficient 
\begin{equation*} \label{double-eq-function}
\Delta^{\bomega,\bv}_{f_1,f_2}(t_1,t_2) := \abse{\int_{\R^d} f_{1}\bigl(a_{\bomega}(t_1)u(\bs)x_{0}\bigr) f_{2} \bigl(a_{\bv}(t_2)u(\bs)x_{0}\bigr) \dd \sigma(\bs) - m_{X}(f_{1})m_{X}(f_{2})}.
\end{equation*}
where $x_0\in X$ is the identity coset. 
\medskip

The following proposition is  essentially \cite[Theorem 1.7]{AG26}, which is a weighted generalization of \cite[Proposition 8.3]{BHZ25}. The main differences here include that we allow weighted vectors $\bomega,\bv$ to vary in the whole $(0,1)^d$, and we relax the assumption in \cite[Theorem 1.7]{AG26}  that $\sigma$ is a product of self-similar measures on $\R$ with equal contraction ratio. To state this proposition, we follow some convenient notations in \cite[Theorem 1.7]{AG26}.

\begin{proposition}\label{prop: double equid}
There exist $C,c>0$ depending only on $\sigma$ such that for any weighted vectors $\bomega,\bv$, all $t_1,t_2>1$ with $t_2\geq t_1$ and $f_1,f_2\in B_{\infty,l}^{\infty}(X)$, we have
\begin{align*}
    \Delta^{\bomega,\bv}_{f_1,f_2}(t_1,t_2)\leq Ct_1^{-c\integer{\bomega}}\cS_{\infty,l}(f_1)\abse{m_X(f_2)}+Ce^{-c\rho((t_1,\bomega),(t_2,\bv)) \integer{\bv}}\cS_{\infty,l}(f_1)\cS_{\infty,l}(f_2),
\end{align*}
where
\[ \rho((t_1,\bomega),(t_2,\bv))=\min_{1\leq i\leq d}\{(v_i+1)\log t_2-(\omega_i+1)\log t_1\}, \]
and $\integer{\bomega}=\min_i \omega_i$, $\integer{\bv}=\min_i v_i$.
\end{proposition}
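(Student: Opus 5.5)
The plan follows the standard double-equidistribution argument of \cite[Proposition 8.3]{BHZ25} and \cite[Theorem 1.7]{AG26}. We use stationarity of $\sigma$ under a suitable stopping-time measure to split off the larger time $t_2$, and then apply Theorem \ref{thm: weighted effective equid} to the inner integral. Write $a_{\bv}(t_2)=a(\bt^{(2)})$ with $\log t^{(2)}_i=(1+v_i)\log t_2$; the exponent $(d\,v_i+ \text{const})/(d+1)$ matches the normalisation \eqref{eq: defn of multi parameter} up to the factor $d$. Similarly $a_{\bomega}(t_1)=a(\bt^{(1)})$ with $\log t^{(1)}_i\asymp (1+\omega_i)\log t_1$. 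Thus $\rho=\min_i(\log t^{(2)}_i-\log t^{(1)}_i)$ up to constants, i.e.\ $\integer{\bt^{(2)}(\bt^{(1)})^{-1}}\approx e^{\rho}$.

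\textbf{Step 1: stopping-time decomposition.} Choose $\eps_i\in(0,1)$ with $\eps_i\approx t_1^{-(1+\omega_i)\cdot O(1)}$, so that $a(\bt^{(1)})\iota(\phi)^{-1}$ is bounded for $\phi\in\supp\lambda'$, where $\lambda'=\lambda^{\eps_1}_{1,d}*\cdots*\lambda^{\eps_d}_{d,d}$. This is possible by Lemma \ref{lemma: basic prop of stopping time}(1), which gives $\ttr_\phi\in[r_{min}\eps_i,\eps_i)$. By Lemma \ref{lemma: stationarity w.r.t stoppoing time} applied coordinatewise, $\sigma=\lambda'*\sigma$. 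For $\bs=\phi(\bs')$ we write $a(\bt^{(j)})u(\phi(\bs'))x_0=a(\bt^{(j)})a_\phi^{-1}u(\bs')g_\phi x_0$, with $g_\phi=\iota(\phi)$ and $a_\phi$ diagonal. The $f_1$ factor then becomes $f_1(a_g' u(\bs')g x_0)$ with $a'_g=O(1)$. This is within $O(\cS(f_1)\max_i\eps_i^{c})$ of $f_1(a'_g g x_0)$, since $u(\bs')$ with $\bs'$ bounded perturbs only by a bounded amount that is contracted. More carefully, we freeze $\bs'$ at scale $\eps$: we replace $\phi(\bs')$ by $\phi(0)$ at cost $\max\eps_i\cS(f_1)$, using $|\phi(\bs')-\phi(0)|\ll\eps_i$ in coordinate $i$ together with the expansion of $a(\bt^{(1)})$ on $u$, which is exactly compensated by the choice of $\eps_i$. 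The $f_2$ factor becomes $f_2(a(\bt^{(2)}(\bt^{(1)})^{-1}\cdot O(1))u(\bs')gx_0)$.

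\textbf{Step 2: inner equidistribution.} For fixed $g$, we apply Theorem \ref{thm: weighted effective equid} to the $\sigma(\bs')$-integral of the $f_2$ term, with translation parameter having $\integer{\cdot}\gg e^{c\rho}$. This gives
\[
m_X(f_2)+O\bigl(\cS(f_2)\inj(gx_0)^{-1}e^{-c\rho}\bigr).
\]
Integrating against $\lambda'$, the main term is $m_X(f_2)\int f_1(a'_g g x_0)\,d\mu'(g)$, which equals $m_X(f_2)\int f_1(a_{\bomega}(t_1)u(\bs)x_0)\,d\sigma$ up to the same freezing error. By Theorem \ref{thm: weighted effective equid} again, this is $m_X(f_1)m_X(f_2)+O(t_1^{-c\integer{\bomega}}\cS(f_1)|m_X(f_2)|)$. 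Here $\integer{\bt^{(1)}}\ge t_1^{\integer{\bomega}}$ up to constants, which is where $\integer{\bomega}$ enters; $\inj(x_0)\asymp1$.

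\textbf{Step 3: cusp control, the main obstacle.} The error $\inj(gx_0)^{-1}$ must be averaged over $g\sim\mu'$. We split along $\{\inj(gx_0)<e^{-c'\rho}\}$ and use Theorem \ref{thm: weighted effective recurrence} (equivalently Theorem \ref{thm: nondiv}) to bound its $\mu'$-mass by $e^{-cc'\rho}$. We need to check that the chosen $\eps_i$ satisfy \eqref{eq: condition on epsilon i}. When the $\eps_i$ are not sufficiently spread, which happens when $t_1$ is small, this condition may fail; there we instead use the bounded-distortion trick from the proof of the equivalence Theorem \ref{thm: nondiv} $\Leftrightarrow$ Theorem \ref{thm: weighted effective recurrence}, absorbing a compact diagonal factor. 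On the good set the error is $e^{-(c-c')\rho}$. A second point is balancing the freezing error $\max_i\eps_i\approx t_1^{-c\integer{\bomega}}$ so that it is multiplied by $\cS(f_1)\cS(f_2)$ rather than by $|m_X(f_2)|$. To handle this, we take $\eps_i$ slightly smaller, namely $\eps_i\approx \bigl(t_1^{1+\omega_i}\bigr)^{-1} e^{-\rho/2}$. The freezing error is then $\ll e^{-c\rho}$, while the remaining gap $e^{\rho/2}$ still gives the inner equidistribution rate $e^{-c\rho\integer{\bv}}$; the factor $\integer{\bv}$ arises from converting $\min_i$ gaps. Collecting the terms yields the stated bound.
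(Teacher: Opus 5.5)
There is a genuine gap in Step 2, and it comes from where you put the stopping time.

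In the paper's normalisation $a_{\bv}(t)=a(t^{dv_1},\dots,t^{dv_d})$, so $\log t^{(2)}_i=dv_i\log t_2$, not $(1+v_i)\log t_2$. The numbers $\tfrac{d}{d+1}(1+v_i)\log t_2$ are the expansion rates of $a_{\bv}(t_2)$ on the $i$-th coordinate of $U$. They do not control membership in the cone $\{a(\bt):\bt\in\R^d_{>1}\}$, which is where Theorem \ref{thm: weighted effective equid} applies.

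Suppose you stop at a uniform refinement of the $(t_1,\bomega)$-scale, i.e.\ $\eps_i\approx t_1^{-d(1+\omega_i)/(d+1)}e^{-m}$ with margin $m\asymp\rho$. Then the diagonal part of $g_\phi=\iota_d(\phi)$ is $a_{\bomega}(t_1)a(e^{m})$ up to $A_{O(1)}$. The translation left on the $f_2$ factor is $a(\bt'')$ with $\log t''_i=d(v_i\log t_2-\omega_i\log t_1)-m$, and this can be negative while $\rho>0$. For example, take $d=2$, $\bomega=(0.9,0.1)$, $\bv=(0.1,0.9)$, $t_2=t_1^2$. Then $\rho=0.3\log t_1>0$, but $d(v_1\log t_2-\omega_1\log t_1)=-1.4\log t_1$.

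So your claim $\integer{\bt^{(2)}(\bt^{(1)})^{-1}}\approx e^{\rho}$ is false, and Theorem \ref{thm: weighted effective equid} says nothing about the inner $\sigma(\bs')$-integral. Indeed, no bound that is uniform in the base point through $\inj$ can hold for such translates: at $y=x_0$ the $U$-fixed vector $e_1$ is contracted. Your literal choice $\eps_i\approx t_1^{-(1+\omega_i)}e^{-\rho/2}$ is even finer than this and only makes it worse.

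The missing idea is to stop at the scale of the \emph{larger} time along $\bv$. Set $t_3=t_2e^{-\rho/4}$ and $\eps_k=t_3^{-d(1+v_k)/(d+1)}$, so that the diagonal part of $g_\phi$ is $a_{\bv}(t_3)$ up to $A_{O(1)}$. Then three things work:
\begin{itemize}
\item The $f_1$ factor freezes. The difference $\phi(\bs)-\ttb_\phi$ is $\ll\eps_k$ in coordinate $k$, and $t_1^{1+\omega_k}t_3^{-(1+v_k)}\le e^{-\rho/2}$ for every $k$. This is exactly what the $+1$'s in $\rho$ encode.
\item The translation left on the $f_2$ factor is $a_{\bv}(e^{\rho/4})$ up to a bounded diagonal. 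This does lie in the cone, with $\integer{\cdot}=e^{d\rho\integer{\bv}/4}$, and this is the actual source of the factor $\integer{\bv}$.
\item Theorem \ref{thm: weighted effective recurrence} applies to this $\mu$, because \eqref{eq: condition on epsilon i} reduces to $t_3^{d\integer{\bv}}\ge r_{min}^{-d}$.
\end{itemize}

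Your first version of Step 1 is also wrong as stated. When $\eps_i$ exactly compensates the expansion of $a_{\bomega}(t_1)$, freezing costs $O(\cS(f_1))$, not $O(\max_i\eps_i\,\cS(f_1))$. Shrinking the $\eps_i$, as you do in Step 3, repairs the freezing. But only the $\bv$-adapted scale makes freezing and inner equidistribution compatible.
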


\medskip

The original idea of the proof of double equidistribution went back to the work of Kleinbock-Shi-Weiss \cite{KSW17} in the context of Lebesgue measure. For fractal measures, this was first done by Khalil-Luethi \cite{KL23}. The proof of Proposition \ref{prop: double equid} follows closely from that of \cite[Proposition 8.3]{BHZ25} and \cite[Theorem 1.7]{AG26} with minor changes. For completeness, we include a proof here.

\begin{proof}
To simplify notations, we write $\cS=\cS_{\infty,l}$ and $\cS(f_1,f_2)=\cS(f_1)\cS(f_2)$. We may assume that $m_X(f_2)=0$. To see this, note that by Theorem \ref{thm: weighted effective equid} we have
\begin{align}\label{eq: assume mX(f_2)=0}
&\abse{ \Delta^{\bomega,\bv}_{f_1,f_2}(t_1,t_2)-\Delta^{\bomega,\bv}_{f_1,f_2-m_X(f_2)}(t_1,t_2)}\nonumber\\
&\leq \abse{\int f_1(a_{\bomega}(t_1)u(\bs)x_0)\dd \sigma(\bs)-m_X(f_1)}\abse{m_X(f_2)}\nonumber\\
&\leq \cS(f_1)t_1^{-c\integer{\bomega}} \abse{m_X(f_2)},
\end{align}
where $c>0$ is provided by Theorem \ref{thm: weighted effective equid}. Hence, it remains to bound
\[ I=\abse{\int  f_1(a_{\bomega}(t_1)u(\bs)x_0)f_2(a_{\bv}(t_2)u(\bs)x_0)\dd \sigma(\bs)  }. \]
Denote 
\[ r=\min_{1\leq i\leq d}\{ (v_i+1)\log t_2-(\omega_i+1)\log t_1 \}. \]
We may assume that $r>0$, since otherwise the proposition follows trivially.  Let $t_3=t_2e^{-r/4}>1$. We have the following relations:
\begin{align}\label{eq: t_1 t_3 inverse}
    t_1^{\omega_k+1}t_3^{-(v_k+1)}\leq e^{-r/2}, \,\, \forall \, 1\leq k\leq d.
\end{align}
Choose parameters $(\eps_1,\cdots,\eps_d)\in (0,1)^d$ determined uniquely by
\begin{align}\label{eq: choice of eps i 2}
     \epsilon_1^{\tfrac{1}{d+1}}\cdots \epsilon_{k-1}^{\tfrac{1}{d+1}}\epsilon_k^{-\tfrac{d}{d+1}}\epsilon_{k+1}^{\tfrac{1}{d+1}}\cdots \epsilon_d^{\tfrac{1}{d+1}}=t_3^{\tfrac{dv_k}{d+1}}, \,\, \text{ for } 1\leq k\leq d,
\end{align}
or equivalently,
\[ \eps_k=t_3^{-\tfrac{d(1+v_k)}{d+1}}, \,\, \text{ for } 1\leq k\leq d. \]
Consider the measure 
\[ \mu=\mu_{1,d}^{\epsilon_1}*\cdots*\mu_{d,d}^{\eps_d} \, \text{ and }\, \lambda=\lambda_{1,d}^{\eps_1}*\cdots *\lambda_{d,d}^{\eps_d}.\]
For any $\phi\in \supp \lambda$, by Lemma \ref{lemma: basic prop of stopping time} (1) we have $\bttr_{\phi}\in \prod_{i=1}^d[r_{min}\eps_i,\eps_i]$, where $r_{min}=\min\{\ttr_{\phi}\in \supp \lambda_i: 1\leq i\leq d\}$. Hence, given any $\phi\in \supp \lambda$ and $\bs \in \supp \sigma$, by \eqref{eq: t_1 t_3 inverse} we have
\begin{align}\label{eq: bound on f1}
    &\abse{ f_1(a_{\bomega}(t_1)u(\phi(\bs))x_0)-f_1(a_{\bomega}(t_1)u(\ttb_{\phi})x_0)}\ll_{\sigma} e^{-r/2}\cS(f_1).
\end{align}
Using the stationarity  $\lambda*\sigma=\sigma$ and \eqref{eq: bound on f1}, we have
\begin{align}\label{eq: bound on I 1}
    I=&\abse{ \int f_1(a_{\bomega}(t_1)u(\ttb_{\phi})x_0) \int_{\R^d} f_2(a_{\bv}(t_2)u(\phi(\bs))x_0)\dd \sigma(\bs)\dd \lambda(\phi) }\nonumber\\
    &+e^{-r/2}\cS(f_1,f_2).
\end{align}
 As $g=a_g a_{\bv}(t_3)u(\ttb_g)$ for some $a_g\in A_{O(1)}$ for any $g\in \supp \mu$, the single effective equidistribution theorem (Theorem \ref{thm: weighted effective equid}) implies that for some $c>0$ and  any $\phi\in \supp \lambda$,
\begin{align}\label{eq: approximate int f_2}
    \int_{\R^d} f_2(a_{\bv}(t_2)u(\phi(\bs))x_0)\dd \sigma(\bs)&=\int_{\R^d} f_2(a^{-1}_{g_{\phi}}a_{\bv}(t_2t_3^{-1})u(\bs)g_{\phi}x_0)\dd \sigma(\bs)\nonumber\\
    &=O(e^{-cr\integer{\bv}/4}\inj(g_{\phi}x_0)^{-1})\cS(f_2).
\end{align}
Let 
\[ E=\{x: \inj(x)\leq e^{-cr\integer{\bv}/8}\}. \]
Condition \eqref{eq: choice of eps i 2} guarantees \eqref{eq: condition on epsilon i} holds for some permutation $S$ on $\dbracket{1,d}$. Then Theorem \ref{thm: weighted effective recurrence} implies that for some $c_1>0$,
\begin{align}\label{eq: bound on E}
    \mu*\delta_{x_0}(E)\ll e^{-c_1cr\integer{\bv}/8} .
\end{align}
Combining \eqref{eq: bound on I 1}, \eqref{eq: approximate int f_2} and \eqref{eq: bound on E}, we have
\begin{align}\label{eq: bound on I 2}
    I=O\left( e^{-r/2}+e^{-cr\integer{\bv}/8}+e^{-c_1cr\integer{\bv}/8} \right)\cS(f_1,f_2)=O(e^{-c_2r \integer{\bv}})\cS(f_1,f_2),
\end{align}
for some $c_2\lll_{c,c_1} 1$. The proof can be concluded by combining \eqref{eq: assume mX(f_2)=0} and \eqref{eq: bound on I 2}.
\end{proof}

\medskip

\bibliographystyle{abbrv} 
\bibliography{khintchine}

\end{document}